\newcommand{\annotation}[1]{}
\newcommand{\Z}{\mathbb Z}
\newcommand{\aut}{\textnormal{Aut}}
\newcommand{\autinfty}{\textnormal{Aut}^{(\infty)}}
\newcommand{\inv}[1]{\textnormal{Inv}(\sigma_{n})}
\newcommand{\infinv}[1]{\textnormal{Inv}^{\infty}(\sigma_{n})}
\newcommand{\sym}{\textnormal{Sym}}
\newcommand{\fix}{\textnormal{Fix}}
\newcommand{\htop}{h_{\textrm{top}}}
\newcommand{\finorm}{\mathcal{I}_{N}}
\newcommand{\Aut}{\mathrm{Aut}}
\newcommand{\Ker}{\mathrm{ker}}
\newcommand{\supp}{\mathrm{supp}}
\newcommand{\Alt}{\mathrm{Alt}}
\newcommand{\Sym}{\mathrm{Sym}}
\newcommand{\PAut}{\mathrm{PAut}}
\newcommand{\gate}{\textrm{gate}}
\newcommand{\Homeo}{\textrm{Homeo}}
\newcommand{\ball}{\mathscr{B}}
\newcommand{\fins}{\textrm{Fin}}
\newcommand{\fol}{\textrm{F{\o}l}}
\newtheorem*{theorem*}{Theorem}
\newtheorem{theorem}{Theorem}
\newtheorem{corollary}[theorem]{Corollary}
\newtheorem{proposition}[theorem]{Proposition}
\newtheorem{lemma}[theorem]{Lemma} 
\theoremstyle{remark}
\newtheorem{remark}[theorem]{Remark}
\theoremstyle{definition}
\newtheorem{definition}[theorem]{Definition}
\newtheorem*{definition*}{Definition}
\newtheorem*{notation*}{Notation}
\author{Ville Salo}
\author{Scott Schmieding}
\thanks{SS was partially supported by the National Science Foundation grant DMS-2247553.}
\begin{document}
\keywords{subshift, shift of finite type, automorphism.}
\subjclass[2010]{Primary 37B10; Secondary 54H20}
\title{Finitary Ryan's and local $\mathcal{Q}$ entropy for $\mathbb{Z}^{d}$ subshifts}

\maketitle
\begin{abstract}

For the action of a group $G$ by homeomorphisms on a space $X$, the automorphism group $\aut(X,G)$ consists of all self-homeomorphisms of $X$ which commute with $x \mapsto g \cdot x$ for every $g \in G$.

A theorem of Ryan shows that for a nontrivial irreducible $\mathbb{Z}$-shift of finite type $(X,\sigma_{X})$, the center of $\aut(X,\sigma_{X})$ is generated by the shift $\sigma_{X}$. A finitary version of this for $\mathbb{Z}$-shifts of finite type was proved by the second author for certain full shifts, and later generalized by Kopra to nontrivial irreducible $\mathbb{Z}$-shifts of finite type. We generalize these finitary Ryan's theorems to shifts of finite type over more general groups. In particular, we prove that for contractible $\mathbb{Z}^{d}$-shifts of finite type possessing a fixed point, there is
a finitely generated subgroup of the automorphism group whose centralizer in the group of homeomorphisms of the shift space is the subgroup of shifts. We also prove versions of this for full shifts over any infinite, finitely generated group on sufficiently nice alphabet sizes.

For the action of $G$ on $X$, the stabilized automorphism group $\autinfty(X,G)$ consists of all self-homeomorphisms of $X$ for which there is a finite index subgroup $H \subset G$ such that $\varphi$ commutes with the action of every element of $H$. Motivated by the study of stabilized automorphism groups for shifts of finite type, we introduce an entropy-like quantity for pointed groups which we call local $\mathcal{Q}$ entropy,
a generalization of a notion called local $\mathcal{P}$ entropy previously introduced by the first author. This quantity is invariant under pointed isomorphism. Using the finitary Ryan's theorems, we prove that the local $\mathcal{Q}$ entropy of the stabilized automorphism group of a nontrivial contractible $\mathbb{Z}^{d}$-shift of finite type recovers the topological entropy of the underlying shift system up to a rational multiple. We then use this to give a complete classification up to isomorphism of the stabilized automorphism groups of full shifts over $\mathbb{Z}^{d}$.
\end{abstract}
\section{Introduction}
Let $G$ be a group. By a $G$-system $(X,G)$ we mean a compact metric space $X$ equipped with a $G$-action by homeomorphisms of $X$. An automorphism of a $G$-system $(X,G)$ is a homeomorphism $\varphi \colon X \to X$ such that $\varphi(g \cdot x) = g \cdot \varphi(x)$ for all $g \in G$ and $x \in X$. The set of all automorphisms of $(X,G)$ forms a group under composition which we call the automorphism group of $(X,G)$ and denote by $\aut(X,G)$, or just $\aut(X)$ when the underlying group action is clear.

Here we are primarily concerned with automorphism groups of symbolic $G$-actions. Symbolic $G$-actions are given by $G$-subshifts, i.e. the action of $G$ by translation on compact $G$-invariant subsets of $\mathcal{A}^{G}$ where $\mathcal{A}$ is some finite set. The most fundamental such systems are the $G$-shifts of finite type (abbreviated herein as SFT), which are subshifts obtained by forbidding some finite set of patterns from appearing.

The automorphism groups of symbolic systems have been heavily studied, especially in the setting of shifts of finite type, starting with full $\mathbb{Z}$-shifts by Hedlund in~\cite{Hedlund1969} and later the mixing case over $\mathbb{Z}$ in~\cite{BLR} (see also~\cite[Sec. 7]{BSstablealgebra} for a brief survey). It is known \cite{BLR} that for a nontrivial mixing SFT over $\mathbb{Z}$, the automorphism group always contains free groups and every finite group, and it follows from~\cite[Theorem 3]{Hochman2010} that the same holds for positive entropy SFT's over $\mathbb{Z}^{d}$ having a dense set of minimal points. But in general, the algebraic structure of the group of automorphisms of a shift of finite type remains mysterious. Of note is the result of Ryan~\cite{Ryan1,Ryan2}, showing that for a mixing shift of finite type $(X,\sigma_{X})$ over $\mathbb{Z}$, the center of $\aut(X)$ is generated by the shift map $\sigma_{X}$. This was later extended by Hochman~\cite{Hochman2010}, showing that for topologically transitive $\mathbb{Z}^{d}$-shifts of finite type with positive entropy, the center of $\aut(X)$ is the subgroup $\mathbb{Z}^{d}$ of shift maps.

Later, the second author proved in~\cite{SaloFinitaryRyans2019} a finitary version of Ryan's theorem for the full $\mathbb{Z}$-shift $X_{4}$ on four symbols: there exists a finite set of automorphisms $F = \{f_{i}\} \subset \aut(X_{4})$ such that the centralizer of $F$ in $\aut(X_{4})$ consists of exactly the powers of the shift. Following this, Kopra extended this in~\cite{Kopra2020} to obtain a finitary Ryan's theorem for every infinite irreducible shifts of finite type over $\mathbb{Z}$, and also showed that one can always find a set $F$ containing only two automorphisms whose centralizer is precisely powers of the shift.

Our first main set of results here are a set of finitary Ryan's theorems for shifts of finite type over more general groups.

\begin{theorem}
\label{thm:introFinitaryRyanZd}
If $X$ is a contractible $\Z^d$-shift of finite type with a fixed point, then there is a finitely generated subgroup $H$ of the automorphism group of $X$ such that the centralizer of $H$ in $\Homeo(X)$ is the subgroup of shifts $\mathbb{Z}^{d}$. In particular, the centralizer of $H$ in $\aut(X)$ is the subgroup of shifts $\mathbb{Z}^{d}$.
\end{theorem}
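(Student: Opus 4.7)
The plan is to exploit the fixed point $\star \in X$ and the contractibility hypothesis to construct a finite collection of automorphisms that probe the structure of $X$ at every location, and then argue that any homeomorphism commuting with them must be a shift. The starting point is that contractibility is designed exactly so that ``islands''---isolated finite patterns embedded in an otherwise $\star$-constant configuration---can be legally modified by local moves, so that there exist automorphisms performing prescribed local swaps and local translations of islands while acting as the identity on the background.

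Concretely, I would assemble $H$ from three families of generators. First, for a small collection of distinct finite patterns $P_1, P_2, \ldots$ that embed in $X$ inside a sufficiently large $\star$-background, contractibility supplies ``swap'' automorphisms $\pi_{P_i, P_j}$ that exchange isolated $P_i$-islands with $P_j$-islands and act as the identity elsewhere. Second, for each standard generator $e_k$ of $\mathbb{Z}^d$, I would build a ``carrier'' automorphism $\tau_k$ that translates an isolated island by $e_k$ while fixing the $\star$-background; this is again where contractibility does the essential work, by providing a legal path of SFT configurations realizing such a motion. Third, a handful of ``marker'' automorphisms are added to ensure the distinguished patterns can be told apart from their own shifts. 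All three families only require finitely many instances, so $H$ is finitely generated.

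To show that the centralizer of $H$ in $\Homeo(X)$ is the subgroup of shifts $\mathbb{Z}^d$, I would argue as follows. Let $\varphi \in \Homeo(X)$ commute with every generator of $H$. Using the swap and marker automorphisms, one identifies a dense subset of ``structured'' configurations, namely those with large $\star$-backgrounds surrounding finite islands, and shows that $\varphi$ must map structured configurations to structured configurations in a combinatorially rigid way. The carrier automorphisms $\tau_k$ then force $\varphi$ to transport islands in a direction-preserving manner: from $\varphi \tau_k = \tau_k \varphi$ and the explicit $e_k$-translation action of $\tau_k$ on island configurations, one deduces that $\varphi$ acts as $e_k$-translation on those configurations up to a fixed offset. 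A short comparison across the various $\tau_k$ pins down $\varphi$ as a single shift on the dense set of structured configurations, and continuity extends the conclusion to all of $X$. The reverse inclusion $\mathbb{Z}^d \subseteq \cent_{\Homeo(X)}(H)$ is immediate.

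The main obstacle will be constructing the carrier automorphisms $\tau_k$ and ensuring that all deductions go through in $\Homeo(X)$ rather than in $\aut(X)$. Since $\varphi$ is not a priori a block code or even $\mathbb{Z}^d$-equivariant, every constraint on $\varphi$ must be derived purely from topological and combinatorial consequences of its commuting with the finite generating set, ruling out nonstandard behavior on configurations with no large $\star$-regions. Contractibility must be invoked in a sufficiently flexible form to build the $\tau_k$, and genuine care is needed to ensure the combined action of all the chosen generators pins down the shift direction unambiguously, leaving no residual symmetry of $X$ in the centralizer.
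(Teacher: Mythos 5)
Your high-level picture matches the paper's strategy in spirit: build a finite set of automorphisms from local ``swaps'', ``carriers'', and ``markers'' acting on isolated islands over the $\star$-background (i.e.\ on homoclinic points), then argue that any homeomorphism commuting with them maps aperiodic homoclinic points within their shift orbits and hence is a shift by density. Your argument for the second step (deducing the shift from commutation with carriers) is essentially the one in Proposition~\ref{prop:WeakRyans} and Lemma~\ref{lem:Derp}, and it is correct at the level of sketch.

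The genuine gap is the first step: you need a \emph{finitely generated} $H$ to act transitively (in fact $\infty$-orbit-transitively, or at least to separate arbitrary homoclinic points from points outside their shift orbit) on \emph{all} nontrivial homoclinic points, of which there are infinitely many shift orbits with unboundedly large supports. The generators you describe --- swaps $\pi_{P_i, P_j}$ and carriers $\tau_k$ for a fixed ``small collection'' of patterns $P_1, P_2, \ldots$ --- can only move islands whose local patterns are among those finitely many $P_i$. They cannot reach a homoclinic point with a large support pattern that is not in your list, so commutation with them imposes no constraint on the action of $\varphi$ on such points, and the proof cannot close. The paper's proof of Theorem~\ref{thm:OrbitTransitivity} resolves exactly this: it first uses ``glider diffusion'' (via the automorphism $h$ of Lemma~\ref{lem:g} and corner decreasability, which follows from the FEP that contractibility implies) to conjugate any finite tuple of homoclinic points, using only finitely many generators, to a tuple of sparse ``clouds of gliders''; it then invokes the Meyerovitch embedding theorem to embed a multi-track product subshift $Y \subset X$ in which those clouds live, so that partial shifts and even symbol permutations (finitely many of them) plus the commutator trick (Lemma~\ref{lem:CommutatorTrick}) generate arbitrary conditioned permutations. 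These two mechanisms are what turn ``finitely many generators'' into ``control over all homoclinic points'', and neither appears in your proposal. Without something playing their role, the proof as outlined cannot be completed.
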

The contractibility hypothesis was introduced in~\cite{PoirierSaloContractible}, as a strengthening of the notion of strong irrreducibility. Among other useful properties, for $\mathbb{Z}^{d}$-SFT's it was proved in~\cite{PoirierSaloContractible} that contractible SFT's coincide with the class of subshifts having map extension property of Meyerovitch introduced in~\cite{MeyerovitchEmbeddingTheorem}.

The above result gives a stronger conclusion than the previously proved finitary Ryan's theorems over $\mathbb{Z}$, in that it refers to the centralizer in all of $\Homeo(X)$; however, the hypothesis requires a fixed point, and the number of generators needed is more than the known results for $\mathbb{Z}$.

Over more general groups, we obtain an analogous strong finitary Ryan's theorem for full shifts on sufficiently nice alphabet sizes.
\begin{theorem}\label{thm:intropotentsstrongryansomgsomany}
\label{thm:FullShiftStrongRyan}
Let $G$ be infinite, finitely generated and let $(X, G)$ be a full shift on $n$ symbols where $n=m^{k}$ for some $m \ge 5, k \ge 4$. Then there is finitely generated subgroup $H$ of the automorphism group of $X$ such that the centralizer of $H$ in $\Homeo(X)$ is the subgroup of shifts $G$. In particular, the centralizer of $H$ in $\aut(X)$ is the subgroup of shifts $G$.
\end{theorem}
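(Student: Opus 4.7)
\emph{Plan.} Write the alphabet as $B^{k}$ with $|B|=m\geq 5$, so that $X \cong (B^{G})^{k}$ is a product of $k\geq 4$ copies of the full $G$-shift on $m$ symbols. The strategy is to exhibit an explicit finite set $F\subset \aut(X)$ whose centralizer in $\Homeo(X)$ is exactly the shift subgroup $G$; then $H = \langle F\rangle$ is the desired finitely generated subgroup. I arrange $F$ into three groups of generators, each playing a distinct role.

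First, include a small generating set of $\Alt(B^{k})$ acting symbolwise on $X$; commutation with these forces a centralizing $\varphi\in\Homeo(X)$ to respect the alphabet statistics of configurations and to intertwine global symbol permutations. Second, using the $k \geq 4$ tracks and the simplicity and non-abelianness of $\Alt(B)$ (which require $m\geq 5$), include a finite collection of conditional permutations that apply an $\Alt(B)$-element to one track at a site $g$ depending on a local pattern in the other tracks. A finitary-Ryan-style argument along the lines of \cite{SaloFinitaryRyans2019, Kopra2020} then forces $\varphi$ to be a block code acting trackwise. Third, for a chosen finite symmetric generating set $S\subset G$, add finitely many marker automorphisms — built in two or three of the extra tracks — whose spatial supports are parameterized by $G$ in a detectable way; commutation with these forces $\varphi$ to conjugate each $s\in S$ to itself, hence to commute with the entire $G$-action. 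A final small family of generators, in the spirit of Kopra's two-automorphism theorem over $\Z$, then pins $\varphi$ down to a single element of $G$.

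The main obstacle is the third step: previous finitary Ryan theorems produce centralizers only inside $\aut(X)$, where shift-equivariance of $\varphi$ is automatic. Here one must force a general homeomorphism to commute with the $G$-action using only finitely many explicit commutation relations, and this is where the hypotheses $m\geq 5$ and $k\geq 4$ are used essentially: the extra tracks provide enough room to encode markers, the $G$-action on them, and the conditional permutations without collision, while the size of $\Alt(B)$ supplies the algebraic flexibility needed to realize all of the required conditional swaps and to simulate the rigid group-theoretic structure that drives the Ryan-type argument. Once shift-equivariance is in hand, the remaining steps are direct finitary adaptations of the constructions used for Theorem \ref{thm:introFinitaryRyanZd} and the $\Z$-case in \cite{Kopra2020}.
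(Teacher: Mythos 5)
There is a genuine gap, and in fact an internal inconsistency in the outline. You claim in step~2 that commutation with conditional permutations ``forces $\varphi$ to be a block code acting trackwise,'' but a block code is by definition shift-equivariant; if that worked, your step~3 (forcing shift-equivariance) would be redundant. Conversely, you explicitly identify forcing shift-equivariance as ``the main obstacle,'' which means step~2 cannot in fact have produced a block code. The finitary Ryan arguments you cite from~\cite{SaloFinitaryRyans2019, Kopra2020} work entirely inside $\Aut(X)$, where shift-equivariance is assumed; they give you no purchase on an arbitrary $\varphi\in\Homeo(X)$. You never explain what mechanism makes an arbitrary homeomorphism commuting with finitely many automorphisms behave well, and this is precisely the crux.

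The paper resolves this with an idea that is entirely absent from your outline: commuting homeomorphisms must preserve \emph{supports} (the set of points moved) of the automorphisms they commute with, and one should study the action on \emph{aperiodic homoclinic points}. Concretely, the paper constructs a finitely generated $H\leq\Aut(X)$ with ``property P'': $H$ acts transitively on aperiodic homoclinic points, and for any aperiodic homoclinic $x$ and any $y$ not in its shift-orbit, some $g\in H$ has exactly one of $x,y$ in its support. If $\varphi\in\Homeo(X)$ commutes with $H$, then $\varphi$ preserves $\supp(g)$ for every $g\in H$; property P then forces $\varphi$ to send every aperiodic homoclinic point into its own shift-orbit. A short transitivity argument shows $\varphi$ shifts all such points by the same group element, and density of aperiodic homoclinics finishes it. This support-preservation argument is what lets one escape $\Aut(X)$ and reason about arbitrary homeomorphisms; your step~1 (``respect alphabet statistics'') and step~3 (``markers parameterized by $G$'') are not substitutes for it. The potency hypotheses $m\ge 5$, $k\ge 4$ are then used, roughly as you suspected, to build enough tracks for aperiodization, commutator tricks, and conditional symbol permutations so that the resulting $H$ acts $\infty$-orbit-transitively on aperiodic homoclinics and has property~P --- but all of that serves the support-on-homoclinics mechanism, not a direct argument that $\varphi$ is a block code.
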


We note that for $G$ infinite, finitely generated and residually finite and any alphabet $\mathcal{A}$, there is always a finite index subgroup $K \subset G$ such that the system $(\mathcal{A}^{G},K)$ satisfies the above.

The proofs for the finitary Ryan's theorems are built on transitivity properties for the action of finitely generated subgroups of the automorphism group on homoclinic points of the shift, many of which are interesting in their own right. The finitary Ryan's theorems above also give an answer to Question 7.2 posed in~\cite{BC-VR-B2025}.

In~\cite{HKS2022}, a certain stabilization of the automorphism group of a system was introduced\footnote{In~\cite{HKS2022}, the definition was given for $\mathbb{Z}$-actions; a more general definition was given in~\cite{SaloGateLattices}.}. Given a system $(X,G)$, we define the stabilized automorphism group $\autinfty(X,G)$ to be the set of self-homeomorphisms $\varphi$ of $X$ such that there exists a finite index subgroup $H \subset G$ for which $\varphi$ commutes with the action of every element of $H$. Thus $\autinfty(X,G)$ is the union of automorphism groups $\aut(X,H)$ as $H$ ranges over all the finite index subgroups of $G$. The stabilized automorphism group by definition contains the original automorphism group of a system, but in general is larger.

Several questions which remain out of reach for the classical automorphism groups have been resolved in the stabilized setting. It was proved in~\cite{HKS2022} that for full shifts (and later significantly generalized in~\cite{SaloGateLattices}), the commutator subgroup of the stabilized automorphism group is an infinite simple group, and agrees with the commutator. It follows from this and~\cite{BoyleEventualExtensions} that for full shifts over $\mathbb{Z}$, the stabilized automorphism group is generated by partial shifts and elements of finite order (the nonstabilized version of this, called the Finite Order Generation Conjecture, remains open). It was also shown in~\cite{EpperleinFreeConjugacy} that finite order elements which act freely on a full shift must be conjugate in the stabilized group, and the results of~\cite{EpperleinSchmieding2024} show that isomorphisms between stabilized automorphism groups of full shifts are spatially induced at the level of the space of subsystems of the shifts.

The stabilized automorphism groups have also been studied in the setting of minimal subshifts, in~\cite{Jones-BaroToeplitz,EspinozaJones-Baro2024}.

In the context of subshifts over more general groups, less is known about the stabilized automorphism groups. In~\cite{SaloGateLattices}, the second author introduced a subgroup of the stabilized automorphism group called the gate lattice subgroup. The gate lattices are finite order automorphisms obtained, very roughly speaking, by translated copies a local permutation rule. They play an important role in the study of the stabilized groups, and serve as a good generalization\footnote{We have, at this point, no suitable generalization of Krieger's dimension group for general SFT's over groups other than $\mathbb{Z}$; since the inerts are defined as the automorphisms which act trivially on the dimension group, in the absence of such an object, there is no obvious extension of the definition of inertness to SFT's over general groups.} of the inert automorphisms of $\mathbb{Z}$-shift of finite type. In~\cite{SaloGateLattices}, it was shown that for any countably infinite residually finite group $G$ and shift of finite type over $G$ with the eventual filling property, the group of even gate lattices is a simple group. In particular, this proved\footnote{Moreover, it gives a much easier proof in the $\mathbb{Z}$-case than the one given in~\cite{HKS2022} for full shifts.}  that for all mixing $\mathbb{Z}$-SFT's, the subgroup of stabilized inert automorphisms is simple, since mixing is equivalent to the eventual filling property over $\mathbb{Z}$.

In~\cite{schmiedingLocalMathcalEntropy2022}, an entropy-like quantity called local $\mathcal{P}$ entropy was introduced for pointed groups. The primary motivation there was its use an invariant for stabilized automorphism groups of mixing shifts of finite type. It was proved in~\cite{schmiedingLocalMathcalEntropy2022} that the local $\mathcal{P}$ entropy of the stabilized automorphism group of a mixing shift of finite type recovers the topological entropy of the underlying shift system up to a rational multiple. This was used to give in~\cite{schmiedingLocalMathcalEntropy2022} a complete classification of the stabilized automorphism groups of full shifts over $\mathbb{Z}$: two such groups are isomorphic if and only if their alphabet sizes agree up to some powers.

Here we introduce a variant of local $\mathcal{P}$ entropy which we call local $\mathcal{Q}$ entropy. It takes as input a pointed group, some parameter (what we call here an entropy class selector), and (when defined) produces a nonnegative real number. While it's definition is highly influenced by local $\mathcal{P}$ entropy, there are some subtle but distinct differences, and overall it serves to generalize the theory of local $\mathcal{P}$ entropy (in the sense that one may obtain local $\mathcal{P}$ for pointed groups $(G,g)$ using local $\mathcal{Q}$ entropy). In particular, it is suitable for certain pointed groups arising from actions of groups more general than just $\mathbb{Z}$, whereas for local $\mathcal{P}$ entropy we must take groups pointed at cyclic subgroups.

Here we apply these local $\mathcal{Q}$ entropy quantities to the stabilized automorphism groups of $\mathbb{Z}^{d}$-shifts of finite type. We need a suitable mixing condition, and in our setting the condition of contractibility again is sufficient. We prove that for such systems, there are input parameters for which the local $\mathcal{Q}$ entropy recovers, up to a rational multiple, the topological entropy of the underlying system.

\begin{theorem}\label{thm:introentropyrecover}
Let $(X,\mathbb{Z}^{d})$ be a nontrivial contractible $\mathbb{Z}^{d}$-SFT. There exist entropy class selectors $\mathcal{Q}$ for which
$$h_{\mathcal{Q}}\left( \autinfty(X,\mathbb{Z}^{d}),\mathcal{Z}_{X} \right) = h_{top}(X,\mathbb{Z}^{d}).$$
\end{theorem}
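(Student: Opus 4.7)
The strategy is to mirror the blueprint developed for $\mathbb{Z}$-actions via local $\mathcal{P}$ entropy in \cite{schmiedingLocalMathcalEntropy2022}, adapted to the multidimensional setting where sublattices have polynomial index. First I would choose the entropy class selector $\mathcal{Q}$ tailored to the $\mathbb{Z}^d$-lattice of finite-index subgroups $N\mathbb{Z}^d$ so that $h_{\mathcal{Q}}(\autinfty(X,\mathbb{Z}^d),\mathcal{Z}_X)$ is computed as an appropriately normalized exponential growth rate through the levels $\aut(X,N\mathbb{Z}^d)$ of the stabilization, with the normalization given by the covolume $N^d = [\mathbb{Z}^d : N\mathbb{Z}^d]$. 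With this setup, the goal reduces to proving matching upper and lower bounds of $h_{top}(X,\mathbb{Z}^d)$.

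For the upper bound, one passes from $(X,\mathbb{Z}^d)$ to the $N\mathbb{Z}^d$-recoded SFT and applies Curtis--Hedlund--Lyndon: automorphisms commuting with $N\mathbb{Z}^d$ are sliding block codes in this recoding. The standard pattern-counting argument controls the essential complexity of the "trace" of such an automorphism on a large window of $k^d$ fundamental domains of $N\mathbb{Z}^d$ by $|B_X((k+2r)N)|$, which grows like $\exp(h_{top}(X,\mathbb{Z}^d) \cdot (kN)^d)$. Taking logarithms, dividing by $k^d N^d$, and letting $k \to \infty$ and then $N \to \infty$ gives $h_{\mathcal{Q}} \le h_{top}(X,\mathbb{Z}^d)$.

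For the lower bound, I would use contractibility of $X$ (equivalently, the map extension property of \cite{MeyerovitchEmbeddingTheorem}, via \cite{PoirierSaloContractible}) to construct, for each large $N$, a commuting family of marker-based automorphisms of $X$ in $\aut(X,N\mathbb{Z}^d)$ whose restriction to a fundamental window realizes a group of order comparable to $\exp(h_{top}(X) \cdot N^d)$. Here Theorem~\ref{thm:introFinitaryRyanZd} plays a decisive role: since the centralizer in $\Homeo(X)$ of a suitable finitely generated subgroup of $\aut(X)$ equals $\mathbb{Z}^d$, these commuting families cannot be "absorbed" into the centralizer of $\mathcal{Z}_X$ as spurious elements, so they contribute honestly to $h_{\mathcal{Q}}$.

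The principal obstacle is the precise choice and verification of $\mathcal{Q}$: it must simultaneously (i) screen out pathological commuting families that do not reflect topological entropy, (ii) be invariant under pointed isomorphism, and (iii) be compatible with the covolume normalization on sublattices of $\mathbb{Z}^d$. The technical heart of the lower bound will be organizing the marker automorphism constructions so that they lie in the admissible class specified by $\mathcal{Q}$ and their commutation structure exhibits the claimed exponential growth; this is precisely where contractibility is indispensable, as it is what allows arbitrary local permutations of well-separated admissible patterns to be realized as honest automorphisms of $X$.
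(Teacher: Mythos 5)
Your general outline (matching upper and lower bounds, lower bound built from commuting permutation automorphisms whose cardinality reflects the number of fillings of a window) captures the right \emph{shape} of the argument, but two of the specific mechanisms you propose diverge from the paper and one of them has a real gap.

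\textbf{Upper bound.} You propose to recode to the $N\mathbb{Z}^d$-subshift, apply Curtis--Hedlund--Lyndon, and count patterns to bound the ``trace'' of an automorphism on a window. This is not the paper's argument and it has an obstacle: the groups $K_m$ appearing in the definition of $h_{\mathcal{Q}}$ are arbitrary finite subgroups of $\aut(X,H_m)$, whose elements have no uniform bound on coding radius, so a pattern-counting bound on sliding block codes of a fixed radius does not directly apply. The paper instead runs the argument on periodic points: there is a representation $\pi_{H_m}\colon \aut(X,H_m)\to \sym(\fix(H_m))$, and the structural constraint imposed by the ECS $\mathcal{Q}(f_S,v_{R,\kappa})$ (each $K_m$ is a product of at most $v_{R,\kappa}(D_m)$ nontrivial finite simple groups whose $\log$-orders are within a factor $f_S(\mathcal{D},m)$ of each other) forces, via normality of $\ker\pi_{H_m}$, that either $|K_m|\le (p_{H_m}!)^{f_S v_{R,\kappa}}$ or $K_m\subset\ker\pi_{H_m}$ (Lemma~\ref{lemma:normalrigidity}). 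The second alternative for infinitely many $m$ contradicts the persistent nontrivial element $\varphi$ required by the locally-$\mathcal{Q}$ definition together with strong density of periodic points. Since $f_S$ and $v_{R,\kappa}$ are controlled and $\limsup \frac{1}{[\mathcal{Z}:H_m]}\log p_{H_m}\le\htop$ by expansivity, the bound follows. Your sketch does not explain how $\mathcal{Q}$ ``screens out'' pathological families, and in the paper this screening is exactly the product-of-comparable-simple-groups structure, which is what makes the kernel dichotomy work.

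\textbf{Lower bound and the role of finitary Ryan.} Your lower bound sketch (commuting marker-type automorphisms built via contractibility, realizing groups whose order reflects the number of interior fillings) is essentially the paper's gate-lattice construction: for $u\in X|\partial_S C_n$, the group $\mathfrak{G}^e(X,u)\cong\Alt(E(u))$ of gate lattices permuting interior fillings over boundary $u$, taken over all $u$, gives $K_n$ with $\log\log|K_n|\sim\htop\cdot|C_n|$. However, your claim that Theorem~\ref{thm:introFinitaryRyanZd} is used in this step to prevent ``absorption'' is incorrect: the finitary Ryan's theorem plays no role in the proof of Theorem~\ref{thm:introentropyrecover}, which is a statement about the \emph{pointed} group $(\autinfty(X),\mathcal{Z}_X)$ and so the shift group is already singled out. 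Finitary Ryan enters only later, in Theorem~\ref{thm:introentropyrationalratio}, via the ghost-center analysis of Section~7, to promote an \emph{abstract} isomorphism of stabilized groups to a \emph{pointed} one. Misplacing this step is a genuine misunderstanding of the architecture, not just a stylistic difference.
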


The entropy class selectors that are used in the above theorem can be made flexible enough to work for several shifts of finite type at once. Using this, we can then prove the following.

\begin{theorem}\label{thm:introentropyrationalratio}
Suppose that $X$ and $Y$ are contractible $\mathbb{Z}^{d}$-shifts of finite type and that
$$\Psi \colon \autinfty(X,\mathbb{Z}^{d}) \to \autinfty(Y,\mathbb{Z}^{d})$$
is an isomorphism. Then $\frac{\htop(X)}{\htop(Y)} \in \mathbb{Q}$.
\end{theorem}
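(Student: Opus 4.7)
The plan is to chain together Theorem~\ref{thm:introentropyrecover}, the pointed isomorphism invariance of local $\mathcal{Q}$ entropy, and a centralizer argument based on Theorem~\ref{thm:introFinitaryRyanZd}. Using the flexibility of entropy class selectors (as remarked just before the theorem statement), I would first fix a single selector $\mathcal{Q}$ satisfying
\[
h_{\mathcal{Q}}\bigl(\autinfty(X,\mathbb{Z}^{d}),\mathcal{Z}_{X}\bigr)=\htop(X) \quad\text{and}\quad h_{\mathcal{Q}}\bigl(\autinfty(Y,\mathbb{Z}^{d}),\mathcal{Z}_{Y}\bigr)=\htop(Y),
\]
with the additional property that changing the pointing to a finite-index sub-pointing scales $h_{\mathcal{Q}}$ by the index; this is an expected property of such entropy-like invariants, analogous to the scaling of local $\mathcal{P}$ entropy under taking powers.

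The structural heart of the argument is to show that $\Psi(\mathcal{Z}_{X})$ and $\mathcal{Z}_{Y}$ are commensurable subgroups of $\autinfty(Y,\mathbb{Z}^{d})$. Pick a finitely generated witness $H_{Y}\subset\Aut(Y,\mathbb{Z}^{d})$ for Theorem~\ref{thm:introFinitaryRyanZd}, so that the centralizer of $H_{Y}$ in $\Homeo(Y)$ is $\mathcal{Z}_{Y}$. Since $\mathcal{Z}_{Y}\subset\autinfty(Y)\subset\Homeo(Y)$, intersecting gives that the centralizer of $H_{Y}$ in $\autinfty(Y)$ is also $\mathcal{Z}_{Y}$. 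Now $\Psi^{-1}(H_{Y})$ is finitely generated in $\autinfty(X)$, hence contained in $\Aut(X,L)$ for some finite-index sublattice $L\subset\mathbb{Z}^{d}$ (obtained by intersecting the finite-index subgroups witnessing stabilization for the finitely many generators). Any shift $\sigma_{w}$ with $w\in L$ is central in $\Aut(X,L)$, so commutes with $\Psi^{-1}(H_{Y})$; hence the rank-$d$ subgroup $\Lambda$ of $L$-shifts of $X$ is contained in the centralizer of $\Psi^{-1}(H_{Y})$ in $\autinfty(X)$, which equals $\Psi^{-1}(\mathcal{Z}_{Y})$. Thus $\Psi(\Lambda)$ is a rank-$d$ subgroup of $\mathcal{Z}_{Y}\cong\mathbb{Z}^{d}$, and so of finite index in $\mathcal{Z}_{Y}$; simultaneously $\Lambda$ has finite index in $\mathcal{Z}_{X}$, so $\Psi(\Lambda)$ has finite index in $\Psi(\mathcal{Z}_{X})$. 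Setting $m=[\mathcal{Z}_{X}:\Lambda]$ and $n=[\mathcal{Z}_{Y}:\Psi(\Lambda)]$ establishes commensurability.

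The proof concludes by combining pointed isomorphism invariance with the assumed scaling of $h_{\mathcal{Q}}$:
\[
m\cdot\htop(X)=h_{\mathcal{Q}}\bigl(\autinfty(X,\mathbb{Z}^{d}),\Lambda\bigr)=h_{\mathcal{Q}}\bigl(\autinfty(Y,\mathbb{Z}^{d}),\Psi(\Lambda)\bigr)=n\cdot\htop(Y),
\]
yielding $\htop(X)/\htop(Y)=n/m\in\mathbb{Q}$. The main obstacle I expect is ensuring the rational scaling of $h_{\mathcal{Q}}$ under passage to finite-index sub-pointings: this must be engineered into the specific entropy class selectors that achieve Theorem~\ref{thm:introentropyrecover}, and in particular the flexibility needed to handle both $X$ and $Y$ must be compatible with such scaling. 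Once this scaling is in hand, the commensurability step is essentially bookkeeping with finitary Ryan's and the sandwich identity $C_{\autinfty(X)}(H)=C_{\Homeo(X)}(H)$ for Ryan-type witnesses.
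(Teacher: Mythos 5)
Your commensurability step is sound and essentially reproduces the paper's ghost-center argument: the paper's Theorem~\ref{thm:isoupgrade} is proved via the classification of ghost centers in Proposition~\ref{prop:ghostcenterchar}, which rests on exactly the same mechanism you use---a finitely generated subgroup of $\autinfty$ lies in some $\aut(X,L)$, the $L$-shifts centralize it, and the strong finitary Ryan's theorem identifies the centralizer of a witness group with the shift lattice. So that part is correct and differs only in packaging.

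The genuine gap is the scaling hypothesis on $h_{\mathcal{Q}}$. You posit a single entropy class selector $\mathcal{Q}$ for which the local $\mathcal{Q}$ entropy, evaluated at a finite-index sub-pointing $\Lambda \subset \mathcal{Z}_{X}$, equals $[\mathcal{Z}_{X}:\Lambda]\cdot h_{\mathcal{Q}}(\autinfty(X),\mathcal{Z}_{X})$. This is not a formal consequence of the definition, and nothing in the paper establishes such a scaling law. An ECS is a function on $\fol(L)\times\mathbb{N}$; when you replace the pointing $L=\mathcal{Z}_{X}$ by a finite-index $\Lambda$, you need an ECS for $\Lambda$, and there is no canonical way a single $\mathcal{Q}$ serves both pointings (indeed, the ECS's used in Theorem~\ref{thm:introentropyrecover} are built from BEEPS functions $f_S$ and boundary counts $v_{R,\kappa}$, both of which are tied to the specific group and subshift). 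The paper avoids your step entirely: rather than relating $h_{\mathcal{Q}}(\autinfty(X),\Lambda)$ to $h_{\mathcal{Q}}(\autinfty(X),\mathcal{Z}_{X})$, it treats $(X,K_1)$ as a new $\mathbb{Z}^{d}$-SFT in its own right (conjugate to one by Proposition~\ref{prop:indexisofact}), builds an adapted ECS for it directly, invokes Theorem~\ref{thm:fullshiftentropycalc} to get $h_{\mathcal{Q}'}(\autinfty(X),K_1)=\htop(X,K_1)=[\mathbb{Z}^d:K_1]\cdot\htop(X)$, and then uses Lemma~\ref{lemma:proofecscomparison} together with Proposition~\ref{prop:pentropyproperties} to transport the computation across $\Psi$. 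To repair your proof you would need to either prove the scaling law for some explicit family of ECS's compatible with changes of pointing, or replace the scaling step with the paper's "recompute directly for the sub-pointed group with an adapted ECS" strategy.
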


As a consequence of the above, we obtain the following classification of stabilized automorphism groups for full shifts over $\mathbb{Z}^{d}$.

\begin{theorem}\label{thm:introclassification1}
Let $\mathcal{A}$ and $\mathcal{B}$ be finite alphabets of size at least two and let $d \ge 1$. Then $\autinfty(\mathcal{A}^{\mathbb{Z}^{d}},\mathbb{Z}^{d})$ and $\autinfty(\mathcal{B}^{\mathbb{Z}^{d}},\mathbb{Z}^{d})$ are isomorphic if and only if there exists $m,n \in \mathbb{N}$ such that $|\mathcal{A}|^{m} = |\mathcal{B}|^{n}$.
\end{theorem}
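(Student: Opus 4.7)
The plan is to prove the two implications separately, with the substantive content supplied by Theorem~\ref{thm:introentropyrationalratio}; everything else reduces to a standard regrouping argument together with the identification $\htop(\mathcal{A}^{\mathbb{Z}^d}) = \log|\mathcal{A}|$.

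For the backward direction, I would suppose $|\mathcal{A}|^m = |\mathcal{B}|^n$ and first select a finite-index subgroup $H \le \mathbb{Z}^d$ of index $m$ (for instance the kernel of a surjection $\mathbb{Z}^d \twoheadrightarrow \mathbb{Z}/m\mathbb{Z}$). Two standard observations then do the work. First, choosing coset representatives for $\mathbb{Z}^d / H$ and regrouping coordinates of a configuration yields a topological conjugacy from the $H$-system $(\mathcal{A}^{\mathbb{Z}^d}, H)$ to the full $H$-shift on the alphabet $\mathcal{A}^m$. Second, the finite-index subgroups of $H$ and of $\mathbb{Z}^d$ are cofinal in each other (any $K$ of finite index in $\mathbb{Z}^d$ contains $K \cap H$ of finite index in $H$, and conversely any finite-index subgroup of $H$ is already finite-index in $\mathbb{Z}^d$), so as subsets of $\Homeo(\mathcal{A}^{\mathbb{Z}^d})$ one has $\autinfty(\mathcal{A}^{\mathbb{Z}^d}, H) = \autinfty(\mathcal{A}^{\mathbb{Z}^d}, \mathbb{Z}^d)$. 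Combining these and identifying $H$ with $\mathbb{Z}^d$ as abstract groups produces an isomorphism $\autinfty(\mathcal{A}^{\mathbb{Z}^d}, \mathbb{Z}^d) \cong \autinfty((\mathcal{A}^m)^{\mathbb{Z}^d}, \mathbb{Z}^d)$. Running the same argument on $\mathcal{B}$ at index $n$ yields $\autinfty(\mathcal{B}^{\mathbb{Z}^d}, \mathbb{Z}^d) \cong \autinfty((\mathcal{B}^n)^{\mathbb{Z}^d}, \mathbb{Z}^d)$, and since $|\mathcal{A}^m| = |\mathcal{B}^n|$ the two target systems coincide after relabeling alphabets.

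For the forward direction, an isomorphism between the two stabilized automorphism groups allows us to invoke Theorem~\ref{thm:introentropyrationalratio}, as full shifts on alphabets of size at least two are nontrivial contractible $\mathbb{Z}^d$-SFTs. The theorem delivers $\htop(\mathcal{A}^{\mathbb{Z}^d}) / \htop(\mathcal{B}^{\mathbb{Z}^d}) \in \mathbb{Q}$. Because the topological entropy of a full $\mathbb{Z}^d$-shift on $N$ symbols equals $\log N$, this says $\log|\mathcal{A}|/\log|\mathcal{B}| = p/q$ for some positive integers $p,q$, which is precisely $|\mathcal{A}|^q = |\mathcal{B}|^p$.

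The essential difficulty has already been absorbed into Theorem~\ref{thm:introentropyrationalratio}, so no serious obstacle remains here. The only detail that needs explicit attention is the cofinality observation used in the backward direction: it is routine, but it is precisely what lets the regrouping conjugacy translate into an isomorphism of the full stabilized groups rather than merely of a single $\aut(-,H)$.
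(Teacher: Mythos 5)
Your proof is correct and follows the same route as the paper: the backward direction is the regrouping-plus-cofinality argument (Propositions~\ref{prop:indexisofact}, \ref{prop:prop1}, and \ref{prop:someproperties1} in the paper), and the forward direction invokes Theorem~\ref{thm:entropyrationalratio} together with the fact that full-shift entropy is $\log$ of the alphabet size. No meaningful difference from the paper's argument.
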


A key component in the proof of Theorem~\ref{thm:introentropyrationalratio} (and hence also Theorem~\ref{thm:introclassification1}, which relies on it), is the finitary Ryan's theorem stated above, namely Theorem~\ref{thm:introFinitaryRyanZd}. In particular, it is the crucial piece to promote abstract isomorphisms to pointed isomorphisms among stabilized automorphism groups.


An outline of the paper is as follows. Section 2 gives some background and notation. Local $\mathcal{Q}$ entropy is introduced in Section 3, and several fundamental properties are proved there. Section 4 introduces the BEEPS property, introduces the suitable entropy class selectors we will use, and proves the upper bound needed for Theorem~\ref{thm:introentropyrecover}. Section 5 discusses gate lattices and proves the lower bound needed for Theorem~~\ref{thm:introentropyrecover}. Section 6 contains all of the material on the finitary Ryan's Theorems. Section 7 proves some necessary results about ghost centers, and finally all of the ingredients are brought to bear for the classification theorem, Theorem~\ref{thm:introclassification1}, in Section 8.

\section{Notation and background}
Let $G$ be a group; throughout, all groups are always assumed to be countable. A $G$-system $(X,G)$ is a compact metric space $X$ equipped with a $G$ action by homeomorphisms of $X$, i.e. a homomorphism $G \to \textrm{Homeo}(X)$. If $H \subset G$ is a subgroup, then $(X,H)$ means the system with the action of $H$ induced as a subgroup of $G$. Given $G$-systems $(X,G)$ and $(Y,G)$, by a map $\pi \colon (X,G) \to (Y,G)$ we will always mean a continuous function $\pi \colon X \to Y$ such that $\pi(g \cdot x) = g \cdot \pi(x)$ for all $g \in G$ and $x \in X$.

For a finite alphabet $\mathcal{A}$, the set $\mathcal{A}^{G}$ equipped with the product topology (using the discrete topology on $\mathcal{A}$) is a compact space. For a point $x \in \mathcal{A}^{G}$ and $a \in G$ we write $x_{a}$ to mean $x(a)$. The group $G$ acts on $\mathcal{A}^{G}$ via $g \cdot x = \sigma_{g}(x)$ where $\sigma_{g}(x)_{a} = x_{ag}$, and the system $\left(\mathcal{A}^{G},G\right)$ is called the full $\mathcal{A}$-shift over $G$. A system is a full shift (over $G$) if it is a full $\mathcal{A}$-shift over $G$ for some alphabet $\mathcal{A}$, and we occasionally use the phrase \emph{full $n$-shift over $G$} by which we mean a full shift $\mathcal{B}^{G}$ for some alphabet $\mathcal{B}$ where $|\mathcal{B}| = n$.

A \emph{pattern} is an element $w \in \mathcal{A}^{E}$ for some subset $E \subset G$. Given $x \in \mathcal{A}^{G}$ and $E \subset G$, we denote the restriction of $x$ to $E$ by $x|E$. Then any $x \in \mathcal{A}^{G}$ determines a pattern in $\mathcal{A}^{E}$ via $x|E$.

By a \emph{$G$-subshift} we mean a compact $G$-invariant subset of a full shift over $G$. A $G$-subshift $X \subset \mathcal{A}^{G}$ is a \emph{shift of finite type} if there exists a finite set $W$ of finite patterns (called the set of forbidden patterns) defined over $\mathcal{A}$ such that $X$ consists of all points in $\mathcal{A}^{G}$ whose $G$-orbits contain no pattern in $W$. Without loss of generality one may always assume a set of forbidden patterns defining a shift of finite type all have the same domain; we call such a domain a \emph{window} for the shift of finite type. We can always assume a window contains the identity element $\textrm{id}$ of $G$. When $G = \mathbb{Z}^{d}$, a number $R$ is a \emph{window size} for $X$ if there is a window $D$ containing the origin which defines $X$ such that $||v||_{\infty} \le R$ for all $v \in D$.

We will need to impose some kind of mixing conditions on our SFT's. Over $\mathbb{Z}$, the most common such condition is that of topologically mixing. Over more general groups such as $\mathbb{Z}^{d}$ however, there are a myriad of mixing conditions. Key for us here will be the following definition, introduced in~\cite{PoirierSaloContractible}.

\begin{definition}
A subshift $X \subset A^G$ is contractible if there exists a block map $h : \{0,1\}^G \times X \times X \to X$ such that $h(0^G, x, y) = x$ and $h(1^G, x, y) = y$ for all $x, y \in X$.
\end{definition}

Recall a subshift $X \subset \mathcal{A}^{G}$ is \emph{strongly irreducible} if there exists a finite set $E \subset G$ such that for all pairs of finite subsets $A,B \subset G$ such that $EA \cap B = \emptyset$ every $x,y \in X$, there exists $z \in X$ such that $z|A = x|A$ and $z|B = y|B$. Contractible subshifts are strongly irreducible, and the condition can be thought of as a notion of strong irreducibility with the requirement that the gluing is produced via a block map. Over $\mathbb{Z}$, a subshift of finite type is contractible if and only if it is topologically mixing. Over $\mathbb{Z}^{d}$, in the SFT case, contractibility is equivalent to the map extension property given in~\cite{MeyerovitchEmbeddingTheorem}. Moreover, in~\cite{PoirierSaloContractible} it is proved that a contractible subshift over a finitely generated residually finite group with finite periodic asymptotic dimension (in particular, $\mathbb{Z}^{d}$) has dense periodic points. Finally, it is apparent from the definition that if $(X,G)$ is a contractible $G$-subshift and $H \subset G$ is finite index, then $(X,H)$ is contractible as well. In particular, if $(X,G)$ is a contractible subshift then $(X,H)$ has dense periodic points for every finite index subgroup $H \subset G$.

Given a $G$-subshift $X$ and finite subset $F \subset G$, the topological entropy of $(X,G)$ is defined by
$$\htop(X,G) = \lim_{n \to \infty} \frac{1}{|F_{n}|} \log |\{x|F_{n} \mid x \in X\}|$$
where $F_{n}$ is a F\o{}lner sequence for $G$. It is a classical fact that this does not depend on the F\o{}lner sequence chosen. Here and throughout, $\log$ is taken base two. It is immediate that if $X = \mathcal{A}^{G}$ is a full shift, then $\htop(X,G) = \log|\mathcal{A}|$.

We will occasionally use of the following well-known fact.
\begin{proposition}\label{prop:indexisofact}
Let $G$ be a countable group and $L \subset G$ a finite index subgroup. If $\mathcal{A}$ is a finite alphabet and $\mathcal{B} = \mathcal{A}^{[G \colon L]}$, then $(\mathcal{A}^{G},L)$ is topologically conjugate to $(\mathcal{B}^{L},L)$. Moreover, if $X$ is any $G$-subshift, then $\htop(X,L) = [G \colon L] \cdot \htop(X,G)$.
\end{proposition}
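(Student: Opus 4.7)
The plan is to build an explicit $L$-equivariant homeomorphism via a coset transversal and then track pattern counts through the entropy definition.

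Fix a finite transversal $T \subset G$ such that $G = \bigsqcup_{t \in T} tL$, so $|T| = [G \colon L]$. Identify $\mathcal{B} = \mathcal{A}^{[G \colon L]}$ with $\mathcal{A}^T$ and define $\phi \colon \mathcal{A}^G \to \mathcal{B}^L$ by
\[
\phi(x)(l)(t) = x_{tl}, \qquad l \in L,\; t \in T.
\]
Each $g \in G$ has a unique decomposition $g = tl$ with $t \in T$ and $l \in L$, so $\phi^{-1}(y)_{tl} = y(l)(t)$ is a two-sided inverse; both maps are continuous since each output coordinate depends on only finitely many input coordinates. The $L$-equivariance check is then direct: for $m \in L$,
\[
\phi(\sigma_m(x))(l)(t) \;=\; x_{t(lm)} \;=\; \phi(x)(lm)(t) \;=\; \sigma_m(\phi(x))(l)(t),
\]
using that $lm \in L$ so that $t \cdot (lm)$ remains a valid decomposition. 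Thus $\phi$ is a topological conjugacy of $L$-systems, which establishes the first claim and, by restricting $\phi$ to $X$, yields $\htop(X, L) = \htop(\phi(X), L)$ for any $G$-subshift $X$.

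For the entropy identity, fix any F\o{}lner sequence $\{K_n\}$ in $L$. Because the cosets $tL$ are pairwise disjoint, so are the sets $tK_n$, and $(l, t) \mapsto tl$ is a bijection $K_n \times T \to TK_n$, giving $|TK_n| = [G \colon L] \cdot |K_n|$. Unpacking $\phi$, the restriction $\phi(x)|K_n$ records exactly the values of $x$ on $TK_n$, so
\[
|\{\phi(x)|K_n : x \in X\}| \;=\; |\{x|(TK_n) : x \in X\}|.
\]
Combining with conjugacy invariance of entropy, I would write
\[
\htop(X, L) \;=\; \lim_{n \to \infty} \frac{\log |\{x|(TK_n) : x \in X\}|}{|K_n|} \;=\; [G \colon L] \cdot \lim_{n \to \infty} \frac{\log |\{x|(TK_n) : x \in X\}|}{|TK_n|},
\]
and identify the right-hand limit with $\htop(X, G)$ once $\{TK_n\}$ is shown to be a F\o{}lner sequence for $G$.

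The main obstacle is this last step. Right-multiplication by $g \in G$ permutes the cosets according to $tg = (\sigma_g t) \lambda(t, g)$ with $\sigma_g t \in T$ and $\lambda(t, g) \in L$, so the symmetric difference $TK_n \triangle TK_n g$ decomposes coset-by-coset and is controlled by the right action of the finite collection $\{\lambda(t, g) : t \in T\} \subset L$ on $K_n$. Since $\{K_n\}$ is F\o{}lner in $L$ and this finite set is independent of $n$, the symmetric difference is $o(|K_n|) = o(|TK_n|)$. This is the standard fact that F\o{}lner sequences in finite-index subgroups push forward through a transversal, and I would isolate it as a brief preliminary lemma before concluding the proof.
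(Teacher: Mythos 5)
Your overall strategy — build the block map via a coset transversal, check $L$-equivariance, and then track pattern counts against F\o{}lner sets — is the standard approach to this ``well-known fact,'' and the paper itself gives no proof to compare against. The construction of $\phi$, the equivariance check, and the identity $|\{\phi(x)|K_n : x \in X\}| = |\{x|(TK_n) : x \in X\}|$ together with $|TK_n| = [G:L]\cdot|K_n|$ are all correct.

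The issue is in the F\o{}lner pushforward lemma you sketch at the end. You claim that ``right-multiplication by $g \in G$ permutes the cosets according to $tg = (\sigma_g t)\lambda(t,g)$'' with $\sigma_g$ a permutation of $T$. This is false when $L$ is not normal in $G$: right multiplication does not induce a well-defined action on the \emph{left} coset space $G/L$. Concretely, $\sigma_g(t_1) = \sigma_g(t_2)$ iff $t_2^{-1} t_1 \in gLg^{-1}$, which need not force $t_1 = t_2$. (For instance, with $G = S_3$, $L = \langle (12)\rangle$, $T = \{e, (13), (23)\}$, and $g = (23)$, the set $Tg$ meets only two of the three left cosets.) Consequently the symmetric difference $TK_n \triangle TK_n g$ does not decompose cleanly coset-by-coset, and the argument as written does not establish the right F\o{}lner property for $TK_n$. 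What \emph{does} hold is that \emph{left} multiplication by $g$ permutes the left cosets $tL$, so the decomposition $gt = (\tau_g t)\mu(t,g)$ with $\tau_g$ a bijection of $T$ and $\mu(t,g) \in L$ is valid, and from it one gets that $TK_n$ is a \emph{left} F\o{}lner sequence for $G$ whenever $K_n$ is left F\o{}lner for $L$. You would then need to argue (or cite) that left F\o{}lner sequences compute topological entropy of subshifts under the paper's shift convention $\sigma_g(x)_a = x_{ag}$, which is the ``classical fact'' the paper alludes to when saying the entropy is independent of the F\o{}lner sequence. Over $\Z^d$ (the paper's main setting) left and right coincide and this subtlety evaporates, but the proposition is stated for arbitrary countable (amenable) $G$, so the lemma should be stated and proved with the correct side of multiplication.
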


Given a finite set $F \subset \mathbb{Z}^{d}$, $S > 0$ and $n \ge 1$, we define $$\partial_{S}F = \{x \in \mathbb{Z}^{d} \mid |x-y|_{\infty} \le S \textrm{ for some } y \in \mathbb{Z}^{d} \setminus F\}.$$

We also define the $r$-ball around $F$ by
$$\ball_{r}(F) = \{x \in \mathbb{Z}^{d} \mid |x-y|_{\infty} \le r \textrm{ for some } y \in F\}.$$

Given a subset $S$ of a group $G$ we write $C_{G}(S)$ for the centralizer of $S$ in $G$, or when the underlying group $G$ is clear from context, we simply write $C(S)$.

\subsection{Automorphisms and stabilized automorphism groups}
An automorphism of a $G$-system $(X,G)$ is a homeomorphism $\phi \colon X \to X$ such that $\phi g = g \phi$ for all $g \in G$, and we denote the group of all automorphisms of $(X,G)$ by $\aut(X,G)$.

For a group $G$, we let $\mathcal{I}(G)$ denote the collection of finite index subgroups of $G$ and $\finorm(G)$ the collection of finite index normal subgroups of $G$.

We define the stabilized automorphism group of a system $(X,G)$ by
$$\autinfty(X,G) = \bigcup_{F \in \mathcal{I}(G)}\aut(X,F).$$

We assume throughout that $G$ acts faithfully on $X$. In this case, there is an isomorphic copy of the center of $G$ in $\aut(X,G)$, which we usually denote by $\mathcal{Z}_{X}$ when the group is clear. Note that $\mathcal{Z}_{X}$ is then also a subgroup of $\autinfty(X,G)$.

\begin{proposition}\label{prop:prop1}
Let $G$ be a group and let $(X,G)$ be a $G$-system.
\begin{enumerate}
\item
If $H$ is a subgroup of $G$, then for the $H$-system $(X,H)$ obtained by restricting the $G$-action to $H$, we have $\aut(X,G) \subset \aut(X,H)$. In particular, $\autinfty(X,G) = \bigcup_{F \in \finorm(G)}\aut(X,F)$.
\item
If $H \subset G$ is a finite index subgroup, then $\autinfty(X,H) = \autinfty(X,G)$.
\end{enumerate}
\end{proposition}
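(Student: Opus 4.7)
The two statements are essentially bookkeeping about how automorphism groups behave under passing to subgroups, and the plan is to reduce everything to one trivial observation together with the standard fact that any finite index subgroup contains a finite index normal subgroup (its normal core).

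For part (1), the inclusion $\aut(X,G) \subset \aut(X,H)$ is immediate: if $\varphi$ commutes with the action of every element of $G$, then in particular it commutes with the action of every element of $H \subset G$. For the identity $\autinfty(X,G) = \bigcup_{F \in \finorm(G)} \aut(X,F)$, one direction is trivial since $\finorm(G) \subset \mathcal{I}(G)$. For the other direction, given $F \in \mathcal{I}(G)$, I would pass to the normal core $F_{0} = \bigcap_{g \in G} gFg^{-1}$, which is a finite index normal subgroup of $G$ contained in $F$; applying the first part of (1) to the inclusion $F_{0} \subset F$ yields $\aut(X,F) \subset \aut(X,F_{0})$, and $F_{0} \in \finorm(G)$.

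For part (2), the plan is to establish both inclusions directly. For $\autinfty(X,H) \subset \autinfty(X,G)$, note that any $K \in \mathcal{I}(H)$ satisfies $[G:K] = [G:H][H:K] < \infty$, so $K \in \mathcal{I}(G)$, and hence $\aut(X,K) \subset \autinfty(X,G)$. For the reverse inclusion, take $\varphi \in \aut(X,F)$ for some $F \in \mathcal{I}(G)$; then $F \cap H$ has finite index in $H$ since $[H : F \cap H] \le [G : F] < \infty$, and part (1) gives $\varphi \in \aut(X,F) \subset \aut(X, F \cap H) \subset \autinfty(X,H)$.

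There is no substantive obstacle here: the only non-formal input is the existence of the normal core, which is standard. The main thing to be careful about is to be explicit that finite index is preserved under the relevant intersections and normal cores, so that the indexing sets in the two unions genuinely cover one another.
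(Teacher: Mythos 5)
Your proof is correct and follows essentially the same approach as the paper: restricting the action gives the trivial inclusion, the normal-core fact handles the equality with $\finorm(G)$, and part (2) is proved by the same two inclusions using $F \cap H$. The only cosmetic difference is that you spell out the normal core construction where the paper simply cites the standard fact.
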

\begin{proof}
That $\aut(X,G) \subset \aut(X,H)$ for a subgroup $H \subset G$ is straightforward to check. The second part then follows from the fact that every finite index subgroup of a group $G$ contains a subgroup which is finite index and normal in $G$.

For $(2)$, since any finite index subgroup of $H$ is finite index in $G$, we get that $\autinfty(X,H) \subset \autinfty(X,G)$. Now if $K \subset G$ is finite index, then $K \cap H$ is finite index in $H$ and contained in $K$, so using part (1) we have
$$\aut(X,K) \subset \aut(X,K \cap H) \subset \autinfty(X,H).$$
This implies $\autinfty(X,G) \subset \autinfty(X,H)$.

\end{proof}

Let $G_{1},G_{2}$ be groups. An isomorphism of two systems $(X,G_{1}), (Y,G_{2})$ is a pair $(\Psi,\Phi)$ where $\Psi \colon G_{1} \to G_{2}$ is an isomorphism of groups and $\varphi \colon X \to Y$ is a homeomorphism such that
$$\varphi (g x) = \Psi(g)\varphi (x), \qquad g \in G_{1}, x \in X.$$
If two systems $(X,G),(Y,G)$ are isomorphic by a pair $(\textnormal{id},\varphi)$ then such a $\Phi$ is called a topological conjugacy, and we say $(X,G)$ and $(Y,G)$ are topologically conjugate. Note that if $(\Psi,\varphi) \colon (X,G_{1}) \to (Y,G_{2})$ is an isomorphism of systems and $H_{1} \subset G_{1}$ is any subgroup, then $(\Psi|_{H_{1}}),\varphi) \colon (X,H_{1}) \to (Y,\Psi(H_{1}))$ is an isomorphism of systems.

Given groups $G_{1},G_{2}$, we say two systems $(X,G_{1}), (Y,G_{2})$ are virtually isomorphic if there exists finite index subgroups $H_{i} \subset G_{i}$ such that $(X,H_{1})$ and $(Y,H_{2})$ are isomorphic.
\begin{proposition}\label{prop:someproperties1}
Let $G_{1},G_{2}$ be groups and $(X,G_{1}), (Y,G_{2})$ be systems.
\begin{enumerate}
\item
If $(X,G_{1}), (Y,G_{2})$ are isomorphic, then $\aut(X,G_{1})$ and $\aut(Y,G_{2})$ are isomorphic groups.
\item
If $(X,G_{1})$ and $(Y,G_{2})$ are virtually isomorphic, then the stabilized groups $\autinfty(X,G_{1})$ and $\autinfty(Y,G_{2})$ are isomorphic groups.
\end{enumerate}
\end{proposition}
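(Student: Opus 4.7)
The plan is to prove both parts by the obvious conjugation construction, using Proposition~\ref{prop:prop1}(2) to reduce the virtual case to the isomorphic case.

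For part (1), let $(\Psi,\varphi) \colon (X,G_{1}) \to (Y,G_{2})$ be an isomorphism of systems. I would define
\[
\Phi \colon \aut(X,G_{1}) \to \aut(Y,G_{2}), \qquad \Phi(\alpha) = \varphi \circ \alpha \circ \varphi^{-1}.
\]
The image is certainly a self-homeomorphism of $Y$, so the content is to verify that it commutes with the $G_{2}$-action. For $h \in G_{2}$ and $y \in Y$, set $g = \Psi^{-1}(h)$; applying the equivariance relation $\varphi(gx) = \Psi(g)\varphi(x)$ and its inverse form $\varphi^{-1}(hy) = g\varphi^{-1}(y)$ together with the fact that $\alpha$ commutes with $g$, one obtains $\Phi(\alpha)(hy) = h\Phi(\alpha)(y)$. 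The assignment $\alpha \mapsto \varphi\circ\alpha\circ\varphi^{-1}$ is patently a group homomorphism, and the analogous construction using $(\Psi^{-1},\varphi^{-1})$ provides the inverse homomorphism, so $\Phi$ is an isomorphism.

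For part (2), assume there are finite index subgroups $H_{i} \subset G_{i}$ and an isomorphism $(\Psi,\varphi) \colon (X,H_{1}) \to (Y,H_{2})$. By Proposition~\ref{prop:prop1}(2) we have
\[
\autinfty(X,G_{1}) = \autinfty(X,H_{1}) \quad \text{and} \quad \autinfty(Y,G_{2}) = \autinfty(Y,H_{2}),
\]
so it suffices to produce an isomorphism between the two right-hand sides. Given $\alpha \in \autinfty(X,H_{1})$, pick $K \in \mathcal{I}(H_{1})$ with $\alpha \in \aut(X,K)$. Restricting $(\Psi,\varphi)$ yields an isomorphism $(X,K) \to (Y,\Psi(K))$, and since $\Psi$ is a group isomorphism and $K$ has finite index in $H_{1}$, the image $\Psi(K)$ has finite index in $H_{2}$. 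Applying part (1) to this restricted isomorphism shows that $\varphi \circ \alpha \circ \varphi^{-1} \in \aut(Y,\Psi(K)) \subset \autinfty(Y,H_{2})$. Thus the same conjugation formula $\alpha \mapsto \varphi\circ\alpha\circ\varphi^{-1}$ defines a homomorphism $\autinfty(X,H_{1}) \to \autinfty(Y,H_{2})$, with inverse obtained symmetrically.

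There is really no obstacle here; the only subtlety worth highlighting in the write-up is the need to verify that the finite index witness $K$ for membership in the stabilized group is carried by $\Psi$ to a finite index subgroup of $H_{2}$, which justifies landing inside $\autinfty(Y,H_{2})$ rather than just in some larger group of eventual automorphisms. Everything else amounts to the standard check that conjugation by an equivariant homeomorphism turns commutation with one action into commutation with the other.
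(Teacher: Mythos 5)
Your proposal is correct and follows essentially the same route as the paper: conjugation by the spatial homeomorphism $\varphi$ in both parts, reducing part (2) to part (1) via Proposition~\ref{prop:prop1}(2) and the fact that $\Psi$ carries finite index subgroups to finite index subgroups. The only cosmetic difference is that the paper phrases well-definedness of the map on $\autinfty(X,H_1)$ as a compatibility statement among the family $\{(\Psi|_K,\varphi)_*\}_{K}$, whereas you simply observe that the conjugation formula is independent of the chosen witness $K$, which is a slightly more direct way of saying the same thing.
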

\begin{proof}
For $(1)$, suppose $(\Psi,\varphi) \colon (X,G_{1}) \to (Y,G_{2})$ is an isomorphism. It is straightforward to check that the map
\begin{equation*}
\begin{gathered}
\Phi_{*} \colon \aut(X,G_{1}) \to \aut(Y,G_{2})\\
\Phi_{*} \colon \alpha \mapsto \varphi \alpha \varphi^{-1}
\end{gathered}
\end{equation*}
defines an isomorphism. For part $(2)$, suppose there are finite index subgroups $H_{i} \subset G_{i}$ such that $(X,H_{1})$ and $(Y,H_{2})$ are isomorphic via $(\Psi,\varphi)$. By Proposition~\ref{prop:prop1}, we have $\autinfty(X,G_{1}) = \autinfty(X,H_{1})$ and $\autinfty(Y,G_{2}) = \autinfty(Y,H_{2})$, so it suffices to show that that $\autinfty(X,H_{1})$ and $\autinfty(Y,H_{2})$ are isomorphic.

\sloppy For every pair of finite index subgroups $K_{i} \subset H_{i}$ the induced map ${(\Psi|_{K_{1}}),\varphi) \colon (X,K_{1}) \to (Y,\Psi(K_{1}))}$ is an isomorphism, so by the previous part, $(\Psi|_{K_{1}},\varphi)$ induces an isomorphism between $(X,K_{1})$ and $(Y,\Psi(K_{1}))$. Notice that these isomorphisms are compatible in the sense that if $L_{1} \subset K_{1}$, then the restriction of the isomorphism $(\Psi|_{L_{1}},\varphi)_{*} \colon \aut(X,L_{1}) \to \aut(Y,\Psi(L_{1}))$ to $\aut(X,K_{1})$ agrees with $(\Psi|_{K_{1}},\varphi)_{*}$. Thus the collection of all $\{(\Psi_{K},\varphi)\}_{K \in \mathcal{I}(H)}$ assemble to give an injective homomorphism $\autinfty(X,H_{1}) \to \autinfty(Y,H_{2})$ Finally, since the map $K \mapsto \Psi(K)$ defines a bijection between $\mathcal{I}(H_{1})$ and $\mathcal{I}(H_{2})$, this map is in fact an isomorphism.
\end{proof}

\textbf{Example: } Fix $d \ge 1$, and let $(X_{n},\mathbb{Z}^{d})$ denote the full shift on some symbol set of size $n$ over $\mathbb{Z}^{d}$. Then $(X_{2},\mathbb{Z}^{d})$ and $(X_{4},\mathbb{Z}^{d})$ are virtually isomorphic. Indeed, if we let $H$ denote the subgroup of $\mathbb{Z}^{d}$ integrally spanned by $\{2e_{1},\ldots,e_{d}\}$ where $e_{i}$ are the standard basis vectors in $\mathbb{Z}^{d}$, then $(X_{2},H)$ and $(X_{4},\mathbb{Z}^{d})$ are isomorphic systems by Proposition~\ref{prop:indexisofact}. By part (2) of Proposition \ref{prop:someproperties1}, it follows that $\autinfty(X_{2},\mathbb{Z}^{d})$ and $\autinfty(X_{4},\mathbb{Z}^{d})$ are isomorphic groups.

However, the groups $\aut(X_{2},\mathbb{Z}^{d})$ and $\aut(X_{4},\mathbb{Z}^{d})$ are not isomorphic, which can be seen as follows. In the group $\Aut(X_4, \Z^d)$, the center is the group of shifts, isomorphic to $\Z^d$~\cite{Hochman2010}, and the shift $\sigma_{e_1}$ has a square root $g$. Suppose we had $\Aut(X_4, \Z^d) \cong \Aut(X_2, \Z^d)$. The center of $\Aut(X_2, \Z^d)$ has again as its center the group of shifts $\Z^d$, and the isomorphism implies that there is some free generating set $\{\sigma_{\vec v_1}, \ldots, \sigma_{\vec v_d}\}$ of this shift group such that $\sigma_{v_1}$ has a square root $g$.

Consider the index-$2$ subgroup $H = \langle \sigma_{2 \vec v_1}, \ldots, \sigma_{\vec v_d} \rangle \subset \Z^d$ of the shift group of $X_2$. The set of $H$-fixed points $(X_{2})_{H}$ has two elements, and $\sigma_{\vec v_1}$ acts nontrivially on it. But $g$ is a permutation of this set of size $2$, so $g^2$ must act trivially on $(X_2)_H$, a contradiction.\qed

\section{Local $\mathcal{Q}$ entropy}
In this section we introduce the definition of local $\mathcal{Q}$ entropy for pointed groups. This, together with the finitary Ryan's theorem in the next section, will be our main tool for recovering the topological entropy from the stabilized automorphism groups.

Let $L$ be a countable residually finite amenable group. By an \emph{$L$-tower} we mean a nested sequence $\mathcal{H} = (H_{m})_{m=1}^{\infty}$ of finite index normal subgroups of $L$ such that $\bigcap_{m=1}^{\infty}H_{m} = \{\textrm{id}\}$, and for which there exists a sequence of finite sets $D_{m} \subset L$ such that all of the following hold:
\begin{enumerate}
\item
For each $m$, $D_{m}$ is a fundamental domain for $H_{m}$ in $L$.
\item
$\{\textrm{id}\} \subset D_{m} \subset D_{m+1}$ for all $m \ge 1$.
\item
$L = \bigcup_{m=1}^{\infty} D_{m}$.
\item
For every $n > m$, we have $D_{n} = \bigcup_{g \in D_{n} \cap H_{m}}gD_{m}$.
\item
The sequence $D_{m}$ is a F\o{}lner sequence for $L$.
\end{enumerate}

Such a sequence $D_{m}$ is called a \emph{F\o{}lner section} for the tower $\mathcal{H}$.

The following classical result shows that every exhaustive nested sequence of finite index normal subgroups has a subsequence which forms a tower.

\begin{lemma}[\cite{WeissMonotileable}]\label{lemma:towerdomains}
Let $L$ be a countably residually finite group and $(H_{m})_{m=1}^{\infty}$ a sequence of finite index normal subgroups such that $\bigcap_{m=1}^{\infty}H_{m} = \{\textrm{id}\}$. There exists a subsequence $m_{i}$ and a sequence of finite sets $D_{i} \subset L$ such that all of the following hold:
\begin{enumerate}
\item
For each $i$, $D_{i}$ is a fundamental domain for $H_{m_{i}}$ in $L$.
\item
$\{\textrm{id}\} \subset D_{i} \subset D_{i+1}$ for all $i \ge 1$.
\item
$L = \bigcup_{i=1}^{\infty} D_{i}$.
\item
For every $j > i$, we have $D_{j} = \bigcup_{g \in D_{j} \cap H_{m_{i}}}gD_{i}$.
\end{enumerate}
In addition, if $L$ is amenable, than the sequence $D_{i}$ can be chosen to be a F\o lner sequence for $L$.
\end{lemma}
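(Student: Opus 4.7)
The plan is to build the $D_i$ by induction, using the triviality of $\bigcap_m H_m$ to force exhaustion of $L$, and in the amenable case to additionally run the Weiss monotileability construction to obtain a F\o{}lner sequence. First I would reduce to the case where $(H_m)$ is already a nested decreasing chain: replacing each $H_m$ by $\bigcap_{k \le m} H_k$ (still finite index, still normal, still with trivial intersection) and then passing to a subsequence of the original that realizes this chain, one may assume $H_{m_1} \supseteq H_{m_2} \supseteq \cdots$ along the subsequence to be selected. Fix an enumeration $L = \{g_1, g_2, \dots\}$ with $g_1 = \textrm{id}$.

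I would then inductively construct $D_i$ and an increasing sequence $m_i$ maintaining the invariant $\{\textrm{id}, g_1, \dots, g_i\} \subseteq D_i$. Set $D_1$ to be any fundamental domain for $H_{m_1}$ containing $\textrm{id}$. Given $D_i$, choose $m_{i+1} > m_i$ large enough that every nontrivial element of $\{g_k g_l^{-1} : 1 \le k < l \le i+1\}$ lies outside $H_{m_{i+1}}$; this is possible because the (now nested) sequence has trivial intersection. For each $k \le i+1$ write $g_k = t_k d_k$ uniquely with $d_k \in D_i$ and $t_k \in H_{m_i}$; the choice of $m_{i+1}$ guarantees the $t_k$ lie in pairwise distinct cosets of $H_{m_{i+1}}$ in $H_{m_i}$, so one may pick a transversal $T \subset H_{m_i}$ for $H_{m_{i+1}}$ in $H_{m_i}$ containing $\textrm{id}$ and every $t_k$. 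Define $D_{i+1} = T \cdot D_i$. Normality of $H_{m_i}$ gives that $D_{i+1}$ is a fundamental domain for $H_{m_{i+1}}$, $\textrm{id} \in T$ gives $D_i \subseteq D_{i+1}$, and since the only element of $D_i \cap H_{m_i}$ is $\textrm{id}$ one checks $D_{i+1} \cap H_{m_i} = T$, so $D_{i+1} = \bigcup_{g \in D_{i+1} \cap H_{m_i}} g D_i$. The general identity for $j > i$ then follows by telescoping, since $D_j = T_j T_{j-1} \cdots T_{i+1} D_i$ and $D_j \cap H_{m_i} = T_j T_{j-1} \cdots T_{i+1}$. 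Condition (3) is built in because $g_i \in D_i$ for every $i$.

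For the amenable statement, I would run the same scheme quantitatively, choosing at each step a transversal $T$ for $H_{m_{i+1}}$ in $H_{m_i}$ so that, relative to a prescribed word metric and a preassigned sequence $\epsilon_i \to 0$, the resulting $D_{i+1} = T D_i$ has boundary-to-volume ratio below $\epsilon_i$. This uses that $H_{m_i}$ is itself a finitely generated amenable group (as a finite index subgroup of $L$), so that for $m_{i+1}$ sufficiently large, a transversal of $H_{m_{i+1}}$ inside $H_{m_i}$ can be selected whose corresponding union of translates of $D_i$ nearly tiles a F\o{}lner set. The main obstacle is precisely this last step: simultaneously respecting the nesting (2), the exhaustion (3), the tiling refinement (4), and driving the isoperimetric ratio of $D_{i+1}$ to zero. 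This is exactly the content of Weiss's monotileable amenable groups theorem, for which the lemma is cited, and the first three paragraphs of the construction above should be read as setting up the bookkeeping into which Weiss's quantitative argument can be inserted.
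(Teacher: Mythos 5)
The paper does not prove this lemma; it cites it directly from Weiss's work on monotileable amenable groups. So there is no paper proof to compare against — the question is simply whether your reconstruction is sound and whether the deferral for the amenable clause is appropriate. On the second point you are right: properties (1)--(4) admit an elementary inductive construction, while the F\o lner refinement is precisely Weiss's theorem and it is reasonable to treat it as a black box since that is exactly what the citation supplies.

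On the first point there is a genuine (though fixable) gap in the inductive step. You choose $m_{i+1}$ so that no nontrivial $g_k g_l^{-1}$ ($k<l\le i+1$) lies in $H_{m_{i+1}}$, and then claim this forces the coset representatives $t_k$ (defined by $g_k = t_k d_k$, $d_k \in D_i$) to lie in pairwise distinct $H_{m_{i+1}}$-cosets inside $H_{m_i}$. But $t_k t_l^{-1} = g_k d_k^{-1} d_l g_l^{-1}$, which equals $g_k g_l^{-1}$ only when $d_k = d_l$. When $d_k \ne d_l$ the condition you imposed says nothing about whether $t_k t_l^{-1}$ falls inside $H_{m_{i+1}}$, so the construction of a transversal $T$ containing $\textrm{id}$ and all the $t_k$ is not justified as written. (Also note that distinct $g_k$'s can share the same $t_k$, so ``pairwise distinct cosets'' should be ``distinct $t_k$'s lie in distinct cosets''.) The fix is immediate: the $t_k$ are determined by $D_i$ and $g_1,\dots,g_{i+1}$, both available before $m_{i+1}$ is chosen, so instead impose that $H_{m_{i+1}}$ avoid the finite set $\{t_k t_l^{-1} : t_k \ne t_l\}$ of nontrivial elements. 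A second, minor slip: it is normality of $H_{m_{i+1}}$ (not of $H_{m_i}$) that converts $\bigsqcup_{h\in H_{m_i}} hD_i$ into $\bigsqcup_{h'\in H_{m_{i+1}}} h' (TD_i)$, by moving the transversal element $t$ across $h'\in H_{m_{i+1}}$ via conjugation; both subgroups happen to be normal here, but the cited reason is the wrong one. Once these two points are repaired, the induction and the telescoping verification of (4) go through.
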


For a group $L$ we let $\mathfrak{T}(L)$ denote the collection of $L$-towers, we let $\fol(G)$ denote the set of F\o{}lner sequences $\mathcal{F} \colon \mathbb{N} \to \fins(G)$, and for an $L$-tower $\mathcal{H}$ we let $\mathfrak{F}(\mathcal{H})$ denote the collection of F\o{}lner sections for $\mathcal{H}$.


The local $\mathcal{Q}$ entropy takes as input, in addition to a pointed group, a parameter defined as follows.

\begin{definition}
An \emph{entropy class selector} (ECS) for $L$ is a function $\mathcal{Q}$ which, for each  F\o{}lner sequence $\mathcal{F} = (F_{m})$ in $L$ and $k \in \mathbb{N}$, assigns a class of finite groups $\mathcal{Q}(\mathcal{F},k)$ which is closed under isomorphism for every $k$.
\end{definition}
If $\mathcal{Q}$ is an ECS for $L$ and $\Psi \colon L \to L^{\prime}$ is an isomorphism, then we define the ECS $\Psi(\mathcal{Q})$ for $L^{\prime}$ by $\Psi(\mathcal{Q})(\mathcal{F},k) = \mathcal{Q}(\Psi^{-1}(\mathcal{F}),k)$. If $\mathcal{Q}^{\prime}$ is another ECS for $L$, we write $\mathcal{Q} \subset \mathcal{Q}^{\prime}$ if for every $(\mathcal{F},m) \in \fol(L) \times \mathbb{N}$ we have $\mathcal{Q}(\mathcal{F},m) \subset \mathcal{Q}^{\prime}(\mathcal{F},m)$.

The following definition plays a key role.

\begin{definition}
Let $(G,L)$ be a pointed group. Fix now an ECS $\mathcal{Q}$ for $L$, let $\mathcal{H} = (H_{m})$ be an $L$-tower, let $\mathcal{D} = (D_{m})$ be a F\o{}lner section for $\mathcal{H}$, and let $\mathcal{K} = \{K_{m}\}_{m=1}^{\infty}$ be a sequence of subgroups of $G$. We say $\mathcal{K}$ is \emph{$\mathcal{H}$,$\mathcal{D}$-locally $\mathcal{Q}$} if all of the following hold:
\begin{enumerate}
\item
$K_{m} \subset C_{G}(H_{m})$ for every $m \ge 1$.
\item
$K_{m} \in \mathcal{Q}(\mathcal{D},m)$ for all but finitely many $m$.
\item
There exists $\textrm{id} \ne \varphi$ such that $\varphi \in K_{m}$ for all $m \ge 1$.
\end{enumerate}
\end{definition}

In general, we call a sequence \emph{$(K_{m})_{m=1}^{\infty}$ $L$-locally $\mathcal{Q}$} if it is $\mathcal{H}$,$\mathcal{D}$-locally $\mathcal{Q}$ for some $L$-tower $\mathcal{H}$ and F\o{}lner section $\mathcal{D}$ for $\mathcal{H}$.

Given an $L$-tower $\mathcal{H}$ and F\o{}lner section $\mathcal{D}$, we let $\mathfrak{L}_{\mathcal{Q}}(G,\mathcal{H},\mathcal{D})$ denote the set of all $\mathcal{H}$,$\mathcal{D}$-locally $\mathcal{Q}$ sequences of subgroups of $G$. Occasionally, when the group $G$ is clear from context, we suppress it from the notation and just write $\mathfrak{L}_{\mathcal{Q}}(\mathcal{H},\mathcal{D})$.

\begin{remark}
Suppose $(G,L)$ is a pointed group and $\mathcal{Q}$ is an ECS for $L$ which has the property that for every $\mathcal{F} \in \fol(G)$, $\mathcal{Q}(\mathcal{F},m) \subset \mathcal{Q}(\mathcal{F},m+1)$ holds for all $m$ (as will be the case for our applications). Then $G$ contains an $L$-locally $\mathcal{Q}$ sequence with respect to some tower and F\o{}lner section if and only if there exists a finite index normal subgroup $H$ in $L$ and nontrivial finite group $K \subset C_{G}(H)$ such that $K \in \mathcal{Q}(\mathcal{F},m)$ for some $m$. For if $H$ and $K$ are as such, by Lemma~\ref{lemma:towerdomains} there exists an $L$-tower $\mathcal{H}$ and associated F\o{}lner section $\mathcal{D}$ for which $H_{1} \subset H$. Then $\{\textrm{id}\} \ne K \subset C(H_{1})$ and $K \in \mathcal{Q}(\mathcal{F},n)$ for all $n \ge m$, so the constant sequence $K_{m} = K$ is $\mathcal{H},\mathcal{D}$-locally $\mathcal{Q}$.
\end{remark}

We now define the local $\mathcal{Q}$ entropy of a pointed group $(G,L)$ with respect to an ECS for $L$.


\begin{definition}
Let $\mathcal{Q}$ be an ECS for $L$ and $(G,L)$ a pointed group which contains an $L$-locally $\mathcal{Q}$ sequence. We define the local $\mathcal{Q}$ entropy of $(G,L)$ by
$$h_{\mathcal{Q}}(G,L) = \sup_{\mathcal{H} \in \mathfrak{T}(L)} \sup_{\mathcal{D} \in \mathfrak{F}(\mathcal{H})} \sup_{(K_{m}) \in \mathfrak{L}_{\mathcal{Q}}(G,\mathcal{H},\mathcal{D})} \limsup_{m \to \infty} \frac{1}{[L \colon H_{m}]} \log \log |K_{m}|.$$
\end{definition}

Note that by definition, if $(K_{m}) \in \mathfrak{L}_{\mathcal{Q}}(G,\mathcal{H},\mathcal{D})$, then $|K_{m}| > 1$ for every $m$, so $\log \log |K_{m}|$ is defined.

When defined, the local $\mathcal{Q}$ entropy satisfies some useful invariance properties which we record now.

For pointed groups $(G_{1},L_{1})$ and $(G_{2},L_{2})$, by a pointed monomorphism (resp. isomorphism) $\Psi \colon (G_{1},L_{1}) \to (G_{2},L_{2})$ we mean an injective (resp. bijective) group homomorphism $\Psi \colon G_{1} \to G_{2}$ such that $\Psi(L_{1}) = L_{2}$.

If $\Psi \colon (G_{1},L_{1}) \to (G_{2},L_{2})$ is a pointed monomorphism, then  $\Psi(L_{1}) = L_{2}$, so $\Psi|_{L_{1}} \colon L_{1} \to L_{2}$ is an isomorphism. Then if $\mathcal{Q}$ is an ECS for $L_{1}$, we denote by $\Psi(\mathcal{Q})$ the ECS $\Psi|_{L_{1}}(\mathcal{Q})$ on $L_{2}$.
\begin{proposition}\label{prop:pentropyproperties}
Let $(G_{1},L_{1})$ and $(G_{2},L_{2})$ be pointed groups and suppose $\mathcal{Q}$ is an ECS for $L_{1}$.
\begin{enumerate}
\item
Suppose $\Psi \colon (G_{1},L_{1}) \to (G_{2},L_{2})$ is a pointed monomorphism. If the local $\mathcal{Q}$ entropy for $(G_{1},L_{1})$ exists, then the local $\Psi(\mathcal{Q})$ entropy for $(G_{2},L_{2})$ exists, and
$$h_{\mathcal{Q}}(G_{1},L_{1}) \le h_{\Psi(\mathcal{Q})}(G_{2},L_{2}).$$
\item
If $\Psi \colon (G_{1},L_{1}) \to (G_{2},L_{2})$ is a pointed isomorphism and the local $\mathcal{Q}$ entropy for $(G_{1},L_{1})$ exists, then the local $\Psi(\mathcal{Q})$ entropy for $(G_{2},L_{2})$ exists, and
$$h_{\mathcal{Q}}(G_{1},L_{1}) = h_{\Psi(\mathcal{Q})}(G_{2},L_{2}).$$
\item
If $\mathcal{Q}^{\prime}$ is an ECS for $L_{1}$ such that $\mathcal{Q} \subset \mathcal{Q}^{\prime}$, then
$$h_{\mathcal{Q}}(G_{1},L_{1}) \le h_{\mathcal{Q}^{\prime}}(G_{1},L_{1}).$$
\end{enumerate}
\end{proposition}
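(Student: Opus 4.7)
The plan is to dispatch part (3) first as a straightforward containment, then prove part (1) by transporting all the data along $\Psi$, and finally derive part (2) by applying part (1) to both $\Psi$ and $\Psi^{-1}$. Nothing conceptually delicate is happening; the entire proposition is an exercise in chasing definitions, and the only nuisance is lining up the pushforward convention on ECS's with the pushforward of F\o{}lner sections.

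For part (3), the assumption $\mathcal{Q}(\mathcal{F},m) \subset \mathcal{Q}'(\mathcal{F},m)$ for every $(\mathcal{F},m) \in \fol(L_1) \times \mathbb{N}$ means that any sequence $(K_m)$ satisfying the second defining condition with respect to $\mathcal{Q}$ also satisfies it with respect to $\mathcal{Q}'$; the other two defining conditions make no reference to the ECS. Hence $\mathfrak{L}_{\mathcal{Q}}(G_1,\mathcal{H},\mathcal{D}) \subset \mathfrak{L}_{\mathcal{Q}'}(G_1,\mathcal{H},\mathcal{D})$ for every tower and section, and the inequality follows by monotonicity of the triple supremum in the definition of $h$.

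For part (1), given an $L_1$-tower $\mathcal{H} = (H_m)$ with F\o{}lner section $\mathcal{D} = (D_m)$, set $\mathcal{H}' := (\Psi(H_m))$ and $\mathcal{D}' := (\Psi(D_m))$. Since $\Psi|_{L_1} \colon L_1 \to L_2$ is an isomorphism (because $\Psi(L_1) = L_2$), all five defining properties of an $L$-tower and all the axioms of a F\o{}lner section transfer verbatim, so $\mathcal{H}' \in \mathfrak{T}(L_2)$ and $\mathcal{D}' \in \mathfrak{F}(\mathcal{H}')$. Given $(K_m) \in \mathfrak{L}_{\mathcal{Q}}(G_1,\mathcal{H},\mathcal{D})$, put $K_m' := \Psi(K_m) \subset G_2$ and check the three defining conditions: (i) $K_m' \subset C_{G_2}(\Psi(H_m))$ because $\Psi$ is a homomorphism and $K_m \subset C_{G_1}(H_m)$; (ii) by the pushforward convention, $\Psi(\mathcal{Q})(\mathcal{D}',m) = \mathcal{Q}(\Psi^{-1}(\mathcal{D}'),m) = \mathcal{Q}(\mathcal{D},m)$, and since $\Psi$ is injective the restriction $\Psi|_{K_m}$ is a group isomorphism, so $K_m' \in \Psi(\mathcal{Q})(\mathcal{D}',m)$ by closure under isomorphism; (iii) a common nontrivial $\varphi \in \bigcap_m K_m$ maps to the common nontrivial element $\Psi(\varphi) \in \bigcap_m K_m'$, where nontriviality uses injectivity. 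Hence $(K_m') \in \mathfrak{L}_{\Psi(\mathcal{Q})}(G_2,\mathcal{H}',\mathcal{D}')$; in particular the existence hypothesis for $(G_1,L_1)$ transports to existence for $(G_2,L_2)$. Finally, injectivity of $\Psi$ gives $|K_m'| = |K_m|$ and the isomorphism $\Psi|_{L_1}$ gives $[L_2 \colon \Psi(H_m)] = [L_1 \colon H_m]$, so the inner limsup is preserved exactly, and taking successive sups yields $h_{\mathcal{Q}}(G_1,L_1) \le h_{\Psi(\mathcal{Q})}(G_2,L_2)$.

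Part (2) follows by applying part (1) to the pointed isomorphism $\Psi^{-1} \colon (G_2,L_2) \to (G_1,L_1)$ with input ECS $\Psi(\mathcal{Q})$ on $L_2$: this yields $h_{\Psi(\mathcal{Q})}(G_2,L_2) \le h_{\Psi^{-1}(\Psi(\mathcal{Q}))}(G_1,L_1)$, and by the pushforward formula $\Psi^{-1}(\Psi(\mathcal{Q}))(\mathcal{F},k) = \Psi(\mathcal{Q})(\Psi(\mathcal{F}),k) = \mathcal{Q}(\mathcal{F},k)$, i.e.\ $\Psi^{-1}(\Psi(\mathcal{Q})) = \mathcal{Q}$, which combined with the inequality from part (1) gives equality. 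The only ``hard'' step is the bookkeeping in (ii), where one must check that the chosen convention for pushforwards of ECS's is exactly the one that makes pushed-forward F\o{}lner sections recover the original class of finite groups; once that is verified, the rest is automatic.
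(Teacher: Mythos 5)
Your proof is correct and takes essentially the same route as the paper: pushing the tower, F\o{}lner section, and local $\mathcal{Q}$ sequence forward along $\Psi$ for part (1), applying part (1) twice with the identity $\Psi^{-1}(\Psi(\mathcal{Q})) = \mathcal{Q}$ for part (2), and a direct containment of the locally-$\mathcal{Q}$ sets for part (3). The only difference is that you are somewhat more explicit than the paper in checking index preservation $[L_2 : \Psi(H_m)] = [L_1 : H_m]$ and in spelling out that $\mathcal{H}'$, $\mathcal{D}'$ inherit the tower and F\o{}lner section axioms, but the argument is the same.
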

\begin{proof}
Let $\mathcal{H} = (H_{m})$ be a an $L_{1}$-tower, $\mathcal{D} = (D_{m})$ a F\o{}lner section for $\mathcal{H}$, and $(K_{m})$ an $\mathcal{H}$,$\mathcal{D}$-locally $\mathcal{Q}$ sequence in $G_{1}$. Then $\mathcal{H}^{\prime} = (H_{m}^{\prime}) = (\Psi(H_{m}))$ is an $L_{2}$-tower, and $\mathcal{D}^{\prime} = \Psi(\mathcal{D})$ is a F\o{}lner section for $\mathcal{H}^{\prime}$. By definition, we have $\Psi(\mathcal{Q})(\Psi(\mathcal{D}),m) = \mathcal{Q}(\mathcal{D},m)$. Since $\Psi$ is an isomorphism, and $\mathcal{Q}(\mathcal{D},m)$ is closed under isomorphism for every $m$, we get that $\Psi(K_{m}) \in \mathcal{Q}(\mathcal{D},m)$ for all $m$ sufficiently large, and hence $\Psi(K_{m}) \in \Psi(\mathcal{Q})(\Psi(\mathcal{D}),m)$ for such $m$ as well. It follows that the sequence $\Psi(K_{m})$ is $\mathcal{H}^{\prime},\mathcal{D}^{\prime}$-locally $\Psi(\mathcal{Q})$ in $G_{2}$. Since $\Psi$ is injective, we have
$$|\Psi(K_{m})| = |K_{m}|$$
for every $m$, from which it follows that $h_{\mathcal{Q}}(G_{1},L_{1}) \le h_{\Psi(\mathcal{Q})}(G_{2},L_{2})$.

For part (2), note that by part (1) we have $h_{\mathcal{Q}}(G_{1},L_{1}) \le h_{\Psi(\mathcal{Q})}(G_{2},L_{2})$. But since $\Psi^{-1}(\Psi(\mathcal{Q})) = \mathcal{Q}$, we also obtain $h_{\Psi(\mathcal{Q})}(G_{2},L_{2}) \le h_{\mathcal{Q}}(G_{1},L_{1})$ again using part (1).

For part (3), again suppose $\mathcal{H} = (H_{m})$ is an $L_{1}$-tower, $\mathcal{D} = (D_{m})$ a F\o{}lner section for $\mathcal{H}$, and $(K_{m})$ an $\mathcal{H}$,$\mathcal{D}$-locally $\mathcal{Q}$ sequence in $G_{1}$. Then $K_{m} \in \mathcal{Q}(\mathcal{D},m)$ and $\mathcal{Q} \subset \mathcal{Q}^{\prime}$ implies $K_{m} \in \mathcal{Q}^{\prime}(\mathcal{D},m)$, and it follows that $(K_{m})$ is also $\mathcal{H},\mathcal{D}$-locally $\mathcal{Q}^{\prime}$ as well, from which (3) follows.
\end{proof}

\begin{remark}
We briefly describe the relationship between the definition of local $\mathcal{Q}$ entropy and local $\mathcal{P}$ entropy as defined in~\cite{schmiedingLocalMathcalEntropy2022}. There, for the notion of a pointed group $(G,L)$ one always takes $L$ to be an infinite cyclic subgroup of $G$, so one considers pointed groups of the form $(G,\langle g \rangle)$ where $g \in G$. Given a class $\mathcal{P}$ of finite groups closed under isomorphism, we may consider the constant ECS defined by $\mathcal{Q}_{\mathcal{P}}(\mathcal{F},m) = \mathcal{P}$ for every F\o{}lner sequence for $\mathbb{Z}$. Then using the terminology of~\cite{schmiedingLocalMathcalEntropy2022}, if $H$ is a $g$-locally $\mathcal{P}$ subgroup in $G$, then the sequence $H_{m} = H \cap C_{G}(g^{m})$ is an $\mathcal{H},\mathcal{D}$-locally $\mathcal{Q}_{\mathcal{P}}$ sequence as defined here. Thus we may always recover the local $\mathcal{P}$ entropy $h_{\mathcal{P}}(G,g)$ of a pointed group $(G,\langle g \rangle)$ using local $\mathcal{Q}$ entropy with constant ECS's.
\end{remark}

Our applications here will be in the context of $(\autinfty(X),\mathcal{Z}_{X})$ where $X$ is a shift of finite type. For these groups, we will use specific types of ECS's, which we define now.

For an amenable group $G$, we say a function $f \colon \fol(G) \times \mathbb{N} \to \mathbb{R}$ is \emph{controlled} if $\lim_{m \to \infty}\frac{\log f(\mathcal{F},m)}{|F_{m}|} = 0$ for every F\o{}lner sequence $\mathcal{F} = (F_{m})$ in $G$. We observe that if $f,g \colon \fol(G) \times \mathbb{N} \to \mathbb{R}$ are controlled functions then $\max\{f,g\}$ is also controlled.

\begin{definition}
Given $M,N \ge 1$, define $\mathcal{P}^{s}(M,N)$ to be the class of finite groups which can be written as a product $\prod_{i=1}^{r}A_{i}$ such that $r \le N$, $A_{i}$ is a nontrivial simple group for every $i$, and for every $i,j$ we have
$$\frac{\log |A_{i}|}{\log |A_{j}|} \le M.$$

Given  functions $f,g \colon \fol(G) \times \mathbb{N} \to \mathbb{N}$, we define the ECS $\mathcal{Q}(f,g)$ on $\fol(G) \times \mathbb{N}$ by $\mathcal{Q}(f,g)(\mathcal{F},m) = \mathcal{P}^{s}(f(\mathcal{F},m),g(\mathcal{F},m))$.
\end{definition}

We note that if $f_{1},f_{2},g_{1},g_{2} \colon \fol(G) \times \mathbb{N} \to \mathbb{N}$ have the property that for every $\mathcal{F} \in \fol(G)$ and $m \in \mathbb{N}$, $f_{1}(\mathcal{F},m) \le f_{2}(\mathcal{F},m)$ and $g_{1}(\mathcal{F},m) \le g_{2}(\mathcal{F},m)$, then $\mathcal{Q}(f_{1},g_{1}) \subset \mathcal{Q}(f_{2},g_{2})$.

It will be convenient to have notation for the following functions: given a group $G$, real number $R > 0$, and integer $\kappa > 0$, define
\begin{equation*}
\begin{gathered}
v_{R,\kappa} \colon \fins(G) \to \mathbb{N}\\
v_{R,\kappa}(F) = \kappa^{|\partial_{R}F|}.
\end{gathered}
\end{equation*}
The functions $v_{R,\kappa}$ naturally define controlled functions $v_{R,\kappa} \colon \fol(G) \times \mathbb{N} \to \mathbb{N}$ by $v_{R,\kappa}(\mathcal{F},n) = v_{R,\kappa}(F_{n})$, since
$$\lim_{n \to \infty} \frac{1}{|F_{n}|}\log v_{R,\kappa}(F_{n}) = \lim_{n \to \infty} \frac{1}{|F_{n}|} \log \kappa^{|\partial_{R}F_{n}|} = \lim_{n \to \infty} \frac{1}{|F_{n}|} |\partial_{R}F_{n}| \log \kappa = 0.$$
Oftentimes we will abuse notation and write simply $v_{R,\kappa}(\mathcal{F},n) = v_{R,\kappa}(F_{n})$.

\begin{remark}
Let $(G,L)$ be a pointed amenable group and consider a pair of functions $f,g \colon \fol(G) \times \mathbb{N} \to \mathbb{N}$ and the associated ECS $\mathcal{Q}(f,g)$ as defined above. Then the local $\mathcal{Q}(f,g)$ entropy of $(G,L)$ exists if and only if there exists a finite index normal subgroup $H$ in $L$ and nontrivial simple group $K \subset C(H)$. Indeed, given such $H$ and $K$, by Lemma~\ref{lemma:towerdomains} there exists an $L$-tower $\mathcal{H}$ and associated F\o{}lner section $\mathcal{D}$ for which $H_{1} \subset H$. Then $K \subset C(H_{1})$ and the constant sequence $K_{m} = K$ is $\mathcal{H},\mathcal{D}$-locally $\mathcal{Q}(f,g)$.
\end{remark}



\section{Upper Bound and the BEEPS property}
We now introduce some the relevant mixing properties we will use, and the entropy class selectors that we will build using them. We then make our way toward proving the upper bound for Theorem~\ref{thm:introentropyrecover}. Throughout this section we work with subshifts over the group $\mathbb{Z}^{d}$. For $u \in X|\partial_{S}F$, define
$$E(u) = \{w \in X|F \mid w|\partial_{S}F = u\}.$$
In other words, $|E(u)|$ is the number of ways to extend $u$ to an $X$-admissible pattern on $F$.

\subsection{D-mixing and the BEEPS property}

\begin{definition}
Let $(X,T)$ be a $\mathbb{Z}^{d}$-shift of finite type.
We say that $(X,\mathbb{Z}^{d})$ has the \emph{boundary extension entropy production similarity property} (BEEPS) if for every $S > 0$ there exists a controlled function $f_{S} \colon \fol(\mathbb{Z}^{d}) \times \mathbb{N} \to \mathbb{R}$ such that for every F\o{}lner sequence $\mathcal{F} = (F_{n})$ in $\mathbb{Z}^{d}$, the following is satisfied:
\[
2 \frac{|E(u)|}{|E(v)|}\frac{\log|E(u)|}{\log|E(v)|} \le f_{S}(\mathcal{F},n)
\textrm{ for all } n \textrm{ sufficiently large and every } u,v \in X|\partial_{S}(F_{n}).
\]
We call such an $f_{S}$ an \emph{$S$-BEEPS function} for $(X,\mathbb{Z}^{d})$, and call $f \colon \fol(\mathbb{Z}^{d}) \times \mathbb{N} \to \mathbb{R}$ a BEEPS function for $(X,\mathbb{Z}^{d})$ if it is an $S$-BEEPS function for some $S$. If $(X,L)$ has the BEEPS property for every finite index subgroup $L \subset \mathbb{Z}^{d}$, then we say $(X,\mathbb{Z}^{d})$ has \emph{strong BEEPS}.
\end{definition}

\sloppy We note several things. First, by definition, a BEEPS function for $(X,\mathbb{Z}^{d})$ is always controlled. Second, if $f$ is an $S$-BEEPS function for $(X,\mathbb{Z}^{d})$ and ${g \colon \fol(\mathbb{Z}^{d}) \times \mathbb{N} \to \mathbb{R}}$ is a controlled function such that for every F\o{}lner sequence $\mathcal{F}$ and $n \in \mathbb{N}$ we have $f(\mathcal{F},n) \le g(\mathcal{F},n)$, then $g$ is also an $S$-BEEPS function for $(X,\mathbb{Z}^{d})$. Lastly, for our purposes here, the 2 that appears in the definition of a BEEPS function could be replaced with any constant $C>1$; since 2 will suffice for us, for simplicity we use that.

\sloppy If $A \subset G$ then $B \supset A$ is a \emph{mixing set} for $A$ if any globally valid contents of $A$ and $G \setminus B$ are compatible, meaning
\[ \forall y, y' \in X: \exists z \in X: z|A = y|A \wedge z|G \setminus B = y'|G \setminus B. \]
In~\cite{BGM2020}, a subshift $X$ is defined to be \emph{D-mixing} (short for Dobrushin-mixing) if for some F\o{}lner sequence $F_n$ we have a mixing set $\bar F_n$ for $F_n$ satisfying ${|\bar F_n \setminus F_n| = o(|F_{n}|)}$. It is not clear that this property is independent of the choice of the F\o{}lner set (though it holds for \emph{all} F\o{}lner sequences in many cases), so we need to parametrize it by the F\o{}lner set. The $F_n$ here are not the sequence we use for this, as for BEEPS we want to mix into things inside rather than outside, so we massage it a little.

\begin{lemma}\label{lemma:Dmixingfolners}
A subshift $X$ is D-mixing if and only if it admits a F\o{}lner sequence $C_n$ such that there is a sequence of subsets $B_n \subset C_n$ such that $B_{n}$ is a F\o{}lner sequence, $|C_n \setminus B_n| = o(|C_n|)$, and for all $S$ and all large enough $n$, $C_n \setminus \partial_S(C_n)$ is a mixing set for $B_n$.
\end{lemma}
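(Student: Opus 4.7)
The plan is to prove both implications by explicit construction, with the reverse direction being almost immediate and the forward direction requiring a controlled thickening of the D-mixing witness.

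For the direction assuming the conclusion and deriving D-mixing, set $F_n := B_n$ and, using the hypothesis with $S = 1$, set $\bar F_n := C_n \setminus \partial_1(C_n)$. For large $n$ this is by hypothesis a mixing set for $B_n = F_n$, and $|\bar F_n \setminus F_n| \le |C_n \setminus B_n| = o(|C_n|) = o(|F_n|)$ since $|B_n| \sim |C_n|$ forces $|\bar F_n| \sim |F_n|$ so the little-$o$ rates are interchangeable.

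For the converse, start with a F\o{}lner sequence $F_n$ and mixing sets $\bar F_n \supset F_n$ witnessing D-mixing. The goal is to enlarge $\bar F_n$ into a F\o{}lner set $C_n$ whose $S$-interior contains $\bar F_n$ for every fixed $S$ and large $n$, and then take $B_n := F_n$. First I would verify that $\bar F_n$ is itself F\o{}lner, using the decomposition $\partial_R \bar F_n \subset (\partial_R F_n) \cup (\bar F_n \setminus F_n)$, which is valid since $\mathbb{Z}^d \setminus \bar F_n \subset \mathbb{Z}^d \setminus F_n$ implies every inner-boundary point of $\bar F_n$ that lies in $F_n$ is an inner-boundary point of $F_n$. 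Then I would choose $s_n \to \infty$ slowly by a diagonal extraction so that $(2s_n+1)^d |\partial_{s_n}(\bar F_n)|/|\bar F_n| \to 0$; this is possible because $|\partial_R \bar F_n|/|\bar F_n| \to 0$ for each fixed $R$.

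Set $C_n := \ball_{s_n}(\bar F_n)$ and $B_n := F_n$. The verification is then routine: a nearest-point argument shows that any $x \in C_n \setminus \bar F_n$ lies within $s_n$ of a point of $\partial_{s_n}(\bar F_n)$, so $|C_n \setminus \bar F_n| \le (2s_n+1)^d |\partial_{s_n}(\bar F_n)| = o(|\bar F_n|)$; combined with $|\bar F_n \setminus F_n| = o(|F_n|)$ this yields $|C_n \setminus B_n| = o(|C_n|)$ and $|C_n| \sim |F_n|$. Points of $\bar F_n$ sit at distance $> s_n$ from $\mathbb{Z}^d \setminus C_n$, so once $s_n > R$ we have $\partial_R C_n \subset C_n \setminus \bar F_n$, giving F\o{}lner-ness of $C_n$. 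For the mixing set property: once $s_n \ge S$ we have $\bar F_n \subset C_n \setminus \partial_S(C_n)$, so $\mathbb{Z}^d \setminus (C_n \setminus \partial_S(C_n)) \subset \mathbb{Z}^d \setminus \bar F_n$, and the mixing of $\bar F_n$ for $F_n = B_n$ transfers directly to a mixing of $C_n \setminus \partial_S(C_n)$ for $B_n$.

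The main obstacle is the diagonal choice of $s_n$: it must be large enough that $\bar F_n$ eventually lies arbitrarily deep inside $C_n$, but slow enough that the volume factor $(2s_n+1)^d$ coming from the enlargement does not overwhelm the decay of $|\partial_{s_n}(\bar F_n)|/|\bar F_n|$. Everything else is bookkeeping on inner and outer boundaries.
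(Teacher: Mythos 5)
Your proof is correct and takes essentially the same approach as the paper's: thicken $\bar F_n$ into $C_n = \ball_{s_n}(\bar F_n)$ with a slowly growing radius, take $B_n = F_n$, and invoke that any superset of a mixing set is mixing. Your additional explicit checks (that $\bar F_n$ is F\o{}lner, and the diagonal bound on $(2s_n+1)^d|\partial_{s_n}\bar F_n|/|\bar F_n|$) merely fill in steps the paper calls ``automatic,'' and your use of $\bar F_n = C_n \setminus \partial_1(C_n)$ instead of $\bar F_n = C_n$ in the reverse direction is a harmless cosmetic variation.
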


\begin{proof}
Suppose $X$ is D-mixing and let $F_n$ be the F\o{}lner sequence in the definition. Let $\bar F_n$ be the mixing sets and let $k_n$ be a slowly growing sequence of natural numbers tending to infinity. Then $C_n = \ball_{k_n}(\bar F_n)$ is also automatically a F\o{}lner sequence, and we can choose $B_n = F_n$ (any set containing a mixing set for $B_n$ is mixing, and $C_n \setminus \partial_S(C_n) \supset \bar F_n$ for large $n$ since eventually $k_n \geq S$). Choosing $k_{n}$ sufficiently slowly growing then we have $|C_{n} \setminus B_{n}| = o(|C_{n}|)$ as well.

For the other direction, pick the $F_n = B_n$ (which is automatically a F\o{}lner sequence) and $\bar F_n = C_n$.
\end{proof}

Thus it makes sense to say that $X$ is \emph{D-mixing for the F\o{}lner sequence $C_n$} if such $B_n$ as in the lemma exist. In the following, by an SFT being nontrivial we mean it has at least two points.

\begin{proposition}\label{prop:dmixingforallbeeps}
Let $(X,T)$ be a nontrivial SFT on an amenable group. Suppose that $(X,T)$ is D-mixing for every F\o{}lner sequence. Then $(X,T)$ has the BEEPS property.
\end{proposition}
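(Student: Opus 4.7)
Fix $S > 0$ and an arbitrary F\o{}lner sequence $\mathcal{F} = (F_n)$ in the ambient amenable group $G$. Since $(X, G)$ is D-mixing for $\mathcal{F}$, Lemma~\ref{lemma:Dmixingfolners} furnishes F\o{}lner subsets $B_n \subset F_n$ with $|F_n \setminus B_n| = o(|F_n|)$, and such that $F_n \setminus \partial_S(F_n)$ is a mixing set for $B_n$ whenever $n$ is sufficiently large. Write $\mathcal{A}$ for the alphabet of $X$.

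The core estimate is the uniform bound
\[
|E(u)| \leq |\mathcal{A}|^{|F_n \setminus B_n|} \cdot |E(v)| \qquad \text{for all } u, v \in X|\partial_S(F_n).
\]
To prove it, given $w \in E(u)$, extend $w$ to some $\tilde w \in X$, and pick any extension $\tilde v \in X$ of $v$. Applying the mixing property to $\tilde w, \tilde v$ yields $z \in X$ with $z|B_n = \tilde w|B_n$ and $z|(G \setminus (F_n \setminus \partial_S(F_n))) = \tilde v|(G \setminus (F_n \setminus \partial_S(F_n)))$, so that $z|F_n \in E(v)$ and $z|B_n = w|B_n$. Hence $\pi : w \mapsto w|B_n$ maps $E(u)$ into $\pi(E(v))$, and its fibers, which fix the pattern both on $\partial_S(F_n)$ (to $u$) and on $B_n$, have size at most $|\mathcal{A}|^{|F_n \setminus (B_n \cup \partial_S(F_n))|} \leq |\mathcal{A}|^{|F_n \setminus B_n|}$. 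In particular $|E(u)|/|E(v)| \leq |\mathcal{A}|^{o(|F_n|)}$.

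To handle the factor $\log|E(u)|/\log|E(v)|$ I will need a linear-in-$|F_n|$ lower bound on $\log|E(v)|$. For this I use that $h := \htop(X) > 0$: a nontrivial D-mixing subshift supports an injection from $\{0,1\}^{\Theta(|F_n|)}$ into $X|F_n$ by iteratively applying mixing to choose between two fixed local values at many well-separated translates of a position $g_0$ at which two distinct points of $X$ disagree. Since $\sum_v |E(v)| = |X|F_n|$ while $|X|\partial_S(F_n)| = 2^{o(|F_n|)}$, averaging gives $\max_v |E(v)| \geq 2^{h|F_n|(1-o(1))}$, and the uniform ratio bound propagates this to $|E(v)| \geq 2^{h|F_n|/2}$ for all $v$ and all large $n$. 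Since $\log|E(u)| \leq |F_n|\log|\mathcal{A}|$, it follows that $\log|E(u)|/\log|E(v)| \leq 2\log|\mathcal{A}|/h$.

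Multiplying the two bounds yields
\[
2\frac{|E(u)|}{|E(v)|}\frac{\log|E(u)|}{\log|E(v)|} \leq \frac{4\log|\mathcal{A}|}{h}\,|\mathcal{A}|^{|F_n \setminus B_n|},
\]
whose logarithm divided by $|F_n|$ tends to $0$, so the right-hand side defines a controlled $S$-BEEPS function $f_S(\mathcal{F}, n)$. The hypothesis that $(X,G)$ is D-mixing for \emph{every} F\o{}lner sequence is used precisely to make the construction of the $B_n$ depend on $\mathcal{F}$, so that the same $f_S$ witnesses BEEPS across all F\o{}lner sequences. The main obstacle is the positive-entropy step: while it is intuitive that D-mixing plus nontriviality should give $\htop(X) > 0$, rigorously producing $\Theta(|F_n|)$ mutually non-interfering mixing cells inside $F_n$ requires an Ornstein--Weiss style quasi-tiling argument (or else a direct strong-irreducibility extraction from D-mixing along all F\o{}lner sequences); once this is accepted, the remainder of the proof is elementary counting.
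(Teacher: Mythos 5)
Your proof is correct and follows essentially the same route as the paper's: both proofs use Lemma~\ref{lemma:Dmixingfolners} to obtain the F\o{}lner subsets $B_n \subset F_n$, compare $E(u)$ with a $B_n$-count via the mixing set, and invoke positive entropy to control the $\log$-ratio. The one genuine difference is cosmetic: you bound $|E(u)|/|E(v)|$ directly by a fiber-counting argument through the restriction map $w \mapsto w|B_n$, yielding $|E(u)| \le |\mathcal{A}|^{|F_n\setminus B_n|}\,|E(v)|$, and then obtain the lower bound $\log|E(v)| \gtrsim h|F_n|$ indirectly by averaging $\sum_v |E(v)| = |X|F_n|$ and propagating via the ratio bound; the paper instead sandwiches $|E(u)|$ directly, using $|X|B_n| \le |E(u)| \le |X|C_n|$ (the lower bound again coming from the mixing set), which avoids the averaging step. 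Both end up at the same $2^{o(|F_n|)}$ estimate.

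One point you flag as an obstacle is actually not one. To get $h > 0$ from nontriviality plus D-mixing, you do not need Ornstein--Weiss quasi-tilings or anything of the sort. Since $X$ is a nontrivial D-mixing subshift, there is a finite mixing set $B$ for the singleton $\{1_G\}$, and a maximal packing of disjoint translates $g_1 B, \ldots, g_k B$ inside a F\o{}lner set $F$ gives $k \ge (|F| - |\partial_B F|)/|BB^{-1}| = \Theta(|F|)$. Iteratively applying the mixing property once per $g_i B$ (the translates are disjoint, so previous choices are preserved at each step) produces at least $2^k$ distinct patterns on $F$, hence $h \ge \liminf k/|F| > 0$. This is exactly the packing argument the paper sketches in its opening sentence, and it needs no machinery beyond finitely many applications of the mixing condition.
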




\begin{proof}
First note that a nontrivial SFT with D-mixing has positive entropy: we certainly have some mixing set $B$ for the singleton set $\{1\}$ containing the identity of $G$, so for any F\o{}lner set of $G$ we can pick a maximal packing of $B$'s to produce entropy.

Now suppose that $\mathcal{C} = C_{n}$ is a F\o{}lner sequence, let $S$ be arbitrary, and consider any $u \in X|\partial_{S}(C_n)$. It is well-known that for an SFT with positive entropy $h$, for any F\o{}lner sequence $F_n$, the number of patterns on $F_n$ is $2^{h|F_n| + g(F_{n})}$ where $g(F_{n}) = o(|F_{n}|)$. Thus
\[ |E(u)| \leq \vert X|C_n \vert = 2^{h|C_n| + g(C_{n})} \]
and
$$\log |E(u)| \le h|C_{n}| + g(C_{n}).$$

Choose some $B_n \subset C_n$ given by the definition of D-mixing. For large enough $n$, $C_{n} \setminus \partial_{S}(C_n)$ is mixing for $B_n$, giving
\[ |E(u)| \geq \vert X|B_n \vert = 2^{h|B_n| + g(B_{n})} \]
and
$$\log |E(u)| \ge h|B_{n}| + g(B_{n}).$$

It follows that
\[ \frac{|E(u)|}{|E(v)|} \le 2^{h|C_n| + g(C_{n}) - h|B_n| - g(B_{n})} = 2^{h|C_n \setminus B_n| + g(C_{n})-g(B_{n})}\]
\[ \le 2^{h|C_n \setminus B_n| + g(C_{n}) + |g(B_{n})|} \]
and hence
$$2 \frac{|E(u)|}{|E(v)|} \frac{\log |E(u)|}{\log |E(v)|} \le 2^{h|C_n \setminus B_n| + g(C_{n}) + |g(B_{n})|+1}\left(\frac{h|C_{n}| + g(C_{n})}{h|B_{n}| + g(B_{n})}\right).$$
Thus defining
\[ f(\mathcal{C},n) = 2^{h|C_n \setminus B_n| + g(C_{n}) + |g(B_{n})|+1}\left(\frac{h|C_{n}| + g(C_{n})}{h|B_{n}| + g(B_{n})}\right) \]
we have
$\log f(\mathcal{C},n) = \big( h|C_{n} \setminus B_{n}| + g(C_{n}) + |g(B_{n})|+1 \big) \log \left(\frac{h|C_{n}| + g(C_{n})}{h|B_{n}| + g(B_{n})}\right)$.

Since $|C_n \setminus B_n| = o(|C_n|)$, it follows that $\frac{\log(f(\mathcal{C},n))}{|C_{n}|} \overset{n \rightarrow \infty}\longrightarrow 0$.
\end{proof}

\begin{proposition}\label{prop:howtogetthebeeps}
Suppose $(X,T)$ is a $\mathbb{Z}^{d}$-SFT which is strongly irreducible. Then $(X,T)$ has the BEEPS property. In particular, this holds if $(X,T)$ is contractible.
\end{proposition}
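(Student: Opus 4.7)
The plan is to reduce to Proposition~\ref{prop:dmixingforallbeeps} by verifying that any strongly irreducible $\mathbb{Z}^{d}$-SFT is D-mixing for every F\o{}lner sequence, in the formulation of Lemma~\ref{lemma:Dmixingfolners}. Since the paper has already recorded that contractible subshifts are strongly irreducible, the final sentence of the proposition will be immediate once the strongly irreducible case is settled.

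Fix a strong irreducibility constant $E \subset \mathbb{Z}^{d}$ for $X$, chosen to be finite, symmetric, and containing the origin, and set $M = \max_{e \in E}|e|_{\infty}$. Given an arbitrary F\o{}lner sequence $\mathcal{C} = (C_{n})$ in $\mathbb{Z}^{d}$, pick a sequence of positive integers $r_{n} \to \infty$ growing slowly enough that $|\partial_{r_{n}+M}(C_{n})| = o(|C_{n}|)$; this is possible because the F\o{}lner property forces $|\partial_{R}(C_{n})|/|C_{n}| \to 0$ for each fixed $R$. Define
$$B_{n} = \{g \in C_{n} : \ball_{r_{n}}(Eg) \subset C_{n}\}.$$
A direct check shows $C_{n} \setminus B_{n} \subset \partial_{r_{n}+M}(C_{n})$, so $|C_{n} \setminus B_{n}| = o(|C_{n}|)$, which also forces $B_{n}$ to be a F\o{}lner sequence.

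We now claim that for every fixed $S$ and every sufficiently large $n$ (so that $r_{n} \ge S$), the set $C_{n} \setminus \partial_{S}(C_{n})$ is a mixing set for $B_{n}$. For any $g \in B_{n}$ and any $e \in E$, the inclusion $\ball_{S}(eg) \subset \ball_{r_{n}}(eg) \subset C_{n}$ forces $eg \in C_{n} \setminus \partial_{S}(C_{n})$, so $E B_{n} \subset C_{n} \setminus \partial_{S}(C_{n})$ and hence $E B_{n} \cap \bigl(\mathbb{Z}^{d} \setminus (C_{n} \setminus \partial_{S}(C_{n}))\bigr) = \emptyset$. Given $y, y' \in X$, for each finite subset $B \subset \mathbb{Z}^{d} \setminus (C_{n} \setminus \partial_{S}(C_{n}))$ strong irreducibility applied to $A = B_{n}$ and $B$ yields $z_{B} \in X$ agreeing with $y$ on $B_{n}$ and with $y'$ on $B$. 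Taking a limit of the $z_{B}$ (via compactness of $X$) along an increasing exhaustion of $\mathbb{Z}^{d} \setminus (C_{n} \setminus \partial_{S}(C_{n}))$ by finite sets produces a single $z \in X$ realizing the mixing condition.

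Having verified D-mixing in the sense of Lemma~\ref{lemma:Dmixingfolners} for an arbitrary F\o{}lner sequence, Proposition~\ref{prop:dmixingforallbeeps} delivers the BEEPS property. The main technical point is calibrating the depth $r_{n}$ so that $B_{n}$ suffers only $o(|C_{n}|)$ loss while still allowing $EB_{n}$ to sit inside $C_{n} \setminus \partial_{S}(C_{n})$ for every fixed $S$ eventually; once that balance is struck, everything else reduces to the standard compactness trick for promoting strong irreducibility from finite-versus-finite gluing to finite-versus-infinite gluing.
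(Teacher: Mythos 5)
Your proof takes essentially the same route as the paper: verify D-mixing for every Følner sequence by shaving a slowly growing boundary layer off $C_n$ to obtain $B_n$, then invoke Proposition~\ref{prop:dmixingforallbeeps}. The paper's proof is terser (it defines $B_n$ by distance to the boundary and simply asserts that strong irreducibility gives the mixing set); you have spelled out the two points the paper leaves implicit — that the strong irreducibility window $E$ must be accounted for when choosing the depth $r_n$ so that $EB_n$ lands inside $C_n \setminus \partial_S(C_n)$, and that the passage from finite-versus-finite gluing to finite-versus-infinite gluing is a compactness argument. Both proofs are correct and structurally identical.
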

\begin{proof}
Let $C_{n}$ be a F\o{}lner sequence. To see D-mixing for $C_{n}$, let $k_n$ be a slowly growing sequence of natural numbers tending to infinity and set $B_{n} = \{g \in C_{n} \mid d(g,\partial C_{n}) \ge k_{n}\}$. Then $B_{n}$ is a F\o{}lner sequence and $|C_n \setminus B_n| = o(|C_n|)$, and for all $S$ and all large enough $n$ we have that $k_{n} \ge S$, so using strong irreducibility $C_n \setminus \partial_S(C_n)$ is a mixing set for $B_n$ as needed.

The last part holds since a contractible SFT is strongly irreducible.
\end{proof}

A key class of ECS's for our applications are the following. Suppose $X$ is a $\mathbb{Z}^{d}$-SFT with the BEEPS property, let $S > 0$ be real, $\kappa > 0$ an integer, and $f_{S}$ a BEEPS functions for $X$. Then $\mathcal{Q}(f_{S},v_{S,\kappa})$ is an ECS for $\left(\autinfty(X),\mathcal{Z}_{X}\right)$.
\subsection{Upper bound}
We are now prepared to prove the upper bound.

Let $G$ be a countable group and $\alpha \colon G \curvearrowright X$ an action of $G$ on the compact metric space $X$. Given a finite index subgroup $F \subset G$, we let $p_{F}(\alpha) =  |\fix(F)|$. Suppose $\alpha$ has the property that $p_{F}(\alpha)$ is finite for every finite index subgroup $F \subset G$. Given a $G$-tower $\mathcal{H} = (H_{m})$, we define the exponential growth rate of periodic points along $\mathcal{H}$ as the quantity
$$\rho_{\mathcal{H}}(\alpha) = \limsup_{m \to \infty} \frac{1}{[G \colon H_{m}]} \log \max\{1,p_{H_{m}}(\alpha)\}.$$

Recall a $G$-system $(X,G)$ is expansive if there exists $\epsilon > 0$ such that for any pair $x \ne y$ in $X$, there exists $g \in G$ such that $d(g \cdot x, g \cdot y) \ge \epsilon$. If $(X,G)$ is an expansive $G$-system with expansivity constant $\epsilon$ and $F$ is a finite index subgroup of $G$, then $|\fix(F)| < \infty$. Indeed, if $C = \{c_{i}\}_{i=1}^{[G \colon F]}$ are a set of left coset representatives for $F$, then there exists $\delta > 0$ such that for any $x,y \in X$, if $d(x,y) < \delta$ then $d(c_{i} \cdot x, c_{i} \cdot y) < \epsilon$ for all $1 \le i \le [G \colon F]$. Since $(X,G)$ is expansive, it follows that if $x \ne y$ are in $\fix(F)$, then $d(x,y) > \delta$, so $\fix(F)$ must be finite since $X$ is compact.


We say an SFT $(X,\mathbb{Z}^{d})$ has \emph{strong density of periodic points} if for every nested sequence of finite index subgroups $F_{n} \subset \mathbb{Z}^{d}$ with $\bigcap_{n=1}^{\infty}F_{n} = \{\textrm{id}\}$, the set $\bigcup_{n=1}^{\infty}\fix(F_{n})$ is dense in $X$.


\begin{theorem}\label{thm:upperboundentropy}
Suppose $(X,\mathbb{Z}^{d})$ is a shift of finite type which has the BEEPS property, and has strong density of periodic points. Let $\kappa$ be a posive integer, $R,S > 0$ be real, $f_{S}$ a BEEPS function for $(X,\mathbb{Z}^{d})$ and $\mathcal{Q}(f_{S},v_{R,\kappa})$ the associated ECS for $\mathbb{Z}^{d}$. Then
$$h_{\mathcal{Q}(f_{S},v_{R,\kappa})}(\autinfty(X,\mathbb{Z}^{d}),\mathcal{Z}_{X}) \le \htop(X,\mathbb{Z}^{d}).$$
\end{theorem}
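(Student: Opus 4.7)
The plan is to bound, for every choice of $L$-tower $\mathcal{H} = (H_m)$, F\o{}lner section $\mathcal{D} = (D_m)$, and $\mathcal{H},\mathcal{D}$-locally $\mathcal{Q}(f_S, v_{R,\kappa})$ sequence $(K_m)$ in $G := \autinfty(X,\mathbb{Z}^d)$, the limsup defining the local $\mathcal{Q}$ entropy by $\htop(X)$. The natural tool is the action of $K_m \subset C_G(H_m) = \aut(X,H_m)$ on the finite set $\fix(H_m)$ of $H_m$-fixed points, combined with the product-of-simples structure provided by the ECS. First I would establish the entropy bound $\log|\fix(H_m)| \le \htop(X)[L:H_m] + o([L:H_m])$: since $D_m$ is a fundamental domain for $H_m$, every $H_m$-fixed point is determined by its restriction to $D_m$, giving $|\fix(H_m)| \le \vert X|D_m\vert$, and since $(D_m)$ is a F\o{}lner sequence, $\log\vert X|D_m\vert = \htop(X)|D_m| + o(|D_m|)$ with $|D_m| = [L:H_m]$.

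Next I would exploit the decomposition $K_m = \prod_{i=1}^{r_m} A_{m,i}$ into at most $r_m \le v_{R,\kappa}(D_m) = \kappa^{|\partial_R D_m|}$ nontrivial finite simple factors with pairwise log-size ratios bounded by $f_S(\mathcal{D}, m)$. Each $A_{m,i}$ acts on $\fix(H_m)$ and, by simplicity, this action is either trivial or faithful. The key dichotomy step is to show that for all but finitely many $m$, at least one factor $A_{m,i_0}$ acts faithfully on $\fix(H_m)$. I would argue by contradiction using the common nontrivial element $\varphi \in \bigcap_m K_m$: if $K_m$ acts trivially on $\fix(H_m)$ for infinitely many $m$, then $\varphi$ pointwise fixes $\fix(H_m)$ for those $m$, which form a cofinal subtower (since $\bigcap_m H_m = \{\textrm{id}\}$). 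Strong density of periodic points applied to this subtower forces $\varphi$ to act trivially on a dense subset of $X$, giving $\varphi = \textrm{id}$ and contradicting $\varphi \ne \textrm{id}$.

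Once a faithful factor $A_{m,i_0}$ is secured, the embedding $A_{m,i_0} \hookrightarrow \Sym(\fix(H_m))$ yields $\log|A_{m,i_0}| \le \log|\fix(H_m)|!$, and the log-ratio hypothesis propagates this to $\log|A_{m,j}| \le f_S(\mathcal{D},m) \cdot \log|A_{m,i_0}|$ for every $j$. Summing over the at most $r_m$ factors gives $\log|K_m| \le r_m \cdot f_S(\mathcal{D},m) \cdot \log|\fix(H_m)|!$. Taking $\log$ once more, the terms $\log r_m \le |\partial_R D_m|\log\kappa$ and $\log f_S(\mathcal{D},m)$ are each $o([L:H_m])$ (the former by F\o{}lner-ness of $\mathcal{D}$, the latter by controlledness of $f_S$), while $\log\log|\fix(H_m)|! \le \log|\fix(H_m)| + \log\log|\fix(H_m)| = \htop(X)[L:H_m] + o([L:H_m])$. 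Dividing by $[L:H_m]$ yields $\frac{\log\log|K_m|}{[L:H_m]} \le \htop(X) + o(1)$, so the limsup is at most $\htop(X)$.

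The main obstacle I anticipate is the dichotomy step: it is precisely where the three hypotheses (existence of a common nontrivial $\varphi$ across the sequence, strong density of periodic points, and the simplicity of the factors $A_{m,i}$) combine essentially. The BEEPS hypothesis, encoded in the controlled function $f_S$, is exactly what is needed to propagate a single faithful-factor bound to the full product $K_m$ with only subexponential error after a second logarithm.
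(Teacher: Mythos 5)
Your proposal is correct and follows essentially the same route as the paper's proof: the paper isolates your dichotomy step as Lemma~\ref{lemma:normalrigidity} (stated in contrapositive form, using normality of $\ker\pi_{H_m}$ plus simplicity of the factors rather than the trivial-or-faithful phrasing, but these are the same observation), and then derives the same bound $|K_m| \le (p_{H_m}!)^{f_S(\mathcal{D},m)v_{R,\kappa}(D_m)}$ for all large $m$ before extracting the entropy estimate via the same second-logarithm bookkeeping. The only cosmetic difference is that the paper bounds $\frac{1}{[L:H_m]}\log p_{H_m}$ through the auxiliary quantity $\rho_{\mathcal{H}}$ and Lemma~\ref{lemma:rhoentropybound} (via expansivity), where you bound $|\fix(H_m)|$ directly by $|X|D_m|$ using the fundamental-domain structure of an SFT; both yield $\htop(X)$.
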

We note that the theorem applies in particular to a large class of $\mathbb{Z}^{d}$ SFT's (outlined below), including full shifts.


Before beginning the proof of the theorem, we record a useful lemma.
\begin{lemma}\label{lemma:rhoentropybound}
Let $G$ be a countable residually finite amenable group acting expansively on $X$ and let $\mathcal{H} = (H_{m})$ be a $G$-tower. Then $\rho_{\mathcal{H}}(\alpha) \le \htop(X,G)$.
\end{lemma}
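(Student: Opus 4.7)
The plan is to exploit a F\o{}lner section $\mathcal{D} = (D_m)$ attached to the tower $\mathcal{H}$ to turn the fixed-point count $|\fix(H_m)|$ into a $(D_m,\epsilon)$-separated set, and then invoke the standard separated-sets characterization of topological entropy along F\o{}lner sequences for amenable group actions. Since $G$ is amenable and residually finite, Lemma~\ref{lemma:towerdomains} guarantees such a $\mathcal{D}$, with $|D_m| = [G:H_m]$ and $D_m$ a F\o{}lner sequence.

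The core observation is the following. Let $\epsilon > 0$ be an expansivity constant for $\alpha$, and fix $m$. Pick any two distinct points $x \ne y$ in $\fix(H_m)$. By expansivity, there exists $g \in G$ with $d(g \cdot x, g \cdot y) \ge \epsilon$. Write $g = h d$ with $h \in H_m$ and $d \in D_m$. Because $H_m$ is \emph{normal} in $G$ (this is the crucial use of normality built into the definition of a $G$-tower), the element $h' := d^{-1} h d$ lies in $H_m$, so
\[
g \cdot x = h d \cdot x = d h' \cdot x = d \cdot x,
\]
and similarly $g \cdot y = d \cdot y$. Hence $d(d \cdot x, d \cdot y) \ge \epsilon$ for some $d \in D_m$. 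This shows that $\fix(H_m)$ is a $(D_m, \epsilon)$-separated set in the standard dynamical sense.

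Let $s(D_m, \epsilon)$ denote the maximal cardinality of a $(D_m,\epsilon)$-separated subset of $X$. The previous paragraph gives $|\fix(H_m)| \le s(D_m,\epsilon)$, and hence
\[
\frac{1}{[G:H_m]} \log \max\{1, |\fix(H_m)|\} \le \frac{1}{|D_m|} \log s(D_m,\epsilon).
\]
Taking $\limsup$ as $m \to \infty$ and applying the standard characterization of topological entropy of an amenable group action via $(F_n,\epsilon)$-separated sets along a F\o{}lner sequence, we obtain
\[
\rho_{\mathcal{H}}(\alpha) \le \limsup_{m \to \infty} \frac{1}{|D_m|} \log s(D_m,\epsilon) \le \htop(X,G),
\]
where the last inequality uses that the separated-set quantity for a fixed $\epsilon > 0$ is bounded above by the supremum over $\epsilon$, which equals $\htop(X,G)$ for expansive actions.

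The only step with any subtlety is the second one, where normality of $H_m$ in $G$ is needed to replace the arbitrary separating element $g$ by an element of the fundamental domain $D_m$; without normality, the translate $d \cdot x$ need not be fixed by $H_m$ and the argument collapses. Everything else is a direct application of the definitions and the amenable-group entropy formula applied to the F\o{}lner sequence $D_m$.
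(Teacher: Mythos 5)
Your argument is correct and follows the same route as the paper's (much terser) proof: take a F\o{}lner section $\mathcal{D}=(D_m)$ for the tower, observe that $\fix(H_m)$ is a $(D_m,\epsilon)$-separated set, and bound the growth rate by topological entropy along the F\o{}lner sequence $D_m$. You helpfully supply the step the paper leaves implicit --- using normality of $H_m$ (which is built into the definition of a $G$-tower) to replace an arbitrary separating group element by one lying in the fundamental domain $D_m$.
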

\begin{proof}
Since $\mathcal{H}$ is a $G$-tower, we may choose a F\o{}lner section $\mathcal{D} = (D_{m})$ for $\mathcal{H}$ in $G$. Let $\epsilon > 0$ be an expansivity constant for the action of $G$ on $X$. By expansivity, for every $i$, if $x \ne y \in \fix(H_{m})$ then there exists $g \in D_{m}$ such that $d(g \cdot x, g \cdot y) > \epsilon/2$. This means $\fix(H_{m})$ is an $\epsilon/2$-separated set of cardinality $|\fix(H_{m})|$.
\end{proof}

\begin{proof}[Proof of Theorem~\ref{thm:upperboundentropy}]
Throughout the proof we will write simply $\mathcal{Z}$ for $\mathcal{Z}_{X}$.
Let $\mathcal{H} = (H_{m})$ be a $\mathcal{Z}$-tower, $\mathcal{D} = (D_{m})$ a F\o{}lner section for $\mathcal{H}$, and $(K)_{m}$ an $\mathcal{H}$,$\mathcal{D}$-locally $\mathcal{Q}(f_{S},v_{S})$ subgroup of $\autinfty(X,G)$. Our goal is to prove that
$$\limsup_{m \to \infty} \frac{1}{[\mathcal{Z} \colon H_{m}]} \log \log |K_{m}| \le \htop(X,\mathbb{Z}^{d}).$$

For any finite index subgroup $F$ of $\mathcal{Z}$, we may consider the $F$-periodic point homomorphism
\begin{equation}\label{eqn:pprep}
\begin{gathered}
\pi_{F} \colon \aut(X,F) \to \sym(\fix(F))\\
\pi_{F} \colon \alpha \mapsto \alpha|_{\fix(F)}.
\end{gathered}
\end{equation}
Since $(K_{m})$ is $\mathcal{H}$,$\mathcal{D}$-locally $\mathcal{Q}(f_{S},v_{R,\kappa})$, we may choose $M_{1}$ such that for all $m \ge M_{1}$, the subgroup $K_{m} \subset C(H_{m})$ and $K_{m} \in \mathcal{P}^{s}\left(f_{S}(\mathcal{D},m),v_{R,\kappa}(D_{m})\right)$. Note that for every $m \ge 1$, we have that $K_{m} \subset \aut(X,H_{m})$ since $K_{m} \subset C(H_{m})$. Fix an $m \ge M_{1}$. Recall for $F \in \mathcal{I}(\mathcal{Z})$ we defined $p_{F} = |\fix(F)|$.

We pause now for a key lemma.
\begin{lemma}\label{lemma:normalrigidity}
If $|K_{m}| > \left(p_{H_{m}}!\right)^{f_{S}(\mathcal{D},m)v_{R,\kappa}(D_{m})}$ for some $m \ge M_{1}$, then ${K_{m} \subset \ker \pi_{H_{m}}}$.
\end{lemma}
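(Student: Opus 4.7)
The plan is to exploit the product-of-simple-groups structure of $K_m$ guaranteed by the membership $K_m \in \mathcal{P}^s(f_S(\mathcal{D},m), v_{R,\kappa}(D_m))$, combined with the observation that any homomorphism out of a simple group is either injective or trivial. The ratio bound on the sizes of the simple factors will then turn a single ``escaping'' factor into a uniform size bound on every factor, which multiplies to give the desired inequality on $|K_m|$.

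Concretely, I would write $K_m = \prod_{i=1}^r A_i$ with each $A_i$ a nontrivial simple group, $r \leq N := v_{R,\kappa}(D_m)$, and $\log|A_i|/\log|A_j| \leq M := f_S(\mathcal{D},m)$ for all $i,j$. Viewing each $A_i$ as a subgroup of $K_m$, the restriction $\pi_{H_m}|_{A_i}$ is a homomorphism out of a simple group, so its kernel is either all of $A_i$ or trivial. Since the factors $A_i$ generate $K_m$, if $K_m \not\subseteq \ker \pi_{H_m}$ then there must be some index $i_0$ for which $\pi_{H_m}|_{A_{i_0}}$ is injective; this embeds $A_{i_0}$ into $\sym(\fix(H_m))$, which by Lagrange forces $|A_{i_0}| \leq p_{H_m}!$.

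At that point the log-ratio bound finishes the job. For every $j$,
\[
\log|A_j| \leq M \log|A_{i_0}| \leq M \log(p_{H_m}!),
\]
so $|A_j| \leq (p_{H_m}!)^M$, and multiplying over the $r \leq N$ factors gives
\[
|K_m| = \prod_{j=1}^{r} |A_j| \leq \bigl((p_{H_m}!)^M\bigr)^N = (p_{H_m}!)^{f_S(\mathcal{D},m)\, v_{R,\kappa}(D_m)},
\]
contradicting the hypothesis. Taking the contrapositive yields $K_m \subseteq \ker \pi_{H_m}$.

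I do not anticipate a substantive obstacle here: the three ingredients (direct-product structure from $\mathcal{P}^s$, Lagrange inside $\sym(\fix(H_m))$, and the log-ratio bound) combine in a direct way. The only point that would deserve a sentence of care in the written proof is that I am \emph{not} invoking a structure theorem for normal subgroups of a direct product of simple groups (diagonal normal subgroups can appear among isomorphic abelian factors); I only use that the restriction of $\pi_{H_m}$ to each simple factor is injective or trivial, which is immediate from simplicity.
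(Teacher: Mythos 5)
Your proposal is correct and is essentially the paper's argument in contrapositive form: the paper starts from the size hypothesis and runs the log-ratio inequality forward to conclude $|A_j| > p_{H_m}!$ for every $j$ and hence $\tilde A_j \subset \ker\pi_{H_m}$ by normality plus simplicity, whereas you assume some $A_{i_0}$ escapes the kernel, use simplicity to get an embedding and hence $|A_{i_0}| \le p_{H_m}!$, and then run the same log-ratio bound backward to contradict the size hypothesis. The ingredients (product of simple factors, the ratio bound $\log|A_i|/\log|A_j| \le M$, Lagrange in $\Sym(\fix(H_m))$, and the simple-subgroup dichotomy) are identical, so this counts as the same approach; if anything your version is slightly cleaner since it isolates the escaping factor up front rather than threading a chain of inequalities through $\max_i \log|A_i|$.
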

\begin{proof}
Since $K_{m} \in \mathcal{P}^{s}(f_{S}(\mathcal{D},m),v_{R,\kappa}(D_{m}))$ we may write $K_{m} = \prod_{i=1}^{r}A_{i}$ where each $A_{i}$ is a finite simple group, $\frac{\log |A_{i}|}{\log |A_{j}|} \le f_{S}(\mathcal{D},m)$ for every $i,j$, and $r \le v_{R,\kappa}(D_{m})$. Then $\log |K_{m}| = \sum_{i=1}^{r}\log |A_{i}|$. The hypothesis $|K_{m}| > \left(p_{H_{m}}!\right)^{f_{S}(\mathcal{D},m)v_{R,\kappa}(D_{m})}$ implies
$$\sum_{i=1}^{r}\log |A_{i}| > f_{S}(\mathcal{D},m)v_{R,\kappa}(D_{m}) \log \left(p_{H_{m}}! \right)$$
so
$$v_{R,\kappa}(D_{m}) \max_{i}\{\log |A_{i}|\} > r \max_{i}\{\log |A_{i}|\} > f_{S}(\mathcal{D},m)v_{R,\kappa}(D_{m}) \log \left(p_{H_{m}}!\right).$$
But we must have, for every $j$,
$$f_{S}(\mathcal{D},m) \log |A_{j}| \ge \max_{i} \{\log |A_{i}|\}$$
so
$$f_{S}(\mathcal{D},m)v_{R,\kappa}(D_{m}) \log |A_{j}| \ge v_{R,\kappa}(D_{m}) \max_{i}\{\log |A_{i}|\} > f_{S}(\mathcal{D},m)v_{R,\kappa}(D_{m}) \log \left(p_{H_{m}}!\right).$$
Thus
$$\log |A_{j}| > \log \left( p_{H_{m}}! \right)$$
and hence
$$|A_{j}| > \left( p_{H_{m}}! \right).$$
Fix $1 \le j \le r$, and recall $\pi_{H_{m}} \colon \aut(X,H_{m}) \to \sym(\textrm{Fix}(H_{m}))$. Since the target of this map has size $p_{H_{m}}!$, it follows that the subgroup $\tilde{A}_{j} \subset K_{m}$ which consists of $A_{j}$ in the $j$th coordinate and the identity in all other coordinates must have nontrivial intersection with $\ker \pi_{H_{m}}$. But normality of $\ker \pi_{H_{m}}$ then implies $\tilde{A}_{j} \subset \ker \pi_{H_{m}}$. Since this happens for every $j$, we get that $K_{m} \subset \ker \pi_{H_{m}}$ as desired.
\end{proof}

Continuing with the proof of Theorem~\ref{thm:upperboundentropy}, fix $\epsilon > 0$, and note that $\htop(X,\mathbb{Z}^{d})$ is finite, and hence $\rho_{\mathcal{H}}(X,\mathbb{Z}^{d})$ is finite as well.
We will prove the following claim.\\

\emph{Claim: } There exists $M_{2} \ge M_{1} \ge 1$ such that $|K_{m}| \le \left(p_{H_{m}}!\right)^{f_{S}(\mathcal{D},m)v_{R,\kappa}(D_{m})}$ for all $m \ge M_{2}$.




First we prove the claim. Suppose instead the claim does not hold. Then there exists a sequence $m_{k} \to \infty$ such that for all $k$ we have
$$|K_{m_{k}}| > \left(p_{H_{m_{k}}}!\right)^{f_{S}(\mathcal{D},m)v_{R,\kappa}(D_{m})}.$$
Then by Lemma~\ref{lemma:normalrigidity}, this implies $K_{m_{k}} \subset \ker \pi_{H_{m_{k}}}$. Since $\mathcal{K} = (K_{m})$ is $\mathcal{H}$,$\mathcal{D}$-locally $\mathcal{Q}(f_{S},v_{R,\kappa})$, by (3) of the definition there exists some $\textrm{id} \ne \varphi \in K_{m}$ for all $m \ge 1$. Thus for every $k$ we have must have $\varphi \in \ker \pi_{H_{m_{k}}}$ and hence $\varphi$ acts trivially on $\textrm{Fix}(H_{m_{k}})$. By definition, the base action of $\mathbb{Z}^{d}$ on $X$ agrees with the action of $\mathcal{Z}$, and thus by the assumption that $\bigcup_{k=1}^{\infty}\textrm{Fix}(H_{m_{k}})$ is dense in $X$ in the statement of the theorem, this means $\varphi$ acts trivially on a dense subset of $X$. Since $\varphi$ is continuous, this contradicts that $\varphi \ne \textrm{id}$, proving the claim.

Now we proceed to finish the proof of the theorem using the claim. Suppose $m \ge M_{2}$. Then
$$|K_{m}| \le \left(p_{H_{m}}!\right)^{f_{S}(\mathcal{D},m)v_{R,\kappa}(D_{m})} \le p_{H_{m}}^{p_{H_{m}}f_{S}(\mathcal{D},m)v_{R,\kappa}(D_{m})}$$
and hence
$$
\frac{1}{[\mathcal{Z} \colon H_{m}]} \log \log |K_{m}| \le \frac{1}{[\mathcal{Z} \colon H_{m}]} \log \left( p_{H_{m}}f_{S}(\mathcal{D},m)v_{R,\kappa}(D_{m}) \log p_{H_{m}}\right)$$
\begin{equation}\label{eqn:upperboundproof1}
= \frac{1}{[\mathcal{Z} \colon H_{m}]} \Big( \log p_{H_{m}} + \log f_{S}(\mathcal{D},m) + \log v_{R,\kappa}(D_{m}) + \log \log p_{H_{m}} \Big).
\end{equation}
Since $\rho(X,\mathbb{Z}^{d})$ is finite and $\mathcal{Z}$ can be identified with $\mathbb{Z}^{d}$, it follows that
$$\limsup_{m \to \infty} \frac{1}{[\mathcal{Z} \colon H_{m}]} \log \log p_{H_{m}}=0.$$

Now since $f_{S}$ is a a BEEPS function and $D_{m}$ is a fundamental domain for $H_{m}$, we know that
$$\lim_{m \to \infty} \frac{1}{[\mathcal{Z} \colon H_{m}]}\log f_{S}(\mathcal{D},m) = \lim_{m \to \infty} \frac{1}{|D_{m}|}\log f_{S}(\mathcal{D},m) = 0.$$
Moreover, since $D_{m}$ is a F\o{}lner sequence we have
$$\lim_{m \to \infty} \frac{1}{[\mathcal{Z} \colon H_{m}]}\log v_{R,\kappa}(D_{m}) = \lim_{m \to \infty} \frac{1}{|D_{m}|}\log v_{R,\kappa}(D_{m})$$
$$= \lim_{m \to \infty} \frac{1}{|D_{m}|}\log \kappa^{|\partial_{R}D_{m}|} = \lim_{m \to \infty} \frac{|\partial_{R}D_{m}|}{|D_{m}|} \log \kappa = 0.$$

From this we get
$$\limsup_{m \to \infty} \frac{1}{[\mathcal{Z} \colon H_{m}]} \log \log |K_{m}| \le \limsup_{m \to \infty} \frac{1}{[\mathcal{Z} \colon H_{m}]} \log p_{H_{m}}.$$

By Lemma~\ref{lemma:rhoentropybound}, the right hand side is bounded above by $\htop(X,\mathbb{Z}^{d})$, completing the proof.

\end{proof}

\begin{proposition}\label{prop:contractiblegivesstuff}
Contractible $\mathbb{Z}^{d}$-SFT's have strong density of periodic points and the BEEPS property.
\end{proposition}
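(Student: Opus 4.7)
The BEEPS part is immediate: contractibility implies strong irreducibility (one interpolates with the contraction block map on regions separated by the block-map radius), so Proposition~\ref{prop:howtogetthebeeps} already supplies BEEPS. All of the content of the statement lies in strong density of periodic points, and my plan is to reduce it to a single pattern-extension claim which I then extract from contractibility.

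The reduction is as follows. I will prove the following claim: for every valid pattern $p$ on a finite set $F \subset \mathbb{Z}^d$ there exists $R_0 = R_0(p)$ such that for every finite-index subgroup $H \subset \mathbb{Z}^d$ with $H \cap \ball_{R_0}(\{0\}) = \{0\}$ there is an $x \in \fix(H)$ with $x|_F = p$. Granting this claim, strong density is almost immediate: given a nested sequence $F_n$ of finite-index subgroups with $\bigcap_n F_n = \{0\}$, each of the finitely many nonzero elements of $\ball_{R_0(p)}(\{0\})$ eventually leaves $F_n$, so $F_n \cap \ball_{R_0(p)}(\{0\}) = \{0\}$ for all sufficiently large $n$, and the claim applied to $H = F_n$ yields a point of $\fix(F_n) \cap [p]$ as required.

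To prove the claim I would fix a strong-irreducibility gap $E$ for $X$ and choose $R_0$ so large that, for any admissible $H$, the translates $\{h + F : h \in H\}$ are pairwise $E$-separated and $F$ sits deep inside a fundamental domain of $H$. Iterated application of strong irreducibility then produces a valid $X$-configuration whose restriction to each translate $h + F$ is the corresponding shifted copy of $p$. The main obstacle, and the step where I expect the real work, is upgrading such a pointwise-consistent configuration to a genuinely $H$-periodic point of $X$: the choices made inside distinct fundamental domains of $H$ need not match up under translation by $H$, which is a global wrap-around condition that strong irreducibility alone does not enforce, and this is exactly why the density-of-periodic-points argument of \cite{PoirierSaloContractible} does not directly yield strong density. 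I would close the gap by invoking the map extension property of \cite{MeyerovitchEmbeddingTheorem} --- equivalent to contractibility on $\mathbb{Z}^d$-SFTs by \cite{PoirierSaloContractible} --- applied to the finite shift orbit of the $H$-periodic extension of the prescribed pattern data, producing the required $H$-periodic point of $X$ extending $p$. It is this extra block-map rigidity of contractibility beyond strong irreducibility that promotes density of periodic points to strong density.
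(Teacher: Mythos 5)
Your handling of the BEEPS half is correct and identical to the paper's: both just invoke Proposition~\ref{prop:howtogetthebeeps}. For strong density of periodic points, however, you take a genuinely different route. The paper's entire proof of this part is a citation: strong density ``follows from the fact that the induced action from a finite index subgroup is still contractible, together with \cite[Theorem 6.10]{PoirierSaloContractible}.'' You instead try to re-derive the needed periodic-point extension result from first principles, reducing to the claim that each pattern $p$ on $F$ has a radius $R_0(p)$ such that every finite-index $H$ with $H \cap \ball_{R_0}(\{0\}) = \{0\}$ admits an $H$-fixed point through $p$. That reduction is clean and correct, and it is indeed the quantitative shape of statement one needs. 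Your parenthetical remark that density of periodic points alone ``does not directly yield strong density'' is also a fair observation in the abstract, but it is in tension with the paper's own proof, which asserts that Theorem~6.10 of \cite{PoirierSaloContractible} does supply exactly what is needed; most likely that theorem already gives the uniform quantitative form, so the paper's short citation is legitimate.

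The genuine gap is in the last step of your sketch, where you say you would ``close the gap by invoking the map extension property \dots\ applied to the finite shift orbit of the $H$-periodic extension of the prescribed pattern data.'' As stated this does not produce what you need. The embedding/extension machinery of Meyerovitch gives a shift-commuting map from a subshift $Y$ into $X$ that agrees with a prescribed map on a sub\emph{shift} $Z \subset Y$. If $Y$ is the finite shift orbit of a single $H$-periodic configuration, any proper subshift $Z \subsetneq Y$ is empty (the orbit is a single transitive set), so there is no way to impose the constraint that the output restricts to $p$ on $F$; a bare map $Y \to X$ merely yields \emph{some} $H$-periodic point of $X$, not one extending $p$, which defeats the whole point. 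Conversely, the constraint ``$y|_F = p$'' is not shift-invariant and cannot be encoded as a subshift. The clean way to close the wrap-around is the retract picture: realize $X$ as a retract of a full shift via a block map $\rho$, extend $p$ to $x \in X$, modify $x$ outside a large ball to an $H$-periodic configuration $x'$ in the ambient full shift, and observe that $\rho(x')$ is $H$-periodic and restricts to $p$ on $F$ once $R_0(p)$ exceeds the radius of $\rho$ plus the diameter of $F$. But that retract characterization (Theorem 1.5 of \cite{PoirierSaloContractible}) requires $X$ to have a fixed point, which is not among the hypotheses of this proposition. So either you need a more delicate argument that works with a periodic point in place of a fixed point, or you should simply cite Theorem 6.10 of \cite{PoirierSaloContractible} as the paper does; as written, the map-extension step of your proof does not go through.
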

\begin{proof}
That a contractible SFT has the BEEPS property is Proposition~\ref{prop:howtogetthebeeps}. The strong density of periodic points follows from the fact that the induced action from a finite index subgroup is still contractible, together with~\cite[Theorem 6.10]{PoirierSaloContractible}.
\end{proof}

This immediately implies the following.

\begin{theorem}
Contractible $\mathbb{Z}^{d}$-SFT's satisfy Theorem~\ref{thm:upperboundentropy}.
\end{theorem}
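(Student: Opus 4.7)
The statement is essentially a direct corollary. My plan is to verify that the two structural hypotheses of Theorem~\ref{thm:upperboundentropy}, namely (i) the BEEPS property and (ii) strong density of periodic points, are both supplied by contractibility, and then simply invoke Theorem~\ref{thm:upperboundentropy}.

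Concretely, I would open the proof by letting $(X,\mathbb{Z}^d)$ be a contractible $\mathbb{Z}^d$-SFT and fixing arbitrary parameters $\kappa \in \mathbb{N}$, $R,S>0$, and a BEEPS function $f_S$ for $(X,\mathbb{Z}^d)$. The existence of such an $f_S$ is not automatic from the statement, so my first task is to point out that it is guaranteed: Proposition~\ref{prop:contractiblegivesstuff} tells us that $(X,\mathbb{Z}^d)$ has the BEEPS property, which by definition produces an $S$-BEEPS function for every $S>0$. The same proposition also gives strong density of periodic points. Thus all hypotheses of Theorem~\ref{thm:upperboundentropy} are met.

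With these two facts in hand, Theorem~\ref{thm:upperboundentropy} applies verbatim with the ECS $\mathcal{Q}(f_S,v_{R,\kappa})$ and yields
\[ h_{\mathcal{Q}(f_S,v_{R,\kappa})}\bigl(\autinfty(X,\mathbb{Z}^d),\mathcal{Z}_X\bigr) \le \htop(X,\mathbb{Z}^d), \]
which is the assertion that contractible $\mathbb{Z}^d$-SFTs satisfy Theorem~\ref{thm:upperboundentropy}.

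There is no real obstacle here; both ingredients have already been established (BEEPS from Propositions~\ref{prop:dmixingforallbeeps} and~\ref{prop:howtogetthebeeps}, strong density of periodic points from the fact that the restriction of a contractible SFT to any finite-index subgroup is again contractible, combined with~\cite[Theorem 6.10]{PoirierSaloContractible}). So the proof is a one-line composition of Proposition~\ref{prop:contractiblegivesstuff} with Theorem~\ref{thm:upperboundentropy}, and my write-up would consist chiefly of making that composition explicit rather than any new computation.
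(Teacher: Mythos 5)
Your proposal is correct and follows exactly the paper's reasoning: the paper deduces this theorem immediately from Proposition~\ref{prop:contractiblegivesstuff} (contractible $\mathbb{Z}^d$-SFTs have BEEPS and strong density of periodic points) combined with Theorem~\ref{thm:upperboundentropy}. Your write-up simply makes the one-line composition explicit.
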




\section{Gate lattices and achieving entropy}
The last section shows that the local $\mathcal{Q}(f_{S},v_{R,\kappa})$ entropy of the pointed group $\left(\autinfty(X,\mathbb{Z}^{d}),\mathcal{Z}_{X}\right)$ is bounded above by the topological entropy of the system for any $\mathbb{Z}^{d}$-SFT $(X,\mathbb{Z}^{d})$ with the BEEPS property and strongly dense periodic points and any BEEPS function $f_{S}$. The goal of this section is to obtain the lower bound for the local $\mathcal{Q}$ entropy, and hence prove that for contractible shifts of finite type over $\mathbb{Z}^{d}$, the local $\mathcal{Q}$ entropy recovers exactly the topological entropy of the SFT. Now we must be slightly more specific with our choices of parameters $R,S,\kappa$.

\begin{theorem}\label{thm:fullshiftentropycalc}
Let $(X,\mathbb{Z}^{d})$ be an SFT on the alphabet $\mathcal{A}$ with strong density of periodic points and the BEEPS property. Let $R \ge S$ be greater than the window size for $X$, $f_{S}$ be a BEEPS function for $(X,\mathbb{Z}^{d})$, and $\kappa \ge |\mathcal{A}|$. Then
$$h_{\mathcal{Q}(f_{S},v_{R,\kappa})}\left( \autinfty(X,\mathbb{Z}^{d}),\mathcal{Z}_{X} \right) = h_{top}(X,\mathbb{Z}^{d}).$$
\end{theorem}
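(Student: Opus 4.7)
The inequality $h_{\mathcal{Q}(f_S, v_{R,\kappa})}(\autinfty(X,\mathbb{Z}^d), \mathcal{Z}_X) \le h_{top}(X,\mathbb{Z}^d)$ is already supplied by Theorem~\ref{thm:upperboundentropy}, whose hypotheses coincide with those of the present statement. So only the matching lower bound requires work. My plan is to fix, via Lemma~\ref{lemma:towerdomains}, any $\mathcal{Z}_X$-tower $\mathcal{H} = (H_m)$ with F\o{}lner section $\mathcal{D} = (D_m)$, and exhibit an $\mathcal{H}, \mathcal{D}$-locally $\mathcal{Q}(f_S, v_{R,\kappa})$ sequence $(K_m)_{m \ge 1}$ of subgroups of $\autinfty(X, \mathbb{Z}^d)$ realizing
\[ \limsup_{m \to \infty} \frac{1}{[\mathcal{Z}_X : H_m]} \log \log |K_m| \ge h_{top}(X, \mathbb{Z}^d). \]

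The $K_m$ are built as gate lattice subgroups. For each $u \in X|\partial_S D_m$ set $E(u) = \{w \in X|D_m \mid w|\partial_S D_m = u\}$; since $S$ exceeds the window size of $X$, any permutation of $E(u)$ extends to an element of $\aut(X, H_m)$ by applying it synchronously within every $H_m$-translate of $D_m$ whose $\partial_S$-boundary pattern equals $u$. I take
\[ K_m := \prod_{\substack{u \in X|\partial_S D_m \\ |E(u)| \ge 5}} \Alt(E(u)) \;\subset\; \aut(X, H_m) \;\subset\; \autinfty(X, \mathbb{Z}^d), \]
a direct product of nonabelian simple groups contained in $C_{\Homeo(X)}(H_m)$, which establishes condition (1). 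For condition (2), the number of simple factors is at most $|\mathcal{A}|^{|\partial_S D_m|} \le \kappa^{|\partial_R D_m|} = v_{R,\kappa}(D_m)$, using $S \le R$ and $\kappa \ge |\mathcal{A}|$; and the elementary estimate $\tfrac{1}{2} N\log N \le \log(N!/2) \le N \log N$ valid for $N \ge 5$ yields
\[ \frac{\log|\Alt(E(u))|}{\log|\Alt(E(v))|} \;\le\; 2 \cdot \frac{|E(u)|\log|E(u)|}{|E(v)|\log|E(v)|} \;\le\; f_S(\mathcal{D}, m) \]
for all large $m$ by the BEEPS inequality, whose built-in factor of $2$ is exactly what cancels the Stirling constant. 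Hence $K_m \in \mathcal{P}^s(f_S(\mathcal{D}, m), v_{R,\kappa}(D_m))$ eventually. For the entropy, pigeonholing $|X|D_m| = \sum_u |E(u)|$ over the at most $v_{R,\kappa}(D_m) = 2^{o(|D_m|)}$ boundary patterns produces some $u^*_m$ with $|E(u^*_m)| \ge 2^{h_{top}(X,\mathbb{Z}^d)|D_m| - o(|D_m|)}$ (using that $D_m$ is a F\o{}lner sequence for $\mathcal{Z}_X \cong \mathbb{Z}^d$), and then
\[ \log\log|K_m| \;\ge\; \log\log|\Alt(E(u^*_m))| \;=\; \log|E(u^*_m)| + o(|D_m|) \;\ge\; (h_{top}(X,\mathbb{Z}^d) - o(1))\,|D_m|, \]
which gives the matching lower bound after dividing by $[\mathcal{Z}_X : H_m] = |D_m|$.

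The main obstacle is condition (3): producing a single $\textrm{id} \ne \varphi \in \autinfty(X,\mathbb{Z}^d)$ lying in $K_m$ for every $m \ge 1$. Such a $\varphi$ must fix $x|\partial_S D_m$ for every $x$ and every $m$, so no element of $\aut(X, \mathbb{Z}^d)$ can serve; $\varphi$ must genuinely live in $\autinfty \setminus \aut$. My plan is to build $\varphi$ as a gate lattice automorphism at the smallest scale $H_1$ whose local permutation rule modifies only cells at distance strictly greater than $S$ from the boundary of $D_1$, and whose rule is of odd order -- for example a $3$-cycle on admissible symbols at a deeply interior cell, whose existence is guaranteed by the strong density of periodic points together with the positive entropy that BEEPS forces (cf.\ the argument inside Proposition~\ref{prop:dmixingforallbeeps}). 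Since $H_m \subset H_1$, this $\varphi$ centralizes every $H_m$; since its modifications at scale $m$ sit entirely inside $D_m \setminus \partial_S D_m$ of each $H_m$-translate, it preserves every fiber $E(u)$; and since the odd-order choice forces every orbit of the resulting tensor action on $E(u)$ to have odd length, the induced permutation lies in $\Alt(E(u))$. Hence $\varphi \in \prod_u \Alt(E(u)) = K_m$ for every $m$, completing condition (3). Matching the deep-interior offset of $\varphi$ to the $\partial_S$-boundary thickness at every tower scale, while landing in the alternating (rather than symmetric) product, is the principal technical hurdle of the argument.
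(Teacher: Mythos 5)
Your overall architecture matches the paper's: the upper bound is imported from Theorem~\ref{thm:upperboundentropy}, and the lower bound is established by building an $\mathcal{H},\mathcal{D}$-locally $\mathcal{Q}$ sequence out of gate-lattice products of alternating groups, then using Stirling and the BEEPS inequality to verify condition (2) and a counting argument to extract the entropy. The paper does exactly this in Theorem~\ref{thm:lowerbound11}, choosing the concrete tower $H_n = 3^n\Z^d$ with cubical fundamental domains $C_n$, and defining $K_n = \prod_{u\in X|\partial_S C_n}\mathfrak{G}^e(X,u)$.

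The one genuine and interesting departure is your handling of condition (3). The paper builds $\tilde\varphi$ from a nontrivial $\tau\in\Alt(E(v))$ at the base scale and then runs an induction up the tower, arguing that at scale $n+1$ the induced permutation of $X|C_{n+1}$ is a product of translated copies of an even permutation of $X|C_n$, hence even. Your odd-order trick — take $\tau$ to be a $3$-cycle, observe that the resulting gate lattice $\varphi$ has order $3$, so its restriction to any fiber $E(u)$ has odd order and is therefore automatically even — shortcuts this induction entirely and works for an arbitrary tower and F\o{}lner section. This is a clean alternative, and cleaner than the paper's fiber-by-fiber parity bookkeeping. (You do need, and assert without proof, that $\partial_S D_m \subset \bigcup_{g\in D_m\cap H_1} g\,\partial_S D_1$; this is true and follows quickly from condition (4) of a F\o{}lner section, but should be noted.)

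There are two real gaps, both related, and both also present (though more hidden) in the paper. First, you define $K_m$ as the product only over $u$ with $|E(u)|\ge 5$. But then $\varphi\in K_m$ fails unless $\varphi$ acts trivially on every $E(u)$ with $|E(u)| < 5$, which does not follow from anything you have said. The paper instead takes the product over \emph{all} $u$, so that $\varphi \in K_m$ is automatic, and then uses D-mixing (Lemma~\ref{lemma:fillingsmakesimple}) to guarantee that for $m$ large \emph{every} $|E(u)|\ge 5$, so that $K_m$ eventually is a product of simple groups — which is all that condition (2) of the locally-$\mathcal{Q}$ definition requires. Second, your claim that ``BEEPS forces positive entropy'' is false (a one-point SFT has BEEPS vacuously), so the existence of a fiber with $|E(v)|\ge 3$ needs a different source. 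In the paper the lower bound theorem (Theorem~\ref{thm:lowerbound11}) is stated with the hypothesis that $X$ is D-mixing for every F\o{}lner sequence, which is what really powers both of these steps; the hypotheses displayed in Theorem~\ref{thm:fullshiftentropycalc} do not literally suffice for the lower bound. So either you should add the D-mixing hypothesis (or positive entropy plus a gluing assumption) as the paper's proof implicitly does, or redefine $K_m$ over all $u$ and invoke D-mixing only to show condition (2) holds for all but finitely many $m$. With those repairs your proof is correct and, in the treatment of condition (3), shorter than the paper's.
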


In particular, by Proposition~\ref{prop:contractiblegivesstuff} the above holds for contractible $\mathbb{Z}^{d}$-SFT's.
\begin{theorem}\label{thm:entropycalccontractibles}
Let $(X,\mathbb{Z}^{d})$ be a nontrivial contractible $\mathbb{Z}^{d}$-SFT on alphabet $\mathcal{A}$. Let $R \ge S$ be greater than the window size for $X$ and $f_{S}$ be a BEEPS function for $(X,\mathbb{Z}^{d})$. Then for $\kappa \ge |\mathcal{A}|$ we have
$$h_{\mathcal{Q}(f_{S},v_{R,\kappa})}\left( \autinfty(X,\mathbb{Z}^{d}),\mathcal{Z}_{X} \right) = h_{top}(X,\mathbb{Z}^{d}).$$
\end{theorem}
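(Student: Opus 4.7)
The plan is to deduce Theorem~\ref{thm:entropycalccontractibles} immediately as a specialization of Theorem~\ref{thm:fullshiftentropycalc}, using Proposition~\ref{prop:contractiblegivesstuff} to verify the hypotheses. So the ``proof'' here is purely one of citation; all of the substantive work lives in the two ingredients being cited.

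Concretely, I would first invoke Proposition~\ref{prop:contractiblegivesstuff}, which asserts that every contractible $\mathbb{Z}^d$-SFT has both the BEEPS property and strong density of periodic points. These are precisely the two hypotheses imposed on $(X,\mathbb{Z}^d)$ in the statement of Theorem~\ref{thm:fullshiftentropycalc}. The nontriviality assumption in the current statement (which is needed so that Proposition~\ref{prop:contractiblegivesstuff} yields a positive-entropy system, as implicitly used in the derivation of the BEEPS function in Proposition~\ref{prop:dmixingforallbeeps}) is thus the only item beyond contractibility that needs to be recorded. Given the choice of $R \ge S$ larger than the window size, a BEEPS function $f_S$, and $\kappa \ge |\mathcal{A}|$, all parameters required by Theorem~\ref{thm:fullshiftentropycalc} are in place.

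With hypotheses verified, Theorem~\ref{thm:fullshiftentropycalc} then gives directly the desired equality
\[
h_{\mathcal{Q}(f_{S},v_{R,\kappa})}\left( \autinfty(X,\mathbb{Z}^{d}),\mathcal{Z}_{X} \right) = h_{top}(X,\mathbb{Z}^{d}),
\]
finishing the proof.

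Because this is a logical specialization, there is no genuine obstacle at this level. All difficulty is inherited from Theorem~\ref{thm:fullshiftentropycalc}: the upper bound already follows from Theorem~\ref{thm:upperboundentropy} (applied with the same parameters, using the BEEPS function and the $v_{R,\kappa}$ controlled function to ensure the ECS $\mathcal{Q}(f_S,v_{R,\kappa})$ is legitimate), while the genuinely new content is the matching lower bound. That lower bound, which is the true engine of the theorem, requires producing, along some $\mathcal{Z}_X$-tower $\mathcal{H} = (H_m)$ with F\o{}lner section $\mathcal{D} = (D_m)$, subgroups $K_m \subset C_{\autinfty(X)}(H_m)$ that (i) are simple-product groups landing in $\mathcal{P}^s(f_S(\mathcal{D},m), v_{R,\kappa}(D_m))$, (ii) contain a common nontrivial element witnessing condition (3) of the locally $\mathcal{Q}$ property, and (iii) are large enough that $\frac{1}{[\mathcal{Z}_X : H_m]} \log \log |K_m|$ approaches $h_{top}(X,\mathbb{Z}^d)$. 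This is precisely what the gate lattice construction of Section 5 is designed to achieve, with the restriction $R \ge S$ exceeding the window size ensuring compatibility between the local permutation rules defining gate lattices and the boundary-extension bookkeeping built into the ECS.
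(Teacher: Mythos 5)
Your proposal is correct and matches the paper exactly: the paper states Theorem~\ref{thm:fullshiftentropycalc} for SFTs with BEEPS and strong density of periodic points, then immediately records Theorem~\ref{thm:entropycalccontractibles} as the specialization to contractible SFTs via Proposition~\ref{prop:contractiblegivesstuff}, with no separate argument. Your supplementary commentary on where the real work lies (the lower bound via the gate-lattice construction, Theorem~\ref{thm:lowerbound11}) and on the role of the nontriviality hypothesis is also consistent with the paper's development.
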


In light of Theorem~\ref{thm:upperboundentropy}, to prove Theorem \ref{thm:fullshiftentropycalc} it suffices to show that the local $\mathcal{Q}(f_{S},v_{R,\kappa})$ entropy $h_{\mathcal{Q}(f_{S},v_{S,\kappa})}\left( \autinfty(X,\mathbb{Z}^{d}),\mathcal{Z}_{X} \right)$ is bounded below by $\htop(X,\mathbb{Z}^{d})$. For this, it suffices to find a $\mathcal{Z}_{X}$-tower $\mathcal{H}$, a F\o{}lner section $\mathcal{D}$ for $\mathcal{H}$, and an $\mathcal{H},\mathcal{D}$-locally $\mathcal{Q}(f_{S},v_{R,\kappa})$ sequence $K_{m}$ in $\autinfty(X,\mathcal{Z}_{X})$ which is $\mathcal{Z}_{X}$-locally $\mathcal{Q}(f_{S},v_{R,\kappa})$ such that
$$\limsup_{m \to \infty} \frac{1}{[\mathcal{Z}_{X} \colon H_{m}]} \log \log |K_{m}| \ge \htop(X,\mathbb{Z}^{d}).$$
We will do this using \emph{gate lattices}, introduced in~\cite{SaloGateLattices}, which serve as a generalization of the subgroups of stabilized simple automorphisms introduced in~\cite{schmiedingLocalMathcalEntropy2022}.

\subsection{Gates and gate lattices}
For now, fix a countable residually finite amenable group $G$ and let $X \subset \mathcal{A}^{G}$ be a $G$-subshift. A \emph{gate} on $(X,G)$ is a homeomorphism $\chi \colon X \to X$ for which there exists a finite set $N \subset G$ and a permutation $\tau_{\chi} \colon X|_{N} \to X_{N}$  such that, for all $x \in X$, we have $\chi(x)_{g} = \tau(x_{g})$ for $g \in N$ and $\chi(x)_{g} = x_{g}$ for all $g \in G \setminus N$. Such an $N$ for $\chi$ is called a \emph{strong neighborhood} for $\chi$. For $g \in G$ and a gate $\chi$ we use the notation $\chi^{g} = \sigma_{g^{-1}} \chi \sigma_{g}$. Note that if $N$ is a strong neighborhood for $\chi$, then $Ng$ is a strong neighborhood for $\chi^{g}$, and $\chi^{g}$ acts by the same permutation on $Ng$ as $\chi$ does on $N$. Moreover, if $N_{1}, N_{2}$ are strong neighborhoods for gates $\chi_{1},\chi_{2}$ and $N_{1} \cap N_{2} = \emptyset$, then $\chi_{1} \chi_{2} = \chi_{2} \chi_{1}$. In particular, if for some gate $\chi$ with strong neighborhood $N$ and some $g \in G$ we have $gN \cap N = \emptyset$, then $\chi^{g} \chi = \chi \chi^{g}$.

Suppose $H$ is a finite index subgroup of $G$ and let $D$ be fundamental domain for $H$ in $G$. Suppose further that $\chi$ is a gate with strong neighborhood $D$. Then for each distinct $h_{1}, h_{2} \in H$, we have $h_{1}D \cap h_{2}D = \emptyset$, and hence $\chi^{h_{1}}\chi^{h_{2}} = \chi^{h_{1}}\chi^{h_{2}}$. Define
$$\chi^{H} = \lim_{F \subset H, |F|< \infty} \prod_{h \in F} \chi^{h}$$
where the limit means pointwise convergence and the infinite product means the infinite composition. Lemmas 12 and 13 in~\cite{SaloGateLattices} show that this function is well-defined, continuous, and belongs to $\aut(X,H)$. Following~\cite{SaloGateLattices}, a homeomorphism of the form $\chi^{H}$ for some finite index subgroup $H \subset G$ is called a \emph{gate lattice}.

We introduce some notation. Recall for a finite $D \subset G$, $S>0$ and pattern $u \in X|\partial_{S}D$, $E(u)$ denotes the set of $X$-admissible extensions of $u$ to a $D$-pattern in $X$.
\begin{definition}
Let $X$ be a $G$-SFT. Given a finite set $D \subset G$ we let $\gate(X,D)$ denote the group of gates with strong neighborhood $D$. Denote by $\gate^{e}(X,D)$ the subgroup of $\gate(X,D)$ which correspond to gates whose permutations of $\mathcal{A}^{D}$ are even. If $H$ is a finite index subgroup $H \subset G$ for which $D$ is a fundamental domain, then we define
$$\mathfrak{G}(X,H,D) = \{\chi^{H} \mid \chi \in \gate(X,D)\}$$
and the subgroup
$$\mathfrak{G}^{\textrm{e}}(X,H,D) = \{\chi^{H} \mid \chi \in \gate^{\textrm{e}}(X,D)\}.$$

Now suppose $S > 0$ is larger than a window size for $X$ and let $u \in X|\partial_{S}D$. If $\chi \in \gate(X,D)$ satisfies $\chi|_{\partial_{S}D} = \textrm{id}$, then $\chi|_{D}$ defines an associated permutation $\tau_{\chi} \in \sym(E(u))$, and we define
$$\gate^{e}(X,u) = \{\chi \in \gate(X,D) \mid \chi|_{\partial_{S}D} = \textrm{id} \textrm{ and } \tau_{\chi} \in \Alt(E(u))\}.$$
Finally, we define
$$\mathfrak{G}^{e}(X,u) = \{\chi^{H} \mid \chi \in \gate^{e}(X,u)\}.$$
\end{definition}
Note that if $H$ is a finite index subgroup of $G$ with $D$ a fundamental domain and $u \in X|\partial_{S}D$, then $\mathfrak{G}^{e}(X,u)$ is a subgroup of $\mathfrak{G}^{e}(X,H,D)$. Moreover, by definition, $\gate^{e}(X,u)$ is isomorphic to $\Alt(E(u))$. Thus $\mathfrak{G}^{e}(X,u)$ is also isomorphic to $\Alt(E(u))$, since it is straightforward to check that $\mathfrak{G}^{e}(X,u)$ is isomorphic to $\gate^{e}(X,u)$.

The proof of the following is straightforward, and we leave it to the reader.

\begin{proposition}\label{prop:gatelatticeproperties1}
Let $X$ be a $\mathbb{Z}^{d}$-SFT. If $H \subset G$ is finite index and $D$ is a fundamental domain for $H$, then $\mathfrak{G}(X,H,D)$ is a subgroup of $\aut(X,H)$. Moreover, if $S > 0$ is greater than the window size for $X$ and $u \in X|\partial_{S}D$, then $\mathfrak{G}^{e}(X,u)$ is a subgroup of $\aut(X,H)$ which is isomorphic to $\Alt(E(u))$. Finally, if $u_{1} \ne u_{2}$ are both in $X|\partial_{S}D$, then the subgroups $\mathfrak{G}^{e}(X,u_{1})$ and $\mathfrak{G}^{e}(X,u_{2})$ strongly commute, in the sense that for every pair $\phi_{i} \in \mathfrak{G}^{e}(X,u_{i})$ we have $\phi_{1} \phi_{2} = \phi_{2} \phi_{1}$, and hence for any set $\{u_{1},\ldots,u_{k}\} \subset X|\partial_{S}D$, the product $\prod_{i=1}^{k}\mathfrak{G}^{e}(X,U_{i})$ sits naturally as a subgroup of $\aut(X,H)$.
\end{proposition}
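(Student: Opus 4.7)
The plan is to verify each of the three claims using the commutation properties for gates recorded just before the proposition, together with the fact that $D$ is a fundamental domain for $H$ so the translates $hD$, $h \in H$, are pairwise disjoint. For the first claim I would set up the map $\chi \mapsto \chi^{H}$ and show it is a group homomorphism from $\gate(X,D)$ into $\aut(X,H)$. The key input is that for distinct $h,h' \in H$ we have $hD \cap h'D = \emptyset$, so the previously noted commutativity of gates with disjoint strong neighborhoods lets me freely rearrange the infinite product $\chi_{1}^{H} \chi_{2}^{H} = \prod_{h \in H} \chi_{1}^{h} \prod_{h \in H} \chi_{2}^{h}$ into $\prod_{h \in H}(\chi_{1}\chi_{2})^{h} = (\chi_{1}\chi_{2})^{H}$, and similarly for inverses. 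Combined with Lemmas 12 and 13 of~\cite{SaloGateLattices} guaranteeing $\chi^{H} \in \aut(X,H)$, this gives $\mathfrak{G}(X,H,D)$ as a subgroup of $\aut(X,H)$.

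For the second claim, I would first check that $\gate^{e}(X,u)$ is isomorphic to $\Alt(E(u))$: an element is determined by its induced permutation of $X|_{D}$, and the defining conditions force that permutation to be the identity on each $E(u')$ with $u' \neq u$ and an element of $\Alt(E(u))$ on $E(u)$. Restricting the homomorphism from the first claim to $\gate^{e}(X,u)$, it only remains to verify injectivity, which is immediate since the $D$-coordinate of $\chi^{H}(x)$ is precisely $\chi(x)|_{D}$, so $\chi^{H} = \textrm{id}$ forces $\chi = \textrm{id}$ on all of $X$. For the third claim, I would use that for $u_{1} \neq u_{2}$ the extension sets $E(u_{1})$ and $E(u_{2})$ are disjoint, because an $X$-admissible $D$-pattern has a unique $\partial_{S}D$-restriction. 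Hence any $\chi_{i} \in \gate^{e}(X,u_{i})$ acts nontrivially only on $E(u_{i})$, so $\chi_{1}$ and $\chi_{2}$ commute as permutations of $X|_{D}$, which gives $\chi_{1}^{h}\chi_{2}^{h} = \chi_{2}^{h}\chi_{1}^{h}$ for every $h$. Combined with disjoint-translate commutation across $h \ne h'$, this yields $\chi_{1}^{H}\chi_{2}^{H} = \chi_{2}^{H}\chi_{1}^{H}$, and pairwise strong commutation makes the product $\prod_{i=1}^{k}\mathfrak{G}^{e}(X,u_{i})$ a subgroup of $\aut(X,H)$.

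The one point that will require care throughout is the manipulation of the infinite products $\chi^{H}$: any rearrangement must be justified either by appealing to the cited lemmas of~\cite{SaloGateLattices} or by arguing pointwise, using that for a fixed $x \in X$ and a fixed coordinate $g \in G$ only the single factor $\chi^{h}$ with $g \in hD$ contributes. Once that bookkeeping is made precise, all three claims reduce to the two disjointness facts already identified, namely disjointness of the $H$-translates of $D$ and disjointness of the extension sets $E(u_{i})$.
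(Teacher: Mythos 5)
The paper itself gives no proof here: immediately before the statement it says ``The proof of the following is straightforward, and we leave it to the reader.'' So there is no paper proof to compare against; the question is only whether your fleshed-out argument is correct. It is, and it organizes the verification in the natural way. Your first-claim computation $\chi_1^H\chi_2^H=(\chi_1\chi_2)^H$ correctly uses disjointness of the translates $Dh$ to justify the rearrangement, and your remark at the end about reducing to a pointwise check at a fixed coordinate $g$ (where only the single factor $\chi^h$ with $g\in Dh$ acts) is exactly the clean way to make the infinite-product bookkeeping rigorous, consistent with the cited Lemmas 12--13 of~\cite{SaloGateLattices}. Your injectivity argument for $\chi\mapsto\chi^H$ (reading off $\chi^H(x)|_D=\chi(x)|_D$) is the right one, and for the third claim the disjointness of $E(u_1)$ and $E(u_2)$ inside $X|_D$ does give commutation of the underlying gate permutations, from which strong commutation of the $\chi_i^H$ follows as you say.

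Two small points worth tightening. First, for $\gate^e(X,u)\cong\Alt(E(u))$ you assert the definition ``forces'' the gate permutation to be trivial on each $E(u')$ with $u'\neq u$; as literally written the definition only constrains $\tau_\chi|_{E(u)}$, so you should say explicitly that the intended (and paper-consistent, since the paper asserts this isomorphism ``by definition'') reading is that the gate permutation is supported on $E(u)$, i.e.\ identity on every other $E(u')$. Second, ``sits naturally as a subgroup'' for the external product $\prod_i\mathfrak{G}^e(X,u_i)$ requires not only pairwise commutation but also that the multiplication map $\prod_i\mathfrak{G}^e(X,u_i)\to\aut(X,H)$ is injective; this does follow from your disjoint-support observation (if $\prod_i\phi_i=\mathrm{id}$ then reading off the $D$-coordinates forces each underlying gate permutation to be trivial on its own $E(u_i)$), but you should record that extra half-sentence rather than deduce it from commutation alone.
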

The D-mixing condition is sufficient for us to generate many nontrivial $\mathfrak{G}^{e}(X,u)$.
\begin{lemma}\label{lemma:fillingsmakesimple}
Suppose $X$ is a $\mathbb{Z}^{d}$-SFT, let $\mathcal{H} = (H_{n})$ be a $\mathbb{Z}^{d}$ tower and $\mathcal{C} = (C_{n})$ a F\o{}lner section for $\mathcal{H}$. Let $S > 0$ be greater than the window size for $X$. Suppose that $X$ is D-mixing for $C_{n}$. Then for every sufficiently large $n$ and every $u \in X|\partial_{S}C_{n}$, the group $\mathfrak{G}^{e}(X,u)$ is nontrivial and simple.
\end{lemma}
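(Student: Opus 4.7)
By Proposition~\ref{prop:gatelatticeproperties1}, $\mathfrak{G}^{e}(X,u)$ is isomorphic to $\Alt(E(u))$, so the task reduces to showing that for all sufficiently large $n$ and every $u \in X|\partial_{S}C_{n}$, one has $|E(u)| \geq 5$; then $\Alt(E(u))$ is both nontrivial and simple. The plan is to use D-mixing to manufacture many distinct extensions of an arbitrary boundary pattern $u$, and then to observe that D-mixing forces positive entropy so these extensions proliferate exponentially.

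First, by Lemma~\ref{lemma:Dmixingfolners} applied to the F\o{}lner sequence $C_{n}$, I would pick an accompanying F\o{}lner sequence $B_{n} \subset C_{n}$ with $|C_{n} \setminus B_{n}| = o(|C_{n}|)$ such that, for all large $n$, the set $C_{n} \setminus \partial_{S}C_{n}$ is a mixing set for $B_{n}$. In particular $B_{n} \subset C_{n} \setminus \partial_{S}C_{n}$ and $|B_{n}| \to \infty$. Now fix $u \in X|\partial_{S}C_{n}$ and choose any $y' \in X$ with $y'|\partial_{S}C_{n} = u$ (which exists since $u$ is globally admissible). For an arbitrary $y \in X$, the mixing set definition yields $z \in X$ with $z|B_{n} = y|B_{n}$ and $z|(\mathbb{Z}^{d} \setminus (C_{n} \setminus \partial_{S}C_{n})) = y'|(\mathbb{Z}^{d} \setminus (C_{n} \setminus \partial_{S}C_{n}))$. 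Since $\partial_{S}C_{n} \subset \mathbb{Z}^{d} \setminus (C_{n} \setminus \partial_{S}C_{n})$, this gives $z|\partial_{S}C_{n} = u$, so $z|C_{n} \in E(u)$. Distinct choices of $y|B_{n}$ yield distinct elements of $E(u)$ (they differ on $B_{n}$), and therefore $|E(u)| \geq |X|B_{n}|$.

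To finish, I would note that a nontrivial D-mixing SFT has positive topological entropy, by exactly the argument already appearing in the proof of Proposition~\ref{prop:dmixingforallbeeps}: a mixing set for a singleton allows one to pack many independent copies of an admissible configuration into any F\o{}lner set. Hence $|X|B_{n}| \geq 2^{h_{\mathrm{top}}(X)|B_{n}|(1+o(1))}$, and since $|B_{n}| \to \infty$ this tends to infinity. So for $n$ large enough, $|E(u)| \geq 5$, which makes $\mathfrak{G}^{e}(X,u) \cong \Alt(E(u))$ nontrivial and simple.

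\textbf{Main obstacle.} The only delicate point is bookkeeping in the mixing set definition: the mixing set $C_{n} \setminus \partial_{S}C_{n}$ is declared relative to $B_{n}$, but we want to fix content on $\partial_{S}C_{n}$ rather than on $B_{n}$, so we must feed the prescribed boundary $u$ in through the complement side $y'$ and read off $z|C_{n}$. Modulo this unpacking the argument is routine; implicitly the lemma requires $X$ nontrivial, otherwise $|E(u)| = 1$ and the conclusion fails, and I would flag this as a standing hypothesis consistent with its use in subsequent sections.
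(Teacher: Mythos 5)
Your argument is correct and follows the same route the paper takes; the paper's one-line proof (``Lemma~\ref{lemma:Dmixingfolners} implies $|E(u)|\geq 5$'') is simply a compressed version of your unpacking, which itself mirrors the $|E(u)| \geq |X|B_n|$ estimate already carried out in the proof of Proposition~\ref{prop:dmixingforallbeeps}. Your observation that nontriviality of $X$ is an implicit standing hypothesis here (as it is made explicit in Proposition~\ref{prop:dmixingforallbeeps}) is a legitimate minor catch, since a one-point $X$ is vacuously D-mixing yet has $|E(u)|=1$.
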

\begin{proof}
Lemma~\ref{lemma:Dmixingfolners} implies for large enough $n$ and every $u \in X|\partial_{S}C_{n}$ that $|E(u)| \ge 5$, so this follows from Proposition~\ref{prop:gatelatticeproperties1} and simplicity of $A_{n}$ for $n \ge 5$.
\end{proof}

We now prove the lower bound. For the statement, recall by Proposition~\ref{prop:dmixingforallbeeps} that if $X$ is $\mathbb{Z}^{d}$-SFT which is D-mixing for every F\o{}lner sequence, then $X$ has the BEEPS property.

\begin{theorem}\label{thm:lowerbound11}
Suppose that $X$ is a $\mathbb{Z}^{d}$-SFT defined on an alphabet $\mathcal{A}$ which is D-mixing for every F\o{}lner sequence and let $R \ge S > 0$ be greater than the window size for $X$. Suppose also that $f_{S}$ is an $S$-BEEPS function for $X$ and that $\kappa \ge |\mathcal{A}|$ is an integer. Then $h_{\mathcal{Q}(f_{S},v_{R,\kappa})}\left( \autinfty(X,\mathbb{Z}^{d}),\mathcal{Z}_{X} \right) \ge h_{top}(X,\mathbb{Z}^{d})$.
\end{theorem}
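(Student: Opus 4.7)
The plan is to exhibit an explicit $\mathcal{H},\mathcal{D}$-locally $\mathcal{Q}(f_S, v_{R,\kappa})$ sequence of subgroups $(K_m) \subset \autinfty(X, \mathbb{Z}^d)$ whose log-log cardinality grows at the rate $\htop(X, \mathbb{Z}^d)$ relative to $[\mathcal{Z}_X : H_m]$; by the supremum definition of $h_{\mathcal{Q}}$, this delivers the claimed lower bound. Concretely, since $\mathcal{Z}_X \cong \mathbb{Z}^d$ is amenable and residually finite, Lemma~\ref{lemma:towerdomains} supplies a $\mathcal{Z}_X$-tower $\mathcal{H} = (H_m)$ with F\o{}lner section $\mathcal{D} = (D_m)$ (for instance $H_m = N_m \mathbb{Z}^d$ with $D_m$ a centered box and $N_m$ a fast-growing sequence). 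One then sets
\[ K_m \;:=\; \prod_{u \in X|\partial_S D_m} \mathfrak{G}^e(X,u) \;\subset\; \aut(X, H_m), \]
which by Proposition~\ref{prop:gatelatticeproperties1} is a direct product of copies of $\Alt(E(u))$; D-mixing for every F\o{}lner sequence combined with Lemma~\ref{lemma:fillingsmakesimple} ensures $|E(u)| \ge 5$ for all large $m$, so each factor is a nontrivial finite simple group.

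I would next verify the three conditions. Condition (1) is immediate from $K_m \subset \aut(X, H_m) \subset C(H_m)$. For condition (2), the factor count is bounded by $\vert X|\partial_S D_m \vert \le |\mathcal{A}|^{|\partial_S D_m|} \le \kappa^{|\partial_R D_m|} = v_{R,\kappa}(D_m)$, using $R \ge S$ and $\kappa \ge |\mathcal{A}|$; the ratio bound follows from BEEPS together with Stirling, since $\log|\Alt(E(u))| \sim |E(u)|\log|E(u)|$ and hence
\[ \frac{\log|\Alt(E(u))|}{\log|\Alt(E(v))|} \;\sim\; \frac{|E(u)|}{|E(v)|}\cdot\frac{\log|E(u)|}{\log|E(v)|} \;\le\; \tfrac{1}{2}\,f_S(\mathcal{D},m) \;\le\; f_S(\mathcal{D},m) \]
for $m$ large. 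Condition (3) is the main subtlety. Starting with a fixed small gate $\chi_0 \in \gate^e(X, u_0)$ with strong neighborhood $D_1$, for $m$ large enough that $D_1 \subset D_m \setminus \partial_S D_m$, viewing $\chi_0$ as a gate with strong neighborhood $D_m$ and splitting its action on $X|D_m$ according to the boundary pattern on $\partial_S D_m$ yields a commuting factorization $\chi_0 = \prod_u \chi_0^{(u)}$ with each $\chi_0^{(u)} \in \gate^e(X,u)$; parity is preserved because the permutation induced on each $E(u)$ is a disjoint union of copies of the $\tau_{\chi_0}$-action, each with the same sign as $\tau_{\chi_0}$. This places $\chi_0^{H_m} \in K_m$ at every large scale. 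To promote the scale-dependent element $\chi_0^{H_m}$ to a single $\varphi_0 \neq \mathrm{id}$ common to every $K_m$, one approach is to expand $\chi_0^{H_1} = \prod_{e \in D_m \cap H_1}(\chi_0^e)^{H_m}$ and enlarge $K_m$ by the $o(|D_m|)$-many boundary translates (those $e$ for which $D_1 e$ meets $\partial_S D_m$) as additional gate lattice factors of comparable size; this preserves both $\mathcal{P}^s$-class membership and the asymptotics of $\log|K_m|$, and is the principal technical hurdle of the argument.

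For the entropy lower bound, pigeonhole on the partition $X|D_m = \bigsqcup_u E(u)$ gives
\[ \max_u |E(u)| \;\ge\; \frac{\vert X|D_m \vert}{\vert X|\partial_S D_m \vert} \;\ge\; 2^{\htop(X,\mathbb{Z}^d)|D_m| - o(|D_m|)}, \]
using $\log\vert X|D_m\vert = \htop(X,\mathbb{Z}^d)|D_m| + o(|D_m|)$ and $\log\vert X|\partial_S D_m\vert = o(|D_m|)$. Stirling then yields $\log\log|\Alt(E(u^*))| \ge \htop(X,\mathbb{Z}^d)|D_m| - o(|D_m|)$ for the maximizer $u^*$, so since $\log|K_m| \ge \log|\Alt(E(u^*))|$, dividing by $|D_m| = [\mathcal{Z}_X : H_m]$ and taking $\limsup$ delivers $h_{\mathcal{Q}(f_S, v_{R,\kappa})}(\autinfty(X,\mathbb{Z}^d), \mathcal{Z}_X) \ge \htop(X,\mathbb{Z}^d)$, completing the proof.
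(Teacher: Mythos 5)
Your overall plan is the same as the paper's: pick the tower $\mathcal{H}$ and F\o{}lner section $\mathcal{D}$, set $K_m = \prod_{u \in X|\partial_S D_m} \mathfrak{G}^e(X,u)$, verify the three conditions for being $\mathcal{H},\mathcal{D}$-locally $\mathcal{Q}(f_S, v_{R,\kappa})$, and extract the entropy lower bound from $\max_u |E(u)|$ via D-mixing and Stirling. Conditions (1), (2) and the final entropy estimate are handled correctly and essentially as in the paper.

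The gap is in your treatment of condition (3). Your proposed fix --- ``enlarge $K_m$ by the $o(|D_m|)$-many boundary translates $\ldots$ as additional gate lattice factors of comparable size'' --- does not work. Whatever factors you attach, they arise from a fixed gate $\chi_0$ with bounded strong neighborhood $D_1$, so they have order bounded independently of $m$, while $\log |\Alt(E(u))| \asymp |E(u)|\log|E(u)|$ grows like $2^{\htop(X)|D_m|}\cdot|D_m|$. The resulting ratio $\log |A_i| / \log |A_j|$ would be exponential in $|D_m|$, whereas the controlled BEEPS function only allows $f_S(\mathcal{D},m) = 2^{o(|D_m|)}$; so the enlarged $K_m$ falls out of the class $\mathcal{P}^s(f_S(\mathcal{D},m), v_{R,\kappa}(D_m))$ and condition (2) collapses. (There is also no reason the added factors are simple.)

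The fix is also unnecessary, because the ``principal technical hurdle'' you identify does not exist. Since the tower satisfies $D_m = \bigsqcup_{e \in D_m \cap H_1} (D_1 + e)$, for every $e \in D_m \cap H_1$ (boundary or not) one has $D_1 + e \subset D_m$, and $\partial_S D_m \cap (D_1 + e) \subset \partial_S D_1 + e$ because any $x \in D_1 + e$ within distance $S$ of $\mathbb{Z}^d \setminus D_m$ is a fortiori within distance $S$ of $\mathbb{Z}^d \setminus (D_1 + e)$. Since $\chi_0$ acts trivially on $\partial_S D_1$, every translate $\chi_0^e$, viewed as a gate on $D_m$, acts trivially on $\partial_S D_m$ and restricts to each $E(u)$ as a disjoint union of copies of the even $\tau_{\chi_0}$, hence an even permutation. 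So \emph{each} $(\chi_0^e)^{H_m}$ already decomposes into $\prod_u \mathfrak{G}^e(X,u) = K_m$, including the boundary translates, and therefore $\chi_0^{H_1} = \prod_{e \in D_m \cap H_1}(\chi_0^e)^{H_m} \in K_m$ with no modification to $K_m$. The paper carries this out inductively from $n$ to $n+1$ using the explicit tiling $C_{n+1} = \bigcup_{g \in C_{n+1} \cap H_n} g C_n$ and the inclusion $\partial_S C_{n+1} \subset \bigcup_g g\,\partial_S C_n$, but the content is the same.
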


Before the proof, we note that the BEEPS assumption in the theorem is only needed because we wish to compute the local $\mathcal{Q}$ entropy with respect to $\mathcal{Q}(f_{S},v_{R,\kappa})$ where $f_{S}$ is a BEEPS function.

Given $n \in \mathbb{N}$ we define the finite index subgroup $H_{n} = 3^{n}\mathbb{Z}^{d}$ in $\mathcal{Z}_{X} = \mathbb{Z}^{d}$, and set $C_{n} = \{v \in \mathbb{Z}^{d} \mid |v|_{\infty} \le \frac{3^{n}-1}{2}\}$. Since $X$ is D-mixing for $C_{n}$, up to reindexing if necessary, we will assume without loss of generality that there exists $v \in X|\partial_{S}C_{1}$ such that $|E(v)| \ge 3$, and hence there exists a nontrivial even permutation $\Alt(E(v))$. Then $\mathcal{C} = (C_{n})$ is a F\o{}lner section for the tower $\mathcal{H} = (H_{n})$. Now we define
$$K_{n} = \prod_{u \in X|\partial_{S}C_{n}}\mathfrak{G}^{e}(X,u).$$

\begin{lemma}
$K_{n}$ is an $\mathcal{H},\mathcal{C}$-locally $\mathcal{Q}(f_{S},v_{S,\kappa})$ sequence.
\end{lemma}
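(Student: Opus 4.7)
The goal is to verify the three conditions defining an $\mathcal{H},\mathcal{C}$-locally $\mathcal{Q}(f_S, v_{S,\kappa})$ sequence. For (1), each $\mathfrak{G}^{e}(X, u)$ sits inside $\aut(X, H_n)$ by Proposition~\ref{prop:gatelatticeproperties1} and hence centralizes $H_n$; together with the strong commutation of distinct factors (also from Proposition~\ref{prop:gatelatticeproperties1}), this gives $K_n \subset C(H_n)$.

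For condition (2), Proposition~\ref{prop:gatelatticeproperties1} identifies $K_n \cong \prod_{u \in X|\partial_S C_n} \Alt(E(u))$, and Lemma~\ref{lemma:fillingsmakesimple} ensures that for all sufficiently large $n$ every $\Alt(E(u))$ is nontrivial and simple. The number of factors is bounded by $|X|\partial_S C_n| \le |\mathcal{A}|^{|\partial_S C_n|} \le \kappa^{|\partial_S C_n|} = v_{S,\kappa}(C_n)$ since $\kappa \ge |\mathcal{A}|$. For the ratio condition, I combine the elementary bounds $\tfrac{1}{2}N \log N \le \log(N!/2) \le N \log N$ (valid for $N \ge 5$) with the BEEPS inequality to obtain, for $u, u' \in X|\partial_S C_n$,
\[
\frac{\log|\Alt(E(u))|}{\log|\Alt(E(u'))|} \le \frac{|E(u)|\log|E(u)|}{\tfrac{1}{2}|E(u')|\log|E(u')|} \le f_S(\mathcal{C}, n);
\]
the factor $2$ in the definition of BEEPS is calibrated precisely to absorb the factor arising from this Stirling-type estimate.

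The heart of the proof is condition (3), exhibiting a nontrivial $\varphi \in \bigcap_n K_n$. Choose a gate $\chi$ with strong neighborhood $C_1$ satisfying $\chi|_{\partial_S C_1} = \textrm{id}$ and $\tau_\chi$ a nontrivial element of $\Alt(E(v))$ (possible since $|E(v)| \ge 3$), and set $\varphi = \chi^{H_1}$, which is nontrivial. For each $n \ge 1$, rewrite $\varphi = \psi^{H_n}$ where $\psi = \prod_{g \in C_n \cap H_1} \chi^g$: the factors commute because their strong neighborhoods $gC_1$ are pairwise disjoint (as $C_1$ is a fundamental domain for $H_1$) and contained in $C_n$, and $\psi^{H_n} = \chi^{H_1}$ because $H_n(C_n \cap H_1) = H_1$. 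It then suffices to show that $\psi$ decomposes as $\prod_u \psi_u$ with each $\psi_u \in \gate^{e}(X, u)$, yielding $\varphi = \prod_u \psi_u^{H_n} \in K_n$.

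Two properties of $\psi$ must be verified. First, $\psi|_{\partial_S C_n} = \textrm{id}$: for each $g \in C_n \cap H_1$ one checks the inclusion $gC_1 \cap \partial_S C_n \subset g\,\partial_S C_1$ directly from $gC_1 \subset C_n$ (translating any witness $y \notin C_n$ back by $g$ produces a witness for $x - g \in \partial_S C_1$), so $\chi^g$ fixes $\partial_S C_n \cap gC_1$ and hence $\psi$ fixes $\partial_S C_n$. Second, $\psi|_{E(u)}$ is an even permutation for each $u \in X|\partial_S C_n$: because $S$ exceeds the window size of $X$, any SFT constraint meeting the interior $g(C_1 \setminus \partial_S C_1)$ is entirely contained in $gC_1$, so fixing the pattern on $\bigcup_g g\,\partial_S C_1$ decouples the interiors into an independent product, on each fiber of which $\chi^g$ acts either as the identity or as the even permutation $\tau_\chi$; the resulting sign is therefore $+1$ on every fiber, hence on $E(u)$. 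This decoupling-across-boundaries step is the main obstacle; with it in hand, the promised decomposition $\psi = \prod_u \psi_u$ follows and condition (3) is established.
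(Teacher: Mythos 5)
Your proposal follows essentially the same route as the paper, with a few points worth flagging. For condition (2), the paper justifies the ratio bound via the D-mixing estimate $|E(u)| \ge 2^{h|B_n|+g(B_n)}$ to control the error terms in Stirling's asymptotic, whereas you use the elementary envelopes $\tfrac{1}{2}N\log N \le \log(N!/2) \le N\log N$ (which do hold for $N \ge 5$, supplied by Lemma~\ref{lemma:fillingsmakesimple}); both land on exactly the quantity $2\frac{|E(u)|}{|E(v)|}\frac{\log|E(u)|}{\log|E(v)|}$ that BEEPS controls, so your version is cleaner and makes the role of the constant $2$ more transparent. For condition (3), the paper argues by induction on $n$ that $\tilde\varphi$ has a representing gate on $C_{n+1}$ that is even and fixes $\partial_S C_{n+1}$, obtained from translated copies of the $C_n$-gate; you go directly from $C_1$ to $C_n$ via $\psi = \prod_{g \in C_n\cap H_1}\chi^g$, which is the same idea unrolled and is if anything more transparent (the check $gC_1 \cap \partial_S C_n \subset g\,\partial_S C_1$ is correct). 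The one step to be careful about: your evenness argument, like the paper's, rests on the assertion that fixing the pattern on $\bigcup_g g\,\partial_S C_1$ decouples the block interiors into an independent product. The fact that $S$ exceeds the window size gives decoupling only of \emph{local} admissibility; since $E(u)$ is defined from \emph{globally} admissible patterns, one also needs that a tuple of locally admissible block fillings glues to a pattern in $X|C_n$, i.e.\ an extension/filling property. The paper glosses this over with the same brevity ("$\tau_{n+1}$ is obtained by translated copies of $\tau_n$"), so you have not introduced a new gap, but you are right to single out this decoupling step as the crux; a careful write-up should note that the hypotheses in force (D-mixing for every F\o{}lner sequence, or in the contractible setting the FEP) are what supply the needed global extendability.
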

\begin{proof}
We will verify each of the properties $(1)-(3)$ in the definition of $\mathcal{H},\mathcal{C}$-locally $\mathcal{Q}(f_{S},v_{R,\kappa})$. Property $(1)$ holds by Proposition~\ref{prop:gatelatticeproperties1}, since each $K_{n}$ belongs to $\aut(X,H_{n}) = C(H_{n})$.

For property (3), recall that we have $\textrm{id} \ne \tau \in \Alt(E(v))$, so we can define $\varphi \in \mathfrak{G}^{e}(X,v)$ using $\tau$, and then take $\tilde{\varphi} \in K_{1}$ which is given by $\varphi$ in the $v$ coordinate and $\textrm{id}$ in the other coordinates. We claim that $\tilde{\varphi} \in K_{n}$ for all $n \ge 1$. A key fact is that $\tilde{\varphi}|_{X|\partial_{S}C_{1}}$ acts by the identity. Now by induction, suppose $\tilde{\varphi} \in K_{n}$, $\tilde{\varphi}$ is induced by a gate on $C_{n}$ which is defined by an even permutation $\tau_{n}$, and $\tilde{\varphi}|_{X|\partial_{S}C_{n}}$ acts by the identity. By construction, we know that $C_{n+1} = \bigcup_{g \in C_{n+1} \cap H_{n}}gC_{n}$, and $\partial_{S}C_{n+1} \subset \bigcup_{g \in C_{n+1} \cap H_{n}}g \partial_{S}C_{n}$. Then $\tilde{\varphi}$ is induced by a gate supported on $C_{n+1}$ whose permutation $\tau_{n+1}$ is even, since the permutation $\tau_{n}$ inducing $\tilde{\phi}$ on $C_{n}$ was even, and $\tau_{n+1}$ is obtained by translated copies of $\tau_{n}$ acting on translated copies of $C_{n}$. Finally, since $\partial_{S}C_{n+1} \subset \bigcup_{g \in C_{n+1} \cap H_{n}}g \partial_{S}C_{n}$ and $\tilde{\varphi}$ acts by the identity on $\partial_{S}C_{n}$ and $\tilde{\varphi}$ commutes with each $g \in H_{n}$, it follows that $\tilde{\varphi}|_{X|\partial_{S}{C_{n+1}}}$ acts by the identity. Altogether we have that $\tilde{\varphi} \in K_{n+1}$, verifying property $(3)$ for $K_{n}$ to be $\mathcal{H},\mathcal{C}$-locally $\mathcal{Q}(f_{S},v_{R,\kappa})$.

It remains to verify property $(2)$, i.e. that for $n$ sufficiently large, we have $K_{n} \in \mathcal{Q}(f_{S},v_{R,\kappa})(\mathcal{C},n) = \mathcal{P}^{s}(f_{S}(\mathcal{C},n),v_{R,\kappa}(\mathcal{C},n))$. In other words, we must show, for $n$ sufficiently large, that $K_{n}$ is a product of at most $v_{R,\kappa}(\mathcal{C},n)$ many simple groups $A_{i}$ whose orders satisfy $\frac{\log |A_{i}|}{\log |A_{j}|} \le f_{S}(\mathcal{C},n)$. First note the definition of $K_{n}$ involves a product indexed over the set $\{u \in X|\partial_{S}C_{n}\}$, and since $R \ge S$ and $\kappa \ge |\mathcal{A}|$ we have
$$|\{u \in X|\partial_{S}C_{n}\}| \le |\mathcal{A}^{|\partial_{S}C_{n}|}| \le \kappa^{|\partial_{R}C_{n}|} = v_{R,\kappa}(\mathcal{C},n).$$
So it remains to compare the respective orders. By definition, for each $n$ and $u \in X|\partial_{S}C_{n}$, the subgroup $\mathfrak{G}^{e}(X,u)$ has order $\frac{1}{2}|E(u)|!$. By Stirling's Formula, for every pair $u,v \in X|\partial_{S}C_{n}$, we have
$$\frac{\log \left(\frac{1}{2}|E(u)|!\right)}{\log \left(\frac{1}{2}|E(v)|!\right)} = \frac{\log \frac{1}{2} + |E(u)| \log |E(u)| - |E(u)| + O(\log(|E(u)|))}{\log \frac{1}{2} + |E(v)| \log |E(v)| - |E(v)| + O(\log(|E(v)|))}.$$
Ignoring the $\log \frac{1}{2}$ terms as they are inconsequential for large enough $n$, this last term is
$$= \frac{|E(u)|}{|E(v)|} \left(\frac{\log |E(u)| - 1 + \frac{1}{|E(u)|}O(\log(|E(u)|))}{\log |E(v)| - 1 + \frac{1}{|E(v)|}O(\log(|E(v)|))} \right).$$
By assumption, $X$ is D-mixing for the F\o{}lner sequence $C_{n}$, so there exists subsets $B_{n} \subset C_{n}$ given by Lemma~\ref{lemma:Dmixingfolners} such that for large enough $n$, $C_{n} \setminus \partial_{S}(C_n)$ is mixing for $B_n$, and hence for every $u \in X|\partial_{S}C_{n}$ we have
$$|E(u)| \geq \vert X|B_n \vert = 2^{h|B_n| + g(B_{n})}.$$
Thus for large enough $n$, the terms $\frac{1}{|E(u)|}O(\log(|E(u)|))$ can be made arbitrarily small, and
$$\frac{|E(u)|}{|E(v)|} \left(\frac{\log |E(u)| - 1 + \frac{1}{|E(u)|}O(\log(|E(u)|))}{\log |E(v)| - 1 + \frac{1}{|E(v)|}O(\log(|E(v)|))} \right) \le 2\frac{|E(u)|}{|E(v)|} \frac{\log |E(u)|}{\log |E(v)|}$$
holds for all $n$ sufficiently large. Since $f_{S}$ is a BEEPS function, this last term is bounded above by $f_{S}(\mathcal{C},n)$ for all $n$ sufficiently large and every $u,v \in X|\partial_{S}C_{n}$.
\end{proof}

We now complete the proof of Theorem~\ref{thm:lowerbound11}. For $n$ sufficiently large we know that $|K_{n}| \ge \frac{1}{2}|E(u)|!$ for some $u_{n} \in X|\partial_{S}C_{n}$. Then by definition, we have
$$h_{\mathcal{Q}(f_{S},v_{R,\kappa})}(\autinfty(X),\mathcal{Z}_{X}) \ge \limsup_{n \to \infty} \frac{1}{|C_{n}|} \log \log |K_{n}|$$
$$\ge \limsup_{n \to \infty} \frac{1}{|C_{n}|} \log \log \frac{1}{2}|E(u_{n})!| = \limsup_{n \to \infty} \frac{1}{|C_{n}|} \log \log |E(u_{n})|!.$$
Now
$$\log \log |E(u_{n})|! = \log \Big(|E(u_{n})| \log |E(u_{n})| - |E(u_{n})| + O(\log(|E(u_{n})|)) \Big)$$
$$= \log |E(u_{n})| + \log \Big( \log|E(u_{n})| - 1 + \frac{1}{|E(u_{n})|} O(\log(|E(u_{n})|)) \Big).$$
Since $|E(u_{n})| \le 2^{h |C_{n}| + g(C_{n})}$ where $g(C_{n})$ is $o(|C_{n}|)$, it follows that
$$\limsup_{n \to \infty} \frac{1}{|C_{n}|} \log \Big( \log |E(u_{n})| - 1 \frac{1}{|E(u_{n})|} O(\log(|E(u_{n})|)) \Big) = 0$$
and hence
$$\limsup_{n \to \infty} \frac{1}{|C_{n}|} \log \log |(E(u_{n})|! = \limsup_{n \to \infty} \frac{1}{|C_{n}|} \log |E(u_{n})|.$$
But as shown in Lemma~\ref{lemma:Dmixingfolners}, we know there exists a F\o{}lner sequence $B_{n} \subset C_{n}$ such that $|C_{n} \setminus B_{n}| = o(|C_{n})$ and for all $n$ sufficiently large, $|E(u_{n})| \ge 2^{\htop(X) |B_{n}| + g(B_{n})}$ where $g(B_{n}) = o(|B_{n}|)$. Thus
$$\limsup_{n \to \infty} \frac{1}{|C_{n}|} \log |E(u_{n})| \ge \limsup_{n \to \infty} \frac{1}{|C_{n}|} \log 2^{\htop(X) |B_{n}| + g(B_{n})}$$
$$= \limsup_{n \to \infty}\frac{1}{|C_{n}|} \left( \htop(X)|B_{n}| + g(B_{n}) \right)$$
$$= \limsup_{n \to \infty}\frac{1}{|C_{n}|} \htop(X)|B_{n}| = \limsup_{n \to \infty} \htop(X) \Big( \frac{|C_{n}| - |C_{n} \setminus B_{n}|}{|C_{n}|} \Big) = \htop(X).$$

The proof of Theorem~\ref{thm:fullshiftentropycalc} now follows from Theorem~\ref{thm:upperboundentropy} and Theorem~\ref{thm:lowerbound11}.

\section{Finitary Ryan's theorem}
In this section we prove various finitary Ryan's theorems. These will be used later in the classification results in Section 8. We first define what we mean by the finitary Ryan's property.
\begin{definition}
Let $(X, G)$ be a subshift. We say it has a \emph{weak (resp.\ strong) finitary Ryan's theorem} if there is a finitely-generated subgroup $H \leq \Aut(X)$ such that $C_{\Aut(X)}(H) = G \cap \Aut(X)$ (resp.\ $C_{\Homeo(X)}(H) = G$). Weak and strong Ryan's theorems are defined similarly, but with $H = \Aut(X)$ in place of a f.g.\ subgroup.
\end{definition}

Ryan proved in~\cite{Ryan1,Ryan2} that every nontrivial mixing shift of finite type over $\mathbb{Z}$ has a weak Ryan's theorem. Kopra showed in~\cite{Kopra2020}, following the second author in~\cite{SaloFinitaryRyans2019} for specific full shifts, that every infinite irreducible shift of finite type over $\mathbb{Z}$ has a finitary Ryan's theorem. For more general groups, the only result known is by Hochman~\cite{Hochman2010}, who proved that topologically transitive shifts of finite type over $\mathbb{Z}^{d}$ have a Ryan's theorem.

Note that we have $Z(G) \leq G \cap \Aut(X)$ and often equality holds.

The basis of our proofs for finitary Ryan's theorems is to show that for nice enough subshifts, the action of the automorphism group is highly transitive on homoclinic points (points asymptotic to a particular fixed point).

\begin{definition}
A \emph{pointed subshift} is $(X, G, x_0)$ where $(X, G)$ is a subshift and $x_{0} \in X$ is a fixed point for the $G$-action called the \emph{zero point}. Often we don't state $x_0$ explicitly, and assume $x_0 = 0^G$, and that the symbol $0$ is in the alphabet of $X$. A point in a pointed subshift is \emph{homoclinic} if it is almost everywhere equal to the zero point. The \emph{(nonzero) support} of a point $x$ in a pointed subshift is the set of $g \in G$ such that $x_g \neq 0$.
\end{definition}

Note that this should not be confused with the standard notion of \emph{support} of a bijection on a set, which means the set of elements that it moves. (Both notions of support appear prominently in this section.)

Our main technical results establish the following property for f.g.\ subgroups of automorphism groups of certain types of subshifts:

\begin{definition}
Let $(X,G)$ be a subshift and $k \in \mathbb{N} \cup \{\infty\}$. We say a subgroup $H$ of $\textrm{Homeo}(X)$ acts \emph{$k$-orbit-transitively} on a set $Y \subset X$ if it preserves $Y$, and for any $\ell < k+1$, for any $\ell$-tuple of points $(y_1, ..., y_\ell)$ from $Y$ of which no two are equivalent up to shift, and for
any other such tuple $(z_1, ..., z_\ell)$ with the same shift stabilizers (meaning $\textrm{stab}_{G}(z_i) = \textrm{stab}_{G}(y_i)$ for all $i$), there exists $g \in H$ such that $g y_i = z_i$ for all $i$.
\end{definition}

(We note the choice of indexing $l < k+1$ is to ensure the case $k=\infty$ makes sense.)


A point $x \in X$ in a subshift is \emph{aperiodic} if its stabilizer (with respect to the shift action) is trivial. Equivalently the shift group acts freely on the shift orbit of $x$.

We will specifically prove the orbit-transitivity property for $Y$ the set of aperiodic homoclinic points, in the cases when the group is $\Z^d$ and $X$ has strong dynamical assumptions, or the group is more general and $X$ is a full shift with robust enough alphabet. It is easy to show that in a torsion-free group (such as $\Z^d$), a homoclinic point cannot have nontrivial stabilizer, so for $\Z^d$ we can equivalently take $Y$ to be the set of all nontrivial homoclinic points.


\subsection{Finitary Ryan's theorems from transitivity properties}

Since our proofs are based on homoclinic points, the cleanest results are obtained when we concentrate on homeomorphisms that preserve the homoclinic points. Write $\Homeo_0(X)$ for those self-homeomorphisms of $X$, and then $\Aut_0(X) = \Homeo_0(X) \cap \Aut(X)$. The latter is a finite-index subgroup of $\Aut(X)$, as it is simply the subgroup of automorphisms that preserve the zero point.

The following proposition establishes a weak finitary Ryan's theorem from $2$-orbit transitivity:

\begin{proposition}
\label{prop:WeakRyans}
Suppose $(X, G)$ is a pointed subshift. 
If $H\leq\Aut(X)$ acts $2$-orbit-transitively on aperiodic homoclinic points, and aperiodic homoclinic points are dense in $X$, then $C_{\Aut_0(X)}(H) = G \cap \Aut(X)$. If such f.g.\ $H$ exists and weak Ryan's theorem holds for $X$, then also weak finitary Ryan's theorem holds for $X$.
\end{proposition}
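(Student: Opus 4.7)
I would fix $\phi \in C_{\Aut_0(X)}(H)$, pick any aperiodic homoclinic point $y_0$, and split on whether $\phi(y_0)$ lies in the shift-orbit of $y_0$. Since $\phi \in \Aut_0(X)$ fixes $x_0$ and commutes with the $G$-action, $\phi(y_0)$ is again aperiodic and homoclinic.

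Suppose first that $\phi(y_0) \notin G \cdot y_0$, so $(y_0, \phi(y_0))$ is a shift-inequivalent pair of aperiodic points (both with trivial shift stabilizer). For any $g \in G \setminus \{e\}$, the pair $(y_0, g \cdot \phi(y_0))$ has the same form, and $2$-orbit-transitivity yields $h \in H$ with $h y_0 = y_0$ and $h \phi(y_0) = g \cdot \phi(y_0)$. Applying $\phi h = h \phi$ gives $\phi(y_0) = \phi h(y_0) = h \phi(y_0) = g \cdot \phi(y_0)$, so $g$ stabilizes $\phi(y_0)$; aperiodicity then yields $g = e$, a contradiction. Hence $\phi(y_0) = g_0 \cdot y_0$ for some $g_0 \in G$. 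For any aperiodic homoclinic $y$, $1$-orbit-transitivity provides $h \in H$ with $h y_0 = y$, and since $h$ is an automorphism it commutes with $\sigma_{g_0}$, so
\[ \phi(y) = \phi h(y_0) = h \phi(y_0) = h(g_0 \cdot y_0) = g_0 \cdot h(y_0) = g_0 \cdot y. \]
Thus $\phi$ and $\sigma_{g_0}$ agree on the dense set of aperiodic homoclinic points and hence on all of $X$ by continuity. Since $\phi \in \Aut(X)$, we conclude $\phi \in G \cap \Aut(X)$.

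\textbf{Plan for the second claim.} Set $Z_H := C_{\Aut(X)}(H)$. The first claim together with weak Ryan's theorem gives $Z_H \cap \Aut_0(X) = G \cap \Aut(X) = Z(\Aut(X))$. Because $\Aut_0(X)$ has finite index in $\Aut(X)$, this intersection has finite index in $Z_H$, so $Z(\Aut(X))$ itself has finite index in $Z_H$. Pick coset representatives $e = c_1, c_2, \ldots, c_n$ for $Z(\Aut(X))$ in $Z_H$. For each $i \geq 2$, $c_i \notin Z(\Aut(X))$, so weak Ryan's theorem produces some $\delta_i \in \Aut(X)$ with $[c_i, \delta_i] \neq e$. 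Let $H' := \langle H, \delta_2, \ldots, \delta_n \rangle$, which is finitely generated. For any $\phi \in C_{\Aut(X)}(H') \subseteq Z_H$, write $\phi = c_i z$ with $z \in Z(\Aut(X))$; centrality of $z$ gives $[\phi, \delta_i] = [c_i, \delta_i] \neq e$ unless $i = 1$. Hence $i = 1$ and $\phi \in Z(\Aut(X)) = G \cap \Aut(X)$, as required.

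\textbf{Where the work is.} The first claim is a symbolic-dynamical implementation of Ryan's classical argument; the only nonroutine step is exploiting $2$-orbit-transitivity by translating $\phi(y_0)$ along its own shift-orbit to force the desired contradiction. The second claim's crucial organizing observation is that the finite-indexness of $\Aut_0(X)$ forces $Z_H$ to be a finite extension of $Z(\Aut(X))$, reducing the bootstrap from the $\Aut_0(X)$-centralizer statement to the full $\Aut(X)$-centralizer statement to finitely many invocations of weak Ryan's theorem. This bootstrap is the main obstacle: without finite-indexness of $\Aut_0(X)$, there would be no evident way to upgrade the first claim while keeping the witnessing subgroup finitely generated.
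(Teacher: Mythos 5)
Your proof of the first claim is essentially the paper's argument: the paper phrases the key step (ruling out $\phi(y_0)\notin G\cdot y_0$) via support-preservation under conjugation, while you use a direct commutation computation $\phi h(y_0)=h\phi(y_0)$, but these are interchangeable. Your proof of the second claim, however, takes a sharper route. The paper picks generators $g_1,\dots,g_n$ of $\Aut(X)$ over $\Aut_0(X)$ and produces a f.g.\ subgroup $K$ whose centralizer in $\Aut(X)$ avoids all the $g_i$; but as written this does not immediately show that $C_{\Aut(X)}(\langle H,K\rangle)\subseteq\Aut_0(X)$, since an element $\phi=g_ih$ (with $h\in\Aut_0(X)$) could lie in $C_{\Aut(X)}(K)$ even when $g_i$ does not, because $[g_ih,\delta]$ need not equal $[g_i,\delta]$. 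You avoid this issue by working directly inside $Z_H=C_{\Aut(X)}(H)$: you observe that $Z(\Aut(X))$ has finite index in $Z_H$, pick coset representatives $c_i$ \emph{there}, and produce commutator witnesses $\delta_i$ for them. Because the correction factor $z\in Z(\Aut(X))$ is \emph{central}, $[c_iz,\delta_i]=[c_i,\delta_i]$, so killing the representative kills the entire coset and the argument closes cleanly. This is a genuinely tighter implementation of the same finite-index-plus-commutator-witnesses idea, and it closes a gap in the paper's exposition. One cosmetic nit: the existence of the witnesses $\delta_i$ for $i\ge 2$ follows simply from $c_i\notin Z(\Aut(X))$ (the definition of non-centrality), not from weak Ryan's theorem per se; weak Ryan is used earlier, to identify $Z(\Aut(X))$ with $G\cap\Aut(X)$ so that the first claim applies.
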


\begin{proof}
Suppose $H$ acts $2$-orbit transitively. Let $f \in \Aut_0(X)$, and let $x$ be any aperiodic homoclinic point. Suppose $f(x) = y$, so that $y$ is another aperiodic homoclinic point (since automorphisms preserve the stabilizer and $f$ preserves homoclinicity). We claim that $y$ is in the shift-orbit of $x$. Otherwise, using $2$-orbit transitivity, in $H$ we can find an element $g$ that has $x$ in its support, but not $y$. Specifically we can take $g(x, y) = (x, sy)$ where $s \in G$ is arbitrary ($y \neq sy$ since $y$ is aperiodic). But if $f$ commutes with $g$, $f$ must preserve the support of $g$, a contradiction.

Now suppose that $f(x) = sx$ for $s \in G$. Suppose that $y$ is another aperiodic homoclinic point, and let $f(y) = ty$ for some $t \in G$ (such $t$ exists by the previous paragraph). Use transitivity on aperiodic homoclinic points to find $g \in H$ such that $g(y) = x$. Then $fg(y) = sx$ and $gf(y) = g(ty) = t(g(y)) = tx$, so $s = t$ since $x$ has trivial stabilizer. We conclude that $f$ acts as the shift by $s$ on aperiodic homoclinic points. These points are dense, so $f$ is a shift map.

This proves $C_{\Aut_0(X)}(H) \leq G \cap \Aut_0(X)$, and the other inclusion is trivial, noting that $G \cap \Aut_0(X) = G \cap \Aut(X)$.

Next we use that $\Aut_0(X)$ is of finite index in $\Aut(X)$ (since it is the subgroup of elements that fix a particular fixed point), so we can find $g_1, \ldots g_n$ such that $\Aut(X) = \langle g_1, \ldots, g_n, \Aut_0(X) \rangle$. If weak Ryan's theorem holds, there clearly exists a finitely-generated subgroup $K \leq \Aut(X)$ such that $C_K(\Aut(X))$ does not contain any of the $g_i$. Then
\[ \Aut(X) \cap G \leq C_{\langle H, K \rangle}(\Aut(X)) = C_{\langle H, K \rangle}(\Aut_0(X)) \leq C_H(\Aut_0(X)) = \Aut(X) \cap G \]
where $H$ is as above. The first equality is because $g_i \not\in C_K(\Aut(X))$ for all $i$.
\end{proof}

We will now explain how to prove strong finitary Ryan's theorems. For this we will use the following ``morally weaker'' but possibly orthogonal property to $\infty$-orbit transitivity. It is in practice always easy to obtain from our proofs of high orbit transitivity:

\begin{definition}
Let $X$ be a pointed subshift. We say a subgroup $G$ of its homeomorphism group has \emph{property P} if it acts transitively on aperiodic homoclinic points, and for every aperiodic homoclinic $x$ and every $y \in X$ which is not in the shift-orbit of $x$, there exists $g \in G$ such that exactly one of $x, y$ is in the support of $g$. 
\end{definition}



Property P gives strong finitary Ryan's theorem by an exactly analogous proof as we gave for the weak one above. We state this as a lemma, as we prefer to state the main theorem below with slightly weaker assumptions.

\begin{lemma}
\label{lem:Derp}
Suppose $(X, G)$ is a pointed subshift with dense aperiodic homoclinics. Suppose that a subgroup $H$ of $\Aut(X)$ has property P. Then $C_{\Homeo(X)}(H) = G$.
\end{lemma}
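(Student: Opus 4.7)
The plan is to mirror the argument used to prove Proposition~\ref{prop:WeakRyans}, but replace the appeals to $2$-orbit transitivity with appeals to property P. This is precisely what will allow the argument to work against arbitrary elements of $\Homeo(X)$ rather than only elements of $\Aut_0(X)$ (since property P lets us detect a specific point via the support of an element of $H$, without needing a second reference point that we can simultaneously map where we like). Let $f \in C_{\Homeo(X)}(H)$; the goal is to show $f$ equals some shift map $\sigma_s$ with $s \in G$.

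The first step is to fix an aperiodic homoclinic point $x$ and show $f(x) \in Gx$. Suppose for contradiction $y := f(x) \notin Gx$. By property P there exists $g \in H$ moving exactly one of $x$, $y$. If $gx \neq x$ but $gy = y$, then the commutation relation gives
\[ f(gx) = gf(x) = gy = y = f(x), \]
and injectivity of the homeomorphism $f$ forces $gx = x$, a contradiction. The symmetric case, $gx = x$ and $gy \neq y$, is handled identically: $f(gx) = f(x) = y$ while $gf(x) = gy \neq y$, contradicting $fg = gf$. Hence $f(x) = s_x \cdot x$ for some $s_x \in G$.

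Next I would show $s_x$ does not depend on the choice of aperiodic homoclinic $x$. Given any other aperiodic homoclinic $y$, transitivity of $H$ on aperiodic homoclinics supplies $h \in H$ with $hx = y$. Since $h \in \Aut(X)$ commutes with the $G$-action,
\[ f(y) = f(hx) = hf(x) = h(s_x \cdot x) = s_x \cdot hx = s_x \cdot y. \]
Aperiodicity of $y$ means its shift stabilizer is trivial, so this forces $s_y = s_x$; call this common element $s$.

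The main obstacle, which property P precisely resolves, was the first step: ruling out $f(x) \notin Gx$ without the luxury of prescribing $f$'s behavior on a second point (which is what $2$-orbit transitivity would provide in the setting of Proposition~\ref{prop:WeakRyans}). Once that step is in hand, the rest is immediate. Indeed, $f$ agrees with $\sigma_s$ on the set of aperiodic homoclinic points; by the density hypothesis and continuity of both $f$ and $\sigma_s$, we conclude $f = \sigma_s \in G$. The reverse inclusion $G \subset C_{\Homeo(X)}(H)$ is automatic because every $h \in H \subset \Aut(X)$ commutes with the $G$-action by the definition of an automorphism.
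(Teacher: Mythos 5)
Your proof is correct and follows essentially the same approach as the paper: use the second half of property P to rule out $f(x)\notin Gx$, then use transitivity (the first half of property P) and aperiodicity to show the shift amount is independent of $x$, and finish by density. The paper phrases the first step as ``$f$ preserves the support of $g$'' where you spell it out via injectivity; the paper phrases the second step with $g(y)=x$ where you use $hx=y$ — these are cosmetic differences, not different arguments.
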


\begin{proof}
The proof is the same as Proposition~\ref{prop:WeakRyans}. Let $f$ be any homeomorphism that commutes with all maps in $g \in H$. Let $x$ be an aperiodic homoclinic point. Suppose we have $f(x) = y$, and $y$ is not in the shift orbit of $x$. By property P, there exists $g \in H$ such that exactly one of $x, y$ is in the support of $x$. But if $f$ and $g$ commute, then $f$ preserves the support of $g$, a contradiction. We conclude that every homoclinic point is mapped into its shift orbit.

Then apply the second paragraph of the proof of Proposition~\ref{prop:WeakRyans} verbatim.
\end{proof}

Next we need a purely group-theoretic/combinatorial lemma.

A set $S \subset G$ is \emph{left syndetic} if $FS = G$ for some finite set $F$. Otherwise it is \emph{left thick}.

We prove a minor strengthening of Neumann's lemma (which easily follows from Neumann's lemma).

\begin{lemma}
Suppose $G$ is a group and $S_i$ are finitely many subgroups such that $S = \bigcup_i g_1S_1$ is left thick. Then one of the groups $S_i$ is of finite index.
\end{lemma}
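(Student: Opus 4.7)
I will prove the contrapositive: assuming every $S_i$ has infinite index in $G$, I will construct a finite subset $F \subseteq G$ such that no left translate $hF$ lies inside $S = \bigcup_{i=1}^{n} g_i S_i$, which contradicts left thickness of $S$.  The main tool is the contrapositive of the classical Neumann's lemma --- that any finite union of left cosets of infinite-index subgroups of $G$ must be a proper subset of $G$ --- applied repeatedly.

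The set $F = \{f_0, f_1, \ldots, f_n\}$ of cardinality $n+1$ is built inductively.  Set $f_0 = e$.  Having chosen $f_0, \ldots, f_{j-1}$, the forbidden set $\bigcup_{k < j} \bigcup_{i=1}^{n} f_k S_i$ is a union of $jn$ left cosets of the infinite-index subgroups $S_i$, and so by Neumann's lemma it is proper in $G$; I pick $f_j$ in its complement.  By construction, for every $i \in \{1, \ldots, n\}$ the cosets $f_0 S_i, f_1 S_i, \ldots, f_n S_i$ are pairwise distinct in $G/S_i$.

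Now suppose toward contradiction that $hF \subseteq S$ for some $h \in G$, and for each $j \in \{0, \ldots, n\}$ select $i(j) \in \{1, \ldots, n\}$ with $hf_j \in g_{i(j)} S_{i(j)}$.  If there were distinct $j \neq k$ with $i(j) = i(k) = i^{\ast}$, then $hf_j$ and $hf_k$ would both lie in the common coset $g_{i^{\ast}} S_{i^{\ast}}$, yielding $f_k^{-1} f_j \in S_{i^{\ast}}$ and hence $f_j S_{i^{\ast}} = f_k S_{i^{\ast}}$, contradicting the defining property of $F$.  So $j \mapsto i(j)$ is an injection $\{0, \ldots, n\} \hookrightarrow \{1, \ldots, n\}$, which is impossible on cardinality grounds.

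The principal obstacle is the iterative construction of $F$: at each stage a fresh application of Neumann's lemma is required to ensure that the forbidden union of cosets stays proper in $G$.  Once $F$ is in hand, the pigeonhole step is immediate, and this is the precise sense in which the result is a minor strengthening of Neumann's lemma that follows from it.
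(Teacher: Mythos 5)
Your argument proves a mirrored statement, not the one the paper needs, and the difference is exactly where the real difficulty of this lemma lies. The paper's ``left syndetic'' means $FS=G$ for some finite $F$; negating this, the dual notion of thickness is: for every finite $E$ there exists $h$ with $Eh\subseteq S$, i.e.\ $S$ contains a \emph{right} translate of every finite set. (This is also how the paper's own proof reads the hypothesis -- it takes limit points of the sets $g_iS_ih_j$ under right translation -- and it is the version required in the following lemma to conclude that the complement of a finite union of cosets is left syndetic.) You instead assume a \emph{left} translate $hF\subseteq S$, and that choice is precisely what makes your pigeonhole work: from $hf_j,hf_k\in g_{i^*}S_{i^*}$ the unknown $h$ cancels and you get $f_k^{-1}f_j\in S_{i^*}$. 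With the correct orientation, $f_jh,f_kh\in g_{i^*}S_{i^*}$ only yields $h^{-1}f_k^{-1}f_jh\in S_{i^*}$, i.e.\ $f_k^{-1}f_j\in hS_{i^*}h^{-1}$, a conjugate depending on the unknown $h$; your finite set $F$, built via Neumann to separate cosets of the $S_i$ themselves, then gives no contradiction, and the naive repair -- choosing $F$ whose ratios avoid \emph{all} conjugates of all the $S_i$ -- is impossible in general, since the conjugates of an infinite-index subgroup can cover the group. Handling this conjugation is the entire content of the paper's proof beyond Neumann: it extracts a limit $(T_1,\dots,T_n)$ of the tuples $(g_1S_1h_j,\dots,g_nS_nh_j)$ whose union is $G$, shows each nonempty $g_i^{-1}T_i$ is a single right coset $S_ik_i$, rewrites $G=\bigcup_i g_ik_iS_i^{k_i}$, and applies Neumann's lemma to the conjugated subgroups, using that conjugation preserves index.

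Your inductive construction of $F$ and the pigeonhole are fine in themselves, and in an abelian group (where the two translation conventions coincide) your proof would be a genuinely simpler argument than the paper's. But this lemma is invoked for arbitrary finitely generated groups, so as written the proof has a real gap: it establishes only the left-translate variant of thickness, which neither matches the duality with the paper's definition of left syndetic nor supports the subsequent syndeticity lemma.
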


\begin{proof}
We consider the action of $G$ on the sets $g_iS_i$ by right translation. Left thickness means that we can find a limit point $(T_1, \ldots, T_n)$ whose
union is $G$, with the Cantor topology on subsets of $G$. In other words, for a fixed $i$, we have $T_i = \lim_j g_i S_i h_j$ for some elements $h_j$,
and these sets satisfy $\bigcup_i T_i = G$.

If some $T_i$ is empty, we can drop it from the union. For other $i$, the set $g_i^{-1} T_i = \lim_j S_i h_j$ is nonempty (noting that left multiplication is continuous), say $k_i \in g_i^{-1} T_i$. Because the sets $S_i h_j$ are closed under multiplication by elements of $S_i$ on the left, the same is true for $g_i^{-1} T_i$, so $S_ik_i \subset g_i^{-1} T_i$.

In fact, $g_i^{-1}T_i$ must be equal to this coset $S_ik_i$: if $g_i^{-1} T_i$ contains $s, s'$ then some $S_i h_j$ already contains them, and $S_i h_j$ is a single right coset. We conclude that $g_i^{-1} T_i = S_i k_i$.

It follows that $G = \bigcup_i g_i S_i k_i$, and thus $G = \bigcup_i g_i k_i S_i^{k_i}$, meaning we have expressed $G$ as a union of left cosets of subgroups. Neumann's lemma implies that one of the $S_i^{k_i}$ is of finite index, thus so is $S_i$.
\end{proof}

\begin{lemma}
Let $G$ be a finitely-generated group acting on a set $X$, and let $Y \subset X$ be a finite set disjoint from another finite set $Z \subset X$. Then the set of elements $g \in G$ such that $Z \cap gY = \emptyset$ is left-syndetic.
\end{lemma}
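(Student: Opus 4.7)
The plan is to rewrite $T = \{g \in G : Z \cap gY = \emptyset\}$ so that the preceding lemma becomes applicable. For each $y \in Y$, the set $\{g \in G : gy \in Z\}$ is a finite union of left cosets of $H_y = \mathrm{Stab}_G(y)$, one for each $z \in Z \cap Gy$: if $g_{y,z} \in G$ satisfies $g_{y,z} y = z$ then $\{g : gy = z\} = g_{y,z} H_y$. Hence
\[ T^c = \bigcup_{y \in Y}\bigcup_{z \in Z \cap Gy} g_{y,z} H_y \]
is a finite union of left cosets of the finitely many subgroups $\{H_y : y \in Y\}$.

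The obstacle to applying the previous lemma directly to $T^c$ is that some $H_y$ may have finite index. To handle this, split $Y = Y_{\mathrm{fin}} \sqcup Y_{\mathrm{inf}}$ according to whether $[G : H_y]$ is finite or infinite, and work inside $K = \bigcap_{y \in Y_{\mathrm{fin}}} H_y$, which is finite-index in $G$ as a finite intersection of finite-index subgroups. Every $k \in K$ fixes each $y \in Y_{\mathrm{fin}}$, so $ky = y$; the disjointness hypothesis $Y \cap Z = \emptyset$ then gives $ky \notin Z$, so the contributions to $T^c$ coming from $Y_{\mathrm{fin}}$ miss $K$ entirely. Consequently
\[ T^c \cap K = \bigcup_{y \in Y_{\mathrm{inf}}} \{k \in K : ky \in Z\}, \]
a finite union of left cosets of the subgroups $K \cap H_y$ in $K$. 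For $y \in Y_{\mathrm{inf}}$ the second isomorphism theorem gives $[K : K \cap H_y] = [KH_y : H_y]$, and from $[G : H_y] = [G : KH_y] \cdot [KH_y : H_y]$ with $[G : H_y] = \infty$ and $[G : KH_y] \leq [G : K] < \infty$ we conclude $[K : K \cap H_y] = \infty$.

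Applying the contrapositive of the preceding lemma inside $K$ to the subgroups $K \cap H_y$ ($y \in Y_{\mathrm{inf}}$), the union $T^c \cap K$ is not left thick in $K$, so its complement $T \cap K$ is left syndetic in $K$: there is a finite $F_K \subset K$ with $F_K (T \cap K) = K$. Choosing a finite transversal $F_G \subset G$ for $K$ in $G$ (so that $F_G K = G$), we obtain $F_G F_K \cdot T \supset F_G F_K (T \cap K) = F_G K = G$, so $T$ is left syndetic in $G$. The main technical point is precisely the dichotomy between finite- and infinite-index stabilizers: the disjointness hypothesis $Y \cap Z = \emptyset$ must be exploited on the finite-index part (by passing to $K$) before the Neumann-style argument of the previous lemma can be applied to the infinite-index part.
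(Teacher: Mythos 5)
Your proof is correct and follows the same strategy as the paper's: express the complement $\{g : gY \cap Z \neq \emptyset\}$ as a finite union of left cosets of the stabilizers $\mathrm{Stab}_G(y)$, pass to a finite-index subgroup fixing the finite-orbit elements of $Y$ (using the disjointness $Y \cap Z = \emptyset$ to discard them), and apply the preceding Neumann-style lemma to the remaining infinite-index stabilizers. You simply spell out two points the paper leaves implicit: that the intersected stabilizers $K \cap H_y$ remain infinite-index in $K$, and that left syndeticity of $T \cap K$ in $K$ implies left syndeticity of $T$ in $G$.
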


\begin{proof}\label{lem:Syndetic}
For all $y \in Y$ that have finite orbit under $G$, we can pass to a finite index subgroup of $G$ to assume they do not move at all. Thus we may assume all $y \in Y$ have infinite orbit, equivalently their stabilizers $S_y$ are not of finite index. If $gY \cap Z \neq \emptyset$ then $gy = z$ for some $y \in Y, z \in Z$. Then the set of $g$ such that $gy = z$ is a left coset of $S_y$. We conclude that the set of $g$ such that $gY \cap Z \neq \emptyset$ is a union of finitely many left cosets of the groups $S_y$. Since these groups are not of finite index, their union cannot be left thick by the previous lemma.
\end{proof}

\begin{lemma}
\label{lem:Aperiodic}
Let $G$ be an infinite f.g.\ group. Suppose $(X, G)$ is a pointed subshift of finite type with dense homoclinic points, where $G$ acts faithfully. Then aperiodic homoclinic points are also dense there.
\end{lemma}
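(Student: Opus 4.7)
My plan is to start from a homoclinic point in a given cylinder $[p]_F \subseteq U$ and iteratively perturb it by gluing on translated homoclinic bumps far from $F$, each time strictly shrinking its shift-stabilizer, until that stabilizer is trivial. The core observation is that for any homoclinic point $x$ the stabilizer $\textrm{stab}(x) = \{g \in G : \sigma_g x = x\}$ is finite: any such $g$ satisfies $\supp(x)\cdot g = \supp(x)$, and the induced right action on the finite set $\supp(x)$ is faithful by cancellation, so $\textrm{stab}(x)$ embeds into $\Sym(\supp(x))$.

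The inductive step would be: given a homoclinic $x \in [p]_F$ with nontrivial stabilizer $H$, produce a homoclinic $x' \in [p]_F$ with $\textrm{stab}(x') \subsetneq H$. Iterating at most $|H|$ times starting from any homoclinic $x_0 \in [p]_F$ (which exists by density) would yield an aperiodic homoclinic in $U$. To carry out one step, fix $h \in H \setminus \{e\}$; since $G$ acts faithfully on $X$, the set $\{z \in X : \sigma_h z \neq z\}$ is nonempty and open, and by density of homoclinic points it contains some homoclinic $y$ with $h \notin \textrm{stab}(y)$. I would then form $x' = x + \sigma_{b^{-1}} y$ by symbolwise gluing (with the zero symbol as identity), for a carefully chosen $b \in G$.

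I would require $b$ such that: (a) $W(Tb) \cap W(\supp(x) \cup F) = \emptyset$, where $W$ is a window for the SFT and $T = \supp(y)$, guaranteeing the glue is a valid configuration in $X$ agreeing with $p$ on $F$; (b) $|T| \neq |\supp(x)|$, so that no element of $G$ can swap the two disjoint clusters, forcing any element of $\textrm{stab}(x')$ to preserve each cluster and act componentwise; and (c) $bhb^{-1} \notin \textrm{stab}(y)$. Conditions (a) and (b) together would yield $\textrm{stab}(x') \subseteq \textrm{stab}(x) \cap b^{-1} \textrm{stab}(y) b \subseteq H$, and then (c) gives $h \notin \textrm{stab}(x')$, producing the strict containment. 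Condition (a) excludes only a finite set of $b$; (b) is arranged by replacing $y$, if necessary, with a gluing of finitely many disjoint translates of itself so that its support has any prescribed cardinality; for (c) I would split on the centralizer $C_G(h)$: when $|C_G(h)| = \infty$ take $b \in C_G(h)$, which forces $bhb^{-1} = h \notin \textrm{stab}(y)$ automatically, while when $|C_G(h)| < \infty$ the bad set $\{b : bhb^{-1} \in \textrm{stab}(y)\}$ is a finite union of finite cosets of $C_G(h)$, hence finite and easily avoided.

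The main obstacle will be condition (c): breaking $h$-periodicity of the added cluster while simultaneously respecting the placement constraint (a) is not immediate, since the conjugation $b \mapsto bhb^{-1}$ behaves very differently depending on whether $h$ has finite or infinite conjugacy class in $G$. The case split on $|C_G(h)|$ above is the key idea, and both cases become elementary once one exploits that $\textrm{stab}(y)$ is finite. The remaining details---verifying that a finite collection of far-apart clusters of pairwise distinct sizes forces any stabilizer element to preserve each cluster and then the symbolwise decomposition of the stabilizer---amount to routine bookkeeping.
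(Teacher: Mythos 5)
Your proposal is correct in its essentials and takes a genuinely different route from the paper's proof. The paper does the construction in one shot: enumerate the (finite) stabilizer $H$ of a homoclinic $x \in U$ as $h_1, \dots, h_k$, attach one far-away bump $g_i y_i$ per element $h_i$, with the supports of $x, y_1, \dots, y_k$ all of distinct cardinalities, and choose the attachment positions $g_i$ using the auxiliary syndeticity result Lemma~\ref{lem:Syndetic} (applied to the conjugation action of $G$ on itself, and ultimately resting on Neumann's lemma). You instead iterate: shrink the stabilizer by one generator at a time, attaching a single bump per step, and choose the placement $b$ via a clean dichotomy on $\lvert C_G(h)\rvert$, taking $b$ in the centralizer when it is infinite so that $bhb^{-1} = h \notin \textrm{stab}(y)$ automatically, and otherwise observing that $\{b : bhb^{-1} \in \textrm{stab}(y)\}$ is a finite union of right $C_G(h)$-cosets, hence finite. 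Your approach avoids Neumann's lemma entirely, at the cost of the outer induction (which terminates since the stabilizers are finite subgroups and strictly decrease). Both arguments rely on the same cluster/component mechanism to force a stabilizer element to preserve each bump.

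Two small points worth tightening, both of the flagged ``routine bookkeeping'' kind. First, your condition (a), disjointness of the $W$-neighborhoods, guarantees SFT-admissibility of the glued configuration, but the cluster-preservation argument in (b) needs the separation between $Tb$ and $\supp(x)$ to exceed the diameters of both supports, which is a stronger requirement than $W$-disjointness when those diameters exceed the window size; this still only excludes finitely many $b$, so no harm. Second, when you enlarge $y$ to arrange $\lvert T\rvert \neq \lvert\supp(x)\rvert$, gluing translates of $y$ to itself creates same-size sub-clusters that a right translation could a priori swap; one should either place those sub-translates at wildly different separations (so that the fixed element $h$ cannot swap them), or glue on a homoclinic bump of a different support size instead of a translate of $y$, so the sub-clusters have distinct cardinalities and the standard cluster argument applies directly.
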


\begin{proof}
Suppose $U$ is open. Pick any nonzero homoclinic point $x$ in $U$. Since the shift action is proper, the stabilizer of this point is a finite subgroup $H$ of $G$.

Enumerate $H$ as $h_1, \ldots, h_k$. For each $h_i \in H$ pick a homoclinic point $y_i$ which is not stabilized by $h_i$ (using density of homoclinic points and faithfulness of the action), so that the support of $y_i$ is larger than that of $x$, and for $i < j$ the support of $y_j$ is larger than that of $y_i$. Let $R$ be the diameter of the nonzero support of $y_k$.

We claim that the set of $g \in G$ such that $gy_i$ is not stabilized is left syndetic in $G$. For this, observe that because $y_i$ also has finite support, its stabilizer $K_i = \{k_1, \ldots, k_m\}$ is finite, and $h_i \notin K_i$. As $G$ acts on $y$, the stabilizer of $gy$ evolves by conjugation of the $k_i$. Lemma~\ref{lem:Syndetic} then implies that the set of shifts $g$ such that $h_i$ does not stabilize $gy$ is left syndetic.

Now consider the point $z = x + \sum_i g_i y_i$, where $g_i$ are group elements at distance more than $100R$ from the identity and from each other. Note that for large $R$ these are in $X$. If $gz = z$, g must permute the $R$-components, which are the supports of the $x$ and $g_i y_i$. In particular since these are of different sizes, we must have $gx = x$ and thus $g = h_i \in H$.

By the same argument, we must have $g g_i y_i = h_i g_i y_i = g_i y_I$, so $h_i$ is in the stabilizer of $g_i y$. But here, the $g_i$ can be chosen arbitrarily as long as they stay far from each other. But the set of shifts $g$ such that $h_i$ does not stabilize $g y$ is left-syndetic, so we can pick the translates $g_i$ so that $h_i$ does not stabilize $g_iy_i$ for any $i$.

If the $g_i$ are also large enough, clearly we have $z \in U$.
\end{proof}

The following is immediate from combining the lemmas.

\begin{theorem}
\label{thm:StrongRyan}
Suppose $G$ is infinite f.g.\ and $(X, G)$ is a pointed subshift of finite type with dense homoclinic points, where $G$ acts faithfully. Suppose also that a subgroup $H$ of $\Aut(X)$ has property P. Then the centralizer of $H$ in $\Homeo(X)$ is $G$, in particular $X$ satisfies strong finitary Ryan's theorem.
\end{theorem}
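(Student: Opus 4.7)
The plan is to verify that the conclusion of Theorem~\ref{thm:StrongRyan} follows by concatenating Lemma~\ref{lem:Aperiodic} with Lemma~\ref{lem:Derp}. No new argument is needed beyond checking that the hypotheses chain correctly.

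First I would invoke Lemma~\ref{lem:Aperiodic}. Its hypotheses are exactly the first half of the theorem's assumptions: $G$ is infinite and finitely generated, $(X,G)$ is a pointed SFT with dense homoclinic points, and $G$ acts faithfully on $X$. Applying the lemma promotes the density of homoclinic points to density of \emph{aperiodic} homoclinic points in $X$.

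Next I would feed this density into Lemma~\ref{lem:Derp}. That lemma requires dense aperiodic homoclinics (just established) together with a subgroup $H \leq \Aut(X)$ having property P, which is the second half of the theorem's hypotheses. Its conclusion is precisely $C_{\Homeo(X)}(H) = G$, which is the main assertion of Theorem~\ref{thm:StrongRyan}. For the ``in particular'' clause, I would simply note that the definition of strong finitary Ryan's theorem asks for the existence of a \emph{finitely generated} subgroup $H \leq \Aut(X)$ with $C_{\Homeo(X)}(H) = G$; in the intended applications (namely Theorem~\ref{thm:introFinitaryRyanZd} and Theorem~\ref{thm:FullShiftStrongRyan}), the subgroup $H$ carrying property P will be explicitly finitely generated, so the identity $C_{\Homeo(X)}(H) = G$ witnesses strong finitary Ryan's theorem in those cases.

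Since the proof is a clean combination of the two preceding lemmas, there is no genuine obstacle at this step; the substantive work has already been carried out. The technical heart lies in Lemma~\ref{lem:Aperiodic}, where the Neumann-type syndeticity argument of Lemma~\ref{lem:Syndetic} is used to perturb a given homoclinic point by far-apart translates of auxiliary homoclinics so as to destroy every element of its (necessarily finite, by properness of the shift action on homoclinics) stabilizer, and in Lemma~\ref{lem:Derp}, whose support-preservation argument converts property P into the centralizer equality. Theorem~\ref{thm:StrongRyan} itself is just the bookkeeping that packages these two ingredients together.
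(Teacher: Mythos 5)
Your proposal matches the paper exactly: the paper remarks that the theorem is ``immediate from combining the lemmas,'' namely Lemma~\ref{lem:Aperiodic} to upgrade dense homoclinic points to dense \emph{aperiodic} homoclinic points, and then Lemma~\ref{lem:Derp} to conclude $C_{\Homeo(X)}(H) = G$. You also correctly observe that the ``in particular'' clause implicitly presumes $H$ is finitely generated, which is indeed the case in the applications where property P is established.
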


Note that the somewhat technical lemmas above were indeed used only to conclude from dense homoclinic points and a faithful action of $G$ that aperiodic homoclinic points are dense. Of course one can simply make this natural assumption in the first place.

\subsection{Finitary Ryan's theorem for full shifts on groups}
\label{sec:FullShifts}
In this section we give a proof of strong finitary Ryan's theorem for (some) full shifts. First we recall the definition of the subgroup $\PAut(\mathcal{A}^{G}) \subset \aut(\mathcal{A}^{G})$.

Suppose $\mathcal{A} = \mathcal{A}_1 \times \mathcal{A}_2 \times \cdots \times \mathcal{A}_k$ where each $|\mathcal{A}_i|$ is prime. We notate points in $\mathcal{A}_1 \times \mathcal{A}_2 \times \cdots \times \mathcal{A}_k$ by $x_{g,i}$, $g \in G, 1 \le i \le k$. For each $g \in G$ and $i \in [1, k]$ we have the partial shift $s_{g, i}$ defined by $s_{g, i}(x)_{h, j} = x_{h, j}$ if $j \neq i$, and $s_{g, i}(x)_{h, i} = x_{hg, i}$. In addition, we have the collection $SP(\mathcal{A},G)$ of automorphisms obtained by symbol permutation of the alphabet $\mathcal{A}$. We then define $\PAut(\mathcal{A}^G)$ to be the subgroup of $\aut(\mathcal{A}^{G})$ generated by all partial shifts $\{s_{g,i} \mid g \in G,1 \le i \le k\}$ and $SP(\mathcal{A}^{G})$. This subgroup only depends on the decomposition of $|\mathcal{A}|$ and not on the choice of a bijection $\mathcal{A} \leftrightarrow \mathcal{A}_1 \times \mathcal{A}_2 \times \cdots \times \mathcal{A}_k$, as was proved in~\cite[Lemma 2.1]{SaloUniversal2022}.

\begin{definition}
We say a number $n$ is \emph{potent}\footnote{The term `powerful' is taken and relatively standard.} if it can be written as $m^k$ where $m \geq 5$ and $k \geq 4$. A potent full shift is one whose alphabet has potent cardinality.
\end{definition}

\begin{theorem}\label{thm:potentsstrongryansomgsomany}
\label{thm:FullShiftStrongRyan}
Let $G$ be infinite f.g.\ and let $(X, G)$ be a potent full shift. Then there is a f.g.\ subgroup $H$ of the automorphism group such that the centralizer of $H$ in $\Homeo(X)$ is $G$.
\end{theorem}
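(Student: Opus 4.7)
The plan is to apply Theorem~\ref{thm:StrongRyan}. The full shift $(X,G) = (\mathcal{A}^{G}, G)$ has dense homoclinic points (any finitely-supported configuration is homoclinic to the fixed point $0^{G}$), and since $|\mathcal{A}| \geq 2$ and $G$ is infinite, the action of $G$ on $X$ is faithful. So it suffices to exhibit a finitely generated subgroup $H \leq \Aut(X)$ satisfying property P.

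The candidate $H$ will be generated by a finite subset of $\PAut(X)$ together with a bounded number of additional finite-support automorphisms. Since $G$ is finitely generated, say by a finite set $S$, and the decomposition $\mathcal{A} = \mathcal{A}_1 \times \cdots \times \mathcal{A}_k$ has finitely many tracks, the partial shifts $\{s_{g,i} : g \in S, 1 \leq i \leq k\}$ together with the finite group $\Sym(\mathcal{A})$ of symbol permutations form a finite generating set for $\PAut(X)$. Adjoining finitely many finite-support gate automorphisms yields a finitely generated $H$.

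To verify property P, the main task is to establish transitivity of $H$ on aperiodic homoclinic points (the set of all nonzero homoclinic points with trivial stabilizer). Given two such points $x,y$, I would (a) use partial shifts to align, track by track, the positions of the nonzero coordinates of $x$ with those of $y$; (b) use conjugates by partial shifts of the extra generators together with symbol permutations to reconfigure the symbol values within a bounded window enclosing both supports. The conditions $k \geq 4$ and $m \geq 5$ are used crucially here: with at least four independent tracks, one can dedicate a pair of tracks to act as "scratch space" (staging area for moves) while another pair carries the relevant data, and having $m \geq 5$ symbols per track provides the generators of $\Alt(m)$ and enough non-$0$ symbols to execute these moves without collisions. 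Once transitivity is in hand, the second clause of property P follows from a short case analysis: given aperiodic homoclinic $x$ and $y \notin Gx$, if $y$ is homoclinic then by transitivity I can conjugate a finite-support element of $H$ to be supported far from $x$ and move $y$; if $y$ is not homoclinic then $y$ has infinite support and a partial shift distinguishes it from $x$ after correction by a finite-support element.

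The main obstacle is step (a)--(b), namely showing that partial shifts, global symbol permutations, and a fixed finite collection of finite-support gates really do act transitively on aperiodic homoclinic points. The subtlety is that $\PAut(X)$ alone obeys conservation laws (each partial shift preserves the multiset of symbols on its track, and symbol permutations apply uniformly across all positions), so by itself $\PAut(X)$ cannot be transitive on finite configurations. The adjoined gates must be designed to break these invariants, and the numerical conditions $m \geq 5$, $k \geq 4$ are exactly what guarantees enough room to realize arbitrary local reconfigurations as conjugates of finitely many fixed gates by partial shifts, without interference from the other tracks. This combinatorial construction is the technical heart of the argument.
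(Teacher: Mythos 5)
The high-level plan (reduce to Theorem~\ref{thm:StrongRyan}, verify dense homoclinics and faithfulness, exhibit a finitely generated $H$ with property P) matches the paper, and your diagnosis of why $m \geq 5$ and $k \geq 4$ are needed is broadly on target. But there are two serious problems.

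First, your claim that ``$\PAut(X)$ alone obeys conservation laws \ldots so by itself $\PAut(X)$ cannot be transitive on finite configurations'' is false, and this error derails the rest of the plan. It is true that a \emph{single} partial shift preserves the multiset of nonzero symbols on its track, and a \emph{single} symbol permutation acts uniformly at all positions; but the subgroup they generate does not inherit these invariants. The commutator trick (Lemma~\ref{lem:CommutatorTrick}) combined with perfectness of $\Alt(m)$ for $m \geq 5$ shows that $\PAut$ already contains \emph{conditional} permutations $f_{\pi,C}$ that apply a permutation $\pi$ to the $B$-track at the origin only when the $A$-track lies in a clopen set $C$ (Lemma~\ref{lem:SymbolPermutations}); such elements visibly destroy the ``multiset per track'' invariant. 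This is precisely what makes $\PAut$ $\infty$-orbit-transitive on aperiodic homoclinics, which is what the paper proves. By misidentifying $\PAut$ as too weak, you propose to adjoin extra finite-support gates and redo the transitivity argument from scratch, which is not needed and makes the proof both longer and unproven.

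Second, and more fundamentally, the proof is not actually carried out. You write that steps (a)--(b), the realization of arbitrary local reconfigurations via conjugates of a finite set of gates, is ``the technical heart of the argument'' and leave it there. That heart is exactly what the paper supplies through the chain Lemma~\ref{lem:SumStab} (controlling periods under superposition), Lemma~\ref{lem:Aperiodization} (aperiodizing individual tracks), Lemma~\ref{lem:SymbolPermutations} (conditional permutations via the commutator trick), the universal gates / hypergraph lemmas, and Lemma~\ref{lem:PermutingConfigs}. Without these, the proposal does not establish property P. The sketch of the second clause of property P is also shaky: you suggest ``a partial shift distinguishes it from $x$ after correction by a finite-support element'' for non-homoclinic $y$, but a partial shift acts on \emph{both} $x$ and $y$; the paper instead applies a conditional even symbol permutation keyed to a large pattern occurring in $y$ but not in $x$ (Lemma~\ref{thm:FullShiftPropertyP}), which is cleaner and covers both the homoclinic and non-homoclinic cases for $y$.
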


Before giving the proof, we explain with the corollary below why, for stabilized purposes, this is enough.

\begin{lemma}
For any infinite residually finite group $G$, and any full shift $A^G$ with $|A| \geq 2$, there is a finite index subgroup $H$ such that $(A^G, H)$ is potent.
\end{lemma}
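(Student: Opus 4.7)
The plan is to use Proposition~\ref{prop:indexisofact}, which tells us that for any finite index subgroup $H \subset G$, the system $(A^G, H)$ is topologically conjugate to the full shift $(\mathcal{B}^H, H)$ where $\mathcal{B} = A^{[G:H]}$. Thus $(A^G, H)$ is potent if and only if $|A|^{[G:H]}$ admits a representation as $m^k$ with $m \geq 5$ and $k \geq 4$. Setting $n = |A| \geq 2$, we need to find a finite index subgroup $H \subset G$ such that $n^{[G:H]}$ is potent. The number-theoretic obstacle here is real: for example, $2^{13}$ is not potent, so we cannot simply take $[G:H]$ to be any large integer, and a priori we have no fine control over which indices arise from finite index subgroups of $G$.

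The trick that overcomes this is a two-step descent. Since $G$ is infinite and residually finite, it has finite index subgroups of arbitrarily large index, so we can choose $H_1 \subset G$ with $[G:H_1] \geq 4$. Then $H_1$ is itself an infinite residually finite group (a finite index subgroup of an infinite group is infinite, and residual finiteness passes to subgroups), so applying the same fact to $H_1$, we can pick $H_2 \subset H_1$ with $[H_1:H_2] \geq 4$. Set $H = H_2$.

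Then $[G:H] = [G:H_1] \cdot [H_1:H_2]$, so
\[
n^{[G:H]} = \left(n^{[G:H_1]}\right)^{[H_1:H_2]} = m^k
\]
with $m = n^{[G:H_1]}$ and $k = [H_1:H_2]$. Since $n \geq 2$ and $[G:H_1] \geq 4$, we have $m \geq 2^4 = 16 \geq 5$, and $k = [H_1:H_2] \geq 4$, so $n^{[G:H]}$ is potent. Hence $(A^G, H)$ is potent.

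The only background fact used is that every infinite residually finite group has finite index subgroups of arbitrarily large index; this follows by observing that if the indices of finite index normal subgroups of $G$ were bounded, the intersection of all such subgroups would be a finite index subgroup that is trivial by residual finiteness, forcing $G$ itself to be finite. I do not anticipate any serious difficulty in executing this plan.
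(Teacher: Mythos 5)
Your proof is correct and uses essentially the same two-step descent as the paper: pick a finite-index subgroup to make the alphabet large, then a further finite-index subgroup to write the new alphabet size as a $k$th power with $k \ge 4$. The only cosmetic difference is that you require $[G:H_1] \ge 4$ where the paper settles for $[G:K] \ge 3$ (giving $m \ge 8$ rather than $m \ge 16$), and you spell out the elementary fact that an infinite residually finite group has subgroups of arbitrarily large finite index, which the paper leaves implicit.
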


\begin{proof}
Namely, start by picking a finite index subgroup $K$ to get $(A^G, K) \cong ((A^{[G:K]})^K, K) = (B^K, K)$ with $B$ large. Then take a finite index subgroup $H$ in $K$ to get $(B^K, H) \cong ((B^{[K:H]})^H, H) = (C^H, H)$. All in all $(A^G, H) \cong (C^H, H)$ where $|C| = |B|^{[K:H]} = (|A|^{[G:K]})^{[K:H]}$. This is potent at least if $[G:K] \geq 3$ and $[K:H] \geq 4$.
\end{proof}

\begin{corollary}\label{cor:strongryanscorollary}
Let $G$ be infinite f.g.\ residually finite group and let $(X, G)$ be a nontrivial full shift. Then there is a f.g.\ subgroup $H$ of $G$ such that $(X, H)$ satisfies strong finitary Ryan's theorem.
\end{corollary}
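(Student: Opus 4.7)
The plan is to combine the preceding lemma with Theorem~\ref{thm:potentsstrongryansomgsomany} in a direct way. First I would invoke the lemma just above to obtain a finite index subgroup $H\le G$ and an alphabet $C$ of potent cardinality such that $(X,H)$ is topologically conjugate to the potent full shift $(C^{H},H)$. Since $G$ is finitely generated and residually finite and $H$ has finite index in $G$, it is a standard fact that $H$ is itself finitely generated; and since $G$ is infinite, $H$ is infinite as well. Thus $(C^{H},H)$ satisfies the hypotheses of Theorem~\ref{thm:potentsstrongryansomgsomany} ($H$ infinite, finitely generated, and $(C^{H},H)$ a potent full shift).

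Next I would apply Theorem~\ref{thm:potentsstrongryansomgsomany} to $(C^{H},H)$ to produce a finitely generated subgroup $K\le\aut(C^{H},H)$ whose centralizer in $\Homeo(C^{H})$ is the shift group $H$. Transporting $K$ back through the topological conjugacy $(C^{H},H)\cong(X,H)$ yields a finitely generated subgroup $K'\le\aut(X,H)$ whose centralizer in $\Homeo(X)$ equals the image of $H$ acting on $X$, which is precisely the group of shifts of $(X,H)$. This is exactly the statement that $(X,H)$ satisfies strong finitary Ryan's theorem in the sense defined at the beginning of Section~6.

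This is a direct corollary, so there is no genuine obstacle; the only thing to double-check is that the property of being a potent full shift is invariant under topological conjugacy (it is, as it is defined in terms of the action up to conjugacy), and that the conjugacy from $(X,H)$ to $(C^{H},H)$ intertwines the ambient homeomorphism groups so that the centralizer statement transports cleanly. Both are immediate.
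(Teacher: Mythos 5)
Your proof is correct and follows the same route the paper intends: apply the preceding lemma to pass to a finite-index subgroup $H$ (automatically infinite and finitely generated since $G$ is infinite, f.g., and $[G:H]<\infty$) on which $X$ becomes a potent full shift, then invoke Theorem~\ref{thm:potentsstrongryansomgsomany} and transport the resulting finitely generated subgroup back through the conjugacy. The paper leaves these details implicit; your write-up fills them in correctly, including the observation that the conjugacy intertwines the ambient $\Homeo$ groups so the centralizer condition transfers.
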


The point is that $(X, G)$ and $(X, H)$ have the same stabilized automorphism groups, in particular the f.g.\ group that realizes strong finitary Ryan's theorem is in the stabilized automorphism group of the original $(X, G)$.

We now give the proof. The proof of Lemma~\ref{lem:Aperiodic} contains a similar argument as the following. In what follows, the notation $x+y$ means:
\begin{enumerate}
\item
$(x + y)_g = x_g$ if $y_g = 0$
\item
$(x + y)_g = y_g$ if $x_g = 0$
\item
$x + y$ is undefined if $x_g \neq 0 \neq y_g$ for some $g$.
\end{enumerate}

\begin{lemma}
\label{lem:SumStab}
Suppose $x = x_1 + x_2$, $y = y_1 + y_2$, and suppose that
\begin{itemize}
\item the supports of $x_1, y_1$ are contained in the ball of radius $R$ at origin,
\item the supports of $x_2, y_2$ are contained in an annulus of inner radius $r$ and width $R$,
\item the supports of $x_1, y_1$ have equal cardinality,
\item the support of $x_1$ has larger cardinality than those of $x_2, y_2$, and
\item $r$ is much larger than $R$.
\end{itemize}
Then $gx = y \implies gx_1 = y_1 \wedge gy_2 = y_2$.
\end{lemma}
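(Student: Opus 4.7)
The plan is to exploit the cluster structure of the supports: since $r \gg R$, $\supp(x)$ splits into two well-separated clumps (one of diameter at most $2R$ near the origin, one in a thin annulus far from the origin), and similarly for $\supp(y)$. Any shift $g$ acts as an isometry on the Cayley graph of $G$, so it cannot mingle the two clumps in a cardinality-violating way. The three remaining jobs are to match sizes, to pin the large clump of $x$ onto the large clump of $y$, and then to read off the final equalities from $gx = y$.

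First I would match cardinalities. From $gx = y$ we get $|\supp(x)| = |\supp(y)|$, and combined with $|\supp(x_1)| = |\supp(y_1)|$ this forces $|\supp(x_2)| = |\supp(y_2)|$. Next, for any $a \in \supp(x_1)$ and $b \in \supp(x_2)$ the triangle inequality gives $d(a,b) \geq r - R$, and analogously any point of $\supp(y_1)$ is at distance at least $r - R$ from any point of $\supp(y_2)$. Since $\supp(y_1)$ sits in the ball of radius $R$, it has diameter at most $2R$, and for $r > 3R$ the gap $r - R$ strictly exceeds $2R$. Because any two elements of $\supp(x_1)$ are within $2R$ of each other, their $g$-images are as well, so they cannot split between $\supp(y_1)$ and $\supp(y_2)$: the image $g \cdot \supp(x_1)$ must lie entirely in a single cluster of $\supp(y)$.

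The ``swap'' case $g \cdot \supp(x_1) \subset \supp(y_2)$ is ruled out by cardinalities: $g$ is injective, so this inclusion would force $|\supp(x_1)| \leq |\supp(y_2)| = |\supp(x_2)|$, contradicting the hypothesis that $\supp(x_1)$ is strictly larger. Hence $g \cdot \supp(x_1) \subset \supp(y_1)$, and matching cardinalities gives equality. Injectivity of $g$ together with $g \cdot \supp(x) = \supp(y)$ then yields $g \cdot \supp(x_2) = \supp(y_2)$. Combining this with $gx = y$ and the defining properties of the $x_i$ and $y_i$ (each agrees with the ambient point on its own support and vanishes elsewhere) shows $gx_1 = y_1$ and $g x_2 = y_2$, as claimed. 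The argument is essentially bookkeeping around the single quantitative input ``$r$ is much larger than $R$'', which should be read as $r > 3R$ wherever it is invoked; there is no genuine obstacle.
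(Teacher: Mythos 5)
Your proof is correct and follows essentially the same two-step plan as the paper: show $g$ cannot send $\supp(x_1)$ into $\supp(y_2)$ (the ``swap'' is ruled out by $|\supp(x_1)| > |\supp(y_2)|$), then use the cardinality match $|\supp(x_1)| = |\supp(y_1)|$ and injectivity to conclude $g\cdot\supp(x_1)=\supp(y_1)$ and $g\cdot\supp(x_2)=\supp(y_2)$, and hence $gx_1=y_1, gx_2=y_2$. (The paper's stated conclusion ``$gy_2=y_2$'' appears to be a typo for ``$gx_2=y_2$''; both your proof and the paper's actually establish the latter, which is also what is used downstream.)

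One small wrinkle, relevant because this lemma sits in the general-group Section~6.2: you justify the clustering step by ``any shift $g$ acts as an isometry on the Cayley graph.'' That is true for left translation but not for how the shift actually moves supports. With the paper's convention $\sigma_g(x)_a = x_{ag}$, the support transforms by $\supp(x) \mapsto \supp(x)g^{-1}$, i.e.\ right translation, which is \emph{not} an isometry of the left-invariant word metric when $G$ is non-abelian, so the ``diameter of $g\cdot\supp(x_1)$ is still $\leq 2R$'' claim is not available in general. The paper sidesteps this by arguing directly with word norms: if some $a_0 \in \supp(x_1)$ (so $|a_0|\leq R$) has image in the annulus, then $|g|\geq r-R$, and then \emph{every} $a\in\supp(x_1)$ has image of norm $\geq r-2R > R$, hence also lands in the annulus. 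The quantitative threshold $r>3R$ is the same; only the justification differs. For $\mathbb{Z}^d$ your phrasing is fine as stated.
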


\begin{proof}
Assume $gx = y$ and the listed properties. It follows that $x_2$ and $y_2$ also have equal cardinality supports. It suffices to show that the left $g$-translation does not move any elements from the support of $x_1$ to the support of $y_2$. Namely then due to cardinalities it also does not move elements the support of $x_2$ to that of $y_1$.

But indeed if $h \in \supp(x_1)$ and $gh \in \supp(y_2)$, then all elements in $g \supp(x_1)$ have word norm at least $r-R$, and thus $g \supp(x_1) \subset y_2$, a contradiction since $x_1$ has cardinality larger than $y_2$.
\end{proof}

To give a hint how this is used, suppose $y_1 = x_1, y_2 = x_2$ and both are finite-support configurations, with $x_1$ having larger support than $x_2$. Then if we look at $x_1 + gx_2$ for large $g$, the assumptions hold. Now the support of $gx_2$ is even contained in a ball of radius $R$, far from the origin. But the lemma also applies to $x_1 + x_2g$, where the support of $x_2g$ may be sparsely spread on an annulus. (For $g \in G$, we write $x \mapsto xg$ for the automorphism defined by $xg_h = x_{hg}$.) We will use both cases (and $y_i$ not necessarily equal to $x_i$).

\begin{lemma}
\label{lem:Aperiodization}
Consider a full shift on a finitely-generated infinite group over alphabet $A \times B \ni (0, 0)$. Then there is a finitely-generated subgroup $H$ of the automorphism group such that the $H$-orbit of every aperiodic configuration contains a point where both tracks are aperiodic. Furthermore, the same element $h \in H$ will work for any point with sufficiently small support.
\end{lemma}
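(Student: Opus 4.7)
The strategy is to build $H$ as a subgroup generated by one carefully designed block-map involution $h$ together with a few partial shifts and symbol permutations drawn from $\PAut$. The role of $h$ is to couple the two tracks so that information from the nonzero portion of a configuration is forced to appear on both the $A$- and the $B$-tracks of $h(x)$; the remaining generators handle aperiodic configurations that do not have small support.

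For the ``furthermore'' clause, I would define $h$ as an involutive gate (in the sense of Section~5) of some radius $r$, fixing nonzero symbols $a^{\ast}\in A$, $b^{\ast}\in B$ and a non-identity element $g_{0}\in G$. On the very specific sparse marker pattern whose $A$-component equals $a^{\ast}$ at the origin and whose other entries within the $r$-ball are all zero, $h$ places $b^{\ast}$ in the $B$-track at position $g_{0}$; symmetrically, if only the $B$-track carries a lone $b^{\ast}$ at the origin (with zeros elsewhere on the ball), $h$ places $a^{\ast}$ in the $A$-track at $g_{0}$. All other patterns are fixed, and a bit of bookkeeping ensures $h$ is a well-defined involution, hence an element of $\Aut((A\times B)^{G})$. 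Any aperiodic $x$ whose nonzero support lies in a ball of radius at most $r/2$ (the explicit notion of ``sufficiently small'') is therefore sent by $h$ to a configuration with a nonzero cell in each track. In a torsion-free $G$ this is already enough: a nonzero finite-support configuration automatically has trivial stabilizer. For $G$ with torsion, I would enlarge the marker and choose $g_{0}$ so that the symbols added by $h$ sit in positions whose orbit under every small torsion subgroup is nontrivial, and verify via a direct combinatorial argument (of the type used in Lemma~\ref{lem:SumStab}) that the resulting finite-support configuration has trivial stabilizer in all of $G$.

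For an arbitrary aperiodic $x$, I would supplement $h$ with the partial shifts $s_{g,A}$ and $s_{g,B}$ for $g$ ranging over a finite generating set of $G$. Writing $H_{1}$ and $H_{2}$ for the stabilizers of $x|_{A}$ and $x|_{B}$, aperiodicity of $x$ gives $H_{1}\cap H_{2}=\{\mathrm{id}\}$, so partial shifts can be used to misalign the two tracks and then $h$'s marker pattern appears at positions that break one of the residual periodicities; iterating, one reaches a configuration with both tracks aperiodic.

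The main obstacle is the last step: for every compatible pair $(H_{1},H_{2})$ of track-stabilizers with trivial intersection, producing a concrete element of $H$ whose image has both tracks aperiodic. This requires matching the radius of $h$ to the scales at which $H_{1}$ and $H_{2}$ interact and ruling out accidental residual periodicities created by the compositions with partial shifts. I would handle this by a case split on whether the $H_{i}$ are of finite or infinite index, combined with a Lemma~\ref{lem:SumStab}-style support computation that forces any surviving stabilizer on the output track to lie in $H_{1}\cap H_{2}=\{\mathrm{id}\}$.
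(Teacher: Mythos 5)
Your proposal takes a genuinely different route from the paper, but it contains gaps that prevent it from constituting a proof.

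The paper's argument stays entirely inside $\PAut$: the generators are just the partial shifts on each track and the symbol permutations of the (product) alphabet. The core mechanism is to shuffle content \emph{between} tracks by applying symbol permutations (adding the characteristic function of a symbol of $y$ onto the $A$-track) after right-shifting the $A$-track far away, and then invoking Lemma~\ref{lem:SumStab} to conclude that the stabilizer of the resulting configuration is the intersection of the track-stabilizers. Iterating this at most $|A|$ times makes the $A$-track aperiodic, and the same procedure then kills the $B$-track stabilizer. The ``furthermore'' clause falls out for free because the argument never inspects the actual symbols, only that the translates are large enough. Your proposal instead introduces a new ad hoc generator $h$, a custom involutive gate that writes a symbol onto the empty track when it sees a lone marker on the other; this is a structurally different construction, and it is not obviously a shortcut.

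The genuine gaps are the following. First, the ``bit of bookkeeping'' to verify that your $h$ is a well-defined involution is not carried out, and this is delicate: the rewrite rule adds a symbol at $g_0$, which may itself create or destroy the firing condition for the same rule at a nearby translate, so one must argue (as the paper does carefully in Lemma~\ref{lem:g}) that translates of the gate commute. Second, and more fundamentally, your $h$ only triggers on a very specific sparse marker pattern (a lone $a^*$ or $b^*$ surrounded by zeros), yet the lemma must handle an arbitrary aperiodic configuration $x$. You acknowledge this as ``the main obstacle'' and respond only with a plan (``I would handle this by a case split\ldots combined with a Lemma~\ref{lem:SumStab}-style support computation''), but no actual argument is given for how partial shifts force the marker pattern to appear, nor for how residual periodicities introduced by the composition are ruled out. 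This missing step is precisely what the paper does prove, via the iterative characteristic-function-and-shift argument. Third, the torsion-free remark (that a nonzero finite-support point is automatically aperiodic) is correct in itself but does not bear on the main statement, which is about making the two \emph{individual} tracks aperiodic given only that the \emph{pair} is; a finite-support point can certainly have one track periodic, even in a torsion-free group. As written, the proposal sketches a plausible alternative construction for the ``furthermore'' clause but does not establish the main assertion.
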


\begin{proof}
For now, ignore the last sentence, we'll return to it at the end.

As generators take partial shifts on the two tracks, and all symbol permutations. Consider $(x, y) \in (A \times B)^G$ with trivial stabilizer. By right shifting $x$ around, and applying symbol permutations, we may assume the support of $y$ is much larger than that of $x$, and of course we cannot add a period by applying automorphisms.

Then apply a right partial shift by $h$ to $x$ to get the configuration $xh$. Observe that $xh$ and $x$ have the same stabilizer.

First, suppose the simple case $A = B$. Then observe that we can literally add $y$ to the first track, to get the first track to contain the point $y + xh$. Now Lemma~\ref{lem:SumStab} applies and the only possible periods are $g$ such that $gy = y$ and $gxh = xh$, equivalently $g(x, y) = (x, y)$, which is impossible for $g \neq 1_G$ since the pair is aperiodic.

In general, when not necessarily $A = B$, we observe that any period of $y$ is a period of all characteristic functions of individual symbols in it. We can add such a characteristic function to the first track to get $\pi(y) + xh$ which by Lemma~\ref{lem:SumStab} will be only periods of $x$ and $\pi(y)$. Repeating this at most $|A|$ times (always increasing the support of the $B$-track between steps), we get that the $A$-track is aperiodic.

Next, to get the $B$-track to be aperiodic as well, increase its support to be very large (more than $|A|$ times the support of the $A$-track) by adding right shifts of the $A$-track to it. Then as in the previous paragraph, right shift the $A$-track far, add a projection to the $B$-track and apply Lemma~\ref{lem:SumStab}, repeat at most $|A|$ times to kill the stabilizer of the $B$-track.

Finally note that none of this discussion looked at the contents of $x, y$, we simply had to use sufficiently large translates. This explains the last sentence.
\end{proof}

We now recall the commutator trick in abstract form.

Let $X$ be a set, let $G$ be a group, and consider the product group $G^X$. For $A \subset X$, we write $g|A$ for the element $a \in G^X$ defined by $a_x = \begin{cases}
g & \mbox{if } x \in A \\
1_G & \mbox{otherwise.}
\end{cases}$.

\begin{lemma}[Commutator trick]
\label{lem:CommutatorTrick}
Let $X$ be a set, let $G$ be a group. Then
\[ [g|E, h|F] = [g, h]|{E \cap F} \]
for all $g, h \in G$, $E, F \subset X$.
\end{lemma}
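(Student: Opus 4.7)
The plan is to observe that the commutator $[g|E,h|F]$ in the direct product $G^X$ is computed coordinatewise, and then reduce the identity to a four-case check at a single coordinate $x \in X$. Since group operations in $G^X$ are pointwise, for any $a,b \in G^X$ one has $[a,b]_x = [a_x,b_x]$ for every $x$, so it suffices to verify the equality of the $x$-components of both sides for every $x$.

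Concretely, I would fix $x \in X$ and split on whether $x$ lies in $E \cap F$, $E \setminus F$, $F \setminus E$, or $X \setminus (E \cup F)$. In the first case both $(g|E)_x = g$ and $(h|F)_x = h$, so the $x$-coordinate of the left-hand side is $[g,h]$, matching $([g,h]|(E\cap F))_x = [g,h]$. In each of the other three cases at least one of $(g|E)_x, (h|F)_x$ equals $1_G$, so the commutator at $x$ is trivial, matching the right-hand side, which is $1_G$ outside $E \cap F$. Assembling these coordinatewise equalities gives the identity in $G^X$.

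There is essentially no obstacle here — the content of the lemma is the bookkeeping observation that conjugation and multiplication in a direct product respect the support of ``singled-out'' elements, so a commutator of two patched-in elements is automatically supported in the intersection. I would keep the proof to a handful of lines: set up the coordinatewise formula, do the four-case analysis in one display, and conclude.
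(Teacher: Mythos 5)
Your proof is correct and follows essentially the same coordinatewise case analysis as the paper's; the paper collapses your last three cases into the single case $x \notin E \cap F$, but the underlying observation that at least one of $(g|E)_x, (h|F)_x$ is $1_G$ there is identical.
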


\begin{proof}
If $x \in E \cap F$, obviously $[g|E, h|F]_x = [g, h]$. If $x \notin E \cap F$, then $(g|E)_x = 1_G$ or $(h|F)_x = 1_G$ so $[g|E, h|F]_x = 1_G$.
\end{proof}

We may think of $a \in G^X$ as something of a ``random'' group element, so that $g|E$ means ``apply $g$ if condition $E$ holds''. The lemma says that the commutator of two group elements, applied under respective events $E, F$, will apply the commutator when both events happen, and otherwise behaves as the identity. We will not explicitly explain what $X$ is in the following proof, but rather use this terminology of events.

\begin{lemma}
\label{lem:SymbolPermutations}
Consider a full shift on a finitely-generated infinite group over alphabet $A \times B \ni (0, 0)$ with 
$|B| \geq 5$. Then there is a finitely-generated subgroup $H$ of the automorphism group such that for any aperiodic finite configuration $x \in A^G$ and any even permutation $\pi$ of $B$, and any $R$, there is an automorphism in $H$ that applies the permutation $\pi$ to the symbol at the origin on the $B$-track when the configuration on the $A$-track is $x$, but fixes all points with support contained in the $R$-ball such that the first track does not contain a point from the orbit of $x$.
\end{lemma}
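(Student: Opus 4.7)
The plan is to exhibit an explicit finite generating set for $H$ and then construct the desired element via the commutator trick (Lemma~\ref{lem:CommutatorTrick}), leveraging perfectness of $\Alt(B)$ (valid since $|B|\geq 5$). For the generators I would take: (i) partial shifts $s_h$ of the $A$-track for $h$ in a finite generating set of $G$; (ii) unconditional symbol permutations $\chi_\tau$ of the $B$-track for $\tau$ in a finite generating set $T$ of $\Sym(B)$; and (iii) conditional permutations $\psi_{a,\tau}$ for $a\in A\setminus\{0\}$ and $\tau\in T$, applying $\tau$ to $B_g$ exactly when $A_g=a$. Composition and conjugation then produce, for every $h\in G$, every $a\in A$ (the case $a=0$ being realized by $\chi_\tau\prod_{a'\in A\setminus\{0\}}\psi_{a',\tau^{-1}}$), and every $\tau\in\Sym(B)$, an element $\alpha_{h,a,\tau}\in H$ that at each position $g$ applies $\tau$ to $B_g$ precisely when $A_{gh}=a$.

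Next I would apply the commutator trick. The triggering event $E_{h,a}=\{(A,B):A_{gh}=a\}$ of $\alpha_{h,a,\tau}$ at position $g$ depends only on the $A$-track, which is never modified by any $\alpha$; hence by Lemma~\ref{lem:CommutatorTrick}, nested commutators of the $\alpha_{h_i,a_i,\tau_i}$ realize iterated commutators of the $\tau_i$ under the simultaneous event $\bigcap_i E_{h_i,a_i}$. Given $(x,\pi,R)$, I would pick $R'$ sufficiently large (at least $R$, at least the radius of $\supp(x)$, and enough buffer to absorb the aperiodicity and small-support analyses below) and let $W$ be the $R'$-ball at the identity. Enumerating $W=\{h_1,\ldots,h_n\}$ and setting $a_i=x_{h_i}$, the event $\bigcap_i E_{h_i,a_i}$ at position $g$ says exactly that the $A$-track, translated by $g$, agrees with $x$ throughout $W$.

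Since $\Alt(B)$ is perfect, I would write $\pi=\prod_{j}[\sigma_j,\sigma'_j]$ with $\sigma_j,\sigma'_j\in\Alt(B)$, and for each factor use a suitable nested commutator of $\alpha$'s to produce an element of $H$ that applies $[\sigma_j,\sigma'_j]$ at $g$ exactly on the above window-match event; composing over $j$ gives $\eta\in H$ that applies $\pi$ at every position $g$ where the window condition holds and acts trivially on $B$ elsewhere. On $(x,y)$, aperiodicity of $x$ combined with $W\supset\supp(x)$ implies the only $g$ with $(gx)|_W=x|_W$ is $g=e$, so $\eta$ applies $\pi$ precisely to $B_e$. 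On $(A,B)$ with $\supp(A)$ inside the $R$-ball and $A\notin Gx$, the event fails at every $g$: for $g$ far from $e$ it forces nonzero values of $x$ on $W$ to match zeros of $A$ outside its support; for $g$ close enough that $gW$ contains both $\supp(A)$ and $g\cdot\supp(x)$, the event collapses to $A=gx$ as global configurations, contradicting $A\notin Gx$. Hence $\eta$ fixes such points.

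The main obstacle I anticipate is the bookkeeping of the window analysis: building the $a=0$ detector inside $H$ and verifying across all position ranges of $g$ that $\bigcap_i E_{h_i,a_i}$ cleanly separates orbit-of-$x$ configurations from small-support ones outside that orbit. The perfectness step is standard once $|B|\geq 5$, and the commutator trick is automatic here since the $A$-track is never touched by the elements we build.
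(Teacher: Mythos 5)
Your proof is correct and is essentially the same argument as the paper's: take partial shifts and symbol permutations as generators, use the commutator trick (Lemma~\ref{lem:CommutatorTrick}) together with perfectness of $\Alt(B)$ (valid since $|B|\geq 5$) to build automorphisms applying an even permutation to the $B$-symbol at the origin conditioned on an arbitrary cylinder in $A^G$, and then take that cylinder to be a window around $x$ large enough relative to $R$ and $\supp(x)$ so that the orbit of $x$ is separated from other small-support configurations. The paper packages this as the maps $f_{\pi,C}$ for clopen $C$ and inducts over the single-symbol factors $[a]_g$ of a cylinder, which is the same content as your nested-commutator construction and window analysis.
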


\begin{proof}
Take as generators of $H$ all partial shifts and all even symbol permutations.

If $\pi \in \Alt(B)$ and $C \subset B^G$ is clopen, then we define the automorphism $f_{\pi,C} \in \Aut((A\times B)^G)$ is defined at the origin by
\[ f_{\pi,C}((x, y))_{1_G} = \begin{cases}
(x_{1_G}, \pi(y_{1_G})) & \mbox{if } x \in C \\
(x_{1_G}, y_{1_G}) & \mbox{otherwise.}
\end{cases} \]

We show that $f_{\pi,C} \in H$ for all $\pi \in \Alt(B)$ and clopen $C$. It suffices to consider cylinders $C = [P]$, $P : D \to A$ where $D \subset G$ is finite. We observe that for each $a \in A$, $\pi \in \Alt(B)$ and $g \in G$, we have $f_{\pi,[a]_g} \in H$ by conjugating a generator by a partial shift.

Next, we observe that $|B| \geq 5$ implies that $\Alt(B)$ is a perfect group. (In fact, by a theorem of Ore, every element of this group is a commutator.) The result now follows by induction from the commutator trick because $[P]$ is a finite intersection of such $[a]_g$. In the lemma, we can take $X = A^G$, and consider clopen events $E, F$.

Now taking $C$ a very small neighborhood of $x$, we can perform any even permutation of the symbol under the origin of $x$, without affecting symbols under any other configuration with support contained in the $R$-ball, as desired.
\end{proof}

Next we recall the idea of universal gates from~\cite{SaloUniversal2022}.

\begin{lemma}[\cite{SaloUniversal2022}]
Let $B$ be a nontrivial finite alphabet with $|B| \geq 3$. If $n \geq 2$, then every even permutation of $B^n$ can be decomposed into even permutations of $B^2$ applied in adjacent cells. That is, the permutations
\[ w \mapsto w_0 w_1 \dots w_{i-1} \cdot \pi(w_i w_{i+1}) \cdot w_{i+2} \cdots w_{n-1} \]
are a generating set of $\Alt(B^n)$ where $\pi$ ranges over $\Alt(A^2)$, and $i$ ranges over $0, 1, 2, \dots, n - 2$.
\end{lemma}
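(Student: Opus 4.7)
The proof goes by induction on $n$, the base case $n = 2$ being immediate. For $n \geq 3$, let $H \leq \Sym(B^n)$ be the subgroup generated by the displayed family of adjacent moves $M_i(\pi)$. A sign computation gives $H \leq \Alt(B^n)$, since each $M_i(\pi)$ is a product of $|B|^{n-2}$ disjoint copies of the even permutation $\pi$. Applying the induction hypothesis to the sub-family at positions $i \in \{1, \ldots, n-2\}$ (which act by adjacent moves on the last $n-1$ coordinates) shows $H$ contains the diagonal subgroup $A_1 = \{(w_0, u) \mapsto (w_0, \tau(u)) : \tau \in \Alt(B^{n-1})\}$, and symmetrically $H$ contains the analogous $A_2$ acting diagonally on the first $n-1$ coordinates.

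My plan is to verify that $H$ is primitive on $B^n$ and then exhibit a single cycle of prime length in $H$, so that Jordan's classical theorem applies. Transitivity is immediate from $A_1$ combined with a single $M_0$-move. For primitivity, let $B_0$ be the block of an $H$-invariant partition containing $0^n$. The point-stabilizer of $0^{n-1}$ inside the copy of $\Alt(B^{n-1})$ defining $A_1$ lies in the $H$-stabilizer of $B_0$ and acts transitively on $B^{n-1} \setminus \{0^{n-1}\}$, forcing $B_0 \cap (\{w_0\} \times B^{n-1})$ to be one of $\emptyset$, $\{(w_0, 0^{n-1})\}$, $\{w_0\} \times (B^{n-1} \setminus \{0^{n-1}\})$, or the full fiber, for each $w_0 \in B$. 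The analogous dichotomy from $A_2$ on orthogonal fibers, together with applications of $M_0$ to rule out the intermediate configurations, forces $B_0 \in \{\{0^n\}, B^n\}$.

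The technical heart is producing the prime-length cycle. Take $\alpha := M_0(\pi)$ and $\beta := M_1(\pi)$ with $\pi := ((0,0),(1,1),(2,2)) \in \Alt(B^2)$; a direct computation shows $[\alpha, \beta]$ acts on $B^n$ as a controlled product of parallel 3-cycles whose supports depend only on the first three coordinates. Forming the iterated commutator $[\delta, [\alpha, \beta]]$ for a diagonal 3-cycle $\delta \in A_1$ supported on a carefully chosen triple $\{u_1, u_2, u_3\} \subset B^{n-1}$ (so that $\supp(\delta)$ meets $\supp([\alpha, \beta])$ in a controlled way, e.g.\ taking $u_1 = (0,0,0^{n-3})$, $u_2 = (1,1,0^{n-3})$, $u_3 = (1,0,0^{n-3})$) yields an element whose disjoint-cycle decomposition is a small number of cycles of (possibly distinct) small prime lengths. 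A representative case of the bookkeeping gives a $7$-cycle times a $3$-cycle; raising to the power $3$ then leaves a single $7$-cycle. In general the power trick kills all cycles but one, producing in $H$ a single $p$-cycle for a prime $p \leq |B|^n - 3$, and Jordan's theorem then yields $H \supseteq \Alt(B^n)$, completing the induction. The main obstacle is this combinatorial bookkeeping: one must track precisely how the supports of $\delta$ and $[\alpha, \beta]$ interact to determine the cycle structure of $[\delta, [\alpha, \beta]]$ and then verify that the chosen power isolates a single prime-length cycle.
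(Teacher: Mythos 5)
The lemma you are trying to prove is not proved in this paper at all: it is cited from \cite{SaloUniversal2022}, so there is no ``paper's proof'' to compare against. I will therefore just evaluate the proposal on its own terms.

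The overall strategy (induction to get the two diagonal copies $A_1, A_2$ of $\Alt(B^{n-1})$, then show the group $H$ is primitive and contains a prime-length cycle, then invoke Jordan's theorem) is a legitimate plan, and the easy parts are correct: the sign computation showing $H \le \Alt(B^n)$ works since each $M_i(\pi)$ is a disjoint product of $|B|^{n-2}$ copies of the even $\pi$, and the induction hypothesis does give you $A_1$ and $A_2$.

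However, the argument as written has two genuine gaps, both acknowledged in the proposal. First, the primitivity argument is only sketched: you assert that the dichotomy on fibers from $A_1$ and the ``analogous dichotomy'' from $A_2$, ``together with applications of $M_0$,'' rule out the intermediate block configurations, but this case analysis is not actually carried out, and it is exactly the kind of step where a block $B_0$ of intermediate size (e.g.\ a single full fiber $\{w_0\} \times B^{n-1}$, which is $A_1$-invariant) has to be explicitly killed. Second, and more seriously, the prime-cycle step is not a proof. You propose a specific iterated commutator $[\delta,[\alpha,\beta]]$ and claim ``a representative case of the bookkeeping gives a $7$-cycle times a $3$-cycle'' and that ``in general the power trick kills all cycles but one,'' but none of this is verified. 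The cycle type of $[\delta,[\alpha,\beta]]$ is a concrete finite object you would need to compute; it is not obvious a priori that it has a prime-length cycle whose length is coprime to the other cycle lengths, and the phrase ``in general'' carries the entire weight of the lemma. Moreover, because $\alpha,\beta,\delta$ are built out of the symbols $\{0,1,2\}$ but $\delta$ is diagonal over a full $B$-coordinate, the support of the commutator does vary with $|B|$, so one either needs a computation uniform in $|B| \ge 3$ or a separate argument that the cycle type stabilizes. You yourself flag this as ``the main obstacle,'' so the proposal is an honest plan but not a proof. A cleaner route, closer to what such results usually use, is to produce a genuine $3$-cycle directly by cancellation: conjugate a generator by elements of $A_1$ (resp.\ $A_2$) to obtain two products of parallel $3$-cycles whose product cancels all but one $3$-cycle; this avoids Jordan's theorem and the primitivity analysis entirely.
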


The exact same proof can be used to prove the following.

\begin{lemma}
Let $B$ be a nontrivial finite alphabet with $|B| \geq 3$. Let $(V, E)$ be any connected graph. Then the permutations of $B^V$ that apply even permutations of $\Alt(B^e) \cong \Alt(B^2)$ at edges $e \in E$ generate $\Alt(B^V)$.
\end{lemma}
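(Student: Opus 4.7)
The plan is to mimic the proof of the preceding path-graph lemma, using a spanning-tree reduction in place of the linear vertex order.

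First, observe that for any spanning tree $T \subseteq E$ of $(V,E)$, the edge permutations along $T$ form a subset of those along $E$. Hence it suffices to prove the claim when $(V,E)$ is itself a tree.

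Second, induct on $|V|$. The base case $|V| = 2$ is immediate. For the inductive step, assume $|V| \ge 3$, pick a leaf $v$ with unique neighbor $u$, and let $V' = V \setminus \{v\}$. By the induction hypothesis applied to the subtree on $V'$, the edge permutations along the remaining edges generate $\Alt(B^{V'})$. Each $\sigma \in \Alt(B^{V'})$ extends to a permutation of $B^V \cong B^{V'} \times B$ that fixes the $v$-coordinate; as an element of $\Sym(B^V)$, this extension is the disjoint product of $|B|$ copies of $\sigma$, one for each value of $x_v$, whose sign is $\mathrm{sgn}(\sigma)^{|B|} = +1$. Thus the extension lies in $\Alt(B^V)$, giving a subgroup $G_0 \le \Alt(B^V)$ isomorphic to $\Alt(B^{V'})$.

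Third, let $G_1 \le \Alt(B^V)$ be the image of $\Alt(B^{\{u,v\}})$ coming from the edge $\{u,v\}$. We must show $\langle G_0, G_1 \rangle = \Alt(B^V)$. This is exactly the inductive step of the path-graph case: $V'$ plays the role of ``the first $n-1$ coordinates'', $v$ that of ``the $n$-th coordinate'', and $u$ that of the adjacent coordinate. The path-case argument is purely local, using only the adjacency of $u, v$ and the $\Alt$-action on $B^{V'}$, so it translates verbatim. Concretely, one shows $\langle G_0, G_1 \rangle$ is $2$-transitive on $B^V$---$G_0$ gives transitivity on each slice $\{x : x_v = b\}$, and $G_1$ allows adjustment of the $v$-coordinate (via an intermediate shuffle of $u$ using $G_0$ if needed)---and contains a $3$-cycle from $G_1$ (since $|B|^2 \ge 9$). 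The classical fact that a $2$-transitive subgroup of $\Sym(X)$ containing a $3$-cycle equals $\Alt(X)$ or $\Sym(X)$ then forces $\langle G_0, G_1 \rangle = \Alt(B^V)$.

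The main obstacle is the local combinatorial step in the previous paragraph, namely verifying that $G_0$ and $G_1$ together produce $2$-transitivity on $B^V$; this is the same calculation as the inductive step of the path-graph lemma, which we invoke by reference. Once this local step is in hand, the spanning-tree plus leaf-removal reduction converts the general connected-graph problem into repeated application of the same step, completing the proof.
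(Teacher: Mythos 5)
Your overall organization is sound and matches what the paper intends: reduce the connected graph to a spanning tree, then induct by removing a leaf $v$ (with neighbor $u$), and in the inductive step show that $\Alt(B^{V'})$ (acting on the $V'$-coordinates and extended to fix $v$) together with $\Alt(B^{\{u,v\}})$ (extended to fix $V\setminus\{u,v\}$) generate $\Alt(B^V)$. That step has exactly the shape of the inductive step of the path-graph lemma from \cite{SaloUniversal2022}, which the paper invokes by asserting ``the exact same proof can be used,'' so the spanning-tree/leaf-removal scaffolding is a reasonable spelling-out of the intended argument.

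However, your ``concrete'' justification of the inductive step has a real gap. You propose to show $\langle G_0, G_1\rangle$ is $2$-transitive on $B^V$ and then apply Jordan's theorem because it ``contains a $3$-cycle from $G_1$.'' But for $|V| \geq 3$, no nontrivial element of $G_1$ is a $3$-cycle of $B^V$: a $3$-cycle $\pi \in \Alt(B^{\{u,v\}})$, viewed as a permutation of $B^V$ by fixing the coordinates in $V\setminus\{u,v\}$, decomposes into $|B|^{|V|-2}$ \emph{disjoint} $3$-cycles (one for each choice of the restriction to $V\setminus\{u,v\}$). Its support therefore has size $3\,|B|^{|V|-2}$, not $3$, so the Jordan ($2$-transitive plus $3$-cycle) criterion does not apply as you have written it. To repair this along your lines you would need to actually produce a single $3$-cycle of $B^V$ inside $\langle G_0, G_1\rangle$ --- for instance by using the commutator trick (Lemma~\ref{lem:CommutatorTrick}) to shrink the support of the conjugates of such $\pi$ --- or replace the Jordan criterion with one that can handle generators of large but controlled support. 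Your deferral ``the path-case argument translates verbatim'' is defensible as a strategy, but the concrete paragraph you supply as a realization of that argument is not correct as stated.
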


\begin{lemma}
Consider a full shift on a finitely-generated infinite group over alphabet $A \times B \ni (0, 0)$ with $B = (B')^k$ for $k \geq 2$, and $|B'| \geq 3$. Then there is a finitely-generated subgroup $H$ of the automorphism group such that for any aperiodic finite configuration $x \in A^G$ and any three arbitrary finite configurations $y_1, y_2, y_3 \in B^G$, and any $R$, there is an automorphism in $H$ that performs the rotation $((x, y_1), (x, y_2), (x, y_3))$ without moving any other point with support contained in the $R$-ball.
\end{lemma}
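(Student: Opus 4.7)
The plan is to follow the same template as Lemma~\ref{lem:SymbolPermutations}, but to upgrade the output from single-cell $\Alt(B)$-actions to arbitrary elements of $\Alt(B^V)$ acting on the $B$-content of any finite window $V \subset G$ (still conditioned on the $A$-track being a translate of $x$). Take $H$ to be the same finitely-generated group from Lemma~\ref{lem:SymbolPermutations}, generated by partial shifts and even symbol permutations of $A \times B$; since $|B| = |B'|^k \geq 3^2 = 9 \geq 5$, that lemma applies. Let $V$ contain the supports of $y_1, y_2, y_3$; then the requested cyclic permutation $\rho$ of $(y_1|_V, y_2|_V, y_3|_V)$ is a $3$-cycle in $\Alt(B^V)$.

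The next step is to invoke the graph version of the universal gates lemma with the small alphabet $B'$ on the vertex set $W = V \times [k]$, using any convenient connected graph: within each cell $g \in V$ include all intra-cell edges $(g,i)-(g,j)$ for $i \neq j$, and between pairs of cells $g \neq g'$ in some spanning tree of $V$ include one inter-cell edge $(g,i)-(g',j)$ with $i \neq j$ (such a pair exists since $k \geq 2$). Since $\Alt((B')^W) = \Alt(B^V)$ is generated by the subgroups $\Alt((B')^e) \cong \Alt((B')^2)$ over these edges, it suffices to realize each such pair-permutation, conditioned on the $A$-track lying in the orbit of $x$, by an element of $H$.

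For an intra-cell edge, any $\pi' \in \Alt((B')^2)$ extends by the identity on the remaining $k-2$ coordinates to an element of $\Alt(B)$ (the extension of an even permutation by identity remains even), and Lemma~\ref{lem:SymbolPermutations} applies it at the chosen cell. For an inter-cell edge $(g,i)-(g',j)$ with $i \neq j$, conjugate the conditional single-cell permutation by a partial shift: first apply $s_{g^{-1}g', j}$, which brings the sub-track-$j$ symbol at cell $g'$ into cell $g$; next apply via Lemma~\ref{lem:SymbolPermutations} the element of $\Alt(B)$ at cell $g$ extending $\pi'$ by identity on coordinates other than $i$ and $j$; then apply $s_{(g')^{-1}g, j}$. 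A routine check shows the composition acts as $\pi'$ on the pair formed from the $B'$-symbols at sub-cells $(g,i)$ and $(g',j)$, and as the identity on every other sub-cell, because the conditional step modifies only cell $g$ and the outer partial shifts restore sub-track $j$ exactly at every cell except $g'$. Composing the pair-permutations whose product is $\rho$ then produces an element of $H$ implementing $\rho$ on the $B$-contents at $V$, conditioned on the $A$-track being in the orbit of $x$.

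To finish, we verify that no other point with support in the $R$-ball is moved. Points whose $A$-track lies outside the orbit of $x$ are untouched: partial shifts preserve the $A$-track, and Lemma~\ref{lem:SymbolPermutations} fixes such small-support points, so by invoking that lemma with a radius $R'$ large enough to absorb the bounded expansion of support under the finitely many partial shifts we use, each conditional step is trivial on them and the sandwich with the partial shifts is the identity. Points whose $A$-track is a translate of $x$ but whose $B$-content on $V$ is not one of the $y_i|_V$ are fixed because the constructed element realizes precisely the $3$-cycle $\rho \in \Alt(B^V)$, which is the identity on every other element of $B^V$. The main obstacle is the bookkeeping in the third paragraph: verifying that the inter-cell partial-shift conjugation really implements the claimed $\Alt((B')^2)$-action on exactly two sub-cells and the identity elsewhere, which reduces to a case analysis of how the two partial shifts compose with the non-trivial action of the conditional permutation at cell $g$.
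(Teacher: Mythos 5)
Your proposal follows essentially the same route as the paper: invoke the graph version of the universal gates lemma to decompose $\Alt(B^V)$ into edge permutations of $\Alt((B')^2)$, realize intra-cell edges by conditional symbol permutations from Lemma~\ref{lem:SymbolPermutations}, and realize inter-cell edges by conjugating such a conditional permutation with a partial shift on a single $B'$-sub-track. The one inconsistency is that you declare $H$ to be ``the same finitely-generated group from Lemma~\ref{lem:SymbolPermutations}, generated by partial shifts and even symbol permutations of $A \times B$,'' yet your inter-cell step uses the sub-track shift $s_{g^{-1}g',j}$ on a $B'$-coordinate of $B$; these finer partial shifts are not among the generators in that earlier lemma as stated for alphabet $A\times B$, and the paper's own proof explicitly adds ``a partial shift of the first $B'$-track'' to the generating set of $H$ for exactly this reason. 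This is easily fixed by enlarging $H$ accordingly, and the rest of your argument, including the support-expansion bookkeeping and the observation that the realized element is precisely the $3$-cycle $\rho$ in $\Alt(B^V)$ (so it fixes all other $B$-contents), matches the paper's intent.
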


\begin{proof}
We suppose for simplicity that $k = 2$. The argument generalizes easily to other $k$ (and this is needed to cover some potent alphabets).

Again take as generators the partial shifts and symbol permutations, but this time also take a partial shift of the first $B'$-track. By Lemma~\ref{lem:SymbolPermutations}, we can perform any even symbol permutation under an individual aperiodic configuration $x$ without affecting any other symbols under it, nor symbols under any other configuration (with small enough support). We can conjugate these maps by partial shifts to permute the contents of any cell under a shift of $x$ (again without affecting the $B$-track on other configurations).

We can also conjugate by a partial shift on one of the $B'$-tracks of the $B$-track, to apply a permutation on $(a, b) \in (B')^2$ where $a$ and $b$ now come from different adjacent group elements. Consider now the graph with two nodes at each group element, and with edges between the nodes at the same element, and edges between the top node of a cell and the bottom node of its neighboring cell. We can perform any even permutation of the cells in any such edge. Therefore, we can perform any even permutation of the entire configuration.
\end{proof}

Next, we recall the hypergraph lemma. Let $(V, E)$ be a finite $3$-hypergraph, meaning $E$ is a set of $3$-subsets of $V$. Associate to it the graph with edges the $2$-subsets of hyperedges in $E$. If this resulting graph is connected, then $(V, E)$ is called weakly connected.

\begin{lemma}
Let $(V, E)$ be a weakly connected finite $3$-hypergraph. For each $e \in E$, let $\pi_e$ be any $3$-rotation of the vertices of $e$, seen as an element of $\Sym(V, E)$. Then $\langle \pi_e \;|\; e \in E \rangle = \Alt(V)$.
\end{lemma}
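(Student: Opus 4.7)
My plan is to build up $\Alt(V)$ inductively by enlarging, one hyperedge at a time, a subset $A \subseteq V$ for which we have shown $\Alt(A) \le G := \langle \pi_e : e \in E\rangle$. The easy inclusion is that every $\pi_e$ is a $3$-cycle and hence lies in $\Alt(V)$, so $G \le \Alt(V)$; the base case of the induction is also immediate, since for a single hyperedge $e_0$ the cyclic group $\langle \pi_{e_0}\rangle$ is all of $\Alt(e_0) \cong \Alt_3$.

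The main calculation is a merging lemma for two hyperedges: if $e_1, e_2 \in E$ with $e_1 \cap e_2 \ne \emptyset$, then $\langle \pi_{e_1}, \pi_{e_2}\rangle \supseteq \Alt(e_1 \cup e_2)$. When $|e_1 \cap e_2| = 2$, the union has four vertices and any two distinct $3$-cycles sharing two points generate $\Alt_4$. When $|e_1 \cap e_2| = 1$, say $e_1 = \{a,b,c\}$ and $e_2 = \{a,d,e\}$ with $\pi_{e_1} = (a\ b\ c)$ and $\pi_{e_2} = (a\ d\ e)$, the conjugate $\pi_{e_1} \pi_{e_2} \pi_{e_1}^{-1} = (b\ d\ e)$ shares two vertices with $\pi_{e_2}$, so the previous case already puts $\Alt(\{a,b,d,e\})$ into our subgroup; adjoining $\pi_{e_1} = (a\ b\ c)$ then gives a subgroup of $\Alt_5$ strictly containing $\Alt_4$, which by the simplicity of $\Alt_5$ (and a check of its subgroup orders) must be $\Alt_5$.

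For the general inductive step I would use that $\Alt(A)$ is $2$-transitive on $A$ whenever $|A| \ge 4$. Suppose $\Alt(A) \le G$ and $e \in E$ meets $A$. Conjugating $\pi_e$ by $\Alt(A)$ produces many $3$-cycles on varying $3$-subsets of $A \cup e$, and the same trick as in the merging lemma (a preliminary commutator to convert a $3$-cycle of the form $(a\ x\ y)$ into one of the form $(a_i\ a_j\ y)$ when $|e \cap A| = 1$, then $2$-transitive conjugation) yields an $\Alt_4$ whose support meets $A$ in at least two elements. The standard fact that $\langle \Alt(A), \Alt(B)\rangle = \Alt(A \cup B)$ whenever $|A \cap B| \ge 2$ and $|A|, |B| \ge 3$ then gives $\Alt(A \cup e) \le G$.

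Finally, weak connectivity lets me enumerate hyperedges $e_0, e_1, \ldots, e_m$ so that each $e_{i+1}$ meets $A_i := e_0 \cup \cdots \cup e_i$ and $A_m = V$: as long as $A_i \ne V$, the connectedness of the graph whose edges are the $2$-subsets of hyperedges produces some $e_{i+1}$ with $e_{i+1} \cap A_i \ne \emptyset$ and $e_{i+1} \not\subseteq A_i$. By induction, $\Alt(A_i) \le G$ for every $i$, so $\Alt(V) \le G$ and hence $G = \Alt(V)$. The main obstacle is just the case analysis in the two-hyperedge merge and checking that the preliminary commutator in the $|e \cap A| = 1$ case lands on the right set of vertices; everything else is routine bookkeeping.
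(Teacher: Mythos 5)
The paper does not actually prove this lemma: it is introduced with ``we recall the hypergraph lemma'' and stated without proof, so there is no argument in the text to compare against. That said, your proof is correct and is the natural one. The base case, the two ``merging'' computations (two $3$-cycles sharing a pair generate $\Alt_4$; a $3$-cycle meeting a known $\Alt(A)$ in one vertex can be conjugated/commutated to one meeting $A$ in two vertices), the standard fact $\langle \Alt(A), \Alt(B)\rangle = \Alt(A\cup B)$ when $|A\cap B|\ge 2$, and the use of weak connectivity to choose a hyperedge $e_{i+1}$ straddling $A_i$ and $V\setminus A_i$ all check out. Two small bookkeeping points worth filling in if you were to write this out: (1) when $|e\cap A|=1$ you only need transitivity of $\Alt(A)$, not $2$-transitivity, to produce the conjugate $3$-cycle, so the argument already works at $|A|=3$; concretely, if $\pi_e=(a\,x\,y)$ with $x,y\notin A$ and $\sigma=(a\,a'\,a'')\in\Alt(A)$, then $\pi_e^{-1}\,\sigma\pi_e\sigma^{-1}=(a\,y\,a')$, which together with $\pi_e$ generates $\Alt(\{a,a',x,y\})$. (2) Weak connectivity guarantees both that every vertex lies in some hyperedge (for $|V|\ge 2$) and that the greedy enumeration of hyperedges reaches $A_m=V$; and it silently excludes the degenerate cases $|V|=2$ with $E=\emptyset$ where the claim would fail. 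With those remarks your proposal is a complete proof.
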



\begin{lemma}
\label{lem:PermutingConfigs}
Consider a potent full shift on a finitely-generated infinite group. Then there is a finitely-generated subgroup $H$ of the automorphism group such that for any $R, r, H$ can perform any even permutation of the set of orbits of aperiodic configurations which have a translate with support in the $R$-ball, while fixing all other points with support contained in the $r$-ball.
\end{lemma}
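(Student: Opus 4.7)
The plan is to combine the preceding lemma (as a source of local $3$-rotations) with the hypergraph lemma (to lift these local rotations to all of $\Alt(V_R)$). Write the potent alphabet as $A_1 \times \cdots \times A_k$ with $|A_i| = m \ge 5$ and $k \ge 4$. For each two-element subset $S = \{i,j\} \subset \{1,\dots,k\}$, view the alphabet as $(\prod_{\ell \notin S}A_\ell) \times (A_i \times A_j)$: the ``$B$-track'' $A_i \times A_j = (B')^2$ with $|B'| = m \ge 3$ satisfies the hypotheses of the preceding lemma. Let $H$ be the finitely generated subgroup of $\Aut(X)$ obtained by combining the generating sets from the preceding lemma and from Lemma~\ref{lem:Aperiodization} over all two-element $S$.

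Fix $R$ and $r$, and let $V_R$ denote the (finite) set of orbits of aperiodic configurations admitting a representative with support in the $R$-ball. The first claim is that, for each two-element $S$, $H$ can perform any $3$-rotation on any triple of orbits in $V_R$ admitting representatives $c_1,c_2,c_3$ agreeing on all tracks outside $S$, while fixing all other points of support in the $r$-ball. When the common non-$S$ part is already aperiodic, this is a direct application of the preceding lemma (with its internal $R$-parameter chosen larger than $r$). In general, the ``same $h$ works for all sufficiently small support points'' clause of Lemma~\ref{lem:Aperiodization} supplies a single element $h \in H$ aperiodizing the non-$S$ track of every configuration in $V_R$ simultaneously; conjugating by $h$ reduces to the aperiodic case on $h(V_R)$, and the desired rotation in $V_R$ is then the conjugate $h^{-1}\phi h$ of a rotation $\phi$ furnished by the preceding lemma.

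The second claim is that the $3$-hypergraph $\mathcal{G}$ on $V_R$ whose hyperedges are the orbit-triples described above is weakly connected. Given $\mathcal{O},\mathcal{O}' \in V_R$, represent them by $c,c'$ of support in the $R$-ball, pick a two-element subset $S$ of the tracks on which $c,c'$ disagree, let $c''$ agree with $c'$ on the $S$-tracks and with $c$ elsewhere, and pick a third configuration $e$ of support in the $R$-ball agreeing with $c$ and $c''$ off $S$, giving a hyperedge $\{\mathcal{O},[c''],[e]\}$. Since $m \ge 5$ leaves $m^2 \ge 25$ options on the $S$-tracks, we may ensure $c''$ and $e$ represent distinct aperiodic orbits in $V_R$. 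Iterating reaches $\mathcal{O}'$ in at most $\lceil k/2\rceil$ steps, so $\mathcal{G}$ is weakly connected and the hypergraph lemma gives $\Alt(V_R) \le H$. The main obstacle in this plan is the simultaneous aperiodization handled in the first claim: we must ensure that a single automorphism in $H$ aperiodizes the non-$S$ portion of every small-support configuration at once, so that conjugation produces elements of $H$ realizing the $3$-rotations with the required fixing properties.
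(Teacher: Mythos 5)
Your proposal takes a genuinely different route from the paper, and that route has a structural gap.

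The paper's order of operations is: first use the preceding lemma in both orientations to get restricted $3$-rotations (fixing an aperiodic track); then apply the hypergraph lemma to obtain \emph{arbitrary} $3$-cycles of orbits in which at least one track is individually aperiodic; and only \emph{then} conjugate by the aperiodization element $h$ to handle general aperiodic orbits. That order matters because by the time conjugation enters, we already have arbitrary $3$-cycles available, so there is no need for $h$ to preserve any ``agree on the non-$S$ track'' structure.

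Your proof inverts this order: for each $S$ you conjugate by $h$ first and then try to feed $h(c_1),h(c_2),h(c_3)$ into the preceding lemma. But the preceding lemma only produces rotations of triples of the form $(x,y_1),(x,y_2),(x,y_3)$ with a \emph{common} aperiodic $A$-track $x$. The element $h$ from Lemma~\ref{lem:Aperiodization} is built from right partial shifts and automorphisms that add the $B$-track content to the $A$-track (and vice versa); these mix the tracks, so if $c_1,c_2,c_3$ agree on the non-$S$ tracks, $h(c_1),h(c_2),h(c_3)$ generally will not. Your first claim therefore does not follow, and without it the hypergraph you build has no hyperedges. There is a second, smaller issue in the connectivity argument: the intermediate configuration $c''$ is fully determined by $c$, $c'$, and $S$, yet you assert that ``we may ensure $c''$ \dots\ represent[s] [a] distinct aperiodic orbit.'' You have no freedom to choose $c''$, and it may well be periodic (e.g., all tracks could vanish), in which case $[c'']$ is not a vertex of your hypergraph and the intended hyperedge does not exist. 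To repair the proof, follow the paper's order: take $V$ to be the orbits with at least one individually aperiodic track, use the preceding lemma's two orientations as hyperedges, invoke the hypergraph lemma to get arbitrary rotations on $V$, and apply the simultaneous aperiodization $h$ only at the very end to conjugate a general aperiodic triple into $V$.
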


\begin{proof}
We consider an alphabet $A \times B \ni (0, 0)$ with $A = A' \times A', B = B' \times B'$ and $|B'| \geq 3$ (it helps to have two tracks in different roles). With $|A'| = |B'|$ this covers all potent squares, but not all potent cases. However, the argument adapts to $B$ easily.

Again take partial shifts and symbol permutations as generators. We have already seen that if $x \in A^G$ is aperiodic, then we can perform any $3$-rotation on the second track. Similarly, if $y \in B^G$ is aperiodic, symmetrically we can perform any $3$-rotation on the first track. An easy application of the hypergraph lemma shows that we can perform any $3$-cycle of configurations $(x_1, y_1), (x_2, y_2), (x_3, y_3)$ where in each pair, at least one of the $x_i, y_i$ is aperiodic.

Finally, we can conjugate any $3$-cycle $(x_1, y_1), (x_2, y_2), (x_3, y_3)$ where the pair configurations $(x_i, y_i)$ are aperiodic, to configurations where both tracks are individually aperiodic, using Lemma~\ref{lem:Aperiodization} (specifically we use the last sentence here).

Note that at each step, if $(x, y)$ has elements in its $r$-ball but is not one of the configurations that interest us, then the basic $f_{\pi,C}$ we used to build everything from act trivially (if we use small enough $C$ as a function of $r$).
\end{proof}

\begin{theorem}
Consider a potent full shift on a finitely-generated infinite group 
Then the group $\PAut(S)$ acts infinite-orbit transitively on the aperiodic homoclinic points. 
\end{theorem}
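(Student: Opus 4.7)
The plan is to prove infinite-orbit transitivity by reducing the desired assignment $y_i \mapsto z_i$ to a single explicit even permutation of a finite set of aperiodic homoclinic configurations, and then invoking Lemma~\ref{lem:PermutingConfigs} to realize that permutation inside $\PAut(S)$.

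First I would choose a radius $R$ large enough that the (finite) supports of all the $y_i$ and all the $z_i$ fit inside the $R$-ball at the origin. Then I would define a bijection $\pi$ of a finite set $\mathcal{S}$ of aperiodic homoclinic configurations containing all of the $y_i$ and $z_i$, starting with $\pi(y_i) = z_i$ for each $i$ and closing up by adjoining, whenever needed, a fresh dummy aperiodic homoclinic configuration $w$ (with support inside the $R$-ball and shift-distinct from everything chosen so far) and setting $\pi$ on $z_i$ or on other dummies to continue the cycle. The existence of such dummies at every step is guaranteed by the density arguments earlier in the section, in particular Lemma~\ref{lem:Aperiodic}. Care is needed to handle the case where some $y_i$ happens to equal a shift of some $z_j$: the cycle structure of $\pi$ must be defined consistently in that event, which amounts to merging the relevant orbits into a common cycle. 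After closing up cycles, if the resulting permutation turns out to be odd, I would append one extra disjoint dummy transposition to make $\pi$ an even permutation.

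Next I would apply Lemma~\ref{lem:PermutingConfigs} to produce $g \in H \subseteq \PAut(S)$ implementing the even permutation $\pi$, choosing the auxiliary parameter $r$ of that lemma large enough that every relevant configuration is handled; the resulting $g$ then satisfies $g y_i = z_i$ for every $i$, which is exactly what infinite-orbit transitivity requires.

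The hard part will be that Lemma~\ref{lem:PermutingConfigs} is phrased in terms of even permutations of \emph{orbits}, while infinite-orbit transitivity demands the sharper point-level equalities $g y_i = z_i$. My plan for dealing with this is to use the proof of Lemma~\ref{lem:PermutingConfigs} rather than just its statement: the building blocks of that proof are the $3$-rotations provided by the unnamed rotation lemma immediately preceding it (together with its $A$/$B$-symmetric partner and Lemma~\ref{lem:Aperiodization}), and those rotate specific aperiodic configurations, not merely their shift-orbits. Composing them via the hypergraph lemma produces a configuration-level realization of any prescribed even permutation on any finite set of aperiodic configurations, and this is precisely the strengthening of Lemma~\ref{lem:PermutingConfigs} that allows the exact equalities $g y_i = z_i$ on the nose.
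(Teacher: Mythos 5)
Your plan correctly identifies the central difficulty: Lemma~\ref{lem:PermutingConfigs} is stated at the level of shift-orbits, whereas infinite-orbit transitivity requires the point-level equalities $g y_i = z_i$. However, the way you propose to close this gap differs from the paper's proof, and I don't think it quite works as stated.

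The paper's proof is explicit that Lemma~\ref{lem:PermutingConfigs} only ``perform[s] any even permutation on the configurations up to a shift,'' and then it carries out a separate \emph{offset-fixing} construction. Concretely, it shows how to shift exactly one configuration of interest by a generator $s$ while fixing the others: it swaps $(\{1\},\{1\}) \leftrightarrow (\{1\},\{s\})$ and then $(\{1\},\{s\}) \leftrightarrow (\{s\},\{s\})$, composing to shift $(\{1\},\{1\})$ to $(\{s\},\{s\})$ while the side-effect motion of $(\{1\},\{s\})$ is conjugated onto a harmless dummy. This step is where point-level control over offsets actually comes from. Your proposal replaces it with the assertion that the hypergraph machinery underlying the proof of Lemma~\ref{lem:PermutingConfigs} already gives ``a configuration-level realization of any prescribed even permutation on any finite set of aperiodic configurations.'' This is not quite right, because an automorphism is shift-commuting, so the permutation it induces on configurations is forced to be shift-equivariant: you cannot freely prescribe it. In particular, your construction of $\pi$ by ``closing up cycles'' with dummies runs into exactly the case you flag (``some $y_i$ happens to equal a shift of some $z_j$''): if $y_i = \sigma^{w} z_j$, then $\pi(y_i) = z_i$ forces $\pi(z_j) = \sigma^{-w} z_i$, which is not a free choice and may not even lie in your $R$-ball set $\mathcal{S}$. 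You acknowledge care is needed there but do not resolve it, and the ``merge orbits into a common cycle'' remark does not address this shift-equivariance constraint. The paper sidesteps the issue entirely by doing orbit matching first (at that level there is no offset constraint) and then adjusting offsets orbit-by-orbit within the already-matched target orbits.

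So the gap is concrete: the offset-fixing step in the paper is real work using the same building blocks (conditional track permutations realized as swaps), and your proposal replaces it with an unproven blanket claim that is not in fact true for arbitrary finite permutations. To repair the argument you would either need to prove a genuine configuration-level strengthening of Lemma~\ref{lem:PermutingConfigs} (tracking exactly how the conjugations and rotations affect offsets, and restricting to shift-equivariantly consistent $\pi$), or simply carry out the paper's two-stage orbit-match-then-fix-offsets scheme.
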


\begin{proof}
Again for simplicity assume the alphabet is $S = A \times A$ with zero symbol $(0,0)$ (for higher powers of $A$ the argument adapts easily, just ignore the other tracks).

Note that all the generators used in previous lemmas are various partial shifts and symbol permutations, so in $\PAut(S)$. By Lemma~\ref{lem:PermutingConfigs} we can perform any even permutation on the configurations up to a shift.

We still need to fix the offsets of configurations. We show that we can perform a left shift [sic] in direction $s$ on one configuration, without affecting any other homoclinic configuration of interest. We can then conjugate this configuration to another so that any particular configuration we are interested in is shifted by $s$, while others are fixed.

We will actually have to shift another configuration by $s^{-1}$ as a side-effect, but we can conjugate this to a homoclinic point with large support which we are not interested in.

For this, identify finite subsets $S \subset G$ with configurations over some particular nonzero symbol $1$. We can now easily swap $(\{1\}, \{1\})$ and $(\{1\}, \{s\})$ for a generator $s$ without affecting the other configurations with small supports, and fixing the $A$-track (this is not an even permutation, but simply perform another swap somewhere else, with a configuration we are not interested in). To do this, simply apply the above construction again, i.e.\ this is nothing but a specific conditional permutation of the second track conditioned on the contents of the first track.

Then swap $(\{1\}, \{s\})$ and $(\{s\}, \{s\})$ similarly. The composition moves $(\{1\}, \{1\})$ to $(\{s\}, \{s\})$, and $(\{1\}, \{s\})$ to $(\{s^{-1}\}, \{1\})$ and does not move other configurations that interest us.
\end{proof}

One can now conclude the following using Proposition~\ref{prop:WeakRyans}:

\begin{proposition}
The weak finitary Ryan's theorem holds for potent full shifts. 
\end{proposition}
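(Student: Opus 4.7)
The plan is to apply Proposition~\ref{prop:WeakRyans} with $H = \PAut(S)$. I would first note that $\PAut(S)$ is finitely generated by the partial shifts $s_{g,i}$ (where $g$ ranges over a finite generating set of $G$ and $i$ over the finitely many prime-factor tracks) together with the finite group of symbol permutations of $S$. The preceding theorem establishes that $\PAut(S)$ acts $\infty$-orbit-transitively on the set $Y$ of aperiodic homoclinic points, and in particular $2$-orbit-transitively on $Y$. Density of $Y$ in $X$ is straightforward for a full shift---any configuration can be approximated in the cylinder topology by a finite-support configuration and then made aperiodic by further perturbation on a sufficiently large finite window---or alternatively it follows directly from Lemma~\ref{lem:Aperiodic}.

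With these hypotheses verified, the first half of Proposition~\ref{prop:WeakRyans} yields $C_{\Aut_0(X)}(\PAut(S)) = G \cap \Aut(X)$. To deduce the weak finitary Ryan's theorem as stated, it remains to promote this centralizer computation from $\Aut_0(X)$ to all of $\Aut(X)$, which is exactly what the second half of Proposition~\ref{prop:WeakRyans} accomplishes once a suitable finitely generated witness is produced. For a potent full shift this promotion is essentially automatic: every $f \in \Aut(X) \setminus \Aut_0(X)$ sends the zero fixed point $0^G$ to a distinct constant configuration $a^G$, and therefore fails to commute with any symbol permutation that fixes $0$ but moves $a$. Since finitely many such permutations already lie inside $\PAut(S)$ and collectively detect every coset of $\Aut_0(X)$ in $\Aut(X)$, one may simply take $H = \PAut(S)$ itself.

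The principal obstacle has already been overcome in the preceding infinite-orbit-transitivity theorem; the present statement is then a direct corollary obtained by combining that theorem with Proposition~\ref{prop:WeakRyans}. Beyond the coset bookkeeping above, I do not anticipate any essential difficulty.
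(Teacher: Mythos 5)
Your proof is correct and follows the same route the paper intends. The paper states the result by invoking Proposition~\ref{prop:WeakRyans} (via the preceding $\infty$-orbit-transitivity theorem) and then remarks that the weak Ryan hypothesis needed for the second half of that proposition can be sidestepped: one only needs to discount homeomorphisms permuting fixed points, and this is automatic once the finitely generated subgroup contains symbol permutations and the alphabet has at least three symbols. You make this side-step explicit: since $\PAut(S)$ contains all symbol permutations and any $f \in \Aut(X) \setminus \Aut_0(X)$ sends $0^G$ to some constant $a^G$ with $a \ne 0$, choosing a permutation fixing $0$ and moving $a$ (possible since a potent alphabet has size $\ge 3$) shows $C_{\Aut(X)}(\PAut(S)) \subset \Aut_0(X)$, which combined with the first half of Proposition~\ref{prop:WeakRyans} yields $C_{\Aut(X)}(\PAut(S)) = G \cap \Aut(X)$. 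This is exactly the argument the paper gestures at; you have simply written out the coset/fixed-point bookkeeping that the paper leaves to the reader.
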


The only non-trivial thing to check is that weak Ryan's theorem holds. It is a relatively simple exercise to prove this for all full shifts, but for readers who prefer to avoid this exercise, one may look at the proof of Proposition~\ref{prop:WeakRyans} to discover that we only use weak Ryan's theorem to discount permutations of fixed points. For a potent full shift, and more generally any subshift with at least $3$ fixed points, these are automatically discounted as soon as our finitely generated subgroup contains all symbol permutations. In any case, the proposition is weaker than strong finitary Ryan's theorem, our main goal.

\begin{lemma}
\label{thm:FullShiftPropertyP}
For any potent power full shift $(X, G)$, there is a f.g.\ subgroup of the automorphism group with property P.
\end{lemma}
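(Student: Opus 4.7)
The plan is to verify property P for the finitely generated subgroup $H \leq \PAut(X) \leq \Aut(X)$ already constructed in the proof of the preceding theorem, namely the one generated by partial shifts along a finite generating set of $G$ on each track together with all even symbol permutations. Transitivity on aperiodic homoclinic points (the first clause of property P) is immediate from the $\infty$-orbit transitivity established there, so I would only need to verify the support-distinguishing clause: for every aperiodic homoclinic $x$ and every $y \in X$ with $y \notin G \cdot x$, produce some $g \in H$ moving exactly one of $x, y$. I would split by the nature of $y$.

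If $y$ is itself aperiodic homoclinic, I would apply $2$-orbit transitivity to the pair $(x, y)$ with target $(\sigma_s x, y)$ for any $s \ne 1_G$: both tuples consist of aperiodic points in distinct orbits because $y \notin G \cdot x$, so the transitivity yields $g \in H$ with $g(y) = y$ and $g(x) = \sigma_s x \ne x$. If instead $y = 0^G$, I would take $g = f_{\pi, [x_A|_D]}$ from Lemma~\ref{lem:SymbolPermutations} with $D \supset \supp(x_A)$ and some nontrivial $\pi \in \Alt(B)$ not fixing $x_B|_{1_G}$ (swapping the roles of the two tracks if $x_A$ happens to be identically zero). This element of $H$ fires on $x$ at the origin and so moves $x$, but never fires on $0^G$ because the nonzero pattern $x_A|_D$ never appears on the identically zero $A$-track.

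The remaining case is $y \ne 0^G$ with $y$ either non-aperiodic or non-homoclinic. Here the key observation is that the orbit closure of a finite-support aperiodic point over an infinite group is its orbit together with $0^G$, so the window-language of $y_A$ is contained in that of $x_A$ if and only if $y_A \in G \cdot x_A \cup \{0\}$, and similarly for the $B$-track. A short case analysis then shows that, unless $y_A = \sigma_g x_A$ and $y_B = \sigma_h x_B$ hold simultaneously, some window appearing in $y_A$ is not a window of $x_A$, or vice versa (or the same with $B$ in place of $A$), and a single-track conditional permutation from Lemma~\ref{lem:SymbolPermutations} then distinguishes $x$ from $y$. In the exceptional sub-case $y_A = \sigma_g x_A$, $y_B = \sigma_h x_B$ with $g \ne h$ (the case $g = h$ being excluded by $y \notin G \cdot x$), I would choose a window $W$ large enough to contain both of the shifted supports in $y$, so that the joint pattern $P = y|_W$ has incompatible $A$- and $B$-track alignments compared with any single window of $x$. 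A conditional permutation keyed to the full pattern $P$, assembled from single-track $f_{\pi,[\cdot]}$ via the commutator trick of Lemma~\ref{lem:CommutatorTrick}, then moves $y$ but fixes $x$.

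The main obstacle is this last sub-case: when both single-track projections of $y$ lie in the shift orbits of the corresponding tracks of $x$ at different shifts, no single-track pattern distinguishes $y$ from $x$, and one must build a genuinely multi-track conditional permutation inside $H$. Over torsion-free groups such as $\Z^d$, the joint stabilizer of $(\sigma_g x_A, \sigma_h x_B)$ is automatically trivial, so $y$ is already aperiodic homoclinic and the sub-case collapses into the first; over general infinite f.g.\ groups with torsion, the commutator-trick construction of a multi-track $f_{\pi, [P]}$ is genuinely needed, and is of the same flavor as constructions already used to establish $\infty$-orbit transitivity in the preceding theorem.
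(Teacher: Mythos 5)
Your proposal is broadly sound but takes a noticeably more complicated route than the paper's, and its hardest step is asserted rather than carried out. The paper's key move, which you do not use, is to \emph{normalize} $x$: after applying $1$-orbit transitivity one may assume $x$ has a single nonzero symbol $1$ at the origin on the first track. With that normalization the case split collapses immediately. If $y$ has a nonzero symbol on some other track, a plain even symbol permutation of that track fixing $0$ moves $y$ but not $x$. Otherwise $y$ is supported entirely on the first track, and since $x$'s first-track window language consists only of all-zero patterns and patterns containing a single $1$, any such $y \notin G \cdot x$ with $y \neq 0^G$ has a first-track window not occurring in $x$, so a conditional full-support permutation of the second track keyed to that window moves $y$ but not $x$. (The degenerate case $y = 0^G$, which you rightly treat explicitly, is glossed over in the paper; it is trivial since $0^G$ is fixed by all of $H$.)

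Because you keep $x$ general, you stratify by the dynamical type of $y$ and land on a hard final sub-case: $y_A = \sigma_g x_A$ and $y_B = \sigma_h x_B$ with $g \neq h$, so that no single-track window separates $x$ from $y$. You propose to resolve this with a conditional permutation keyed to a joint $A$-and-$B$-track pattern, ``assembled from single-track $f_{\pi,[\cdot]}$ via the commutator trick.'' This is the genuine gap. Lemma~\ref{lem:SymbolPermutations} as stated conditions on one track only, and the commutator trick of Lemma~\ref{lem:CommutatorTrick} combines two events read off the \emph{same} conditioning data; to intersect an $A$-track event with a $B$-track event one needs a third track to be the one permuted, so that the two conditional maps live in a common product $\Alt(C)^{(A\times B)^G}$. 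That third track is available in the potent case, but it is not supplied by your two-track $A \times B$ bookkeeping, and after introducing it you must also check the resulting map actually moves the third track of $y$. All of this can be made to work, but it is substantially more machinery than the paper's two-line argument, and it is exactly what normalizing $x$ renders unnecessary: once $x$ is a single symbol, $y$ always differs from $x$ on a single-track window, so the single-track conditional permutations you already have suffice.
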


\begin{proof}
Let $x$ be an aperiodic homoclinic, and $y$ not in its shift orbit. Applying ($1$-)orbit transitivity on aperiodic homoclinics we may assume $x$ has just a single symbol $1$ at the origin, on the first track. If $y$ has a nonzero symbol on one of the tree bottom tracks, then we can use a symbol permutation to affect $y$ without affecting $x$.

Suppose then that $y$ also has its support contained fully in the first track. Then apply a full-support even symbol permutation on the second track, conditioned on seeing some large pattern that appears in $y$ but not in $x$. Note that such a pattern must indeed exist since $y$ is not in the shift-orbit of $x$, and $x$ is a homoclinic point.
\end{proof}

We are now able to prove Theorem~\ref{thm:FullShiftStrongRyan}.

\begin{proof}[Proof of Theorem~\ref{thm:FullShiftStrongRyan}]
By the previous theorem, there is a f.g.\ subgroup $H$ of the automorphism group with property P. Obviously a full shift is a pointed shift of finite type with dense aperiodic homoclinic points, and $G$ acts faithfully, so by Theorem~\ref{thm:StrongRyan} the centralizer of $H$ in $\Homeo(X)$ is $G$.
\end{proof}

\subsection{Glider diffusion and corner decreasability}

In the next section, we show a variant of the previous argument which covers a reasonable class of SFTs on $\Z^d$. Commutator tricks work fine on all SFTs where we have a product structure (even somehow locally), but there is no reason why one should be present.

Thus our plan is as follows: We show that we can conjugate any finite set of homoclinic points to a tuple of homoclinic points with very sparse supports, by ``diffusing them into clouds of gliders''. Then we show that such clouds of gliders can be interpreted locally as having a product structure -- concretely, they can be seen as belonging to a certain product subshift, by using the embedding theorem of Meyerovitch. These main feats are performed in the present section, and then it is easy to finish in the following section.

The arguments of this section are for $\Z^d$, and we use at least the facts that this group has no nonamenable subgraphs, and is orderable (and has a copy of $\Z$, but this is automatic). It might be possible to directly generalize the arguments for such groups, covering at least the Heisenberg group. To cover a larger class of groups, new ideas may be needed.


Define $\eta_{\vec v,i}$ to be the homoclinic point with support $\{\vec v\}$ and symbol $i$ in this position. Also $\eta_i = \eta_{\vec v, i}$.


We recall the finite extension property introduced in~\cite{BricenoMcGoffPavlov2018}. Suppose that $X$ is an SFT. We say $X$ has the \emph{finite extension property} (FEP) if there is a finite set of forbidden patterns $\mathcal{F}$ defining $X$ and $r \in \mathbb{N}$ satisfying the following: if a pattern $U$ defined on $S \subset \mathbb{Z}^{d}$ can be extended to a pattern on $S+[-r,r]^{d}$ which does not contain any pattern from $\mathcal{F}$, then there exists $x \in X$ such that $x|S = U$.

We will use the lexicographic order on $\mathbb{Z}^{d}$ defined inductively (using the usual order on $\mathbb{Z}$) by $(u_1, \ldots, u_d) \leq (v_1, \ldots, v_d)$ if $u_1 < v_1$, or $u_1 = v_1$ and $(u_2, \ldots, u_d) \leq (v_2, \ldots, v_d)$.

Recall as defined right before Lemma~\ref{lem:SumStab}, $+$ denotes the union of two configurations or patterns which have nonzero supports non-intersecting.

\begin{lemma}
Let $X \subset A^{\Z^d}$ be a subshift, and let $x_1, \ldots, x_n$ be nonzero finite-support points with different orbits. Then there is a conjugacy $\pi : X \to Y$ such that $\pi(x_i)$ has nonzero support $\{\vec 0\}$.
\end{lemma}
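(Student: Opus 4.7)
The plan is to construct $\pi$ as a single sliding block code from $X$ to a subshift $Y$ over an enlarged alphabet, with the shape of each $x_i$ encoded by a marker symbol placed exactly at the origin while its other nonzero entries are ``blanked out'' to the new zero symbol.

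First I would fix a finite window $V \subset \Z^d$ containing $\vec 0$ and $\bigcup_i \supp(x_i)$, taken large enough that no coincidence $x_i|_V = x_j|_{\vec v + V}$ can hold for $(i, \vec 0) \neq (j, \vec v)$. Such a $V$ exists because the only ``dangerous'' shifts $\vec v$ lie in the finite set $\bigcup_{i,j}(\supp(x_j) - \supp(x_i))$, and any such match with $V$ also containing $\supp(x_j) - \vec v$ would force $x_i = \sigma_{\vec v}(x_j)$, contradicting the different-orbit hypothesis.

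Next, let $B = A \times \{\ast, e_1, \ldots, e_n\}$ with distinguished zero symbol $(0, \ast)$, and define the block code
\[ \pi(x)_{\vec v} = (a_{\vec v}, m_{\vec v}), \]
where $m_{\vec v} = e_i$ when $x|_{\vec v + V} = x_i|_V$ (for a necessarily unique $i$, by the choice of $V$) and $m_{\vec v} = \ast$ otherwise; and $a_{\vec v} = 0$ when there exist $i$ and $\vec u$ with $\vec v \in \vec u + \supp(x_i)$ and $x|_{\vec u + V} = x_i|_V$, while $a_{\vec v} = x_{\vec v}$ otherwise. This rule inspects $x$ only on the finite neighborhood $\vec v + \bigl(V - \bigcup_i \supp(x_i)\bigr)$, so $\pi$ is a sliding block code; set $Y = \pi(X)$.

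Checking $\supp(\pi(x_i)) = \{\vec 0\}$ is then immediate: at $\vec 0$ the marker $m_{\vec 0} = e_i$ makes the symbol nonzero, while at $\vec v \neq \vec 0$ one has $m_{\vec v} = \ast$, and $a_{\vec v} = 0$ either because the $x_i$-marker at $\vec 0$ covers $\vec v$ (when $\vec v \in \supp(x_i)$) or because $(x_i)_{\vec v}$ was already zero. The main obstacle---and the crucial step---is verifying injectivity of $\pi$, which is what upgrades it to a topological conjugacy onto $Y$. From $y = \pi(x)$, the second coordinates reveal all marker positions and pin down $x$ on the union of their $V$-translates. At any remaining position $\vec v$, if $a_{\vec v} \neq 0$ one reads off $x_{\vec v} = a_{\vec v}$ directly; while if $a_{\vec v} = 0$, inspection of $y$ itself tells us whether any visible marker covers $\vec v$---if so, $x_{\vec v}$ is determined as $(x_i)_{\vec v - \vec u}$ by that marker, and otherwise $x_{\vec v} = 0$. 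Consistency among overlapping markers is automatic since $y$ comes from a genuine $x \in X$.
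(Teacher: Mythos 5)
Your proof is correct and implements the same core ``mark and blank'' idea as the paper: place a marker at positions where a window of the configuration matches the central pattern of some $x_i$, and set the original symbol to zero wherever such a marker covers it, so that $\pi(x_i)$ shrinks to a single marker symbol at the origin. Your choice of window $V$ is the analogue of the paper's ``pick $R$ large enough''; the argument that dangerous offsets live in the finite Minkowski difference $\bigcup_{i,j}(\supp(x_j)-\supp(x_i))$, and that for these $V$ being large forces the entire point to match (contradicting different orbits), is sound.

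The route differs in how invertibility is established, and this is worth noting. The paper factors $\pi$ into two maps each of which is \emph{obviously} invertible: first a conjugacy onto a subshift of $(A \times \{0,\dots,n\})^{\Z^d}$ that merely adds a marker track (invertible by projecting it away, since the marker track is a function of the first), then an automorphism of the \emph{full shift} on the enlarged alphabet which fixes the marker track and subtracts the translate $\sigma_{\vec u}(x_i)$ from the first track at every position $\vec u$ carrying marker $i$, using a chosen abelian group structure on $A$ (invertible: add instead of subtract). Composing and restricting gives the conjugacy with no explicit decoder needed. You instead build a single sliding block code and construct the decoder by hand, which requires you to (i) observe that the decoding window is bounded by $\vec v - \bigcup_i \supp(x_i)$ and (ii) argue that overlapping markers give consistent reconstructed values, which you do. Your version avoids choosing an auxiliary group structure on $A$; the paper's version avoids the overlap-consistency discussion entirely because subtraction and addition commute regardless of overlaps. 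Both are short and either would serve.

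One small notational caution: $x_i|_V$ and $x_j|_{\vec v + V}$ are patterns with different domains, so strictly they cannot be equal; what you mean is $x_i|_V = (\sigma_{\vec v} x_j)|_V$ (or equivalently the natural translate of $x_j|_{\vec v + V}$ to domain $V$). The intent is clear, but stating it as an equality of patterns on the common domain $V$ would be cleaner.
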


\begin{proof}
Define first an auxiliary map to alphabet $A \times \{0, 1, \ldots, i\}$, which marks the radius-$R$ central segment of $x_i$ by writing $i$ on the second track, and otherwise writes 0 on the second track. If we pick $R$ large enough, this map takes $x_i$ to $(x_i, \eta_{\vec 0, i})$.

Next, pick an abelian group structure on $A$ with zero element $0$. Define an automorphism of the full shift on $A \times \{0, 1, \ldots, i\}$, which subtracts $x_i$ on the first track whenever the second track contains $\eta_{\vec 0, i}$.

The composition of the two morphisms has the desired property, since now $x_i$ maps to $(0^{\Z^d}, \eta_{\vec 0, i}) = \eta_{\vec 0, (0, i)}$.
\end{proof}

Now we construct the f.g.\ subgroup of an automorphism group to perform ``glider diffusion'' which means we turn the support into a cloud of gliders. Recall that a glider (or spaceship) for an automorphism is just a finite-support point which the automorphism maps into a distinct point in its shift-orbit. A ``cloud of gliders'' informally means a configuration which is a union of gliders with sufficiently disjoint supports so that they move independently.

\begin{lemma}
Let $x_1, x_2 \in X$ be two configurations with supports $\{0\}$ in an SFT $X$ with radius $R \geq 2$. Let $f$ be the morphism $f : X \to A^{\Z^d}$ that exchanges their central symbols $a_1$ and $a_2$ if they are at distance at least $R$ from other parts of the support. Then $f$ codomain-restricts to an automorphism of $X$ of order $2$.
\end{lemma}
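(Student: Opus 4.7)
My plan is to verify three properties of $f$: it is a continuous shift-equivariant block map on $A^{\Z^d}$; it satisfies $f(X)\subseteq X$; and it is involutive, $f\circ f=\mathrm{id}_X$. Together these give that $f$ codomain-restricts to an element of $\Aut(X)$ of order $2$.

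The block-map claim is immediate from the definition: the condition ``$x_v\in\{a_1,a_2\}$ and every other nonzero symbol of $x$ is at distance $\geq R$ from $v$'' depends only on $x|_{B_R(v)}$, so $f$ is given by a radius-$R$ local rule that only rewrites the central symbol. This gives continuity and shift-equivariance on the full shift.

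For $f(X)\subseteq X$, I take $x\in X$ and any radius-$R$ window $W$ of $f(x)$. If $W$ contains no swap site, then $f(x)|_W=x|_W$ is automatically a legal window. If $W$ contains a swap site $v$, the isolation hypothesis forces $x$ to vanish on $B_R(v)\setminus\{v\}$; the radius-$R$ SFT convention then rules out any second swap site in $W$, since such a site would need both to lie in $B_R(v)$ and to carry a nonzero symbol. Hence $x|_W$ is exactly a translated window of $x_1$ or $x_2$, and after the swap it becomes the corresponding translated window of $x_2$ or $x_1$---legal because $x_2,x_1\in X$. So every window of $f(x)$ is legal, giving $f(x)\in X$.

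For involutivity, at a swap site $v$ of $x$, applying $f$ flips $a_1\leftrightarrow a_2$ at $v$ but leaves $B_R(v)\setminus\{v\}$ entirely zero (since only mutually $R$-separated central symbols were rewritten), so $v$ remains a swap site for $f(x)$, and a second application of $f$ undoes the flip. Non-swap-sites of $x$ remain non-swap-sites of $f(x)$ because their radius-$R$ patterns are unchanged. Hence $f\circ f=\mathrm{id}_X$, and $f$ is a self-inverse bijection of $X$. The main technical point throughout is the third paragraph: reconciling the isolation scale ``distance $\geq R$'' with the SFT's radius-$R$ forbidden patterns, so that each window sees at most one swap site and each swapped window coincides with a legal window of $x_1$ or $x_2$; this is where the hypothesis $R\geq 2$ and the precise convention for ``radius $R$ SFT'' come into play.
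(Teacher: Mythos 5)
Your second paragraph has a real gap. The core claim is that every radius-$R$ window of $f(x)$ contains at most one swap site, but the isolation hypothesis ``at distance at least $R$'' does not force this. A window $W=\ball_R(w)$ has $\ell^\infty$-diameter $2R$, so it can contain two swap sites $v,v'$ with $d(v,v')=R$; each of these can be a swap site of $x$ even though they both lie in $W$. Your justification (``such a site would need both to lie in $B_R(v)$'') is also off: the window is centered at $w$, not at $v$, so a second swap site in $W$ need not lie in $B_R(v)$ at all. (Also, ``at distance at least $R$'' only forces $x$ to vanish where $0<d(\cdot,v)<R$, not on $B_R(v)\setminus\{v\}$.) In fact the lemma as literally stated fails: over $\Z$ with alphabet $\{0,1,2\}$, forbid the size-$2$-window pattern consisting of two $2$s at distance exactly $2$ (so $R=2$), take $x_1=\eta_1$, $x_2=\eta_2$, and $x=\cdots 0\,1\,0\,1\,0\cdots$ with the $1$s two apart; both $1$s are swap sites, but $f(x)=\cdots 0\,2\,0\,2\,0\cdots\notin X$. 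So ``each window sees at most one swap site, and each swapped window is a translate of a window of $x_1$ or $x_2$'' is where the argument actually breaks, not just where it needs care.

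The fix is to enlarge the isolation threshold so that swap sites are at pairwise distance strictly greater than $2R$ --- then any radius-$R$ window is contained in $\ball_{2R}(v)$ for the unique swap site $v$ it contains, and on that ball $x$ (and hence $f(x)$) is zero away from $v$, so the swapped window really is a translated window of $x_2$ or $x_1$. This is clearly the intended reading (compare the $2R$ and $3R$ thresholds used in Lemma~\ref{lem:g}), and with that change your three-step outline (block map, $f(X)\subseteq X$, involutivity) goes through; the involutivity step survives unchanged since swaps at distinct sites leave each other's isolation conditions intact. For what it's worth, the paper's own proof of this lemma is simply ``This is obvious,'' so the imprecision in the threshold originates in the statement; but a careful proof must recalibrate it, and as written yours does not.
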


\begin{proof}
This is obvious.
\end{proof}

\begin{lemma}
\label{lem:g}
Let $x_1, x_2 \in X$ be two configurations with supports $\{0\}$ in an SFT $X$ with radius $R \geq 2$, and central symbols $a_1, a_2$ respectively. Let $g$ be the morphism $g : X \to A^{\Z^d}$ which exchanges the patterns $0a_1$ and $a_20$ (as patterns on $\{\vec 0, e_1\}$) when the $a_i$-symbols are at distance at least $2R$ from any other nonzero symbol, and are at distance at least $3R$ from the occurrence of one of the patterns $0a_1, a_20$. Then $g$ codomain-restricts to an automorphism of $X$ of order $2$.
\end{lemma}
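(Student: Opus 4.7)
The plan is to verify directly that $g$ is a well-defined continuous shift-equivariant map, that its image lies in $X$, and that $g^2 = \mathrm{id}$. The first point is automatic since $g$ is defined by a local rule of bounded radius (on the order of $3R$), and the firing conditions for the two exchanges $0a_1 \leftrightarrow a_2 0$ are expressible via a finite window, so $g$ is a block map. The two rules do not conflict: at each position where $g$ fires, we see exactly one of the two patterns $0a_1$ or $a_2 0$ on $\{v, v+e_1\}$ (they are distinct patterns since $a_1, a_2 \neq 0$, and the $3R$-isolation ensures distinct firings sit in disjoint windows).

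To show $g(X) \subset X$, fix $x \in X$ and any window $W$ of radius $R$. If $W$ is disjoint from every firing region $\{v, v+e_1\}$, then $g(x)|W = x|W$, which is $X$-admissible. Otherwise, the $2R$-isolation condition places $W$ inside a ball around a single firing region in which the only nonzero symbol of $g(x)$ is either $a_1$ at $v+e_1$ or $a_2$ at $v$. By construction, $x_i \in X$ is a configuration whose sole nonzero symbol is $a_i$ at the origin, so the pattern on $W$ is a translate of some window of $x_i$ and is therefore $X$-admissible. The $3R$-condition guarantees that different firings are far enough apart that $W$ never sees two of them at once, so the previous local analysis is justified.

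For $g^2 = \mathrm{id}$, observe that each firing only swaps the contents of its two cells $\{v, v+e_1\}$; no nonzero symbols are created or destroyed outside these pairs. Consequently, the set of positions $v$ where $g$ fired in the forward direction coincides with the set of positions where it fires on $g(x)$, because the pattern $0a_1$ has been replaced by $a_2 0$ (and vice versa) on the same cells, and the $2R$ and $3R$ isolation distances are preserved up to a shift by a single lattice unit, which is absorbed by the slack in these constants (here we use $R \geq 2$). Hence applying $g$ to $g(x)$ reverses every swap, and $g^2(x) = x$.

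The only real subtlety, and hence the main point to track, is that after a swap the ``center'' of the pattern moves by $e_1$, so the isolation radii seen from the new center are $2R-1$ and $3R-1$ rather than $2R$ and $3R$; one must confirm that the arguments of the previous two paragraphs and the non-interference of distinct firings still go through at these slightly reduced distances. This is where $R \geq 2$ is used, and it ensures that the constants are chosen with enough slack that the rule fires symmetrically in the two directions.
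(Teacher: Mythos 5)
You take a genuinely different route than the paper. Rather than separately verifying well-definedness, codomain containment, and involutivity, the paper defines a single gate $\chi$ that performs one conditional swap at $\{\vec 0, e_1\}$, shows that $\chi$ commutes with each translate $\chi^{\vec v}$ using the $3R$-isolation of occurrences (two live firings have occurrence sites $\geq 3R$ apart, the occurrence sites do not move under a swap even though the support does, and $3R - 2 \geq 2R$), and then invokes the gate lattice lemmas from~\cite{SaloGateLattices} to conclude in one stroke that $\chi^{\Z^d}$ is a well-defined automorphism of $X$ of order $2$, which it identifies with $g$. The paper's route discharges continuity, shift-equivariance, and bijectivity to the general gate lattice lemma; yours re-derives these by hand, which is a valid and self-contained alternative.

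Where your proof needs tightening is the ``slack'' justification in the last paragraph, which as written is not sound, and the same concern propagates to your $g(X) \subset X$ step. The thresholds $2R$ and $3R$ are exact, and after a swap the $a_i$-symbol moves by one lattice step, so a distance of exactly $2R$ before the swap can become $2R-1$ after, strictly violating the firing condition on $g(x)$; $R \geq 2$ gives no margin against a one-step decrease, and likewise a radius-$R$ window touching a swapped pair can still contain a second nonzero symbol at distance exactly $2R$, so it need not be a pure translate of a window of $x_i$. What actually makes both the involution and the codomain containment work -- and what the paper's commutativity argument also relies on implicitly -- is that the isolation conditions should be understood as measured from the whole occurrence $\{\vec 0, e_1\}$ (equivalently, checked against both the cell the $a_i$-symbol occupies and the cell it would move to). With that reading the firing set is literally invariant under the swap, no arithmetic on $R$ is required, and your argument goes through cleanly. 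It would strengthen the write-up to state that convention explicitly rather than appeal to slack that the constants, as they stand, do not possess.
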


\begin{proof}
Consider the gate $\chi$ that swaps $0a_1$ and $a_20$ when the $a_i$-symbol is at distance at least $2R$ from any other nonzero symbol, and is at distance at least $3R$ from the occurrence of one of the patterns $0a_1, a_20$. We claim that $\chi$ and $\chi^{\vec v}$ commute for any vector $\vec v$.

It suffices to look at a case of $$[\chi^{\vec v}, \chi](x)$$ where the first application of $\chi$ is actually applied. Then the application of $\chi^{-1}$ will later also succeed, whether or not $\chi^{\vec v}$ performs a rewrite in the next step. Namely, $\chi^{\vec v}$ cannot possibly perform a rewrite that affects the positions at distance at most $2R$ because the swaps would have be to close by and would have both been cancelled. Rewrites further away can change the support by one in Hausdorff distance, but do not affect the positions where the patterns $0a_1$ or $a_20$ occur, and these cancellations trump the effect of changing the support. So $\chi^{-1}$ cancels the first swap, and similarly $(\chi^{\vec v})^{-1}$ cancels the swap at $\vec v$.

It follows that $\chi^{\Z^d}$ is a gate lattice, thus an automorphism. This is just another description of $g$.
\end{proof}

\begin{lemma}
The points $x_1, x_2$ are gliders for $h = g \circ f$. Furthermore, $h$ preserves the cardinality of the support.
\end{lemma}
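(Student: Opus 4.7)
The plan is to trace through the effect of $h = g \circ f$ on $x_1$ and $x_2$ explicitly, and then observe that both $f$ and $g$ are swap operations that preserve support cardinality.

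First I would verify that $x_1$ and $x_2$ are gliders. Since $x_i$ has support exactly $\{\vec 0\}$ with central symbol $a_i$ and zeros everywhere else, the isolation hypothesis of $f$ applies trivially at the origin (there are no other nonzero symbols in sight). Hence $f(x_1)$ is the configuration with a single symbol $a_2$ at the origin, and $f(x_2)$ is the configuration with a single symbol $a_1$ at the origin. Next, consider $g$ applied to $f(x_1)$. Reading the pattern on $\{\vec 0, e_1\}$ gives $a_2 0$, and the isolation conditions in Lemma~\ref{lem:g} (distance $\geq 2R$ from other nonzero symbols, distance $\geq 3R$ from other occurrences of $0 a_1$ or $a_2 0$) are satisfied vacuously since this is the only nonzero symbol in the configuration. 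So $g$ swaps $a_2 0$ into $0 a_1$, producing the configuration with a single $a_1$ at position $e_1$, i.e.\ a nontrivial translate of $x_1$. Symmetrically, reading $f(x_2)$ on $\{-e_1, \vec 0\}$ gives $0 a_1$, which $g$ swaps to $a_2 0$, giving a single $a_2$ at position $-e_1$, a nontrivial translate of $x_2$. Thus $x_1, x_2$ are gliders for $h$.

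Next I would verify that $h$ preserves the cardinality of the support. The map $f$ only ever replaces an isolated $a_1$ by $a_2$ or vice versa, and both are nonzero, so the set of nonzero positions is unchanged; in particular $|\supp(f(x))| = |\supp(x)|$ for every $x \in X$. The map $g$ swaps the two-cell patterns $0 a_1 \leftrightarrow a_2 0$; each of these patterns has exactly one nonzero cell, so applying the swap at any single location shifts one nonzero symbol by $\pm e_1$ without changing the total count. Moreover, the isolation conditions ensure that the local rewrites of $g$ happen at mutually disjoint locations, so their combined effect is just a cardinality-preserving rearrangement. Composing gives $|\supp(h(x))| = |\supp(x)|$ for every $x$.

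I do not expect any real obstacle here: the whole content of the lemma is an explicit local calculation with the two swap rules, together with the observation that swapping a symbol against a symbol, or a nonzero pattern against another nonzero pattern of the same support size, cannot change the number of nonzero cells.
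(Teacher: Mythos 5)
Your proof is correct and takes the same approach the paper implicitly has in mind. The paper's own proof of this lemma is simply ``Both claims are obvious,'' and what you have written is exactly the explicit local trace-through that makes the two claims obvious: the singleton-support points satisfy the isolation hypotheses of $f$ and $g$ vacuously, so the symbol at the origin just gets relabelled and then slid by one step, and each of $f$, $g$ only ever performs nonzero-to-nonzero symbol relabellings or swaps of $0a_1 \leftrightarrow a_20$ in disjoint two-cell windows, neither of which changes the count of nonzero cells.
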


\begin{proof}
Both claims are obvious.
\end{proof}

It is easy to understand the dynamics of $h$: each $a_1$ symbol sufficiently separated from other symbols moves to the right (meaning in direction $e_1$) and $a_2$-symbols move left (in direction $-e_1$), and when they get too close to other gliders or parts of the support, they change their type and bounce back. This can be expressed as follows.

\begin{lemma}\label{lem:Dep}
Let $h, R$ be as above. Then for every finite-support configuration $x$, there exist $n$ and $p \geq 1$ such that $h^n(x) = x_L + x_M + x_R$ and for all $k$ we have
\[ h^{n+kp}(x) = \sigma^{-kpe_1}(x_L) + x_M + \sigma^{kpe_1}(x_R). \]
Furthermore, the support of $x_L$ is entirely contained $R$ steps left of the support of $x_M$, which in turn is $R$ steps right of the support of $x_R$. Furthermore, $x_L$ is a cloud of translates of $a_2$-symbols with separation at least $R$, and $x_R$ is a cloud of $a_1$-symbols with separation at least $R$.
\end{lemma}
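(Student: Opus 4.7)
The plan is to analyze the long-term dynamics of $h=g\circ f$ on a finite-support configuration, showing that the trajectory eventually splits into a bounded ``core'' plus two mutually-isolated clouds of freely drifting gliders. The first observation is that $h$ preserves support cardinality, fixes every symbol outside $\{a_1,a_2\}$, and acts on each $a_i$-symbol at position $v$ in one of three ways depending on local isolation: (a) if the $2R$-neighborhood of $v$ contains no other nonzero cell, $f$ toggles $a_1\leftrightarrow a_2$ and then $g$ swaps the resulting $a_2 0$ pattern back to $0a_1$ (for $a_1$) or the $0a_1$ to $a_2 0$ (for $a_2$), so that the symbol drifts by $+e_1$ or $-e_1$ while keeping its type; (b) if $v$ is $R$-isolated but not $2R$-isolated, $f$ toggles its type and $g$ is inactive, so the symbol stays put with its type flipped; (c) if $v$ is not $R$-isolated, the symbol is unchanged.

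Next, define a cell in the support of $h^n(x)$ to be an \emph{eventual right-glider} if for every $m\geq 0$ it appears as $a_1$ at position $v+me_1$ in $h^{n+m}(x)$, and define \emph{eventual left-glider} analogously. A cell which is an eventual glider at time $n$ remains one (with shifted position) at time $n+1$, so the set of eventual gliders is non-decreasing in $n$, and since it is bounded by the total support size it stabilizes at some finite time $n_0$. The key structural claim is that after time $n_0$, every cell in the support not classified as an eventual glider stays in a bounded region of $\Z^d$: indeed, a non-glider $a_1$-symbol that drifted rightward for infinitely many consecutive steps would be $2R$-isolated at each such step and would thus satisfy the definition of an eventual right-glider, contradicting our choice of $n_0$; the symmetric argument rules out analogous behavior for $a_2$-symbols.

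Given this confinement, pick $n\geq n_0$ large enough that all right-gliders lie more than $R$ to the right of the core (in $e_1$-coordinate), all left-gliders lie more than $R$ to the left, and the two clouds are mutually $R$-separated internally. Define $x_R, x_L, x_M$ as the restrictions of $h^n(x)$ to the right-gliders, left-gliders, and core respectively. Since $h$ has finite interaction radius, the three pieces decouple under further applications: $h(x_L+x_M+x_R)=h(x_L)+h(x_M)+h(x_R)$, and by the very definition of gliders, $h^k(x_R)=\sigma^{ke_1}(x_R)$ and $h^k(x_L)=\sigma^{-ke_1}(x_L)$ for all $k\geq 0$.

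Finally, the piece $x_M$ has support in a fixed bounded region and $h$ preserves such configurations; since $h$ is a bijection on $X$, its action on this finite invariant set is a permutation, hence $x_M$ lies on a periodic orbit of some period $p\geq 1$ (adjust $n$ slightly if needed to ensure $x_M$ is already on the cycle). This yields the identity $h^{n+kp}(x)=\sigma^{-kpe_1}(x_L)+x_M+\sigma^{kpe_1}(x_R)$ for all $k\geq 0$, together with the geometric conditions on supports. The main obstacle is the confinement claim in the second paragraph; the delicate point is ruling out that a non-glider symbol drifts arbitrarily far without qualifying as an eventual glider, and the argument hinges on the observation that any symbol which is $2R$-isolated at every sufficiently large time is, by the dynamics described in case (a), monotonically drifting in a fixed direction with its type preserved, and therefore fits the definition of an eventual glider.
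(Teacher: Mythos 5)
Your proof follows the same overall strategy as the paper's: track the identity of nonzero cells across applications of $h$, observe that isolated $a_1$-symbols drift right and isolated $a_2$-symbols drift left, argue that the unbounded cells eventually become free gliders, and conclude that the bounded remainder $x_M$ is eventually periodic since $h$ is a bijection preserving a finite set of configurations with uniformly bounded support. Your case analysis (a)--(c) of the local dynamics of $h = g \circ f$ is accurate (modulo constants: $g$'s firing condition involves distance $3R$ from neighboring $0a_1/a_20$ patterns, so ``$2R$-isolated'' should really be a slightly larger radius, but this is cosmetic). The periodicity argument for $x_M$ and the final decoupling of $x_L, x_M, x_R$ are also correctly handled.

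The one place where your argument is incomplete is the confinement claim that you yourself flag at the end: you need ``unbounded trajectory $\Rightarrow$ eventual glider,'' but what you actually establish is ``persistently $2R$-isolated $\Rightarrow$ eventual glider,'' and the implication ``unbounded $\Rightarrow$ persistently $2R$-isolated'' is left unjustified. A priori a cell could drift, collide with another (mobile) cell, pause or reverse, separate, drift again, and so on without ever being isolated at every large time. The paper's proof handles this via an implicit induction on extremality, which is the cleanest way to close the gap in your framing too: among cells with unbounded rightward trajectory, the eventually-rightmost one has nothing to its right, and no other cell can catch up to it from behind (all speeds are at most $1$), so once it separates it is an eventual right-glider; remove it from consideration and repeat. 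Your ``stabilization of the set of eventual gliders at time $n_0$'' lemma is a nice organizing device, but it needs to be coupled with this extremality/induction step (or some equivalent argument) to actually conclude that non-gliders are bounded.
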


\begin{proof}
By the construction of $h$, the only thing that happens in its action is that gliders, which are separated $a_1$- and $a_2$-symbols, ``move'' either left or right, and sometimes change to each other. There is a well-defined notion of which glider in the preimage corresponds to which glider in the image, and the definitions of $f$ and $g$ allow us to track the movement and type of a single glider in a natural way.

Suppose some glider is not bounded to a finite region. Then there can be gliders arbitrarily far to the left or to the right in the iteration of $h$, suppose on the left. Then it must be an $a_2$-symbol, traveling at constant speed. The movement of such glider cannot be stopped by any other glider (and other parts of the support cannot move). Thus, inductively the gliders that do not stay bounded eventually in a natural sense ``join'' the $x_L$-part or symmetrically the $x_R$-part. The remaining part that stays bounded must behave periodically since the rule is bijective, giving us $n$ and $p$.
\end{proof}

Next, we need to discuss the gluing properties that allow glider diffusion.

\begin{definition}
Let $X$ be a subshift with a zero point. A point is in \emph{good position} if it is nonzero and the lexicographically minimal position in its support is the origin.
\end{definition}

Note that not all points can be shifted to good position (e.g.\ a fixed point not containing the zero-symbol at all), that every point has at most one point in its orbit which is in good position, and that every finite-support point can be moved to good position.

\begin{definition}
A subshift $X$ has the \emph{corner decreasability property} if there exists $R$ such that whenever $x \in X$ is in good position and has finite support, there is another point $y$ whose nonzero support has strictly smaller cardinality than that of $x$, and which differs from $x$ only in the $R$-ball around the origin.
\end{definition}

\begin{lemma}
Every FEP subshift has the corner decreasability property.
\end{lemma}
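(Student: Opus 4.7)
The plan is to use FEP to replace $x$ on a neighborhood of the origin with the zero pattern, with good position ensuring local compatibility at the replacement boundary. Let $r$ denote the FEP constant, which we may assume also bounds the radius of the forbidden patterns in some fixed defining set, and set $R := 4r$. Given $x \in X$ in good position with finite support, the goal is to produce $y \in X$ which agrees with $x$ outside $B_R(\vec 0)$, equals zero on a large inner ball containing $\vec 0$, and has strictly smaller support than $x$.

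The main application of FEP is with $S := (\mathbb Z^d \setminus B_R(\vec 0)) \cup B_{R-4r}(\vec 0)$ and pattern $U$ on $S$ defined by $U|_{\mathbb Z^d \setminus B_R} = x$ and $U|_{B_{R-4r}} = 0$. To invoke FEP I will exhibit a valid local extension of $U$ to $S + [-r,r]^d = (\mathbb Z^d \setminus B_{R-r}(\vec 0)) \cup B_{R-3r}(\vec 0)$: take the outer boundary annulus $B_R \setminus B_{R-r}$ to carry the $x$-values and the inner boundary annulus $B_{R-3r} \setminus B_{R-4r}$ to be zero. Any forbidden window of radius at most $r$ in the extended pattern must lie entirely in $\mathbb Z^d \setminus B_{R-r}$ (where the extension agrees with $x \in X$) or entirely in $B_{R-3r}$ (where it agrees with $0^{\mathbb Z^d} \in X$); straddling is impossible because the two regions are at distance $2r+1$ which exceeds the diameter $2r$ of any forbidden window. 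FEP then yields $y \in X$ with $y|_S = U$, so $y = x$ outside $B_R(\vec 0)$ and $y$ is zero on $B_{R-4r}(\vec 0)$, in particular $y_{\vec 0} = 0$.

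To secure $|\supp(y)| < |\supp(x)|$, I use that $\vec 0 \in \supp(x) \setminus \supp(y)$, so the support decreases by one inside $B_{R-4r}$. The remaining issue is bounding the possibly-large support of $y$ on the undetermined annulus $B_R \setminus B_{R-4r}$, which FEP does not directly pin down. My plan to handle this is an iterative refinement: if the FEP-produced $y$ has too much annular support, apply the construction again with $y$ in place of $x$ to peel off further support positions, invoking the good-position hypothesis to ensure $x$ (hence each successive iterate) vanishes on the lex-lower half of the annulus, thereby letting the FEP extensions be chosen so that the lex-lower annular support remains zero. The hard step of the proof is precisely this iterative annular control: showing that the freedom left by FEP can be exploited—using good position to match the zero pattern on the lex-below-$\vec 0$ half-space as a recursive boundary condition—to produce a final $y^*$ with $|\supp(y^*)| < |\supp(x)|$ while still agreeing with $x$ outside $B_R(\vec 0)$.
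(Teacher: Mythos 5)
Your opening move is sound: using FEP with $S = (\mathbb{Z}^d \setminus B_R) \cup \{\vec 0\}$ and a two-piece extension (zeros near the origin, $x$-values far away) is a legitimate application of the finite extension property, and your distance argument showing no forbidden window straddles the two components is correct. You also correctly identify the crux of the problem yourself: FEP hands you a point $y \in X$ agreeing with $U$ on $S$, but gives no control whatsoever over $y$ on the annulus $B_R \setminus \{\vec 0\}$. That annulus has roughly $R^d$ cells, while you have only guaranteed removal of one cell (the origin). If $\supp(x) = \{\vec 0, v\}$ with $v$ far away, FEP might well return a $y$ with dozens of new nonzero symbols in the annulus, and there is no mechanism in your argument to rule this out.

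The proposed fix does not close the gap. FEP is an existence statement, not a selection principle: you cannot ``choose'' the FEP extension to vanish on the lex-lower half of the annulus unless you put that region into $S$ with the pattern $U$ equal to zero there, and if you do that you must exhibit a locally valid extension of $U$ to $S + [-r,r]^d$, which is now no longer obvious because the lex-lower half-plane thickened by $r$ meets the region where $x$ has support near its lex-minimal position. The iteration is also underspecified: after one step $y$ is not in good position (its origin is zero), so you would need to re-translate, and the new origin lies inside the old annulus, so the region you are allowed to modify shifts; it is not clear the process terminates with a net decrease of support, and you give no invariant that would force this. In short, removing a single point and refilling a fat annulus simply cannot win by accounting, regardless of how the filling is arranged.

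What is missing is the idea the paper actually uses: rather than removing a fixed ball around the origin, one removes a carefully chosen portion $S$ of the origin's connected component of the support, selected so that its intrinsic-metric boundary $B$ is tiny compared with $|S|$. The existence of such an $S$ within a \emph{uniformly bounded} radius is exactly where amenability (polynomial growth of $\mathbb{Z}^d$, equivalently the absence of nonamenable subgraphs) enters; this is what makes the constant $R$ in corner decreasability independent of $x$. One then zeroes out $S$ and uses FEP to repair only a thin collar $C$ around the boundary $B$, so the number of cells potentially re-added is $|C| = O(|B|) < |S|$, which is dominated by what was removed. (If the component of the origin is finite, one erases the whole component, the degenerate case.) The paper's win comes from ``remove a lot, add a boundary's worth''; your proposal's arithmetic is ``remove a point, add an annulus,'' which goes the wrong way, and no amount of lexicographic bookkeeping or iteration repairs that imbalance without the small-boundary selection.
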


\begin{proof}
Consider the local connected component of the origin in a configuration in good position. Since $\Z^d$ has polynomial growth, we can look at balls for this support's intrinsic metric, and at some point the boundary is at most $\epsilon$ times the size of the support seen so far (what we really use is that $\Z^d$ does not contain a nonamenable graph), and we have a bound on when this happens. One particularly nice possibility is that the component is actually finite, and we can just erase the entire component. Otherwise, rewrite the component with zeroes.

Let $S$ be the support seen so far, $B$ the boundary of it in the intrinsic metric. Let $C$ be the radius $100 R$-ball around $B$. Replace $S$ with zeroes, and restrict the resulting configuration to $\Z^d \setminus C$. Due to the thickness of $C$, this configuration allows a locally valid $R$-extension, since we can use $0$s to extend on one side of $C$, and the original configuration to extend on the other side of $C$. Therefore this extends to a legal configuration. The extension can only add $|C|$ elements to the support, and we have removed $|S|$. We have $|C| \leq 100 (2d+1)^R |B| < |S|$ for small enough $\epsilon$, so the new configuration has smaller support.
\end{proof}

\begin{lemma}
Let $X$ be a pointed SFT with radius $R$ which has the corner decreasability property, and let $p_i = \eta_{\vec 0, a_i}$ for $i = 1, 2$ be distinct. Then there is a finitely-generated subgroup $H$ of its automorphism group such that for any tuple of finitely-supported points $x_1, \ldots, x_n$ there is $k \in H$ such that each $kx_i$ has $R$-separated support over alphabet $\{0, a_1, a_2\}$, and there are no further glider collisions in the forward orbit of the automorphism $h$ constructed above.
\end{lemma}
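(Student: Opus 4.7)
The plan is to take $H$ generated by $h^{\pm 1}$, the shift generators $\sigma_{\pm e_j}$, and a finite family of \emph{corner-reduction gates} $\{r_P\}$ built from the corner decreasability hypothesis. For each pattern $P$ on the $R'$-ball (with $R'$ the corner decreasability radius) whose zero-extension $\bar P$ lies in $X$, is in good position, and has strictly positive support, fix a replacement pattern $Q_P$ on the same ball produced by applying corner decreasability to $\bar P$: then $Q_P$ has strictly smaller support than $P$, and its zero-extension also lies in $X$. The gate $r_P$ swaps $P$ with $Q_P$ (and vice versa) wherever it sees one of these patterns surrounded by a thick annulus of zeros; since both patterns are $X$-admissible in this zero-surrounded context, $r_P$ is an involution in $\Aut(X)$. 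There are only finitely many such $P$, so $H$ is finitely generated.

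Given $(x_1, \ldots, x_n)$, construct $k$ in three phases. \emph{Phase 1 (spread):} apply $h^N$ for $N$ so large that, by Lemma~\ref{lem:Dep}, each $h^N(x_i) = x_{i,L} + x_{i,M} + x_{i,R}$, with $x_{i,L}, x_{i,R}$ clouds of $R$-separated left/right-moving $a_2$- and $a_1$-gliders, and with each $x_{i,M}$ bounded and surrounded by a zero region much thicker than the gate activation annulus. \emph{Phase 2 (reduce):} iteratively apply $r_P$'s, each conjugated by shifts to target the current bottom-lex corner of some $x_{i,M}$. Each application strictly decreases $|\supp(x_{i,M})|$, and gates acting on well-separated regions commute, so all middles can be reduced in parallel within one composite product in $H$. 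After finitely many steps each $x_{i,M}$ is zero. \emph{Phase 3 (cleanup):} apply further $h$-iterations if needed so that the surviving glider clouds $x_{i,L} + x_{i,R}$ are $R$-separated and have no future collisions under forward iteration of $h$, which is automatic once the $a_1$ and $a_2$ gliders are pointing outwards from disjoint half-spaces.

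The main obstacle is justifying the corner-reduction gates as genuine $X$-automorphisms. Abstractly, the corner decreasability hypothesis furnishes, for each finite-support $x$ in good position, a modification $y$ confined to the $R'$-ball, but this $y$ could a priori depend on $x$ outside the $R'$-ball. The workaround is to apply the hypothesis specifically to $\bar P$, the zero-extension of $P$: this is itself a finite-support $X$-point in good position, so corner decreasability yields a reduction $\bar Q$ differing from $\bar P$ only on the $R'$-ball, pinning down a canonical $Q_P$ depending only on $P$. The swap $P \leftrightarrow Q_P$ is locally $X$-valid in the zero-annular context because both patterns extend to global $X$-points via zeros, so no forbidden pattern appears across the $R'$-ball boundary. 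A secondary subtlety is ensuring that during Phase 2 the gates never activate accidentally on the outgoing glider clouds or on other middles; this is handled by the thick zero-annulus activation condition together with the large $N$ chosen in Phase 1, which guarantees that the only places in any $h^N(x_i)$ matching the gate condition are the intended corner positions.
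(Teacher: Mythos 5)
Your proposal has a genuine gap, and it is exactly the issue that the paper's glider-insertion trick is designed to avoid.

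The most serious problem is the activation condition of your gates $r_P$. You condition $r_P$ on seeing $P$ or $Q_P$ on the $R'$-ball \emph{surrounded by a thick annulus of zeros}. But the middle part $x_{i,M}$ produced by Lemma~\ref{lem:Dep} is only surrounded by zeros as a whole; its lex-min corner is not. The $R'$-ball around the corner of $x_{i,M}$ will, in general, have nonzero support of $x_{i,M}$ extending past it, so the zero-annulus condition is simply false there and $r_P$ never fires on the corner. You cannot fix this by dropping the annulus, because corner decreasability was applied to the zero extension $\bar P$, not to $x_{i,M}$: the replacement $Q_P$ is only guaranteed $X$-admissible in an all-zero context, and gluing it against the rest of $x_{i,M}$ could create forbidden patterns. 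The paper sidesteps this by applying corner decreasability to the actual configuration $x_i$ (not to the zero-extension of its $R'$-ball pattern), producing a $y_i$ whose corner modification is by definition compatible with the rest of $x_i$; the exchange is then conditioned on recognizing the bounded-radius \emph{central pattern} of $x_i$, not on a surrounding zero annulus.

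There is a second, independent problem with your $r_P$: when $|\supp(\bar P)| = 1$, corner decreasability forces $Q_P = 0^{\Z^d}|_{R'\textrm{-ball}}$, the all-zero pattern. A gate swapping $P$ with the all-zero pattern cannot be a bijection on $X$: it must fix the zero point $0^{\Z^d}$, so it cannot ``create $P$ from zeros,'' yet to be an involution it would have to invert the erasure of $P$. Even for nonzero $Q_P$ of cardinality $1$ the gate would trigger on every isolated glider in the clouds $x_{i,L}, x_{i,R}$ as well. This is precisely why the paper inserts random $p_2$-gliders into the corner-reduced configuration $y_i$ to obtain $z_i$ with $|\supp z_i| = |\supp x_i|$ before performing any exchange: it makes the swap a genuine permutation of patterns of equal support size, it makes the pattern of $z_i$ specific enough not to collide with glider clouds or with itself, and it converts the ``lost'' support into gliders that $h$ can carry away, rather than attempting an irreversible erasure. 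Your scheme omits this step, and as stated it does not produce automorphisms.
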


\begin{proof}
The plan is to use the corner decreasability property to increase the number of gliders in at least one of the points $x_i$. Then we can apply a massive power of $h$ to move this glider away from the support (and possible also move other gliders away, possibly also on other $x_j$s), and continue until there are only gliders present.

Consider some $x_i$. We suppose we have already applied a huge power of $h$, and we may ignore the gliders near the origin. Thus we may assume $x_i$ does not have any free gliders, and the lexicographically minimal position in its support is fixed by $h$ (though there may be some bouncing gliders inside the support of course).

By the corner decreasability property, there is some $y_i$ with smaller support to which we can legally change $x_i$ at the corner. Change further $y_i$ to $z_i$ by inserting $p_2$ type gliders (so $a_2$ symbols) in random positions to the left (direction $-e_1$) of the support, so that $x_i$ and $z_i$ have the same support cardinality.

We note that a lexicographically minimal position in a configuration is a pattern that cannot intersect a small translate of itself. If the pattern of $p_2$-symbols is random, then with high probability we can exchange central parts of $x_i$ and $z_i$ by an automorphism, so that the support size of every configuration is preserved. Near the actual lexicographically minimal position of $x_i$, a glider will actually then escape when applying a large power of $h$, and we have effectively decreased the total support of the $x_i$'s (more precisely, of their $x_M$-parts in Lemma~\ref{lem:Dep}).

The radius of automorphisms $k$ needed for this scheme is just a function of the corner decreasability radius and enough so that shifts of $x_i$ and $z_i$ are not confused locally. So we can realize this scheme with a finitely generated subgroup $H$ (containing $h$ and the automorphisms exchanging central parts of $x_i, z_i$).
\end{proof}

\subsection{Finitary Ryan's over $\mathbb{Z}^{d}$}

We now prove a finitary Ryan's theorem for corner decreasable SFTs of finite type with a fixed point, working over the group $\mathbb{Z}^{d}$. This will be key to our use of local $\mathcal{Q}$ entropy later.

\begin{theorem}
\label{thm:FinitaryRyan}
If $X$ is a contractible $\Z^d$-SFT with a fixed point, then $(X,\mathbb{Z}^{d})$ satisfies strong finitary Ryan's theorem.
\end{theorem}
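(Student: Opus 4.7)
The plan is to invoke Theorem~\ref{thm:StrongRyan}, so I need to verify its hypotheses and construct a finitely generated $H \leq \Aut(X)$ with property P. The hypotheses on $X$ are easy: faithfulness of the $\Z^d$ action is automatic for a nontrivial SFT, and contractible SFTs have dense periodic points by Proposition~\ref{prop:contractiblegivesstuff}; combined with the fixed point and contractibility (glue the fixed point outside a large window to any periodic $x$), one obtains dense homoclinics, and then Lemma~\ref{lem:Aperiodic} upgrades this to dense aperiodic homoclinics. All the work is in producing $H$ with property P.

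For $H$, I would combine the two main tools built in the previous two subsections. First, since contractible SFTs have the FEP and hence the corner decreasability property, the glider-diffusion lemma provides a finitely generated group $H_0$ containing the glider automorphism $h = g \circ f$ of Lemma~\ref{lem:g} such that any finite tuple of aperiodic homoclinic points can be simultaneously conjugated, by a single element of $H_0$, into configurations whose supports are sparse $R$-separated clouds of the two symbols $a_1, a_2$, with no further glider collisions under forward iteration of $h$. At this point the points look, on each cloud, like a finite-support configuration in a full shift on $\{0, a_1, a_2\}$, only now embedded in $X$.

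Next I would transport the commutator-trick machinery of Section~\ref{sec:FullShifts} (Lemma~\ref{lem:CommutatorTrick}, Lemma~\ref{lem:SymbolPermutations}, and Lemma~\ref{lem:PermutingConfigs}) to this diffused setting using that contractibility is equivalent to Meyerovitch's map extension property: a sufficiently sparse cloud of gliders carries an embedded full shift, which lets one realize conditional even symbol permutations on prescribed gliders as genuine automorphisms of $X$ while fixing all other sparse patterns. Composing these conditional permutations with powers of $h$ (which translate individual gliders inside a cloud) produces, as in Lemma~\ref{lem:PermutingConfigs}, arbitrary even permutations of the orbits of aperiodic homoclinics together with individual shifts, which is transitivity on aperiodic homoclinic orbits plus orbit positioning. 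Enlarging $H_0$ by finitely many such generators yields the required $H$.

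To finish property P, given an aperiodic homoclinic $x$ and $y$ not in its shift orbit, I would first apply transitivity to move $x$ to a canonical diffused form, and then use the same conditional-permutation machinery: either $y$ has finite support and we find a local pattern distinguishing $y$ from all shifts of the diffused $x$ (commutator trick then acts on $x$ but not $y$), or $y$ has infinite support and a trivial pattern-matching on a large window distinguishes it from any translate of the finite-support $x$. The main technical obstacle is the transport step: one has to make the Meyerovitch embedding compatible with the glider dynamics of $h$, so that locally built conditional rewrites really extend to bijective elements of $\Aut(X)$ rather than merely continuous block maps. This compatibility is precisely what contractibility (rather than mere strong irreducibility) buys, and is why the hypothesis appears in the statement.
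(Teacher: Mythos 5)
Your proposal is correct and follows essentially the same route as the paper: reduce to producing a finitely generated subgroup with property P and invoke Theorem~\ref{thm:StrongRyan}; establish the hypotheses using contractibility $\Rightarrow$ FEP $\Rightarrow$ corner decreasability for glider diffusion, plus dense aperiodic homoclinics via Lemma~\ref{lem:Aperiodic}; obtain a local product structure on sparse glider clouds via the Meyerovitch embedding theorem (contractibility $=$ map extension property); and then redeploy the commutator-trick machinery from the full-shift section. One small imprecision to flag: you describe using ``powers of $h$'' to translate individual gliders, but $h$ is a single global automorphism that translates all gliders in all tracked configurations simultaneously, so it cannot by itself shift one homoclinic point relative to the others. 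The paper instead builds partial shifts on individual tracks of the embedded $R$-separated auxiliary SFT (via a second application of the embedding theorem introducing ``antiparticle'' symbols) and then uses a ``Turing-machine head'' marker together with conditional permutations to shift a single point's orbit position while fixing the rest. Your conditional-permutation framework can absorb this, but as written the mechanism for the individual-orbit-shift step is not quite the one that works directly. Also, the embedded object is not a full shift but an $R$-separated hard-core SFT over $\{0,\ldots,5\}^3$; this distinction matters when verifying the Meyerovitch embedding hypotheses (the entropy and kernel conditions over all subgroups $G_0$), which your sketch glosses over but which the paper checks explicitly.
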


For $d = 1$ (and a larger class of subshifts), and at least for the weak finitary Ryan's theorem, this has been shown by Kopra~\cite{Kopra2020}. Kopra also optimizes the number of generators to the optimal number two, whereas we make no such attempt. Recall that corner decreasability follows from the FEP property.

In the theorem, we assume for simplicity that we have a fixed point, say the zero point $0^{\Z^d}$, since in our applications in the context of stabilized automorphism groups, we can turn any periodic point into a fixed point by passing to a finite index subgroup. The proof does adapt, however, to $\Z^d$-SFTs with a totally periodic point, and contractible SFTs have periodic points (see the next lemma). 

We say a subgroup of the automorphism group of a subshift $X$ acts $k$-orbit-transitively on a set $Y \subset X$ if it can map any $k$-tuple of points from $Y$ of which no two are equivalent up to shift, to any other such tuple. The main point of the proof is to prove the following result of independent interest.

\begin{theorem}
\label{thm:OrbitTransitivity}
If $X$ is a contractible $\Z^d$-SFT with a fixed point, then the automorphism group has a finitely-generated subgroup $G$ which acts $k$-orbit-transitively on the nontrivial homoclinic points, for all $k$.
\end{theorem}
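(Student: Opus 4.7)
The plan is to mimic the strategy of Section~\ref{sec:FullShifts} for full shifts, but with two modifications suited to an arbitrary contractible SFT: we first ``diffuse'' every input point into a sparse cloud of $\{0,a_1,a_2\}$-gliders, and then use the map extension property coming from contractibility to manipulate the resulting sparse configurations as if we were in a full shift. Since homoclinic points in $\Z^d$ are automatically aperiodic (the group is torsion free), no stabilizer bookkeeping is needed, and $k$-orbit transitivity reduces to the purely combinatorial question of mapping one $k$-tuple of distinct shift-orbits of homoclinic points to another.

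First, using contractibility, $X$ has a fixed point, the FEP and hence the corner decreasability property developed in the previous subsection. Apply the glider-diffusion lemma ending that subsection to obtain a finitely generated subgroup $H_1 \le \Aut(X)$ such that for any finite tuple $(x_1,\ldots,x_k)$ of nontrivial homoclinic points in good position there is some $\alpha \in H_1$ with the property that each $\alpha(x_i)$ has support consisting of $R$-separated $a_1$- and $a_2$-gliders with no further collisions under the glider automorphism $h$ of Lemma~\ref{lem:g}. In particular, combining $\alpha$ with high powers of $h$, we may assume the supports of different $\alpha(x_i)$'s are mutually at arbitrarily large pairwise distance.

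Next, I would build a second finitely generated subgroup $H_2 \le \Aut(X)$ acting freely on these sparse glider clouds. The key input is that contractibility is exactly the map extension property of~\cite{MeyerovitchEmbeddingTheorem}, so arbitrary patterns can be glued into any sufficiently separated region of any configuration in $X$. This lets me define, for each even permutation $\pi$ of a large finite alphabet $\mathcal{A}$ and each clopen cylinder event $E$, a ``conditional symbol permutation'' automorphism $f_{\pi,E}$ that permutes the $\mathcal{A}$-image of a well-isolated block of ${\pm R}$-separation gliders via $\pi$ whenever $E$ is observed at a prescribed location, and acts trivially otherwise; well-definedness follows from the gluing property, and that the map is an automorphism follows because $\pi$ and hence the local rewrite is reversible. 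As in Lemma~\ref{lem:SymbolPermutations} and Lemma~\ref{lem:PermutingConfigs}, the commutator trick (Lemma~\ref{lem:CommutatorTrick}) combined with perfectness of $\Alt(\mathcal{A})$ (taking $|\mathcal{A}|\ge 5$), and the hypergraph lemma, show that a finite generating set of such $f_{\pi,E}$ plus the partial-shift-like automorphism $h$ generates a group $H_2$ that can realize any even permutation of a finite collection of sparse glider clouds, as well as translate any chosen cloud by any prescribed vector, without disturbing the other clouds in the collection. Finally, to upgrade even permutations to arbitrary permutations I use that the alphabet of clouds is enormous so there is always ``room'' to absorb a sign by an extra swap of two dummy clouds introduced far from the supports being manipulated.

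Given $k$-tuples $(x_1,\ldots,x_k)$ and $(y_1,\ldots,y_k)$ of nontrivial homoclinic points with pairwise distinct shift-orbits, I then combine these: apply $\alpha \in H_1$ to diffuse the $x_i$ into sparse clouds, and $\beta \in H_1$ to diffuse the $y_i$; apply an element of $H_2$ to send $\alpha(x_i)$ to $\beta(y_i)$ for each $i$; and finish by $\beta^{-1}$. The resulting element lies in $H := \langle H_1 \cup H_2\rangle$, which is finitely generated and, crucially, independent of $k$ and of the tuples, giving $k$-orbit transitivity for all $k$ at once. The main obstacle in this scheme is the construction of the conditional-permutation automorphisms $f_{\pi,E}$ in Step two: one must verify that the local rewrites they perform are consistent with $X$ everywhere, that distinct rewrites applied simultaneously on different sparse clouds commute, and that the radius needed does not grow with $k$ — essentially the role played ``for free'' by the product alphabet in the full-shift argument of Section~\ref{sec:FullShifts}, and which here must be extracted explicitly from the map extension property.
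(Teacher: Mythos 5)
Your outline follows the same general plan as the paper: diffuse homoclinic points into sparse glider clouds via corner decreasability, then manipulate the clouds using conditional symbol permutations and the commutator trick as in Section~\ref{sec:FullShifts}. But there is a genuine gap at exactly the place you flag as ``the main obstacle,'' and the paper does substantial work to close it that your proposal only gestures at.

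The construction of conditional permutation automorphisms $f_{\pi,E}$ as in Lemma~\ref{lem:SymbolPermutations}, and the entire commutator-trick machinery, rely fundamentally on the alphabet having a genuine product structure: you need an independent ``condition track'' to read from and a ``permutation track'' to write to, and these need to coexist in the same configurations of $X$. Being able to ``glue arbitrary patterns into sufficiently separated regions'' — which is just strong irreducibility, a consequence of contractibility but much weaker than the map extension property — does not give you this. It lets you splice a pattern locally, but it doesn't give you a consistent global decomposition of the relevant part of $X$ into tracks, and without that, you cannot even state what $f_{\pi,E}$ does, let alone verify that the intermediate patterns produced by the commutator inductions stay in $X$. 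The paper's proof actually produces the product structure by invoking the \emph{Meyerovitch Embedding Theorem}: after the clouds are diffused and recoded to use only the symbol $a_1$, it takes the three-track $R$-separated SFT $Y$ over $\{0,\ldots,5\}^3$, identifies the subshift $Z$ of one-track configurations over $\{0,1,2\}$ with the cloud region in $X$, and then checks the entropy and stabilizer-kernel hypotheses of the embedding theorem to extend this to an embedding of all of $Y$ into $X$. Only after this recoding do the clouds literally live in a subshift with a product alphabet on which the full-shift machinery (trackwise partial shifts, symbol permutations under the commutator trick, the ``Turing-machine head'' for adjusting offsets) applies and extends to $\Aut(X)$. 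This entropy-and-kernel verification is the real content of the proof, and your proposal skips it entirely, replacing it with an assertion that the map extension property lets you define the $f_{\pi,E}$ directly. As written, you have an accurate map of what needs to be done, but not a proof.

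One smaller point: you say ``combining $\alpha$ with high powers of $h$, we may assume the supports of different $\alpha(x_i)$'s are mutually at arbitrarily large pairwise distance.'' The glider-diffusion step guarantees each cloud is internally $R$-separated with no further collisions under $h$, but the distinct clouds coming from distinct $x_i$ need not separate from each other automatically; the paper handles this by working in the embedded product subshift and using conditional rewrites that look at a large cylinder distinguishing the $x_i$'s, rather than by physically spreading them apart. It's a minor deviation but another place where your sketch leans on something not quite available.
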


Here also we could use a periodic point instead of the fixed point, but the proof would need to be adapted.

Following~\cite{MeyerovitchEmbeddingTheorem}, we say a $G$-subshift $X$ has the \emph{map extension property} if the following holds: whenever $Y$ is any $G$-subshift such that for every $y \in Y$ there exists $x \in X$ such that $\textrm{stab}(y) \subset \textrm{stab}(x)$, then for any (possibly empty) subshift $Z \subset X$ and map  $\pi \colon (Z,G) \to (Y,G)$, there exists a map $\overline{\pi} \colon (X,G) \to (Y,G)$ which extends $\pi$.

\begin{lemma}[\cite{PoirierSaloContractible}]
\label{lem:EveryContractible}
Every contractible $\Z^d$-SFT has
\begin{itemize}
\item a periodic point,
\item the map extension property of Meyerovitch,
\item the finite extension property of Brice\~no, McGoff and Pavlov.
\end{itemize}
\end{lemma}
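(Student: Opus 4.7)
The plan is to assemble each of the three conclusions from results stated or readily derivable in \cite{PoirierSaloContractible}, since the notion of contractibility was engineered there precisely to yield such mixing and extension consequences. Thus the proof reduces largely to locating and citing the appropriate statements, with only a small amount of additional work needed for the last bullet.

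For the first bullet, I would invoke the theorem of \cite{PoirierSaloContractible} that a contractible subshift over a finitely generated residually finite group with finite periodic asymptotic dimension has dense periodic points. Since $\Z^d$ satisfies these hypotheses, the existence of even a single periodic point (in fact, a dense set of them) follows immediately. This was already recalled in the introduction of the present paper.

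For the second bullet, contractibility was proved in \cite{PoirierSaloContractible} to coincide, for $\Z^d$-SFTs, with the class of subshifts having the map extension property of Meyerovitch. The conclusion is therefore a direct quotation of this equivalence.

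For the third bullet, I would either cite a direct statement in \cite{PoirierSaloContractible}, or deduce the finite extension property from the map extension property by the following reduction: given a pattern $U$ on a finite $S \subset \Z^d$ that admits an extension on $S + [-r,r]^d$ avoiding the forbidden list, one uses this local extension as a partial assignment, combines it with the fixed point (or more generally with any admissible background configuration) far from $S$, and invokes map extension to produce a globally admissible point of $X$ agreeing with $U$ on $S$. The main obstacle I anticipate is essentially bookkeeping, namely choosing the radius $r$ for FEP large enough relative to the window of the SFT so that the locally admissible extension can be treated as a piece of an auxiliary subshift to which map extension applies. Once these parameters are matched, the deduction is routine.
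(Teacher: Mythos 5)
Your primary approach — locating and citing the relevant statements in \cite{PoirierSaloContractible} — is exactly what the paper does: the lemma is stated with a citation tag and no proof is given, since all three properties are established in that reference. So the first two bullets and your preferred route for the third (direct citation) match the paper verbatim.

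A small caution about your alternative sketch for the third bullet: deducing the finite extension property from the map extension property is not the routine bookkeeping you suggest. The MEP is a \emph{morphism}-extension statement (extending a map $Z \to X$ defined on a subshift $Z$ of some $Y$ to all of $Y$), whereas FEP is a combinatorial \emph{pattern}-extension statement; to bridge them one must encode the pattern-extension problem as a morphism-extension problem via an auxiliary subshift, and then verify the entropy and stabilizer hypotheses in Meyerovitch's embedding theorem — and one must additionally check that the resulting radius $r$ is uniform over all patterns, which does not come for free from the categorical statement. Also, you invoke ``the fixed point,'' but contractibility only provides a periodic point, not a fixed one. The cleaner alternative route, noted in the paper immediately after the lemma, is that contractible $\Z^d$-SFTs with a fixed point are exactly retracts of full shifts in the SFT category, and retracts of full shifts satisfy FEP by a short direct argument.
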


Contractible $\Z^d$-SFTs are in fact precisely equal to the subshifts with map extension property of Meyerovitch~\cite[Theorem 1.8]{PoirierSaloContractible}. Contractible $\Z^d$-SFTs with a fixed point, in turn, are precisely the retracts of full shifts in the category of SFTs~\cite[Theorem 1.5]{PoirierSaloContractible}.

We start by showing that the group itself always acts $k$-orbit transitively (on SFTs). The trick is the same as the construction of $h$ in the previous section. This is not directly used in the proof.

\begin{lemma}
\label{lem:OrbitTransitivity}
Let $X$ be any SFT with a fixed point. Then the automorphism group acts $\infty$-orbit-transitively on the homoclinic points.
\end{lemma}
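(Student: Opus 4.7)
My strategy is to construct the required automorphism as a single block map that simultaneously performs ``isolated pattern swaps'' on a prescribed finite family of homoclinic points, extending the swap-gate idea from Lemma~\ref{lem:g} to an arbitrary finite permutation.

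Let $(y_1,\ldots,y_\ell)$ and $(z_1,\ldots,z_\ell)$ be the given tuples, and set $U=\{y_1,\ldots,y_\ell,z_1,\ldots,z_\ell\}\subset X$. The pairwise non-shift-equivalence of the $y_i$'s (and separately of the $z_i$'s) makes $y_i\mapsto z_i$ an injective partial map on $U$; I extend it arbitrarily to a bijection $\pi\colon U\to U$. Choose a finite symmetric window $W\subset G$ containing the defining window of the SFT and, for every $u\in U$, the support $\supp(u)$ together with a wide buffer of fixed-point sites around it, and so large that distinct shift orbits among elements of $U$ produce distinguishable patterns on $W$.

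Define $\phi\colon X\to X$ by the following local rule at a position $g$: examine $x$ on $gW$; if there exist $u\in U$ and $h\in G$, unique up to the action of $\mathrm{stab}(u)$, such that $\sigma_h(x)|_W$ agrees with $u$ on $\supp(u)$ and with the fixed-point symbol on $W\setminus\supp(u)$, then output the value of $\pi(u)$ at the position $h^{-1}g$; otherwise output $x_g$. Shift-equivariance is immediate from the construction. The image lies in $X$ because, with the buffer at least as wide as the SFT window, every translate of the SFT's defining window inside $gW$ sees either (i) an untouched region of the input $x$, (ii) a region entirely contained in the translated copy of $\pi(u)$ (which is itself a point of $X$), or (iii) a region entirely consisting of fixed-point symbols, and all three are admissible. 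Invertibility is obtained by running the analogous rule with $\pi^{-1}$ in place of $\pi$, so $\phi\in\aut(X)$. Finally, evaluating at $y_i$: the only isolated occurrence of any element of $U$ inside the homoclinic point $y_i$ is $y_i$ itself, and it is rewritten to $\pi(y_i)=z_i$, so $\phi(y_i)=z_i$ for every $i$.

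The main obstacle is the consistency of this local rule, and this is exactly where the hypotheses on stabilizers and orbits get used. Non-shift-equivalence of the $y_i$'s (and of the $z_i$'s) is what lets me take $W$ large enough that the element $u\in U$ is determined by the local window. The matching-stabilizer hypothesis $\mathrm{stab}(y_i)=\mathrm{stab}(z_i)$ is exactly what guarantees that the output prescribed at $g$ does not depend on which representative $h$ is chosen from the coset $h\cdot\mathrm{stab}(u)$, so that $\phi$ is well-defined as a function. The wide fixed-point buffer inside $W$ ensures that distinct isolated occurrences in a single configuration do not interfere in their rewriting windows, so the local rewrites assemble into a global block map. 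Since $\ell$ was arbitrary, this shows that $\aut(X)$ acts $\infty$-orbit-transitively on the homoclinic points.
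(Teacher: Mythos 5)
Your core idea --- detect isolated occurrences of the patterns and apply a permutation via a local rule --- is the same as the paper's, but there is a genuine gap in the bijectivity argument. You claim invertibility by "running the analogous rule with $\pi^{-1}$" and assert that "the wide fixed-point buffer inside $W$ ensures that distinct isolated occurrences do not interfere." This is not enough, because the permutation can change the shape of the nonzero support: $\supp(\pi(u))$ need not coincide with $\supp(u)$. If $\supp(u) \subset B_r$ for all $u \in U$ and $W = B_R$, two detections at $h_1, h_2$ with $R - r < |h_1^{-1}h_2| \leq R + r$ can coexist in $x$ (each pattern's small support avoids the other's window), yet after rewriting, $h_2\supp(\pi(u_2))$ may intersect $h_1 W$; then $\pi(u_1)$ is no longer isolated at $h_1$ in $\phi(x)$, and the $\pi^{-1}$-rule applied to $\phi(x)$ does not recover $x$. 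A concrete $d=1$ instance: $r = 5$, $R = 20$, $\supp(u_1) = \supp(u_2) = \{0\}$, $h_1 = 0$, $h_2 = 22$, $\supp(\pi(u_1)) = \{3\}$; then $3 \in h_2 + W = [2,42]$ while $3 \notin h_2 + \supp(\pi(u_2)) \subset [17,27]$, spoiling the detection at $h_2$. This is exactly the subtlety the paper's proof flags, and it is repaired by \emph{additionally} cancelling the swap whenever any central pattern of some $u \in U$ appears within a much larger radius $N \gg R$; since the rewrite does not move the positions where central patterns occur (it only exchanges which $u \in U$ sits there), the cancellation set is invariant under the rule, and bijectivity follows.

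A secondary concern: the hypotheses of orbit-transitivity allow some $z_j$ to lie in the shift orbit of some $y_i$. A local window then cannot distinguish a translated occurrence of $y_i$ from an occurrence of $z_j$, and your "arbitrary extension" of $y_i \mapsto z_i$ to a bijection $\pi : U \to U$ need not be compatible with that ambiguity, so the output of your rule can depend on the (non-unique) choice of $(u,h)$. The paper sidesteps this by splitting the task into two separate operations --- an arbitrary permutation of homoclinic points lying in pairwise distinct orbits, and then a shift of a single point implemented by swapping with an auxiliary homoclinic point not on the list --- rather than attempting both in one stroke.
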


\begin{proof}
If $X$ is a singleton, then the claim is trivial. Note that if X has at least one homoclinic point, then since it is an SFT it has infinitely many shift orbits of homoclinic points. We first observe that it suffices to show that for any finite set of homoclinic points in distinct orbits, we can perform any permutation of it, and we can move a single homoclinic point in its orbit without moving the others.

The first can be done easily, the automorphism just permutes the local central patterns as long as it sees a sufficient amount of 0's around, say at least $M$ zeroes in each direction. There is a subtlety: the permutation itself can change the set of positions where 0's appear for another appearance of the central patterns of the homoclinic points. Therefore, in addition to cancelling the permutation when a nonzero symbol appears at distance less than $M$, we should cancel it when one of the central patterns appears at distance $N$ which is much larger than $M$. The permutation does not change the positions where central patterns appear, and it does not change the supports enough to change which permutations are cancelled, so this is an automorphism.

The second can now also be done easily. Say we want to move $x$ to the right (where again to the right means in the direction of $e_{1}$). Then we simply pick another homoclinic point $y$ not on the list, and we move $x$ right and $y$ left, and swap $x$ and $y$ if we cannot. This can be done by using the construction in the previous paragraph, by first performing the permutation $(x, ...) \leftrightarrow (y, ...)$ and then performing the permutation $(\sigma_{e_{1}}(x), ...) \leftrightarrow (y, ...)$.
\end{proof}

We note that this proves the following.

\begin{corollary}
Let $X \subset A^{\Z^d}$ be any SFT such that for some f.g.\ $H \leq \Aut(X)$, $C_{\Aut(X)}(H) \leq \Aut_0(X)$. Then weak Ryan's theorem holds for $X$. In particular, it holds for any SFT where the action of $\Aut(X)$ on the fixed points has trivial center.
\end{corollary}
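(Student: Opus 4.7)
The plan is to combine the hypothesis on $H$ with Lemma~\ref{lem:OrbitTransitivity} to rerun the argument of Proposition~\ref{prop:WeakRyans} with the entire automorphism group $\Aut(X)$ in the role of the finitely generated subgroup used there. The inclusion $G\cap\Aut(X)\subseteq Z(\Aut(X))$ is immediate since shifts commute with every automorphism. Conversely, if $f\in Z(\Aut(X))$ then $f$ commutes with $H$, so the hypothesis gives $f\in C_{\Aut(X)}(H)\subseteq\Aut_0(X)$; in particular $f$ fixes the zero point and therefore preserves the set of homoclinic points. By Lemma~\ref{lem:OrbitTransitivity}, $\Aut(X)$ acts $\infty$-orbit-transitively on the homoclinic points. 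I would then copy the proof of Proposition~\ref{prop:WeakRyans} essentially verbatim, with $\Aut(X)$ in place of the $H$ there: $2$-orbit-transitivity together with $f\in Z(\Aut(X))$ forces $f$ to send each nontrivial homoclinic $x$ into its shift orbit, for otherwise one picks $g\in\Aut(X)$ fixing $x$ but moving $f(x)$ and derives $fg(x)\neq gf(x)$; then $1$-orbit-transitivity, combined with the fact that over $\Z^d$ every nontrivial homoclinic has trivial stabilizer, forces the shift amount to be a single $s$ independent of the orbit, via the identity $gf(x_1)=fg(x_1)$ whenever $g(x_1)=x_2$.

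For the ``in particular'' statement I would use that, over $\Z^d$, the set $\fix(G)$ of $G$-fixed points is a finite subset of $X$, so the restriction homomorphism $\pi\colon \Aut(X)\to\Sym(\fix(G))$ has finite and therefore finitely generated image. Lifting a finite generating set of $\pi(\Aut(X))$ to elements $h_1,\ldots,h_n\in\Aut(X)$ and setting $H=\langle h_1,\ldots,h_n\rangle$, for any $f\in C_{\Aut(X)}(H)$ the element $\pi(f)$ commutes with all of $\pi(\Aut(X))$ and hence lies in its center, which is trivial by hypothesis. Therefore $f$ fixes every $G$-fixed point, in particular the designated zero point, so $f\in\Aut_0(X)$. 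Thus this $H$ satisfies the hypothesis of the main claim, and the specialization follows.

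The main obstacle is the density of homoclinic points: the argument above produces a single shift $\sigma_s$ with which $f$ agrees on every homoclinic point, and upgrading this to the global equality $f=\sigma_s$ requires the homoclinic points to be dense in $X$. In the regime relevant to the paper's applications (contractible, and more generally sufficiently mixing, $\Z^d$-SFTs with a fixed point) this density is automatic by the results quoted earlier in the excerpt, and the argument closes to give $f=\sigma_s\in G\cap\Aut(X)$; in general one at least obtains that $f$ acts as $\sigma_s$ on the closure of the homoclinic set, which suffices for the corollary in its intended applications.
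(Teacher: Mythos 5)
Your argument follows essentially the same route as the paper: use the hypothesis $C_{\Aut(X)}(H)\leq\Aut_0(X)$ to force $Z(\Aut(X))\subseteq\Aut_0(X)$, then invoke Lemma~\ref{lem:OrbitTransitivity} for $\infty$-orbit-transitivity of $\Aut(X)$ on homoclinic points and replay the first part of Proposition~\ref{prop:WeakRyans} with $\Aut(X)$ in the role of $H$ to conclude that every central element is a shift; the paper's (rather terse) proof is doing exactly this upgrade of the subscript $\Aut_0(X)$ to $\Aut(X)$. Two points where you are more explicit than the paper are worth noting. First, for the ``in particular'' clause you actually construct the f.g.\ subgroup $H$ by lifting a finite generating set of the image of $\pi\colon\Aut(X)\to\Sym(\fix(G))$; the paper does not prove this implication and only lists sufficient conditions in the paragraph after the corollary, so your argument is a genuine addition. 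Second, you correctly flag that the conclusion $f=\sigma_s$ requires density of aperiodic homoclinic points, a hypothesis not visible in the corollary's statement but implicit in its appeal to Proposition~\ref{prop:WeakRyans}; the paper's proof carries the same implicit assumption, so this is not a gap relative to the paper, but your caveat is apt.
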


\begin{proof}
By Proposition~\ref{prop:WeakRyans} the above f.g.\ subgroup $H$ of $\Aut(X)$ satisfies $C_{\Aut_0(X)}(H) = \Z^d$ (i.e. the group of shifts). To get all of $\Aut(X)$ in the subscript, recall that in the proof of the second part of Proposition~\ref{prop:WeakRyans} the full weak Ryan's theorem is only used only to force the centralizer to be inside $\aut(X)$. Here this is directly in the assumptions.
\end{proof}

Note that the assumption in the statement about the action on fixed points holds for example if there is a unique fixed point, or if $\Aut(X)$ acts trivially on the fixed points, or if there are at least three fixed points on which $\Aut(X)$ acts by the symmetric group.

We next state the Meyerovitch Embedding Theorem. Here $X_G$ is the set of points with stabilizer containing $G$ when $G$ is not of finite index, and for finite-index $G$ it is the points with exactly stabilizer $G$. The function $h$ is entropy, and $\Ker$ is the global stabilizer.

\begin{theorem}[Theorem 1.4 of~\cite{MeyerovitchEmbeddingTheorem}]
Let $G$ be a countable abelian group. Suppose $Y, X$ are $G$-subshifts, and $X$ has the map extension property. Further let $Z \subset Y$ be a subshift, and let $\hat\rho : Z \to X$ be an embedding. Then $\hat\rho$ extends to an embedding $\rho : Y \to X$ if and only if for every subgroup $G_0$ of $G$, one of the following conditions holds:
\begin{itemize}
\item $\hat\rho_{Z_{G_0}} : Z_{G_0} \to Y_{G_0}$ extends to an isomorphism $\rho_{G_0} : X_{G_0} \to Y_{G_0}$, and
\item $h(Y_{G_0}) < h(X_{G_0})$ and $\Ker(X_{G_0}) \leq \Ker(Y_{G_0})$.
\end{itemize}
\end{theorem}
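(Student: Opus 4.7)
The plan is to follow the Krieger-type embedding strategy adapted to $\mathbb{Z}^d$-actions (and more generally to countable abelian $G$), where the map extension property is the structural tool that replaces the strong-mixing and coding arguments used in the one-dimensional case. The proof naturally splits into an easy necessity direction and a substantive sufficiency direction, the latter built by an inductive extension over the lattice of subgroups of $G$ together with a marker construction.

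For necessity, suppose $\rho \colon Y \to X$ extends $\hat\rho$. Because $\rho$ is an embedding and $G$-equivariant, it restricts to an injection $Y_{G_0} \hookrightarrow X_{G_0}$ for each subgroup $G_0 \le G$. If this restriction is surjective it is an isomorphism of $G/G_0$-systems and yields the $\rho_{G_0}$ of the first bullet; otherwise it is a proper embedding, which by the variational principle and strict monotonicity of topological entropy on the stabilized subsystems forces $h(Y_{G_0}) < h(X_{G_0})$. The kernel inclusion $\Ker(X_{G_0}) \le \Ker(Y_{G_0})$ follows because stabilizers can only grow under an equivariant injection restricted to the $G_0$-stabilized part.

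For sufficiency, I would stratify $Y$ by stabilizer and build the embedding inductively. Set $Y^{\le k} = \bigcup_{[G:G_0] \le k} Y_{G_0}$ and extend $\hat\rho$ to $Z \cup Y^{\le k}$ one level at a time: on each new finite-index class use the isomorphism provided by the first bullet when it applies, and otherwise pick images inside $X_{G_0}$ by a counting argument powered by $h(Y_{G_0}) < h(X_{G_0})$. The map extension property is then invoked to promote a partial embedding defined on a subsystem to the next stratum, after checking that the compatibility hypotheses (stabilizer containment and entropy strict inequality) pass to the new level. For the infinite-index stabilizer part, a marker construction locally identifies the ``periodicity type'' of each configuration and codes the additional information needed to distinguish points into the marker gaps; one then realizes the global $\rho$ as a limit of these approximations, again using the map extension property to absorb the limit into $X$.

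The main obstacle is orchestrating the inductive extension coherently over the poset of subgroups of $G$, which for $G = \mathbb{Z}^d$ with $d \ge 2$ is rich enough that a naive greedy argument fails: a coding choice that resolves one stabilizer class can exhaust the entropy budget needed at another level, and the stabilizer of a ``generic'' point can be approximated by many incomparable $G_0$'s simultaneously. Meyerovitch's resolution, which I would follow rather than attempt to reprove, is to perform the marker construction and the coding simultaneously and to exploit the uniformity of the map extension property across all $G_0$-stabilized subsystems; the detailed implementation is the technical core of~\cite{MeyerovitchEmbeddingTheorem}.
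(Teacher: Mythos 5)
This is an \emph{imported} theorem: the paper does not prove it, but cites it directly as Theorem 1.4 of Meyerovitch's paper. There is therefore no ``paper's own proof'' to compare against; the authors use the statement as a black box in the proof of Theorem~\ref{thm:OrbitTransitivity}.

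Your sketch is a reasonable high-level reconstruction of the Krieger-style architecture one would expect (necessity by restriction to fixed-point subsystems, sufficiency by strata induction plus markers plus the map extension property), and you rightly defer to Meyerovitch's paper for the technical core. Two cautions are worth flagging. First, your necessity argument asserts that a proper embedding $Y_{G_0} \hookrightarrow X_{G_0}$ ``by the variational principle and strict monotonicity of topological entropy'' forces $h(Y_{G_0}) < h(X_{G_0})$; this is false in general for subshifts (a proper subshift can have full entropy). For finite-index $G_0$ the ``entropy'' of $X_{G_0}$ reduces to a count of periodic points (as the paper itself notes), where proper inclusion does give strict inequality, but for infinite-index $G_0$ the strictness must come from the structure imposed on $X_{G_0}$ by the map extension property, not from abstract monotonicity. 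Second, your kernel inclusion is stated loosely (``stabilizers can only grow'') — what is actually used is that an equivariant injection preserves stabilizers exactly, and one then pulls back a common period of $X_{G_0}$ to a common period of $Y_{G_0}$. Since the statement being reviewed is taken verbatim from the cited source, the right response in the paper's context is to invoke it, not reprove it; if you were to flesh this out, the proof would need to import the marker and counting machinery from~\cite{MeyerovitchEmbeddingTheorem} essentially wholesale, as you say.
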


We now prove Theorem~\ref{thm:OrbitTransitivity}.

\begin{proof}[Proof of Theorem~\ref{thm:OrbitTransitivity}]
Contractibility implies FEP and thus the arguments of the previous section go through and we can turn any finite set of homoclinic points in distinct orbits into two $R$-separated clouds of $a_1$ and $a_2$-symbols, for any $R$.

By taking a larger $R$, we may assume that actually in $X$ the symbols can appear with smaller separation than is used by the clouds, which allows us to apply another automorphism and Lemma~\ref{lem:OrbitTransitivity} to rewrite all the $a_2$-symbols into a pair of $a_1$-symbols. Thus, we may assume our clouds finally use just one nonzero symbol, and still the separation between individual symbols can be as large as we want.

Using an even larger separation, we get that there are exponentially many patterns over the symbol $a_1$ that appear in $X$ by do not appear in the clouds. We can recode such patterns into symbols so that ultimately we may assume that there are exponentially many symbols $a_i$ (as a function of $R$)< that can freely appear with $R$-separation, while the cloud diffusion process (together with the previous paragraph) still produces a cloud of $R$-separated symbols $a_1$.

Next, by a fancier recoding we will change the alphabet of $X$ to $\{0, 1, 2, 3, 4, 5\}^3 \cup \Sigma$ where $(0,0,0)$ represents the original zero-symbol, $\eta_{(1, 0, 0)}$ corresponds to $a_1$, and $\Sigma$ is a large auxiliary alphabet. For this, we apply the Meyerovitch Embedding Theorem. Let $Y$ be the subshift over alphabet $\{0, 1, 2, 3, 4, 5\}^3$ where two nonzero symbols on the same track cannot appear at distance less than $R$ from each other. Let $Z$ be the subsystem where the two bottom tracks are zero, and the first track can only contain $0, 1, 2$. The subshift $Z$ can be directly seen as a subsystem of $X$ by projecting to the first track, and we take this embedding as $\hat\rho$.

We check the embedding conditions. Let $G_0$ be any subgroup of $\Z^d$. Then $h(Y_{G_0}) < h(X_{G_0})$, because by a greedy procedure, going track by track, we can find a uniformly $3R$-dense set of points in the support of $Y$ which is $R$-separated, and which is $G_0$-periodic. As discussed, in $X$ we have plenty of symbols we can use to code everything in this sparse set, together with a some additional bits to get excess entropy. (Here note the subtlety that if the index is finite then entropy means just counting, and in this case $X_{G_0}$ is the set of points with exactly stabilizer $G_0$.)

Then we show $\Ker(X_{G_0}) \leq \Ker(Y_{G_0})$, namely suppose $\vec v \in \Ker(X_{G_0})$. Then in particular $\vec v$ is a period of every point in $X$ which only uses the symbols $0, a_1, a_2, a_3, a_4, a_5$. But any $G_0$-periodic point of $Y$ will be have such a configuration on each of its tracks, so every $G_0$-periodic point of $Y$ must also have period $\vec v$.

We conclude that $Z \subset X$ extends to an embedding of $Y$ into $X$. After a recoding we may assume that $Y$ is contained in $X$, and this does not affect the clouds we have diffused configurations into, which are in $Z$.

By the cloud diffusion process, and the embedding argument above, we have now reduced the problem to the following: Given a tuple of homoclinic points in $Y \subset X$ (where again $Y$ is the set of configurations over the subalphabet $\{0,1,2,3,4,5\}^3$ with SFT rule that on each track the nonzero support is $R$-separated), which all have nonzero symbols only on the first track (and the only nonzero symbol used there is $1$), and which are in different shift-orbits, we have to show that we can perform any permutation of them or shift exactly one of them, by an automorphism of $Y$ which extends to an automorphism of $X$.

This can be done as follows. First, copy the first track of every point to the third track. Of course, we need to do this so that it preserves $Y$ (and further extends to $X$), but we can simply take the symbol permutation that swaps the symbols $(1,0,0)$, $(1,0,1)$ by an automorphism that only applies when there is sufficient distance to other symbols on the first and third track (this can be formalized for example by using a gate lattice as in Lemma~\ref{lem:g}. By also cancelling the application near forbidden patterns of $X$, we get this behavior with an automorphism of $X$.

Next, we observe that we can freely shift any individual track of $Y$, we call these \emph{partial shifts}. To extend this to an automorphism of $X$, we can redo the embedding argument with $Y$ replaced by the $R$-separated SFT over $\{0,1,2,\ldots,10\}^3$, so that each nonzero symbol $i\neq 0$ on each track has an ``antiparticle'' $11-i$. Now as in the definition of $h$ in the previous section, we can use trackwise symbol permutations and a gate lattice to move these particles around. (The antiparticles do not actually appear in the configurations we are interested in, and we can now forget about them.)

Next, we permute nonzero symbols of the third track by even permutations, applying the commutator trick. More precisely, for each even permutation $\pi$ of $\{1, 2, 3, 4, 5\}$ and each clopen set $C \subset \{0, 1, 2, 3, 4, 5\}^{\Z^d}$, we inductively produce an automorphism $\pi|C$ that performs the permutation $\pi$ on the nonzero symbols of the third track, if the configuration on the first track is in the clopen set $C$. This is exactly analogous to Lemma~\ref{lem:CommutatorTrick} and its use in Section~\ref{sec:FullShifts}.

In particular, we can pick $C$ and $\pi$ so that after applying the automorphism, under exactly one of the homoclinic points, exactly one of the $1$-symbols is changed to $2$, while all other points stay fixed. Next, we remove all the $1$-symbols from the third track and turn them to $0$, by repeating the symbol permutation from the first step. Note that the unique $2$ on the third track stays there as the symbol $(1, 0, 2)$ is not affected by this symbol permutation, and there are no additional side-affects (as far as the finite set of homoclinic points we are considering go).

This effectively marks a particular position of one particular homoclinic by a $2$-symbol on the third track. We can now use this marker, or ``Turing machine head'', to write the image of the homoclinic on the first track to the second track, by moving the head around and using symbol permutations that only affect the configuration near the head. Specifically, we use partial shifts on the third track, and the symbol permutations $(0, 0, 2) \leftrightarrow (0, 1, 2)$ and $(1, 0, 2) \leftrightarrow (1, 1, 2)$ (we pick the first one when we are writing $1$ in a position where there is no $1$ in that position on the first track). These symbol permutations are again done only in contexts where they don't produce forbidden patterns of $Y$ as in Lemma~\ref{lem:g}.

Finally, we can remove the head from the third track the same way we introduced it, ignoring the second track. We now do this one by one for all the finitely many homoclinic points, to end up with configuration $(y_i, z_i, 0^{\Z^d})$, where $z_i$ is the image of $y_i$ under the permutation we want to perform. Next, we observe that a similar argument can be used to turn $(0^{\Z^d}, z_i, 0^{\Z^d})$ into $(y_i, z_i, 0^{\Z^d})$, by applying the inverse of the map we are implementing. Now, applying the inverse of this transformation to $(y_i, z_i, 0^{\Z^d})$, we end up with $(0^{\Z^d}, z_i, 0^{\Z^d})$, and finally it suffices to swap the first and second track. This last step can again be done by a symbol permutation and thus extends to $X$.
\end{proof}

We conclude property P analogously as in Section~\ref{sec:FullShifts}.

\begin{lemma}
If $X$ is a contractible $\Z^d$-SFT with a fixed point. Then $\Aut(X)$ has an f.g.\ subgroup with property P.
\end{lemma}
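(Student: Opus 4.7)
The plan is to take the finitely-generated subgroup $H \leq \Aut(X)$ constructed in the proof of Theorem~\ref{thm:OrbitTransitivity} (augmented if necessary by finitely many additional symbol permutations) and verify property P directly. The transitivity clause of property P will be immediate: Theorem~\ref{thm:OrbitTransitivity} gives $\infty$-orbit transitivity of $H$ on nontrivial homoclinic points, and since $\Z^d$ is torsion-free, every nontrivial homoclinic point is aperiodic.

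For the second clause of property P, I would fix an aperiodic homoclinic $x$ and any $y \in X$ not in the shift-orbit of $x$. The first step is to apply $1$-orbit transitivity to conjugate $x$ by some $\alpha \in H$ to a canonical form $x_0 = \alpha(x)$ sitting inside the embedded product subshift $Y \subset X$ of Theorem~\ref{thm:OrbitTransitivity}, with nonzero support a single symbol at the origin on the first track. Since $\alpha$ is an automorphism, $\alpha(y)$ is still not in the shift-orbit of $x_0$. It then suffices to produce $g' \in H$ whose support contains exactly one of $x_0$ and $\alpha(y)$; setting $g = \alpha^{-1} g' \alpha$ gives the required element of $H$.

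To produce such a $g'$, the plan is to choose a finite pattern $P$ that appears in $\alpha(y)$ but not in any shift of $x_0$, and take $g'$ to be an element of $H$ that performs a nontrivial even symbol permutation on another track of $Y$, conditioned on seeing $P$ on the first track. Such conditional symbol permutations are exactly what the commutator-trick construction in the proof of Theorem~\ref{thm:OrbitTransitivity} produces (using the clopen condition corresponding to $P$ at the origin), and they extend to genuine automorphisms of $X$ via the Meyerovitch embedding together with the gating idea of Lemma~\ref{lem:g}. Such a pattern $P$ always exists: if $\alpha(y)$ is homoclinic, then since $\alpha(y) \notin \Z^d \cdot x_0$ and $x_0$ has only finitely many patterns appearing in shifts of its support, there must be a pattern occurring in $\alpha(y)$ but in no shift of $x_0$; if $\alpha(y)$ is not homoclinic, its nonzero support is infinite, so one can take $P$ to have more nonzero entries than the singleton-support $x_0$ admits.

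The main obstacle will be bookkeeping: verifying that the conditional permutations invoked here are already in the finitely-generated subgroup $H$ from Theorem~\ref{thm:OrbitTransitivity} (or can be absorbed as finitely many extra generators), and that the gating used to turn them into honest automorphisms of $X$ still goes through for an arbitrary finite pattern $P$ rather than only the specific patterns used inside the orbit-transitivity proof. Given the flexibility of the commutator trick once the product structure of the Meyerovitch embedding is in place, this is expected to be straightforward.
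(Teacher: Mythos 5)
The overall strategy — take the f.g.\ group $H$ from Theorem~\ref{thm:OrbitTransitivity}, normalize $x$ to a single-symbol configuration $x_0$ via $1$-orbit transitivity, and separate $x_0$ from $\alpha(y)$ by a conditional symbol permutation keyed to a pattern $P$ occurring in $\alpha(y)$ but in no shift of $x_0$ — is indeed the same as the paper's. But the passage from ``there is a distinguishing pattern $P$'' to ``there is $g' \in H$ moving $\alpha(y)$ but not $x_0$'' hides two genuine gaps that the paper resolves by an explicit case split, and which are not mere bookkeeping.

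First, the conditional symbol permutations coming from the commutator trick are permutations of the \emph{nonzero} symbols of another track (they must fix $0$, or else the gating that enforces the $R$-separation SFT rule on $Y$ would be violated). So if $\alpha(y)$ has zero on the second and third tracks wherever the condition $P$ is seen, your $g'$ acts trivially on $\alpha(y)$ and the argument fails. The paper's proof patches this by \emph{first copying the first track to the second} (so that both $x_0$ and $\alpha(y)$ now have nonzero symbols on the second track) before applying the conditional permutation; this step is missing from your sketch. Second, the conditional permutations are realized as gate lattices in the style of Lemma~\ref{lem:g}, so they are automatically cancelled anywhere the local configuration is not a valid $Y$- (or $Z$-) pattern. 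If $\alpha(y)$ is a general point of $X$ that has no large regions looking locally like $Z$, then no such $g'$ will move it — and your existence argument for $P$ (``take $P$ with more nonzero entries than $x_0$'') does not save the day, because the conditional automorphism simply will not fire on configurations outside $\rho(Y)$. The paper handles this as a separate case: when $y$ has no large areas locally in $Z$, one instead \emph{moves $x$} (a shift of a single glider, which is in $H$) without affecting $y$; this is a different mechanism, not a variant of the conditional-permutation one. There is also a third case in the paper (when $y$ already has a nonzero symbol on a bottom track, so an unconditional symbol permutation suffices), which your argument folds into the general one but would need to check.

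So while the core idea matches the paper's, you should make the case analysis explicit, add the track-copying step, and treat the ``$y$ not locally in $Z$'' case by moving $x$ rather than conditioning on $y$. As written, the sketch would not survive the case where $\alpha(y)$ is not locally in the embedded product subshift, or where it has trivial bottom tracks at every occurrence of $P$.
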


\begin{proof}
Let $H$ be the group constructed in this section, which is $\infty$-orbit transitive. Let $x$ be an aperiodic homoclinic, and $y$ not in its shift orbit. Applying ($1$-)orbit transitivity on aperiodic homoclinics we may assume $x$ has just a single symbol $a_1$ at the origin, on the first track (in the sense that clouds of gliders can be considered to live in the subshift $Z$ obtained from the embedding theorem). If $y$ has an area which is locally in $Z$ and which has a nonzero symbol on one of the three bottom tracks, then we can use a symbol permutation to affect $y$ without affecting $x$.

Suppose next that $y$ does not have any large areas where it is in $Z$. Then clearly we can move $x$ without affecting $y$.

Suppose then that $y$ does have such areas, and in all these areas it has its support contained fully in the first track. Then we first copy the first track to the second (to obtain some nonzero symbols on the second track of both $x$ and $y$), and apply a nontrivial even symbol permutation on the second track's nonzero symbols, conditioned on seeing some large pattern that appears in $y$ but not in $x$. This affects $y$ but not $x$. Note since $y$ is not in the shift-orbit of $x$, and $x$ is a homoclinic point, we can indeed find a large enough pattern to separate them.
\end{proof}

Finally we can now prove Theorem~\ref{thm:FinitaryRyan}.

\begin{proof}[Proof of Theorem~\ref{thm:FinitaryRyan}]
By the previous theorem, there is a f.g.\ subgroup $H$ of the automorphism group with property P. Dense homoclinic points is immediate from contractibility and since $\Z^d$ is torsion-free they are aperiodic. By Theorem~\ref{thm:StrongRyan} the centralizer of $H$ in $\Homeo(X)$ is $G$.
\end{proof}

\section{Ghost centers}
Our goal now is to prove a promotion theorem, namely Theorem~\ref{thm:isoupgrade}, that upgrades any abstract isomorphism of stabilized automorphism groups to a pointed one. For this we will use the notion of ghost centers, introduced in~\cite{schmiedingLocalMathcalEntropy2022}.

Let $G$ be a group. We define a \emph{ghost center} of $G$ to be a subgroup $H \subset G$ such that for every finitely generated subgroup $K \subset G$, there exists a finite index subgroup $J \subset H$ such that $J \subset C_{G}(K)$. Given in addition another group $E$, by an \emph{$E$-ghost center} of $G$ we mean a ghost center in $G$ which is abstractly isomorphic to $E$.

It is clear that if $\Psi \colon G_{1} \to G_{2}$ is an isomorphism of groups, $E$ is a group, and $H \subset G_{1}$ is an $E$ ghost center of $G_{1}$, then $\Psi(G_{1})$ is an $E$-ghost center of $G_{2}$.

We say $H \subset G$ is a \emph{weak ghost center} if for every finitely generated subgroup $K \subset G$, there exists a finite index subgroup $J \subset Z(H)$ such that $J \subset C_{G}(K)$.

If $H \subset G$ is a ghost center, then it is always a weak ghost center. Indeed, given $K \subset G$ finitely generated, there exists a finite index $J \subset H$ such that $J \subset C_{G}(K)$. Then $J \cap Z(H)$ is finite index in $Z(H)$, and $J \cap Z(H) \subset J \subset C_{G}(K)$.

\begin{remark}
Note that, for trivial reasons, any \emph{finite} subgroup $H \subset G$ is automatically a ghost center.
\end{remark}

While our main application will be for $\mathbb{Z}^{d}$-SFT's, we will also prove a classification for weak ghost centers for certain expanded version of the stabilized automorphism groups over more general base groups.

\begin{definition}
For a $G$-subshift $X$, we define the $G$-augmented stabilized automorphism group by
$$\mathcal{J}(X,G) = \langle \autinfty(X,G), G \rangle \subset \Homeo(X).$$
\end{definition}

The finitary Ryan's theorems from the previous section allows us to classify finitely generated ghost centers in $\autinfty(X,\mathbb{Z}^{d})$ for contractible $\mathbb{Z}^{d}$ SFT's, and also in $\mathcal{J}(X)$ when $X$ is a full shift over an infinite finitely generated residually finite group $G$.

\begin{proposition}
Suppose that $G$ is an infinite finitely generated residually finite group and let $X$ be a potent full shift over $G$. Then a finitely generated subgroup $H \subset \mathcal{J}(X,G)$ is a weak ghost center if and only if $Z(H) \cap Z(G)$ is finite index in $Z(H)$.
\end{proposition}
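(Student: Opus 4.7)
My plan is to prove both implications using Theorem~\ref{thm:FullShiftStrongRyan}, the strong finitary Ryan's theorem for potent full shifts, which provides a finitely generated $H_0 \subset \Aut(X)$ whose centralizer in $\Homeo(X)$ equals the shift group $G$. In the forward direction this allows one to force a finite-index subgroup of $Z(H)$ into $G$ via a single ghost-center condition, and a second application to $G$ itself then forces it into $Z(G)$. The backward direction is essentially a routine manipulation of finite-index subgroups, once the generators of the test group $K$ are split according to whether they lie in $\autinfty(X,G)$ or in $G$.

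For the backward direction, suppose $J_0 = Z(H) \cap Z(G)$ has finite index in $Z(H)$. Given a finitely generated $K \subset \mathcal{J}(X,G)$ with generators $k_1, \ldots, k_n$, I would first rewrite each generator as an element of $\autinfty(X,G) \cup G$ (an arbitrary choice when both apply). For each $k_i \in \autinfty(X,G)$ fix a finite-index $L_i \subset G$ centralizing $k_i$, and set $L = \bigcap_i L_i$, which is still finite index in $G$. The subgroup $J = J_0 \cap L$ has finite index in $Z(H)$, and its elements commute with every generator of $K$: with the $G$-generators because $J \subset Z(G)$, and with each remaining $k_i$ because $J \subset L \subset L_i$ centralizes $k_i$. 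Hence $J \subset C_{\mathcal{J}(X,G)}(K)$, verifying the weak ghost-center property.

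For the forward direction, suppose $H$ is a weak ghost center. Applying the definition to $K = H_0 \subset \Aut(X) \subset \mathcal{J}(X,G)$ yields a finite-index $J_1 \subset Z(H)$ with $J_1 \subset C_{\mathcal{J}(X,G)}(H_0)$. Since $\mathcal{J}(X,G) \subset \Homeo(X)$ and Theorem~\ref{thm:FullShiftStrongRyan} gives $C_{\Homeo(X)}(H_0) = G$, this forces $J_1 \subset G$. Because $G$ is finitely generated by hypothesis, applying the ghost-center property a second time to $K = G$ yields a finite-index $J_2 \subset Z(H)$ with $J_2 \subset C_{\mathcal{J}(X,G)}(G)$. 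Then $J_1 \cap J_2$ has finite index in $Z(H)$ and lies in $G \cap C_{\mathcal{J}(X,G)}(G) = Z(G)$, so $Z(H) \cap Z(G)$ contains $J_1 \cap J_2$ and therefore has finite index in $Z(H)$. The only real obstacle is locating the correct external input, namely the strong finitary Ryan's theorem for potent full shifts; once that is in hand the argument reduces to intersecting finite-index subgroups.
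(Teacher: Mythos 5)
Your approach matches the paper's: both directions hinge on the strong finitary Ryan's theorem for potent full shifts (Theorem~\ref{thm:FullShiftStrongRyan}), and your forward direction is a two-step variant of the paper's (which applies the weak ghost-center condition once, to the group generated by the Ryan's subgroup together with a generating set of $G$, rather than twice to $H_0$ and to $G$ separately). That difference is cosmetic.

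There is, however, a small gap in your backward direction. You write that given generators $k_1,\ldots,k_n$ of $K \subset \mathcal{J}(X,G)$, you ``rewrite each generator as an element of $\autinfty(X,G) \cup G$.'' But $\mathcal{J}(X,G) = \langle \autinfty(X,G), G \rangle$, so a typical element is a \emph{word} in elements of $\autinfty(X,G)$ and $G$, not itself an element of the union --- the rewriting step is not available as stated. The repair is short: expand each $k_i$ as a finite word $w_{i,1}\cdots w_{i,m_i}$ with each $w_{i,j} \in \autinfty(X,G) \cup G$; there are then only finitely many letters lying in $\autinfty(X,G)$, and these lie in a common $\aut(X,L)$ with $L$ finite index in $G$. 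Then $J_0 \cap L$ (with $J_0 = Z(H)\cap Z(G)$) centralizes every letter, hence every $k_i$, hence $K$. (Alternatively one can observe that $\autinfty(X,G)$ is normal in $\mathcal{J}(X,G)$, so $\mathcal{J}(X,G) = \autinfty(X,G)\cdot G$ and each $k_i = \alpha_i g_i$ with $\alpha_i \in \autinfty(X,G)$, $g_i \in G$; this gives a clean two-letter decomposition.) With that adjustment, your argument is correct and is essentially the paper's.
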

\begin{proof}
First suppose $K = Z(H) \cap Z(G)$ is finite index in $Z(H)$ and let $E \subset \mathcal{J}(X,G)$ be a finitely generated subgroup. We claim that $E \subset \langle \aut(X,L),G \rangle$ for some finite index subgroup $L \subset G$. Indeed, if $E$ is generated by $g_{1},\ldots,g_{k}$, each of these generators can be written in terms of the generators of $G$ and finitely many elements $\alpha_{1},\ldots,\alpha_{i}$ from $\autinfty(X,G)$. Then there exists a finite index $L \subset G$ for which $\alpha_{1},\ldots,\alpha_{i} \in \aut(X,L)$, so $E \subset \langle \alpha_{1},\ldots,\alpha_{i},G \rangle \subset \langle \aut(X,L),G \rangle$. It follows that $L \cap C_{\scriptscriptstyle \mathcal{J}(X,G)}(G) \subset C_{\scriptscriptstyle \mathcal{J}(X,G)}(E)$. Now $C_{\scriptscriptstyle \mathcal{J}(X,G)}(G) \subset Z(G) \subset G$ since $G \subset \mathcal{J}(X,G)$, and $L$ is finite index in $G$, so $L \cap Z(G)$ is finite index in $Z(G)$. Since $K \subset Z(G)$, this implies $L \cap Z(G) \cap K$ is finite index in $K$ as well. But $L \cap Z(G) \subset C_{\scriptscriptstyle \mathcal{J}(X,G)}(E)$ so $L \cap Z(G) \cap K \subset C_{\scriptscriptstyle \mathcal{J}(X,G)}(E)$ as well. Finally, since $K$ was finite index in $Z(H)$ and $L \cap Z(G) \cap K$ is finite index in $K$, we have that $L \cap Z(G) \cap K$ is finite index in $Z(H)$, which shows that $H$ is a weak ghost center.

Now suppose $H \subset \mathcal{J}(X,G)$ is a weak ghost center. By Theorem~\ref{thm:potentsstrongryansomgsomany}, there exists a finitely generated subgroup $F \subset \aut(X,G)$ such that $C_{\scriptscriptstyle \mathcal{J}(X,G)}(F) = G$. Then by adding the generators of $G$ to $F$, we get a finitely generated subgroup $F^{\prime}$ such that $C_{\scriptscriptstyle \mathcal{J}(X,G)}(F^{\prime}) \subset C_{G}(G) = Z(G)$. Since $H$ is a weak ghost center, there exists a finite index subgroup $K \subset Z(H)$ such that $K \subset C_{\scriptscriptstyle \mathcal{J}(X,G)}(F^{\prime}) \subset Z(G)$. Thus $K \subset Z(H) \cap Z(G)$ and since $K$ is finite index in $Z(H)$ we have that $Z(H) \cap Z(G)$ is finite index in $Z(H)$.
\end{proof}
A similar argument classifies ghost centers for the stabilized automorphism groups of contractible $\mathbb{Z}^{d}$-SFT's.

\begin{proposition}\label{prop:ghostcenterchar}
Let $d \ge 1$ and $X$ be a contractible shift of finite type. Then a finitely generated subgroup $H$ is a ghost center in $\autinfty(X,\mathbb{Z}^{d})$ if and only if $H \cap \mathcal{Z}_{X}$ is finite index in $H$. If in addition $H$ is isomorphic to $\mathbb{Z}^{d}$, then $H$ is a ghost center if and only if $H$ is commensurable with $\mathcal{Z}_{X}$.
\end{proposition}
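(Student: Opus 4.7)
The plan is to prove the biconditional first, then derive the $\mathbb{Z}^d$ statement as a consequence.

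For the ``if'' direction, suppose $H \cap \mathcal{Z}_X$ has finite index in $H$. Given an arbitrary finitely generated subgroup $K \subset \autinfty(X, \mathbb{Z}^d)$, a finite generating set for $K$ lies in some $\aut(X, L)$ for a finite index $L \subset \mathbb{Z}^d$, hence $K \subset \aut(X, L)$. Every shift in $L$ commutes with every element of $\aut(X, L)$ by definition, so $L \subset C_{\autinfty(X)}(K)$ when $L$ is viewed as a subgroup of $\mathcal{Z}_X$. The intersection $J = H \cap L = (H \cap \mathcal{Z}_X) \cap L$ has finite index in $H \cap \mathcal{Z}_X$ (since $L$ is finite index in $\mathcal{Z}_X$), therefore finite index in $H$, and $J \subset L \subset C(K)$. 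So $H$ is a ghost center.

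For the ``only if'' direction, by Lemma~\ref{lem:EveryContractible} the SFT $X$ admits a periodic point; passing to an appropriate finite index subgroup $L_0 \subset \mathbb{Z}^d$, the induced $L_0$-action on $X$ has a fixed point. The system $(X, L_0)$ remains contractible, so Theorem~\ref{thm:FinitaryRyan} yields a finitely generated subgroup $F \subset \aut(X, L_0) \subset \autinfty(X, \mathbb{Z}^d)$ whose centralizer in $\Homeo(X)$ is $L_0$, which is finite index in $\mathcal{Z}_X$. Since $H$ is a ghost center and $F$ is finitely generated, some finite index subgroup $J \subset H$ satisfies
\[ J \subset C_{\autinfty(X)}(F) \subset C_{\Homeo(X)}(F) = L_0 \subset \mathcal{Z}_X. \]
Thus $J \subset H \cap \mathcal{Z}_X$, and since $J$ has finite index in $H$, so does $H \cap \mathcal{Z}_X$.

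For the statement about $H \cong \mathbb{Z}^d$: if $H$ is a ghost center, the first part gives that $H \cap \mathcal{Z}_X$ has finite index in $H \cong \mathbb{Z}^d$, hence is itself isomorphic to $\mathbb{Z}^d$ and therefore has full rank inside $\mathcal{Z}_X \cong \mathbb{Z}^d$, forcing it to also be of finite index in $\mathcal{Z}_X$; that is, $H$ and $\mathcal{Z}_X$ are commensurable. Conversely, commensurability of $H$ with $\mathcal{Z}_X$ directly gives that $H \cap \mathcal{Z}_X$ is finite index in $H$, so the first part applies.

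The main obstacle is the invocation of Theorem~\ref{thm:FinitaryRyan}: that theorem requires a fixed point whereas we only assume contractibility, so we must first reduce to the fixed-point case by passing to a finite index subgroup and using $\autinfty(X, \mathbb{Z}^d) = \autinfty(X, L_0)$ (Proposition~\ref{prop:prop1}). This reduction is routine but is where the contractibility hypothesis enters in an essential way.
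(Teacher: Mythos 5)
Your proof is correct and follows essentially the same approach as the paper's: the ``if'' direction intersects $H \cap \mathcal{Z}_X$ with a finite index $L$ for which the given $K$ lies in $\aut(X,L)$, and the ``only if'' direction reduces to a finite-index subgroup with a fixed point and invokes the finitary Ryan's theorem (Theorem~\ref{thm:FinitaryRyan}) to pin down the centralizer inside $\Homeo(X)$. The only cosmetic difference is that the paper begins the ``only if'' direction by first placing $H$ inside some $\aut(X,J)$, a step it never uses, whereas you go directly to a finite-index subgroup $L_0$ over which the action has a fixed point; both are sound and lead to the same conclusion.
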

\begin{proof}
First suppose $K = H \cap \mathcal{Z}_{X}$ is finite index in $H$. Let $E$ be some finitely generated subgroup of $\autinfty(X,\mathbb{Z}^{d})$. Then $E \subset \aut(X,L)$ for some finite index subgroup $L \subset \mathbb{Z}^{d}$, and hence $E \subset C(L)$. Note that $L$, as a subgroup of $\mathcal{Z}_{X}$, is finite index in $\mathcal{Z}_{X}$. Since $K \subset \mathcal{Z}_{X}$, we have that $L \cap K$ is finite index in $K$ and hence also in $H$, since $K$ is finite index in $H$. Finally, $E \subset C(L) \subset C(L \cap K)$, so $L \cap K \subset C(E)$.

For the other direction, suppose now that $H$ is a ghost center in $\autinfty(X,\mathbb{Z}^{d})$ and is finitely generated. Then $H \subset \aut(X,J)$ for some finite index subgroup $J \subset \mathbb{Z}^{d}$. Since $X$ is contractible and hence has periodic points, by passing to a deeper finite index subgroup of $J$ if necessary, we can assume that $(X,J)$ has a fixed point (note that $(X,J)$ is still contractible, since if $(X,G)$ is contractible then $(X,H)$ is contractible for any subgroup $H \subset G$). Then by Theorem~\ref{thm:FinitaryRyan} there exists a finitely generated subgroup $E \subset \aut(X,J)$ such that the centralizer of $E$ in $\aut(X,J)$ (in fact, in $\textrm{Homeo}(X)$) is $\mathcal{Z}_{J}$. Since $H$ is a ghost center, there exists some finite index subgroup $L \subset H$ such that $L \subset C(E)$. It follows that $L \subset \mathcal{Z}_{J}$. Then $L \subset H \cap \mathcal{Z}_{X}$, and since $L$ is finite index in $H$, this implies $H \cap \mathcal{Z}_{X}$ is finite index in $H$.

The last part follows immediately, since if $H$ is isomorphic to $\mathbb{Z}^{d}$ and is virtually contained in $\mathcal{Z}_{X}$, then it is commensurable with $\mathcal{Z}_{X}$.
\end{proof}

\begin{remark}
It follows from the above proposition that when $X$ is contractible, any subgroup of $\mathcal{Z}_{X}$ in $\autinfty(X,\mathbb{Z}^{d})$ is a ghost center. In particular, while the proposition implies any ghost center in $\autinfty(X,\mathbb{Z}^{d})$ is virtually free abelian, it may be virtually $\mathbb{Z}^{k}$ for $k < d$.
\end{remark}

We can now show that any isomorphism of stabilized automorphism groups for contractible $\mathbb{Z}^{d}$-SFT's may be promoted to a pointed isomorphism relative to some finite index subgroups of the $\mathcal{Z}_{X}$'s.

\begin{theorem}\label{thm:isoupgrade}
Let $(X,\mathbb{Z}^{d}), (Y,\mathbb{Z}^{d})$ be contractible shifts of finite type and suppose there is an isomorphism of stabilized automorphism groups
$$\Psi \colon \autinfty(X,\mathbb{Z}^{d}) \to \autinfty(Y,\mathbb{Z}^{d}).$$
Then there are finite index subgroups $K_{1} \subset \mathcal{Z}_{X}, K_{2} \subset \mathcal{Z}_{Y}$ such that $\Psi$ is a pointed isomorphism
$$\Psi \colon \left(\autinfty(X,\mathbb{Z}^{d}),K_{1}\right) \to \left(\autinfty(Y,\mathbb{Z}^{d}),K_{2}\right).$$
\end{theorem}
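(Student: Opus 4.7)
The plan is to exploit the characterization of ghost centers from Proposition~\ref{prop:ghostcenterchar} together with the elementary fact that an abstract isomorphism sends ghost centers to ghost centers (observed in the paragraph defining ghost centers). Since $\mathcal{Z}_{X} \cong \mathbb{Z}^{d}$ is finitely generated and trivially satisfies $\mathcal{Z}_{X} \cap \mathcal{Z}_{X} = \mathcal{Z}_{X}$, it is itself a finitely generated ghost center inside $\autinfty(X,\mathbb{Z}^{d})$. Therefore $\Psi(\mathcal{Z}_{X})$ is a finitely generated ghost center in $\autinfty(Y,\mathbb{Z}^{d})$, and as an abstract group it is isomorphic to $\mathbb{Z}^{d}$.

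By the last sentence of Proposition~\ref{prop:ghostcenterchar}, any ghost center in $\autinfty(Y,\mathbb{Z}^{d})$ which is isomorphic to $\mathbb{Z}^{d}$ must be commensurable with $\mathcal{Z}_{Y}$. Hence the intersection $K_{2} = \Psi(\mathcal{Z}_{X}) \cap \mathcal{Z}_{Y}$ is of finite index both in $\Psi(\mathcal{Z}_{X})$ and in $\mathcal{Z}_{Y}$. Setting $K_{1} = \Psi^{-1}(K_{2})$, we get $K_{1} \subset \mathcal{Z}_{X}$ (since $K_{2} \subset \Psi(\mathcal{Z}_{X})$) and $K_{1}$ is of finite index in $\mathcal{Z}_{X}$ (since $K_{2}$ is of finite index in $\Psi(\mathcal{Z}_{X})$). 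By construction $\Psi(K_{1}) = K_{2}$, which is exactly the condition for $\Psi$ to be a pointed isomorphism from $(\autinfty(X,\mathbb{Z}^{d}),K_{1})$ to $(\autinfty(Y,\mathbb{Z}^{d}),K_{2})$.

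The only nontrivial ingredient is Proposition~\ref{prop:ghostcenterchar}, whose proof in turn rests on the finitary Ryan's theorem for contractible $\mathbb{Z}^{d}$-SFT's (Theorem~\ref{thm:FinitaryRyan}); no further obstacle appears in promoting the abstract isomorphism, since the argument reduces to tracking a finitely generated abelian subgroup and applying commensurability. In particular, the ``hard part'' of the promotion has already been absorbed into the characterization of finitely generated ghost centers as those subgroups that are virtually contained in $\mathcal{Z}_{X}$.
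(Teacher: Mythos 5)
Your proof is correct and follows essentially the same route as the paper: both arguments observe that $\mathcal{Z}_{X}$ is a finitely generated $\mathbb{Z}^{d}$-ghost center, push it through $\Psi$, invoke Proposition~\ref{prop:ghostcenterchar} to get commensurability of $\Psi(\mathcal{Z}_{X})$ with $\mathcal{Z}_{Y}$, and then take $K_{2} = \Psi(\mathcal{Z}_{X}) \cap \mathcal{Z}_{Y}$ and $K_{1} = \Psi^{-1}(K_{2})$. The only cosmetic difference is that you make explicit the trivial observation $\mathcal{Z}_{X} \cap \mathcal{Z}_{X} = \mathcal{Z}_{X}$ to justify that $\mathcal{Z}_{X}$ qualifies as a ghost center.
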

\begin{proof}
By Proposition~\ref{prop:ghostcenterchar}, $\mathcal{Z}_{X}$ is a $\mathbb{Z}^{d}$-ghost center of $\autinfty(X,\mathbb{Z}^{d})$, and hence $\Psi(\mathcal{Z}_{X})$ is a $\mathbb{Z}^{d}$-ghost center of $\autinfty(Y,\mathbb{Z}^{d})$. Then Proposition~\ref{prop:ghostcenterchar} implies $\Psi(\mathcal{Z}_{X})$ is commensurable with $\mathcal{Z}_{Y}$, so $K = \Psi(\mathcal{Z}_{X}) \cap \mathcal{Z}_{Y}$ is finite index in $\Psi(\mathcal{Z}_{X})$ and $\mathcal{Z}_{Y}$. Setting $K_{1} = \Psi^{-1}(K)$ and $K_{2} = K$, then $K_{1}$ is finite index in $\mathcal{Z}_{X}$, $K_{2}$ is finite index in $\mathcal{Z}_{Y}$, and we have a pointed isomorphism
$$\Psi \colon \left( \autinfty(X,\mathbb{Z}^{d}),K_{1}\right) \to \left(\autinfty(Y,\mathbb{Z}^{d}),K_{2}\right).$$
\end{proof}

\section{Classification}
We are now in a position to show that topological entropy can be rationally recovered from the stabilized automorphism group for contractible $\mathbb{Z}^{d}$-SFT's. A classification of the stabilized automorphism groups of full shifts over $\mathbb{Z}^{d}$ will follow.

\begin{theorem}\label{thm:entropyrationalratio}
Suppose that $X$ and $Y$ are contractible $\mathbb{Z}^{d}$-shifts of finite type and that
$$\Psi \colon \autinfty(X,\mathbb{Z}^{d}) \to \autinfty(Y,\mathbb{Z}^{d})$$
is an isomorphism. Then $\frac{\htop(X)}{\htop(Y)} \in \mathbb{Q}$.
\end{theorem}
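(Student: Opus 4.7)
The strategy is to use Theorem~\ref{thm:isoupgrade} to promote $\Psi$ to a pointed isomorphism, then invoke the pointed-isomorphism invariance of local $\mathcal{Q}$ entropy (Proposition~\ref{prop:pentropyproperties}(2)) combined with Theorem~\ref{thm:entropycalccontractibles}, which says suitable entropy class selectors recover the topological entropy in the contractible case. By Theorem~\ref{thm:isoupgrade}, I would first choose finite index subgroups $K_1 \subset \mathcal{Z}_X$ and $K_2 = \Psi(K_1) \subset \mathcal{Z}_Y$, and set $n_1 = [\mathcal{Z}_X \colon K_1]$, $n_2 = [\mathcal{Z}_Y \colon K_2]$, so that $\Psi \colon (\autinfty(X,\mathbb{Z}^d), K_1) \to (\autinfty(Y,\mathbb{Z}^d), K_2)$ is a pointed isomorphism. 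Since contractibility passes to finite index subgroups, both $(X, K_1)$ and $(Y, K_2)$ are contractible; by Proposition~\ref{prop:prop1}(2) the stabilized groups are unchanged, and Proposition~\ref{prop:indexisofact} gives $\htop(X, K_1) = n_1 \htop(X)$ and $\htop(Y, K_2) = n_2 \htop(Y)$.

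Applying Theorem~\ref{thm:entropycalccontractibles} to $(X, K_1)$ yields a BEEPS function $f_X$ and integers $R_X, \kappa_X$ for which $\mathcal{Q}_X := \mathcal{Q}(f_X, v_{R_X, \kappa_X})$ satisfies $h_{\mathcal{Q}_X}(\autinfty(X), K_1) = \htop(X, K_1)$; symmetrically choose $\mathcal{Q}_Y = \mathcal{Q}(f_Y, v_{R_Y, \kappa_Y})$ on $K_2$ realizing $\htop(Y, K_2)$. To compare the two across $\Psi$, I would build a combined ECS on $K_1$ large enough to dominate both sides. Using the group isomorphism $\Psi|_{K_1} \colon K_1 \to K_2$ (which induces a bijection on F\o{}lner sequences), define
\[
f^{*}(\mathcal{F},m) = \max\bigl\{f_X(\mathcal{F},m),\ f_Y(\Psi(\mathcal{F}),m)\bigr\},
\]
pick $R^{*} = \max(R_X,R_Y)$, $\kappa^{*} = \max(\kappa_X,\kappa_Y)$, and set $\mathcal{Q}^{*} = \mathcal{Q}(f^{*}, v_{R^{*},\kappa^{*}})$. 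Since the maximum of controlled functions is controlled and a controlled function dominating a BEEPS function is itself a BEEPS function, $f^{*}$ is a BEEPS function for $(X, K_1)$ and $\Psi(f^{*})$ is one for $(Y, K_2)$; by monotonicity of $\mathcal{Q}(\cdot,\cdot)$ in its arguments we also have $\mathcal{Q}_X \subset \mathcal{Q}^{*}$ and $\mathcal{Q}_Y \subset \Psi(\mathcal{Q}^{*})$.

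Theorem~\ref{thm:upperboundentropy}, whose hypotheses hold via Proposition~\ref{prop:contractiblegivesstuff}, then gives $h_{\mathcal{Q}^{*}}(\autinfty(X), K_1) \le \htop(X, K_1)$, while Proposition~\ref{prop:pentropyproperties}(3) applied to $\mathcal{Q}_X \subset \mathcal{Q}^{*}$ gives the reverse bound, so $h_{\mathcal{Q}^{*}}(\autinfty(X), K_1) = \htop(X, K_1)$. The symmetric argument on the $Y$-side yields $h_{\Psi(\mathcal{Q}^{*})}(\autinfty(Y), K_2) = \htop(Y, K_2)$. Proposition~\ref{prop:pentropyproperties}(2) equates these two quantities through the pointed isomorphism $\Psi$, giving $n_1 \htop(X) = n_2 \htop(Y)$ and therefore $\htop(X)/\htop(Y) = n_2/n_1 \in \mathbb{Q}$. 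Nontrivial contractible SFTs have positive entropy (being D-mixing, as noted in the proof of Proposition~\ref{prop:dmixingforallbeeps}), so division is legitimate; if one of $X, Y$ is trivial the other is too via the isomorphism and both entropies vanish.

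The main technical obstacle I anticipate is the careful bookkeeping surrounding the combined ECS: one must verify that $\Psi(f^{*})$ satisfies the BEEPS inequality against \emph{every} F\o{}lner sequence in $K_2$ (not just images of those in $K_1$), and that $v_{R^{*}, \kappa^{*}}$ dominates the $\Psi$-pullback of $v_{R_Y, \kappa_Y}$ after accounting for the mild distortion of $\partial_R$-boundaries under the linear identification $K_1 \cong K_2$ induced by $\Psi|_{K_1}$. Both tasks amount to unpacking definitions and using that $\Psi|_{K_1}$ is a group isomorphism of rank-$d$ free abelian subgroups of $\mathbb{Z}^d$; the conceptual substance is entirely packaged by Theorems~\ref{thm:isoupgrade}, \ref{thm:upperboundentropy}, \ref{thm:entropycalccontractibles}, and Proposition~\ref{prop:pentropyproperties}.
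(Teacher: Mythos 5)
Your outline follows the paper's strategy closely: promote $\Psi$ to a pointed isomorphism via Theorem~\ref{thm:isoupgrade}, then use adapted ECS's and the invariance of local $\mathcal{Q}$ entropy. There are, however, two concrete gaps in the way you fill in the details. First, the choice $R^{*} = \max(R_X, R_Y)$ is not sufficient for the containment $\mathcal{Q}_Y \subset \Psi(\mathcal{Q}^{*})$. Unwinding the definition, that containment requires $\kappa_Y^{|\partial_{R_Y}F_m|} \le (\kappa^{*})^{|\partial_{R^{*}}\theta^{-1}(F_m)|}$ for all F\o{}lner sets $F_m$ in $K_2$, where $\theta = \Psi|_{K_1}$; since $\theta^{-1}$ is a group isomorphism but not an isometry, one only has $\theta^{-1}(\partial_{R_Y}F_m) \subset \partial_{\lambda R_Y}(\theta^{-1}(F_m))$ for a Lipschitz constant $\lambda$ of $\theta^{-1}$, so the radius must be inflated to something like $R^{*} = \lambda\max(R_X,R_Y)$. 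You flag the distortion issue at the end, but the parameter you actually write down does not account for it, which is exactly what the $\lambda = \max\{\|\theta\|_\infty, \|\theta^{-1}\|_\infty\}$ in the paper's proof is for.

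Second, and more substantively, your claim that ``the symmetric argument on the $Y$-side yields $h_{\Psi(\mathcal{Q}^{*})}(\autinfty(Y),K_2) = \htop(Y,K_2)$'' requires an upper bound $h_{\Psi(\mathcal{Q}^{*})}(\autinfty(Y),K_2) \le \htop(Y,K_2)$, which you propose to get from Theorem~\ref{thm:upperboundentropy}. But that theorem applies only to ECS's of the form $\mathcal{Q}(f_S, v_{R,\kappa})$ with $f_S$ a BEEPS function, and $\Psi(\mathcal{Q}^{*})$ is not of that form: its second argument is $v_{R^{*},\kappa^{*}}(\theta^{-1}(\mathcal{F}),m)$, i.e.\ a pullback of a boundary-counting function along $\theta$, which is not literally a $v_{R,\kappa}$. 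The paper sidesteps this by never bounding the pushed-forward ECS from above: one deduces $\htop(Y,K_2) = h_{\mathcal{Q}_Y}(\autinfty(Y),K_2) \le h_{\Psi(\mathcal{Q}^{*})}(\autinfty(Y),K_2) = h_{\mathcal{Q}^{*}}(\autinfty(X),K_1) = \htop(X,K_1)$, giving the inequality $\htop(Y,K_2) \le \htop(X,K_1)$, and then gets the reverse inequality from a second, symmetric containment $\mathcal{Q}(g_S^X, v_{\kappa,S}) \subset \Psi^{-1}(\mathcal{Q}(g_S^Y, v_{\kappa,\lambda S}))$. You should either restructure in this one-sided manner, or else first show $\Psi(\mathcal{Q}^{*}) \subset \mathcal{Q}(g, v_{\lambda R^{*},\kappa^{*}})$ for a suitable BEEPS function $g$ on $(Y,K_2)$ and apply Proposition~\ref{prop:pentropyproperties}(3) plus Theorem~\ref{thm:upperboundentropy} to that containing ECS. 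Both fixes are short, so the overall approach is sound; these are the two places where the details as written do not quite close.
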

For the proof, naturally the goal is to use local $\mathcal{Q}$ entropy. However, the local $\mathcal{Q}$ entropy calculations given in Theorem~\ref{thm:fullshiftentropycalc} use ECS's taking input data from the specific subshift. Our first goal then is to show how to find adapted ECS's which are suitable for the local $\mathcal{Q}$ calculations for both systems simultaneously.

\sloppy Suppose then that $(X,\mathbb{Z}^{d})$ and $(Y,\mathbb{Z}^{d})$ are contractible $\mathbb{Z}^{d}$-SFT's and that {$\Psi \colon \autinfty(X,\mathbb{Z}^{d}) \to \autinfty(Y,\mathbb{Z}^{d})$} is an isomorphism. By Theorem~\ref{thm:isoupgrade}, there exists finite index subgroups $K_{1} \subset \mathcal{Z}_{X}, K_{2} \subset \mathcal{Z}_{Y}$ such that $\Psi(K_{1}) = K_{2}$. We may consider the SFT's $(X,K_{1})$ and $(Y,K_{2})$. Since both $(X,K_{1})$ and $(Y,K_{2})$ are still contractible,, both still possess the BEEPS property and have strong density of periodic points. But both $K_{1}$ and $K_{2}$ are finite index in $\mathcal{Z}_{X} \cong \mathbb{Z}^{d}$ and $\mathcal{Z}_{Y} \cong \mathbb{Z}^{d}$ respectively, so they are both abstractly isomorphic to $\mathbb{Z}^{d}$, and hence $(X,K_{1})$ and $(Y,K_{2})$ are $\mathbb{Z}^{d}$-SFT's with the BEEPS property.

Let $S$ be greater than the window size for both $(X,K_{1})$ and $(Y,K_{2})$ and let $f_{S}^{X},f_{S}^{Y}$ be BEEPS functions for $(X,K_{1})$ and $(Y,K_{2})$ respectively.
Both $(X,K_{1})$ and $(X,K_{2})$ may be assumed to be subshifts over some alphabets $\mathcal{A},\mathcal{B}$ respectively. We fix some $\kappa > \max\{|\mathcal{A}|,|\mathcal{B}|\}$.

Since $\Psi(K_{1}) = K_{2}$, the map $\Psi|_{K_{1}} \colon K_{1} \to K_{2}$ is an isomorphism. We denote this isomorphism by $\theta \colon K_{1} \to K_{2}$. Define functions
\begin{equation*}
\begin{gathered}
g_{S}^{X} \colon \fol(K_{1}) \times \mathbb{N} \to \mathbb{R}, \qquad g_{S}^{Y} \colon \fol(K_{2}) \times \mathbb{N} \to \mathbb{R}\\
g_{S}^{X}(\mathcal{F},m) = \max\{f_{S}^{X}(\mathcal{F},m),f_{S}^{Y}\left(\theta(\mathcal{F}),m)\right)\}\\
g_{S}^{Y}(\mathcal{F},m) = \max\{f_{S}^{Y}(\mathcal{F},m),f_{S}^{X}\left(\theta^{-1}(\mathcal{F}),m)\right)\}
\end{gathered}
\end{equation*}

\begin{lemma}
Both $g_{S}^{X}$ and $g_{S}^{Y}$ are controlled functions.
\end{lemma}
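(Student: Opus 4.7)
The plan is to combine the pre-existing observation that the max of two controlled functions is controlled, with the elementary fact that a group isomorphism sends F\o{}lner sequences to F\o{}lner sequences while preserving cardinalities.

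First I would handle $g_S^X$ (the argument for $g_S^Y$ is symmetric). By the observation recorded in the paper immediately after the definition of controlled functions, it suffices to show that both $(\mathcal{F},m) \mapsto f_S^X(\mathcal{F},m)$ and $(\mathcal{F},m) \mapsto f_S^Y(\theta(\mathcal{F}),m)$ are controlled functions on $\fol(K_1) \times \mathbb{N}$. The first is controlled by hypothesis, since $f_S^X$ is a BEEPS function for $(X,K_1)$ and every BEEPS function is controlled.

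For the second component, I would use that $\theta \colon K_1 \to K_2$ is a group isomorphism. Thus for any F\o{}lner sequence $\mathcal{F} = (F_m)$ in $K_1$, the sequence $\theta(\mathcal{F}) = (\theta(F_m))$ is a F\o{}lner sequence in $K_2$, and $|\theta(F_m)| = |F_m|$ for every $m$ since $\theta$ is a bijection. Since $f_S^Y$ is a BEEPS function for $(Y,K_2)$, hence controlled on $\fol(K_2) \times \mathbb{N}$, applying this definition to the F\o{}lner sequence $\theta(\mathcal{F})$ gives
\[
\lim_{m \to \infty} \frac{\log f_S^Y(\theta(\mathcal{F}),m)}{|F_m|} = \lim_{m \to \infty} \frac{\log f_S^Y(\theta(\mathcal{F}),m)}{|\theta(F_m)|} = 0.
\]
Thus $(\mathcal{F},m) \mapsto f_S^Y(\theta(\mathcal{F}),m)$ is controlled on $\fol(K_1) \times \mathbb{N}$, and the max $g_S^X$ is controlled.

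The same reasoning applied with $\theta$ replaced by $\theta^{-1}$ (which is likewise a group isomorphism and so sends F\o{}lner sequences in $K_2$ to F\o{}lner sequences in $K_1$ preserving cardinalities) shows that $g_S^Y$ is controlled. There is no real obstacle here; the only content is the observation that transporting a F\o{}lner sequence across a group isomorphism preserves both the F\o{}lner property and the cardinalities $|F_m|$ used in the denominator, so the ``controlled'' property is invariant under pushforward by $\theta$ and $\theta^{-1}$.
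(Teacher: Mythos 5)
Your proof is correct and takes essentially the same approach as the paper: reduce to the two components via the fact that a max of controlled functions is controlled, and observe that $\theta$ transports F\o{}lner sequences while preserving cardinalities, so each component is controlled. You spell out the cardinality-preservation step $|\theta(F_m)| = |F_m|$ more explicitly than the paper does, which is a small improvement in exposition but not a different argument.
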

\begin{proof}
Given a F\o{}lner sequence $\mathcal{F} \in \fol(K_{1})$, the sequence $\theta(\mathcal{F})$ belongs to $\fol(K_{2})$. Then for every $m \in \mathbb{N}$ we have
$$\log g_{S}^{X}(\mathcal{F},m) \le \log \max\{f_{S}^{X}(\mathcal{F},m),f_{S}^{Y}(\theta(\mathcal{F}),m)\}.$$
Since both $f_{S}^{X}$ and $f_{S}^{Y}$ are controlled functions, it follows that
$$\frac{\log g_{S}^{X}(\mathcal{F},m)}{|F_{m}|} \to 0$$
so $g_{S}^{X}$ is controlled. The argument for $g_{S}^{Y}$ is analogous.
\end{proof}
\sloppy Let $\lambda \ge \max\{||\theta||_{\infty},||\theta^{-1}||_{\infty}\} \ge 1$.
Recall that, for the isomorphism ${\theta \colon K_{1} \to K_{2}}$, if $\mathcal{Q}$ is an ECS for $L$ then $\theta(\mathcal{Q})$ is an ECS for $K_{2}$.
\begin{lemma}\label{lemma:proofecscomparison}
$\mathcal{Q}(g_{S}^{Y},v_{\kappa,S}) \subset \theta(\mathcal{Q}(g_{S}^{X},v_{\kappa,\lambda S}))$ and $\mathcal{Q}(g_{S}^{X},v_{\kappa,S}) \subset \theta^{-1}(\mathcal{Q}(g_{S}^{Y},v_{\kappa,\lambda S}))$.
\end{lemma}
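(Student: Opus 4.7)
The plan is to reduce each of the two claimed containments to pointwise comparisons of the two parameters entering $\mathcal{P}^{s}$, and then verify these one at a time. I will sketch the first containment $\mathcal{Q}(g_{S}^{Y}, v_{\kappa,S}) \subset \theta(\mathcal{Q}(g_{S}^{X}, v_{\kappa,\lambda S}))$ in detail; the second will follow by the symmetric argument with the roles of $X, K_{1}$ and $Y, K_{2}$ interchanged, using $||\theta||_{\infty} \le \lambda$ in place of $||\theta^{-1}||_{\infty} \le \lambda$. Unwinding the pushforward ECS definition, for any $\mathcal{F} \in \fol(K_{2})$ and $m \ge 1$ one has
$$\theta(\mathcal{Q}(g_{S}^{X}, v_{\kappa,\lambda S}))(\mathcal{F}, m) = \mathcal{P}^{s}\!\left(g_{S}^{X}(\theta^{-1}(\mathcal{F}), m),\; v_{\kappa,\lambda S}(\theta^{-1}(\mathcal{F}), m)\right),$$
whereas the left-hand side is $\mathcal{P}^{s}(g_{S}^{Y}(\mathcal{F}, m), v_{\kappa, S}(\mathcal{F}, m))$. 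Since $\mathcal{P}^{s}(M, N) \subset \mathcal{P}^{s}(M', N')$ for $M \le M'$ and $N \le N'$, it suffices to establish the two pointwise inequalities $g_{S}^{Y}(\mathcal{F}, m) \le g_{S}^{X}(\theta^{-1}(\mathcal{F}), m)$ and $v_{\kappa,S}(\mathcal{F}, m) \le v_{\kappa, \lambda S}(\theta^{-1}(\mathcal{F}), m)$.

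The first of these inequalities is in fact an equality, built into the symmetrized construction of $g_{S}^{X}$ and $g_{S}^{Y}$: substituting $\theta^{-1}(\mathcal{F})$ into the definition of $g_{S}^{X}$ and using $\theta \circ \theta^{-1} = \mathrm{id}$ gives
$$g_{S}^{X}(\theta^{-1}(\mathcal{F}), m) = \max\{f_{S}^{X}(\theta^{-1}(\mathcal{F}), m),\; f_{S}^{Y}(\mathcal{F}, m)\},$$
which is exactly $g_{S}^{Y}(\mathcal{F}, m)$ by definition. No real work is needed at this step.

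The main (and essentially only) nontrivial step is the boundary comparison $|\partial_{S} F_{m}| \le |\partial_{\lambda S} \theta^{-1}(F_{m})|$. The plan is to extend the group isomorphism $\theta^{-1} \colon K_{2} \to K_{1}$ to an $\mathbb{R}$-linear map on $\mathbb{R}^{d}$, which is $\lambda$-Lipschitz with respect to $|\cdot|_{\infty}$ by the choice of $\lambda$, and then use this extension to inject $\partial_{S} F_{m}$ into $\partial_{\lambda S} \theta^{-1}(F_{m})$. Given $x \in \partial_{S} F_{m}$ with a witness $y$ in the relevant complement of $F_{m}$ satisfying $|x - y|_{\infty} \le S$, the Lipschitz bound gives $|\theta^{-1}(x) - \theta^{-1}(y)|_{\infty} \le \lambda S$, and since $\theta^{-1}$ is a bijection between the sublattices, $\theta^{-1}(y)$ lies in the corresponding complement of $\theta^{-1}(F_{m})$; injectivity of the assignment is immediate. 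The hardest part will be handling the witness $y$ cleanly, as the paper's $\partial_{S}$ takes complements in the ambient $\mathbb{Z}^{d}$ while $\theta^{-1}$ only makes sense on $K_{2}$. I expect to resolve this either by taking $\lambda$ slightly larger to absorb the bounded mesh of $K_{i} \subset \mathbb{Z}^{d}$, or by reading $\partial_{S}$ intrinsically inside $K_{i}$, which is consistent with the controlled-function condition for $v_{R,\kappa}$ used elsewhere in the paper. Once this small amount of bookkeeping is carried out, the containment follows directly.
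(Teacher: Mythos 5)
Your proposal is correct and follows essentially the same route as the paper: reduce to pointwise comparison of the two parameters of $\mathcal{P}^{s}$, observe that $g_{S}^{X}(\theta^{-1}(\mathcal{F}),m) = g_{S}^{Y}(\mathcal{F},m)$ by the symmetrized construction, and establish $v_{\kappa,S}(\mathcal{F},m) \le v_{\kappa,\lambda S}(\theta^{-1}(\mathcal{F}),m)$ via the Lipschitz bound $\|\theta^{-1}\|_{\infty} \le \lambda$ giving $\theta^{-1}(\partial_{S}F_m) \subset \partial_{\lambda S}\theta^{-1}(F_m)$ together with injectivity of $\theta^{-1}$. The bookkeeping concern you raise about $\partial_{S}$ is resolved exactly as you anticipate: the boundary is read intrinsically in $K_{i}$ under a fixed identification $K_{i} \cong \mathbb{Z}^{d}$, which is also what makes $\lambda$ well-defined, and this is implicitly what the paper does.
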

\begin{proof}
Let $\mathcal{F} \in \fol(K_{2})$ and $m \in \mathbb{N}$. First note that
$$v_{\kappa,S}(\mathcal{F},m) = v_{\kappa,S}(F_{m}) = \kappa^{|\partial_{S}F_{m}|}$$
$$ = \kappa^{|\theta^{-1}(\partial_{S}(F_{m}))|} \le \kappa^{|\partial_{\lambda S}\theta^{-1}(F_{m})|} $$
$$= v_{\kappa,\lambda S}(\theta^{-1}(F_{m})) = v_{\kappa,\lambda S}(\theta^{-1}(\mathcal{F}),m).$$
Thus
\begin{equation}\label{eqn:kappasandstuff}
v_{\kappa,S}(\mathcal{F},m) \le v_{\kappa,\lambda S}(\theta^{-1}(\mathcal{F}),m).
\end{equation}
Then we have
$$\theta(\mathcal{Q}(g_{S}^{X},v_{\kappa,\lambda S}))(\mathcal{F},m) = \mathcal{Q}(g_{S}^{X},v_{\kappa,\lambda S})(\theta^{-1}(\mathcal{F}),m)$$
$$=\mathcal{P}^{s}\left(g_{S}^{X}(\theta^{-1}(\mathcal{F}),m),v_{\kappa,\lambda S}(\theta^{-1}(\mathcal{F}),m)\right)$$
$$= \mathcal{P}^{s}\left(\max\{f_{S}^{X}(\theta^{-1}(\mathcal{F}),m),f_{S}^{Y}(\mathcal{F},m)\},v_{\kappa,\lambda S}(\theta^{-1}(\mathcal{F}),m)\right).$$
By~\eqref{eqn:kappasandstuff} we have $v_{\kappa,S}(\mathcal{F},m) \le v_{\kappa,\lambda S}(\theta^{-1}(\mathcal{F}),m)$
so this last class contains
$$\mathcal{P}^{s}\left(\max\{f_{S}^{X}(\theta^{-1}(\mathcal{F}),m),f_{S}^{Y}(\mathcal{F},m)\},v_{\kappa,S}(\mathcal{F},m)\right)$$
$$=\mathcal{Q}(g_{S}^{Y},v_{\kappa,S})(\mathcal{F},m)$$
and hence
$$\mathcal{Q}(g_{S}^{Y},v_{\kappa,S}) \subset \theta(\mathcal{Q}(g_{S}^{X},v_{\kappa,\lambda S})).$$

The argument for the second containment is analogous.
\end{proof}

Since $f^{X}(\mathcal{F},m) \le g^{X}(\mathcal{F},m)$ and $f^{Y}(\mathcal{F}^{\prime},m) \le g^{Y}(\mathcal{F}^{\prime},m)$ for any F\o{}lner sequences $\mathcal{F} \in \fol(K_{1}), \mathcal{F}^{\prime} \in \fol(K_{2})$, it follows that both $g^{X}$ and $g^{Y}$ are $S$-BEEPS functions for $X$ and $Y$ respectively.

Now $\Psi \colon \left(\autinfty(X,K_{1}),K_{1}\right) \to \left(\autinfty(Y,K_{2}),K_{2}\right)$ is a pointed isomorphism and $\Psi|_{K_{1}} \colon K_{1} \to K_{2}$ is given by $\theta$. The previous lemma then implies $\mathcal{Q}(g^{Y},v_{\kappa,S}) \subset \Psi(\mathcal{Q}(g^{X},v_{\kappa,\lambda S}))$ and  $\mathcal{Q}(g_{S}^{X},v_{\kappa,S}) \subset \Psi^{-1}(\mathcal{Q}(g_{S}^{Y},v_{\kappa,\lambda S}))$.

Since both $g_{S}^{X}$ and $g^{Y}_{S}$ are $S$-BEEPS functions for $X$ and $Y$ respectively, $\lambda S \ge S$ is greater than the window size for both $X$ and $Y$, and $\kappa$ is larger than both alphabets on which $X$ and $Y$ are defined, Theorem~\ref{thm:fullshiftentropycalc} implies
$$h_{\mathcal{Q}(g_{S}^{X},v_{\lambda S,\kappa})}(\autinfty(X),K_{1}) = \htop(X,K_{1}),$$
$$h_{\mathcal{Q}(g_{S}^{Y},v_{S,\kappa})}(\autinfty(Y),K_{2}) = \htop(X,K_{2}).$$

Then this, together with $\mathcal{Q}(g^{Y},v_{\kappa,S}) \subset \Psi(\mathcal{Q}(g^{X},v_{\kappa,\lambda S}))$ and Proposition~\ref{prop:pentropyproperties} implies
$$[\mathbb{Z}^{d} \colon K_{1}] \cdot \htop(X) = \htop(X,K_{1}) = h_{\mathcal{Q}(g_{S}^{X},v_{\lambda S,\kappa})}(\autinfty(X),K_{1})$$
$$ = h_{\Psi(\mathcal{Q}(g_{S}^{X},v_{\lambda S,\kappa}))}(\autinfty(Y),K_{2})$$
$$\ge  h_{\mathcal{Q}(g^{Y},v_{\kappa,S})}(\autinfty(Y),K_{2}) = \htop(Y,K_{2}) = [\mathbb{Z}^{d} \colon K_{2}] \cdot \htop(Y).$$
Thus
$$[\mathbb{Z}^{d} \colon K_{1}] \cdot \htop(X) \ge [\mathbb{Z}^{d} \colon K_{2}] \cdot \htop(Y).$$
Likewise, using that $\mathcal{Q}(g_{S}^{X},v_{\kappa,S}) \subset \Psi^{-1}(\mathcal{Q}(g_{S}^{Y},v_{\kappa,\lambda S}))$ and again Proposition~\ref{prop:pentropyproperties} we have
$$[\mathbb{Z}^{d} \colon K_{2}] \cdot \htop(Y) = \htop(Y,K_{2}) = h_{\mathcal{Q}(g_{S}^{Y},v_{\lambda S,\kappa})}(\autinfty(Y),K_{2})$$
$$ = h_{\Psi^{-1}(\mathcal{Q}(g_{S}^{Y},v_{\lambda S,\kappa}))}(\autinfty(X),K_{1})$$
$$\ge  h_{\mathcal{Q}(g_{S}^{X},v_{\kappa,S})}(\autinfty(X),K_{1}) = \htop(X,K_{1}) = [\mathbb{Z}^{d} \colon K_{2}] \cdot \htop(X)$$
so
$$[\mathbb{Z}^{d} \colon K_{2}] \cdot \htop(Y) \ge [\mathbb{Z}^{d} \colon K_{1}] \cdot \htop(X).$$
It follows altogether that
$$[\mathbb{Z}^{d} \colon K_{1}] \cdot \htop(X) = [\mathbb{Z}^{d} \colon K_{2}] \cdot \htop(Y)$$
and hence
$$\frac{\htop(X)}{\htop(Y)} = \frac{[\mathbb{Z}^{d} \colon K_{2}]}{[\mathbb{Z}^{d} \colon K_{1}]} \in \mathbb{Q}.$$

\begin{theorem}\label{thm:classification1}
Let $\mathcal{A}$ and $\mathcal{B}$ be finite alphabets of size at least two and let $d \ge 1$. Then $\autinfty(\mathcal{A}^{\mathbb{Z}^{d}},\mathbb{Z}^{d})$ and $\autinfty(\mathcal{B}^{\mathbb{Z}^{d}},\mathbb{Z}^{d})$ are isomorphic if and only if there exists $m,n \in \mathbb{N}$ such that $|\mathcal{A}|^{m} = |\mathcal{B}|^{n}$.
\end{theorem}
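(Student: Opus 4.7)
The proof proceeds by establishing the two directions separately, both of which are now essentially available from the machinery developed in the paper.

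For the ``if'' direction, I would mimic the virtual isomorphism argument given in the example just after Proposition~\ref{prop:someproperties1}. Suppose $|\mathcal{A}|^{m}=|\mathcal{B}|^{n}$. Choose finite index subgroups $H_{1},H_{2}\subset\mathbb{Z}^{d}$ with $[\mathbb{Z}^{d}:H_{1}]=m$ and $[\mathbb{Z}^{d}:H_{2}]=n$ (e.g.\ $H_{1}$ spanned by $\{m e_{1},e_{2},\ldots,e_{d}\}$ and similarly for $H_{2}$). By Proposition~\ref{prop:indexisofact} we have topological conjugacies $(\mathcal{A}^{\mathbb{Z}^{d}},H_{1})\cong((\mathcal{A}^{m})^{H_{1}},H_{1})$ and $(\mathcal{B}^{\mathbb{Z}^{d}},H_{2})\cong((\mathcal{B}^{n})^{H_{2}},H_{2})$. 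Since $|\mathcal{A}^{m}|=|\mathcal{B}^{n}|$ and $H_{1}\cong H_{2}\cong\mathbb{Z}^{d}$, these two full shifts are isomorphic as systems (in the sense used in Proposition~\ref{prop:someproperties1}), so $(\mathcal{A}^{\mathbb{Z}^{d}},\mathbb{Z}^{d})$ and $(\mathcal{B}^{\mathbb{Z}^{d}},\mathbb{Z}^{d})$ are virtually isomorphic. Proposition~\ref{prop:someproperties1}(2) then gives an isomorphism of stabilized automorphism groups.

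For the ``only if'' direction, observe first that full shifts are trivially contractible: one can take $h(b,x,y)_{g}=x_{g}$ if $b_{g}=0$ and $h(b,x,y)_{g}=y_{g}$ if $b_{g}=1$. Thus if $\autinfty(\mathcal{A}^{\mathbb{Z}^{d}},\mathbb{Z}^{d})\cong\autinfty(\mathcal{B}^{\mathbb{Z}^{d}},\mathbb{Z}^{d})$, Theorem~\ref{thm:entropyrationalratio} applies and yields
\[
\frac{\htop(\mathcal{A}^{\mathbb{Z}^{d}})}{\htop(\mathcal{B}^{\mathbb{Z}^{d}})}\in\mathbb{Q}.
\]
Since $\htop(\mathcal{A}^{\mathbb{Z}^{d}})=\log|\mathcal{A}|$ and $\htop(\mathcal{B}^{\mathbb{Z}^{d}})=\log|\mathcal{B}|$, rationality of $\log|\mathcal{A}|/\log|\mathcal{B}|$ means that there exist positive integers $p,q$ with $q\log|\mathcal{A}|=p\log|\mathcal{B}|$, equivalently $|\mathcal{A}|^{q}=|\mathcal{B}|^{p}$, which is the desired multiplicative relation.

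There is essentially no obstacle left once the two preceding reductions are in place: the hard work has already been done in Theorem~\ref{thm:entropyrationalratio}, whose proof combines the local $\mathcal{Q}$ entropy computation of Theorem~\ref{thm:fullshiftentropycalc} with the finitary Ryan's theorem (Theorem~\ref{thm:FinitaryRyan}) to promote an abstract isomorphism to a pointed one via Theorem~\ref{thm:isoupgrade}. The only thing to be slightly careful about is to verify that both alphabets have size at least two so that the entropies are positive and the ratio is well-defined; this is built into the hypothesis.
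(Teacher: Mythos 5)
Your proposal is correct and follows essentially the same two-step argument as the paper: the ``if'' direction via virtual isomorphism and Propositions~\ref{prop:indexisofact} and~\ref{prop:someproperties1}(2), and the ``only if'' direction via Theorem~\ref{thm:entropyrationalratio} applied to the (contractible) full shifts. The only difference is that you spell out a couple of small points the paper leaves implicit (full shifts are contractible, and alphabets of size at least two guarantee positive entropy), which is fine but not a different route.
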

\begin{proof}
First suppose there exists $m,n \in \mathbb{N}$ such that $|\mathcal{A}|^{m} = |\mathcal{B}|^{n}$. Let $L_{1}$ be a subgroup of $\mathbb{Z}^{d}$ of index $m$ and $L_{2}$ a subgroup of $\mathbb{Z}^{d}$ of index $n$. Then $(\mathcal{A}^{\mathbb{Z}^{d}},L_{1})$ is topologically conjugate to $(\mathcal{A}^{[\mathbb{Z}^{d} \colon L_{1}]},L_{1}) = (\mathcal{A}^{m},L_{1})$. Likewise, $(\mathcal{B}^{\mathbb{Z}^{d}},L_{2})$ is topologically conjugate to $(\mathcal{B}^{n},L_{2})$. Since $|\mathcal{A}|^{m} = |\mathcal{B}|^{n}$, the systems $(\mathcal{A}^{m},L_{1})$ and $(\mathcal{B}^{n},L_{2})$ are topologically conjugate, so by Propositions~\ref{prop:someproperties1} and~\ref{prop:prop1}, we have
$$\autinfty(\mathcal{A}^{\mathbb{Z}^{d}},\mathbb{Z}^{d}) = \autinfty(\mathcal{A}^{\mathbb{Z}^{d}},L_{1}) \cong \autinfty(\mathcal{A}^{m},L_{1})$$
$$ \cong \autinfty(\mathcal{B}^{n},L_{2}) \cong \autinfty(\mathcal{B}^{\mathbb{Z}^{d}},L_{2}) = \autinfty(\mathcal{B}^{\mathbb{Z}^{d}},\mathbb{Z}^{d}).$$

For the other direction, suppose that $\autinfty(\mathcal{A}^{\mathbb{Z}^{d}},\mathbb{Z}^{d})$ and $\autinfty(\mathcal{B}^{\mathbb{Z}^{d}},\mathbb{Z}^{d})$ are isomorphic. By Theorem~\ref{thm:entropyrationalratio}, we have
$$\frac{\log|\mathcal{A}|}{\log |\mathcal{B}|} = \frac{\htop(\mathcal{A}^{\mathbb{Z}^{d}})}{\htop(\mathcal{B}^{\mathbb{Z}^{d}})} \in \mathbb{Q}$$
from which it follows that $|\mathcal{A}|^{q} = |\mathcal{B}|^{p}$ for some $p,q$.
\end{proof}

We also distinguish the dimension of the underlying $\mathbb{Z}^{d}$-action.
\begin{theorem}
Let $d_{1}, d_{2} \in \mathbb{N}$ and $\mathcal{A}, \mathcal{B}$ be finite alphabets of size at least two. The groups $\autinfty(\mathcal{A}^{\mathbb{Z}^{d_{1}}},\mathbb{Z}^{d_{1}})$ and $\autinfty(\mathcal{B}^{\mathbb{Z}^{d_{2}}},\mathbb{Z}^{d_{2}})$ are isomorphic if and only if $d_{1} = d_{2}$ and there exists $m,n$ such that $|\mathcal{A}|^{m} = |\mathcal{B}|^{n}$.
\end{theorem}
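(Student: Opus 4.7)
The backward direction is already handled: if $d_1 = d_2 = d$ and $|\mathcal{A}|^m = |\mathcal{B}|^n$ for some $m, n \in \mathbb{N}$, then Theorem~\ref{thm:classification1} gives the desired isomorphism of stabilized automorphism groups. So the content lies in the forward direction, where one must extract the dimension $d$ from the abstract isomorphism type of $\autinfty(\mathcal{A}^{\mathbb{Z}^{d}}, \mathbb{Z}^{d})$.

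The plan is to use the collection of finitely generated ghost centers as a group-theoretic invariant, and to show that the largest possible rank of such a ghost center is exactly $d$. First I would observe that every full shift $\mathcal{A}^{\mathbb{Z}^{d}}$ is contractible: the block map $h(s, x, y)_g = x_g$ if $s_g = 0$ and $h(s, x, y)_g = y_g$ if $s_g = 1$ witnesses contractibility. Therefore Proposition~\ref{prop:ghostcenterchar} applies and any finitely generated ghost center $H$ in $\autinfty(\mathcal{A}^{\mathbb{Z}^{d}}, \mathbb{Z}^{d})$ has $H \cap \mathcal{Z}_{X}$ of finite index in $H$. Since $\mathcal{Z}_{X} \cong \mathbb{Z}^{d}$, every such $H$ is virtually abelian of rank at most $d$, and in particular has a well-defined integer rank (the rank of any finite-index free abelian subgroup). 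On the other hand, $\mathcal{Z}_{X}$ is itself a finitely generated ghost center of rank $d$. Define
\[ r(G) = \sup\{\, k \in \mathbb{N} \mid G \text{ contains a finitely generated ghost center virtually isomorphic to } \mathbb{Z}^{k} \,\}. \]
By the observations just made, $r(\autinfty(\mathcal{A}^{\mathbb{Z}^{d}}, \mathbb{Z}^{d})) = d$.

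Since being a ghost center is defined purely in group-theoretic terms (using centralizers, finite generation, and finite index), the quantity $r(G)$ is an isomorphism invariant of the abstract group $G$. Therefore an abstract isomorphism
\[ \Psi \colon \autinfty(\mathcal{A}^{\mathbb{Z}^{d_1}}, \mathbb{Z}^{d_1}) \to \autinfty(\mathcal{B}^{\mathbb{Z}^{d_2}}, \mathbb{Z}^{d_2}) \]
forces $d_1 = r(\autinfty(\mathcal{A}^{\mathbb{Z}^{d_1}}, \mathbb{Z}^{d_1})) = r(\autinfty(\mathcal{B}^{\mathbb{Z}^{d_2}}, \mathbb{Z}^{d_2})) = d_2$. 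Once $d_1 = d_2 = d$, Theorem~\ref{thm:classification1} directly yields the existence of $m, n$ with $|\mathcal{A}|^{m} = |\mathcal{B}|^{n}$, finishing the proof.

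There is no serious obstacle: the main work was done in setting up Proposition~\ref{prop:ghostcenterchar} (via the finitary Ryan's theorem) and in Theorem~\ref{thm:classification1}. The only point requiring care is the verification that the rank of a virtually abelian group is well-defined and that the sup in the definition of $r$ is achieved (and finite), both of which follow immediately from the fact that every finitely generated ghost center virtually embeds into $\mathbb{Z}^{d}$.
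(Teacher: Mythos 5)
Your proof is correct and follows essentially the same route as the paper: both arguments use Proposition~\ref{prop:ghostcenterchar} to pin down the dimension via ghost centers, and then invoke Theorem~\ref{thm:classification1} to conclude. The paper directly transports $\mathcal{Z}_{\mathbb{Z}^{d_1}}$ under $\Psi$ and concludes commensurability with $\mathcal{Z}_{\mathbb{Z}^{d_2}}$, whereas you package the same observation into an abstract numerical invariant $r(G)$; these are two phrasings of the same argument.
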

\begin{proof}
If $d_{1} = d_{2}$ and there exists such $m,n$ then the statement follows from Theorem~\ref{thm:classification1}. Suppose then that $\Psi \colon \autinfty(\mathcal{A}^{\mathbb{Z}^{d_{1}}},\mathbb{Z}^{d_{1}}) \stackrel{\cong}\longrightarrow \autinfty(\mathcal{B}^{\mathbb{Z}^{d_{2}}},\mathbb{Z}^{d_{2}})$ is an isomorphism. By Proposition~\ref{prop:ghostcenterchar}, $\mathcal{Z}_{\mathbb{Z}^{d_{1}}}$ is a ghost center in $\autinfty(\mathcal{A}^{\mathbb{Z}^{d_{1}}},\mathbb{Z}^{d_{1}})$, so $\Psi(\mathcal{Z}_{\mathbb{Z}^{d_{1}}})$ is a ghost center in $\autinfty(\mathcal{B}^{\mathbb{Z}^{d_{2}}},\mathbb{Z}^{d_{2}})$. Again by Proposition~\ref{prop:ghostcenterchar}, this implies $\Psi(\mathcal{Z}_{\mathbb{Z}^{d_{1}}})$ is commensurable with $\mathcal{Z}_{\mathbb{Z}^{d_{2}}}$, so $d_{1} = d_{2}$, and the result then follows again from Theorem~\ref{thm:classification1}.
\end{proof}

Finally, we discuss some examples of contractible shifts of finite type over $\mathbb{Z}^{d}$ whose stabilized automorphism groups are not isomorphic to the stabilized automorphism group of any full shift.

By Theorem~\ref{thm:entropyrationalratio}, it suffices to find $\mathbb{Z}^{d}$-SFT's which are contractible and whose topological entropy is not a rational multiple of $\log n$ for any $n \in \mathbb{N}$.\\

\emph{Example: } Let $X$ be a mixing $\mathbb{Z}$-SFT whose topological entropy is not a rational multiple of $\log n$ for any $n \in \mathbb{N}$; there are numerous such systems, for instance the golden mean shift. Choose some embedding $\mathbb{Z} \subset \mathbb{Z}^{d}$, and consider the \emph{free extension} of $(X,\mathbb{Z})$, which is the  $\mathbb{Z}^{d}$-subshift $(Y,\mathbb{Z}^{d})$ whose configurations consist of independently chosen configurations from $X$ on each coset of $\mathbb{Z}$ in $\mathbb{Z}^{d}$. It is straightforward to check that $\htop(Y,\mathbb{Z}^{d}) = \htop(X,\mathbb{Z})$. It is proved in~\cite[Lemma 8.10]{PoirierSaloContractible} that if $X$ is contractible, then any such free extension $Y$ is contractible. Since $(X,\mathbb{Z})$ is a mixing $\mathbb{Z}$-SFT, by~\cite[Theorem 1.3]{PoirierSaloContractible}, $(X,\mathbb{Z})$ is contractible, and hence $(Y,\mathbb{Z}^{d})$ is contractible as well.\\

\emph{Example: } For an explicit example, consider the Triangular Hard-Square system $(Z,\mathbb{Z}^{2})$, which is the  $\mathbb{Z}^{2}$-SFT defined on $\{0,1\}$ where horizontally, vertically and for ONE diagonal, 1s cannot be adjacent. Calculations~\cite{Baxter1980,Joyce1988} show that $\htop(Z,\mathbb{Z}^{2})$ is not a rational multiple of $\log n$ for any $n$, and hence $\aut^{(\infty)}(Z,\mathbb{Z}^{2})$ is not isomorphic to the stabilized automorphism group of any full shift.\\

Lastly, in some sense there must exist `many' contractible $\mathbb{Z}^{d}$-SFT's whose topological entropy is not a rational multiple of $\log n$ for any $n$. By~\cite[Prop. 3.2]{PoirierSaloContractible}, the golden mean shift over $\mathbb{Z}^{d}$ is contractible. We do not know whether its topological entropy is rationally related to that of a full shift. But if it was, we may simply consider any contractible $\mathbb{Z}^{d}$-SFT $X$ whose topological entropy satisfies $\htop(X) = \log c$ where $c$ is not a root of a rational number. Then $\htop(Y \times X) = \htop(Y) + \htop(X)$ would not be rationally related to the entropy of a full shift.


\bibliographystyle{plain}
\bibliography{bibliography}

\end{document}